\documentclass[11pt]{article}

\usepackage[latin1]{inputenc} 
\usepackage{amsmath}
\usepackage{amsthm}
\usepackage{amsfonts}
\usepackage{amssymb}
\usepackage{mathtools}
\usepackage{graphics}
\usepackage{graphicx}
\usepackage[top=1in, bottom=1in, left=1in, right=1in,footskip=.5in]{geometry}
\usepackage{amssymb,amsmath,amsthm,amscd,epsf,latexsym,verbatim,graphicx,amsfonts} 
\input epsf.tex
\usepackage[english]{babel}
\usepackage{enumerate}
\usepackage{enumitem}
\usepackage{mathrsfs}
\usepackage{tikz-cd}
\usepackage{hyperref}
\usepackage{tabularx}
\usepackage[format=plain,
            font=it]{caption}
\usepackage{makecell}
\usepackage{authblk}
\usepackage{titling}
\usepackage[linesnumbered,ruled,vlined]{algorithm2e}

\newcounter{Theorem}
\numberwithin{Theorem}{section}

\theoremstyle{definition}

\newtheorem{theorem}[Theorem]{Theorem}
\newtheorem{example}[Theorem]{Example}
\newtheorem{definition}[Theorem]{Definition}
\newtheorem{corollary}[Theorem]{Corollary}

\newtheorem{lemma}[Theorem]{Lemma}
\newtheorem{remark}[Theorem]{Remark}
\newtheorem*{problem*}{Problem}

\makeatletter
    \let\c@algocf\c@Theorem
\makeatother

\usepackage{comment}
\usepackage[backend=biber,style=numeric,maxbibnames=99]{biblatex}
\addbibresource{bibliography.bib}
\renewbibmacro{in:}{}

\usepackage{lineno}

\newcommand{\bbN}{\mathbb{N}}

\def\Vhrulefill{\leavevmode\leaders\hrule height 0.7ex depth \dimexpr0.4pt-0.7ex\hfill\kern0pt}
\newcommand{\algname}[1]{\texttt{#1}}

\newcommand{\xra}[1]{\xrightarrow{#1}}

\newcommand{\op}[1]{\operatorname{#1}}
\newcommand{\precdot}{\prec\hspace{-0.075in}\cdot\hspace{0.0375in}}
\newcommand{\succdot}{\succ \hspace{-0.165in} \cdot \hspace{0.125in}}

\DeclareMathOperator{\leaf}{\operatorname{Lf}}
\DeclareMathOperator{\cross}{\operatorname{Crs}}
\DeclareMathOperator{\rut}{\operatorname{root}}
\DeclareMathOperator{\crt}{\operatorname{crt}}
\DeclareMathOperator{\crtei}{\operatorname{crtei}}

\title{Characterizing planar tanglegram layouts and \\applications to edge insertion problems}
\author{Kevin Liu}
\date{June 2, 2022}

\begin{document}
\maketitle

\begin{abstract}
    Tanglegrams are formed by taking two rooted binary trees $T$ and $S$ with the same number of leaves and uniquely matching each leaf in $T$ with a leaf in $S$. They are usually represented using layouts that embed the trees and matching in the plane.
    Given the numerous ways to construct a layout, one problem of interest is the Tanglegram Layout Problem, which is to efficiently find a layout that minimizes the number of crossings. This parallels a similar problem involving drawings of graphs, where a common approach is to insert edges into a planar subgraph. In this paper, we explore inserting edges into a planar tanglegram. Previous results on planar tanglegrams include a Kuratowski Theorem, enumeration, and an algorithm for finding a planar layout. We build on these results and characterize all planar layouts of a planar tanglegram. We then apply this characterization to construct a quadratic-time algorithm that inserts a single edge optimally. Finally, we generalize some results to multiple edge insertion.
\end{abstract}

\textbf{Keywords:} tanglegram, tree, layout, crossing, planar, edge insertion
\bigskip

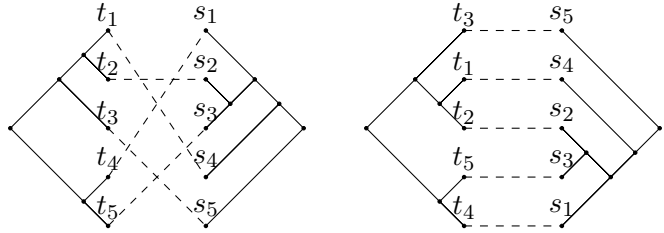
\begin{figure}[h]
\begin{center}
\begin{tikzpicture}[scale=0.65]
\filldraw[fill=black,draw=black] (-1,2) circle (1pt);
\filldraw[fill=black,draw=black] (-1,1) circle (1pt);
\filldraw[fill=black,draw=black] (-1,0) circle (1pt);
\filldraw[fill=black,draw=black] (-1,-1) circle (1pt);
\filldraw[fill=black,draw=black] (-1,-2) circle (1pt);
\filldraw[fill=black,draw=black] (1,2) circle (1pt);
\filldraw[fill=black,draw=black] (1,1) circle (1pt);
\filldraw[fill=black,draw=black] (1,0) circle (1pt);
\filldraw[fill=black,draw=black] (1,-1) circle (1pt);
\filldraw[fill=black,draw=black] (1,-2) circle (1pt);
\draw[dashed] (-1,2) -- (1,-1);
\draw[dashed] (-1,1) -- (1,1);
\draw[dashed] (-1,0) -- (1,-2);
\draw[dashed] (-1,-1) -- (1,2);
\draw[dashed] (-1,-2) -- (1,0);
\draw (-1,2) -- (-1.5,1.5) -- (-1,1) -- (-1.5,1.5) -- (-2,1) -- (-1,0) -- (-2,1) -- (-3,0) -- (-1,-2) -- (-1.5,-1.5) -- (-1,-1);
\draw (1,2) -- (2,1) --(1.5,0.5) -- (1,1) -- (1.5,0.5) -- (1,0) -- (1.5,0.5) -- (2,1) -- (2.5,.5) -- (1,-1) -- (2.5,.5)-- (3,0) -- (1,-2);
\filldraw[fill=black,draw=black] (-1.5,1.5) circle (1pt);
\filldraw[fill=black,draw=black] (-2,1) circle (1pt);
\filldraw[fill=black,draw=black] (-3,0) circle (1pt);
\filldraw[fill=black,draw=black] (-1.5,-1.5) circle (1pt);
\filldraw[fill=black,draw=black] (1.5,0.5) circle (1pt);
\filldraw[fill=black,draw=black] (2,1) circle (1pt);
\filldraw[fill=black,draw=black] (3,0) circle (1pt);
\filldraw[fill=black,draw=black] (2.5,0.5) circle (1pt);
\node at (-1,2.325) {$t_1$};
\node at (-1,1.325) {$t_2$};
\node at (-1,0.325) {$t_3$};
\node at (-1,-0.625) {$t_4$};
\node at (-1,-1.625) {$t_5$};
\node at (1,2.325) {$s_1$};
\node at (1,1.325) {$s_2$};
\node at (1,0.325) {$s_3$};
\node at (1,-0.625) {$s_4$};
\node at (1,-1.625) {$s_5$};
\end{tikzpicture}\qquad
\begin{tikzpicture}[scale=0.65]
\filldraw[fill=black,draw=black] (-1,2) circle (1pt);
\filldraw[fill=black,draw=black] (-1,1) circle (1pt);
\filldraw[fill=black,draw=black] (-1,0) circle (1pt);
\filldraw[fill=black,draw=black] (-1,-1) circle (1pt);
\filldraw[fill=black,draw=black] (-1,-2) circle (1pt);
\filldraw[fill=black,draw=black] (1,2) circle (1pt);
\filldraw[fill=black,draw=black] (1,1) circle (1pt);
\filldraw[fill=black,draw=black] (1,0) circle (1pt);
\filldraw[fill=black,draw=black] (1,-1) circle (1pt);
\filldraw[fill=black,draw=black] (1,-2) circle (1pt);
\draw[dashed] (-1,2) -- (1,2);
\draw[dashed] (-1,1) -- (1,1);
\draw[dashed] (-1,0) -- (1,0);
\draw[dashed] (-1,-1) -- (1,-1);
\draw[dashed] (-1,-2) -- (1,-2);
\draw (-1,0) -- (-1.5,0.5) -- (-1,1) -- (-1.5,0.5) -- (-2,1) -- (-1,2) -- (-2,1) -- (-3,0) -- (-1,-2) -- (-1.5,-1.5) -- (-1,-1);
\draw (1,1) -- (2.5,-0.5) -- (2,-1) -- (1,0) -- (1.5,-0.5) -- (1,-1) -- (1.5,-0.5) -- (2,-1) -- (1,-2) -- (3,0) -- (1,2);
\filldraw[fill=black,draw=black] (-1.5,0.5) circle (1pt);
\filldraw[fill=black,draw=black] (-2,1) circle (1pt);
\filldraw[fill=black,draw=black] (-3,0) circle (1pt);
\filldraw[fill=black,draw=black] (-1.5,-1.5) circle (1pt);
\filldraw[fill=black,draw=black] (2.5,-0.5) circle (1pt);
\filldraw[fill=black,draw=black] (2,-1) circle (1pt);
\filldraw[fill=black,draw=black] (3,0) circle (1pt);
\filldraw[fill=black,draw=black] (1.5,-0.5) circle (1pt);
\node at (-1,2.325) {$t_3$};
\node at (-1,1.325) {$t_1$};
\node at (-1,0.325) {$t_2$};
\node at (-1,-0.625) {$t_5$};
\node at (-1,-1.625) {$t_4$};
\node at (1,2.325) {$s_5$};
\node at (1,1.325) {$s_4$};
\node at (1,0.325) {$s_2$};
\node at (1,-0.625) {$s_3$};
\node at (1,-1.625) {$s_1$};
\end{tikzpicture}
    \caption{Two layouts for the same tanglegram, one with six crossings and one with  no crossings.}\label{tanglegramexample}
    \end{center}
\end{figure}

\section{Introduction}

Let $T$ and $S$ be two rooted binary trees with leaves respectively labeled as $\{t_i\}_{i\in I}$ and $\{s_j\}_{j\in J}$, where $I,J\subseteq \mathbb{N}$ are finite index sets of the same size. If we let $\phi:I\to J$ be a bijection, then we can denote a tanglegram as $(T,S,\phi)$, where $\phi$ indicates that $t_i$ is matched with $s_{\phi(i)}$. A \textit{layout} of a tanglegram draws $T$, $S$, and the edges $(t_i,s_{\phi(i)})$ in the plane such that $T$ is planarly embedded left of the line $x=0$ with all leaves on $x=0$, $S$ is planarly embedded right of the line $x=1$ with all leaves on $x=1$, and the edges $(t_i,s_{\phi(i)})$ are drawn using straight lines. See Figure \ref{tanglegramexample} for examples. 
A \textit{crossing} is any pair of edges $(t_i,s_{\phi(i)})$ and $(t_j,s_{\phi(j)})$ that intersect in the layout, and the \textit{crossing number} of a tanglegram $(T,S,\phi)$, denoted $\crt(T,S,\phi)$, is the minimum number of crossings over all layouts of $(T,S,\phi)$. The Tanglegram Layout Problem attempts to efficiently find a layout that achieves the crossing number.

Tanglegrams initially arose in biology and computer science. Biologists use binary trees to model evolution and tanglegrams to model relationships between species. Finding optimal layouts helps determine how two species may have co-evolved \cite{reduction}.  Applications in computer science include clustering, decomposition of programs into layers, or analyzing the difference in hierarchy between similar programs or different versions of the same program \cite{algorithm}. Combinatorial interest in tanglegrams developed more recently. Matsen et al. formalized tanglegrams as mathematical objects and described connections with phylogenetics \cite{reduction}. Billey, Konvalinka, and Matsen then enumerated tanglegrams and constructed an algorithm to generate them uniformly at random \cite{count}. Subsequently, Konvalinka and Wagner studied the properties of random tanglegrams \cite{randomtanglegram}, Ralaivaosaona, Ravelomanana, and Wagner counted planar tanglegrams \cite{countingplanar}, and Gessel counted several variations of tanglegrams using combinatorial species \cite{species}.

The crossing number of a tanglegram has connections with the crossing number of a graph $G$, denoted $\operatorname{cr}(G)$, which is the minimum number of crossings over all drawings of $G$. Determining if $\operatorname{cr}(G)\leq k$ for $k\in \mathbb{N}$ is NP-complete \cite{CNNP}, and the same is true for determining if $\crt(T,S,\phi)\leq k$ \cite{tlhard}. Some of the known results in graph drawing have analogous results in tanglegram layouts, and some have approached the Tanglegram Layout Problem by translating what we know about graphs to tanglegrams.  Czabarka, Sz\'ekely, and Wagner recently used the well-known Kuratowski Theorem characterizing planarity of graphs to construct a Tanglegram Kuratowski Theorem characterizing \textit{planar tanglegrams}, which are tanglegrams with crossing number zero \cite{kura}. Prior to this, Lozano et al. constructed their \algname{Untangle} Algorithm for drawing a planar layout of a planar tanglegram \cite{layout}. Anderson et al. recently proved that removing a between-tree edge $(t_i,s_{\phi(i)})$ from a tanglegram reduces crossing number by at most $n-3$, and they produced a family of tanglegrams to show that this bound is sharp \cite{analogy}. They also found that the maximum crossing number over all tanglegrams asymptotically approaches $\frac{1}{2}{n\choose 2}$, where $n$ is the number of leaves in each tree. 

Given the difficulty of minimizing crossings in graph drawings, some have studied approximating the minimum number of crossings rather than finding it exactly. One approach to this is edge insertion. The Edge Insertion Problem for graphs starts with a graph $G$ and an edge $e \in G$ such that $G\setminus\{e\}$ is planar, and attempts to find an embedding of $G$ into the plane so that the drawing of $G\setminus \{e\}$ is planar and the number of crossings from $e$ is minimized. This problem  is well studied. A linear-time algorithm exists to solve it, and some bounds have been found relating an optimal drawing of $G$ and a solution to the Edge Insertion Problem for $G\setminus \{e\}$ with $\{e\}$ inserted \cite{insert1,almostplanar}.
The Edge Insertion Problem generalizes to the Multiple Edge Insertion Problem, where we insert several edges $\{e_1,\ldots ,e_n\}$ into planar $G\setminus \{e_1,\ldots,e_n\}$ optimally, and current approximation algorithms for graph drawing still use multiple edge insertion with planar subgraphs \cite{graphcrossing}. Given the role that edge insertion with planar subgraphs plays in graph drawings, it is plausible that edge insertion can play a similar role for tanglegram layouts. In this paper, we consider the tanglegram versions of the insertion problems for graphs, which we now state.

\begin{problem*}[\textbf{Tanglegram Single Edge Insertion}]
Given a tanglegram $(T,S,\phi)$ and a planar subtanglegram $(T_I,S_{\phi(I)},\phi|_I)$ induced by $I=[n]\setminus \{i\}$ for $i\in [n]$, find a layout of $(T,S,\phi)$ that restricts to a planar layout of $(T_I,S_{\phi(I)},\phi|_I)$ and has the minimal number of crossings possible.
\end{problem*}

\begin{problem*}[\textbf{Tanglegram Multiple Edge Insertion}]
Given a tanglegram $(T,S,\phi)$ and a planar subtanglegram $(T_I,S_{\phi(I)},\phi|_I)$ induced by $I\subseteq [n]$, find a layout of $(T,S,\phi)$ that restricts to a planar layout of $(T_I,S_{\phi(I)},\phi|_I)$ and has the minimal number of crossings possible.
\end{problem*}

We will start by characterizing the planar layouts of a planar tanglegram. For a planar tanglegram $(T,S,\phi)$, we will define a \textit{leaf-matched pair} $(u,v)$ as a pair of internal vertices $u\in T$ and $v\in S$ whose descendant leaves are matched by $\phi$, and we will define an operation called a \textit{paired flip}. These pairs of internal vertices have the property that their descendant leaves are matched by $\phi$. By adding steps to the \algname{Untangle} algorithm by Lozano et al. for drawing a planar layout of a planar tanglegram, we construct \algname{ModifiedUntangle} (Algorithm \ref{modifieduntangle}), which also identifies leaf-matched pairs and stores them in a set $L$, obtaining the following result.

\begin{theorem}\label{planarlayouts} 
Let $(T,S,\phi)$ be a planar tanglegram, and let $\mathscr{P}(T,S,\phi)$ denote its collection of planar layouts. Let the output of $\algname{ModifiedUntangle}(T,S,\phi)$ be the layout $(X,Y)$ and set of leaf-matched pairs $L$. Every $(X',Y')\in \mathscr{P}(T,S,\phi)$ can be obtained by starting with $(X,Y)$ and performing a sequence of paired flips at $(u,v)\in L$.
\end{theorem}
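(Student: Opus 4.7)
The plan is to prove the theorem by induction on the number of leaves $n$, using a recursive decomposition at the roots of $T$ and $S$. The key structural observation to establish first is the following: in any planar tanglegram with $n \geq 2$ leaves, the roots of $T$ and $S$ form a leaf-matched pair, and the two maximal proper subtrees of $T$ are leaf-matched under $\phi$ with the two maximal proper subtrees of $S$ in one of the two possible pairings. To see this, take any planar layout: the leaves of $T$ on the left partition into two contiguous blocks corresponding to the two subtrees at the root; planarity forces the matched leaves on the right to partition into analogous contiguous blocks. At the top level of $S$, a contiguous leaf block coincides precisely with the leaf set of one of the two root subtrees, so the pairing follows. Restricting to either block yields a planar sub-tanglegram, which is what enables the recursion.

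From this structural lemma the easy direction of the theorem is immediate: for any $(u,v)\in L$, a paired flip preserves planarity. Indeed, applied to the planar sub-tanglegram rooted at $(u,v)$ --- which is planar by restriction from the ambient layout --- the paired flip swaps the two child sub-layouts within the horizontal strip containing the descendants of $u$ on the left and those of $v$ on the right, while nothing changes outside this strip.

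For the main direction, I would take two arbitrary planar layouts $(X,Y), (X',Y') \in \mathscr{P}(T,S,\phi)$ and show by induction on $n$ that $(X',Y')$ is reachable from $(X,Y)$ by paired flips at pairs in $L$. The base case $n=1$ is vacuous. For the inductive step, the structural lemma puts the root pair in $L$; after possibly performing a paired flip at this root pair, one may assume $(X,Y)$ and $(X',Y')$ agree on which subtree of $T$'s root occupies the upper block on the left, and similarly for $S$. Both layouts then restrict to planar layouts of the same two sub-tanglegrams determined by the root children. The inductive hypothesis applied to each sub-tanglegram expresses the restrictions as sequences of paired flips at leaf-matched pairs within that sub-tanglegram; these pairs sit in $L$ because leaf-matching is a purely structural condition on $\phi$ and the trees. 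Moreover, any non-root leaf-matched pair $(u,v) \in L$ must lie wholly inside one of the two sub-tanglegrams, since otherwise $\phi$ applied to the descendant leaves of $u$ could not equal the descendant leaves of $v$ (they would sit in different top-level blocks on the right). Concatenating the possible root flip with the sub-tanglegram flips then transforms $(X,Y)$ into $(X',Y')$.

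The principal obstacle is the structural lemma, specifically verifying that a top-level contiguous leaf block on the right in a planar layout must coincide with the leaf set of one of $S$'s root subtrees --- a clean consequence of how planar embeddings partition leaves at each internal vertex, but one that should be spelled out carefully. A secondary subtlety is confirming that \algname{ModifiedUntangle} actually returns the full set of leaf-matched pairs as $L$, which is a design property of the algorithm and independent of the starting layout $(X,Y)$; once this is accepted, the theorem reduces to the transitivity of the paired-flip action on $\mathscr{P}(T,S,\phi)$, which the induction above establishes.
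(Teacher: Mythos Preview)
Your structural lemma is false, and this breaks the induction at the very first step. You claim that in any planar tanglegram with $n\geq 2$ leaves, the two maximal proper subtrees of $T$ are leaf-matched under $\phi$ with the two maximal proper subtrees of $S$. Here is a counterexample: take $T=((t_1,t_2),(t_3,t_4))$, $S=(s_1,(s_2,(s_3,s_4)))$, and $\phi(i)=i$. The layout $(t_1t_2t_3t_4,\,s_1s_2s_3s_4)$ is planar. The root subtrees of $T$ have leaf sets $\{t_1,t_2\}$ and $\{t_3,t_4\}$, but the root subtrees of $S$ have leaf sets $\{s_1\}$ and $\{s_2,s_3,s_4\}$; no pairing of these is leaf-matched. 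Your argument that a contiguous leaf block on the right ``coincides precisely with the leaf set of one of the two root subtrees'' fails here: $\{s_1,s_2\}$ is contiguous in the layout but is not the leaf set of any subtree of $S$ rooted at a child of $\rut(S)$. So there is nothing to recurse into, and the inductive step collapses. More generally, irreducible planar tanglegrams (those whose only leaf-matched pair is the root pair) exist in every size $\geq 3$ and have exactly two planar layouts; your scheme gives no mechanism to handle them beyond the single root flip, yet must still verify there are no further planar layouts.

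The paper's proof avoids this obstruction entirely by working not with a tree-level recursion but with the sequence of \emph{partial layouts} produced by \algname{ModifiedUntangle}. Given any second planar layout $(X',Y')$, one builds a parallel promising partial sequence for it (Lemma~\ref{sequence}) that refines the same vertices in the same order. Lemma~\ref{extends}(b) is the engine: whenever the refined vertex has degree $\geq 2$ in the bipartite graph $(X_k,E_k,Y_k)$, the refinement is forced, so the two sequences can only diverge at a degree-$1$ vertex. But a degree-$1$ vertex is exactly half of a leaf-matched pair (Lemma~\ref{degree1}), and a paired flip at that pair realigns the sequences through the next step. Iterating yields the chain of paired flips. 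This argument never assumes any compatibility between the root decompositions of $T$ and $S$.
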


Letting $\text{size}(T,S,\phi)$ be the number of leaves in $T$ or $S$, we then consider the generating function
\begin{equation}\label{Fxq}
    F(x,q)=\sum_{\text{planar  }(T,S,\phi)}x^{\text{size}(T,S,\phi)}q^{|\{\text{leaf-matched pairs of } (T,S,\phi)\}|}.
\end{equation}
The coefficient of $x^nq^k$ is the number of tanglegrams of size $n$ with $k$ leaf-matched pairs. When $\text{size}(T,S,\phi)\geq 2$, the roots of $T$ and $S$ always form a leaf matched pair. A tanglegram is \textit{irreducible} if this is the only leaf-matched pair. We let 
\begin{equation}\label{Hx}
    H(x) = \sum_{\text{irreducible planar $(T,S,\phi)$}} x^{\text{size}(T,S,\phi)},
\end{equation}
where we use the convention that the coefficient of $x^2$ is $\frac{1}{2}$, as in \cite[Proposition 8]{countingplanar}. These generating functions have a relationship, which we can use to find coefficients of $x^nq^k$ in $F(x,q)$.

\begin{theorem}\label{Fxqrelation}
The generating function $F(x,q)$ satisfies the relation
\begin{equation}\label{Fxqequation}
    F(x,q)=x+q\cdot H(F(x,q))+\frac{q\cdot F(x^2,q^2)}{2}.
\end{equation}
\end{theorem}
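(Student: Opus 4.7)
The approach is to give a combinatorial decomposition of planar tanglegrams of size $\geq 2$ into an irreducible ``top skeleton'' with arbitrary planar tanglegrams substituted at its leaves, then translate this to generating functions. The identity can be viewed as a $q$-refinement of the recursive enumeration of planar tanglegrams underlying \cite[Proposition~8]{countingplanar}.

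\textbf{Decomposition.} Size-$1$ contributes the $x$ term. Given $(T,S,\phi)$ planar of size $\geq 2$, the leaf-matched pairs form a rooted tree under the ancestor relation in $T$ (equivalently in $S$), with $(\rho_T,\rho_S)$ at the top; let $(u_1,v_1),\dots,(u_m,v_m)$ be its children and $C_i$ the planar subtanglegram rooted at $(u_i,v_i)$. Contracting each $C_i$ to a single matched leaf pair yields a planar tanglegram $K$ of size $n\geq 2$; by maximality of the $(u_i,v_i)$, $K$ has no non-root leaf-matched pair, so $K$ is irreducible. The inverse operation substitutes planar tanglegrams $D_1,\dots,D_n$ at the $n$ leaves of $K$ (a size-$1$ substitution being trivial), preserving planarity by gluing layouts. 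A direct check of what an LMP of the combined tanglegram can look like (either inside some $D_i$ or spanning across the skeleton, where irreducibility of $K$ forces the latter to be the root of $K$) shows the leaf-matched pairs of the combined tanglegram are precisely the root of $K$ together with the leaf-matched pairs inside each $D_i$.

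\textbf{Generating function translation.} For each irreducible $K$ of size $n$, substitution contributes a factor of $q$ (for the new root leaf-matched pair) times the generating function for $\operatorname{Aut}(K)$-orbits of $n$-tuples of planar tanglegrams. I claim $\operatorname{Aut}(K)$ is trivial for every irreducible planar $K$ of size $\geq 3$: any nontrivial tree-automorphism swap in $\sigma_T$ at an internal vertex $u\in T$ induces a compatible action of $\sigma_S$ making $(u,v)$ a leaf-matched pair for some internal $v\in S$, so irreducibility forbids swaps away from the root; a root swap in an irreducible $K$ of size $\geq 3$ is then ruled out because in the ``unmixed'' case (where $\phi$ preserves the root bipartition) the two subroot pairs would themselves be leaf-matched, while in the ``mixed'' case a case analysis shows any candidate $\sigma_S$ is incompatible with the planarity of $K$. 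Hence each $K$ of size $n\geq 3$ contributes $q\,h_n F(x,q)^n$, where $h_n$ counts such $K$. The unique irreducible size-$2$ tanglegram has $\operatorname{Aut}(K)=\mathbb{Z}/2\mathbb{Z}$ swapping the two leaf pairs, and Burnside's Lemma gives $\tfrac12\bigl(F(x,q)^2+F(x^2,q^2)\bigr)$ for the unordered pairs of substitutions. Summing contributions and including the size-$1$ term,
\begin{equation*}
F(x,q)=x+q\sum_{n\geq 3}h_n F(x,q)^n+\tfrac{q}{2}\bigl(F(x,q)^2+F(x^2,q^2)\bigr).
\end{equation*}
Under the convention $[x^2]H(x)=\tfrac12$ we have $qH(F(x,q))=\tfrac{q}{2}F(x,q)^2+q\sum_{n\geq 3}h_n F(x,q)^n$, so the right-hand side collapses to $x+qH(F(x,q))+\tfrac{q\,F(x^2,q^2)}{2}$, which is \eqref{Fxqequation}. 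The main technical hurdle is the rigidity claim for $K$ of size $\geq 3$: without it Burnside would produce additional $F(x^\ell,q^\ell)$ corrections, so confirming that $n=2$ is the only case requiring correction is what yields the clean form of the identity.
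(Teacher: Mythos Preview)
Your proposal is correct and takes essentially the same approach as the paper: decompose a planar tanglegram via its irreducible component $K$, substitute planar tanglegrams at the leaves of $K$, account for the root leaf-matched pair with the factor $q$, and handle the size-$2$ case with the Burnside/symmetry correction. The one place your argument is thinner is the rigidity of irreducible planar $K$ of size $\geq 3$---your claim that a non-root swap in $\sigma_T$ forces a leaf-matched pair is not justified as stated---but the paper does not prove this fact either and simply cites \cite{countingplanar}.
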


Afterwards, we use our characterization of planar layouts to solve the Tanglegram Single Edge Insertion Problem. After considering various cases, we construct \algname{Insertion} (Algorithm \ref{insertionalgorithm}) and show the following result.

\begin{theorem}\label{insertiontime}
The \algname{Insertion} Algorithm solves the Tanglegram  Single Edge Insertion Problem in $O(n^2)$ time and space, where $n$ is the size of the tanglegram.
\end{theorem}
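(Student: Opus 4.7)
The plan is to use Theorem \ref{planarlayouts} to reduce the Single Edge Insertion Problem to a tree-structured optimization. First I would apply \algname{ModifiedUntangle} to the planar subtanglegram $(T_I, S_{\phi(I)}, \phi|_I)$ to obtain a reference planar layout $(X,Y)$ and the set $L$ of leaf-matched pairs in linear time. By Theorem \ref{planarlayouts}, every planar layout of the subtanglegram arises from $(X,Y)$ by performing paired flips at some subset $F \subseteq L$. Extending such a layout to a layout of the full tanglegram then adds only two binary choices: the insertion position of $t_i$ (immediately above or below its sibling subtree in $T$) and of $s_{\phi(i)}$ (likewise in $S$). Hence the relevant search space is parameterized by $2^{|L|}\cdot 4$ configurations, and the objective is the number of crossings generated by the inserted edge $(t_i,s_{\phi(i)})$.

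Next I would analyze these crossings. The inserted edge crosses $(t_j,s_{\phi(j)})$ exactly when the vertical order of $t_i,t_j$ on the left is opposite to the order of $s_{\phi(i)},s_{\phi(j)}$ on the right. Because each pair $(u,v)\in L$ has matched descendant leaves, flipping at $(u,v)$ coherently reflects the corresponding blocks in both $T_I$ and $S_{\phi(I)}$. I would show, using this leaf-matching property, that whether $(t_j,s_{\phi(j)})$ crosses the inserted edge depends only on the parities of flips along a single root path in the nested hierarchy of $L$, together with the two insertion positions. Organizing $L$ as a rooted \emph{leaf-matched tree} via descendant-leaf inclusion, I would then run a post-order dynamic program that stores, at each pair and for each of its two local orientations and for each of the four global insertion choices, the minimum number of induced crossings contributed by edges whose governing pair lies in the current subtree. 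Transitions sum the children's optimal values plus a local contribution at $(u,v)$, which can be evaluated in $O(n)$ time by scanning the subtanglegram edges that first become relevant at $(u,v)$. Summed over the $O(n)$ leaf-matched pairs this gives $O(n^2)$ time and space, and a standard traceback recovers the optimal flip subset $F$ and insertion positions; the final layout is produced by applying these flips to $(X,Y)$ and inserting the edge.

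The main obstacle is the structural step: proving that the flip-dependence of each candidate crossing localizes to a single root-path in the leaf-matched tree, so that the problem genuinely decomposes as a tree DP rather than forcing an exponential search. Once this locality is established from the leaf-matching property of pairs in $L$, the correctness of the DP and its $O(n^2)$ time and space bound follow in a routine manner, yielding Theorem \ref{insertiontime}.
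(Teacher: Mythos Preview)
Your proposal is plausible and would lead to a correct $O(n^2)$ algorithm, but it takes a genuinely different route from the paper and misses the structural simplification that makes the paper's argument cleaner.

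The paper does not set up a tree DP over all of $L$. Instead, it proves directly (Lemma~\ref{automorphisms}) that a paired flip at $(u,v)\in L(I)$ leaves every crossing unchanged whenever \emph{both} or \emph{neither} of $u>_T t_i$, $v>_S s_{\phi(i)}$ hold. This is exactly the lemma hiding behind your ``single root path'' locality claim; once you prove your claim you will have reproved it. The paper then observes that the only pairs that matter, namely $L(I)_T=\{(u,v):u>_T t_i,\ v\not>_S s_{\phi(i)}\}$ and $L(I)_S=\{(u,v):u\not>_T t_i,\ v>_S s_{\phi(i)}\}$, are each linearly ordered (they lie on the root paths to $t_i$ and $s_{\phi(i)}$ respectively). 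Hence the search space is two chains plus the two subtree-switch bits at $u_0,v_0$, not a tree. A short case analysis on the relative position of $u_0$ (resp.\ $v_0$) versus these chains shows that the crossings partition into disjoint ``edge sets'' $E(\cdot)$, each controlled by a single chain element, so a greedy top-down pass along each chain is already optimal; the only coupling is the possible overlap $E(u_0)\cap E(v_0)$, handled by trying the four switch combinations.

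Your DP would arrive at the same answer: once the locality is established, the cumulative flip parity at each relevant pair can be set independently (choosing $f_p$ adjusts the parity passed to the child), so the tree DP collapses to an independent greedy choice at every node---which is precisely the paper's algorithm. What the paper's route buys is that this independence is visible from the start, avoiding the DP machinery and the need to argue that the ``local orientation'' state you describe is the right invariant (strictly speaking you need cumulative parity from the root, not local orientation, though the two are interchangeable once you allow a free flip at each node). Two small corrections: \algname{ModifiedUntangle} runs in $O(n^2)$ time, not linear time (Remark~\ref{modifieduntangletime}); and in the paper the algorithm is run on $(T,S,\phi|_I)$ rather than on the subtanglegram itself, so that the output is already a layout of the full trees.
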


Finally, we consider Tanglegram Multiple Edge Insertion. Similar to the corresponding graph theory problems, Single Edge Insertion can be solved efficiently, but Multiple 
Edge Insertion is significantly more difficult.

\begin{theorem}\label{NP-hard}
The Tanglegram Multiple Edge Insertion Problem  is NP-hard.
\end{theorem}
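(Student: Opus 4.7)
I plan to prove NP-hardness by a direct reduction from the Tanglegram Layout Problem, which is NP-hard by \cite{tlhard}. Given an instance $(T, S, \phi)$ of size $n$, I will produce in polynomial time an instance of Tanglegram Multiple Edge Insertion whose optimum crossing count equals $\crt(T, S, \phi)$.

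The simplest reduction reuses the same tanglegram $(T, S, \phi)$ paired with a trivially planar subtanglegram, for instance the one induced by $I = \{1\}$: the single-leaf subtanglegram consists of one matching edge and is vacuously planar. Every layout of $(T, S, \phi)$ restricts to a planar layout on this one-edge subtanglegram, so the planarity restriction imposes no real constraint on the full layout, and the minimum over valid layouts equals $\crt(T, S, \phi)$. A polynomial-time algorithm for Multiple Edge Insertion would therefore yield one for Tanglegram Layout, a contradiction.

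Should the intended statement require a less degenerate planar subtanglegram, the construction upgrades cleanly. Form a new tanglegram $(T', S', \phi')$ by introducing fresh roots whose two children on each side are the original root of $(T,S,\phi)$ and the root of an arbitrary planar tanglegram on $k$ leaves matched among themselves, and set $I$ to be the $k$ new leaves. The subtanglegram induced by $I$ is planar by construction. In any planar embedding of the enlarged trees, the two root-child subtrees occupy contiguous blocks on each side, so the original half and the planar gadget can be stacked one above the other with no inter-half crossings, and the optimum is again $\crt(T, S, \phi)$. The only nontrivial step in this variant is verifying that no interleaving layout can strictly reduce the crossing count; this follows from the fact that every planar embedding keeps sibling subtrees separated along the embedding line, so any leaf of $I$ lies outside the block containing the leaves of $(T,S,\phi)$ and the two halves cannot interact beneficially.
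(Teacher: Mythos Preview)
Your reduction is correct, and it takes a different route than the paper's own argument. Both reduce from the Tanglegram Layout Problem, but the paper uses index sets of size two: it invokes Theorem~\ref{nchoose2} (the bound $\crt(T,S,\phi)<\frac{1}{2}\binom{n}{2}$) to conclude that some pair of between-tree edges is non-crossing in an optimal layout, and then argues that solving Multiple Edge Insertion for all $\binom{n}{2}$ size-two subsets $I$ would recover $\crt(T,S,\phi)$. Your argument with $|I|=1$ is more elementary---it needs no auxiliary crossing-number bound and makes a single call rather than $O(n^2)$ calls---and is valid since the problem statement places no lower bound on $|I|$ and the paper explicitly admits the size-$1$ tanglegram. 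The paper's version has the mild advantage of showing hardness already for the regime $|I|\geq 2$ (so one cannot escape by declaring size-$1$ subtanglegrams degenerate), while yours buys simplicity; your fallback construction with an attached planar gadget achieves the same non-degeneracy if desired.
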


Nevertheless, we generalize some of our results from \algname{Insertion} to construct our \algname{MultiInsertion} algorithm. While the space required will still be $O(n^2)$, the runtime is potentially exponential, depending on where vertices and edges are inserted, as well as how many leaf-matched pairs there are. Nevertheless, in certain situations, \algname{MultiInsertion} efficiently solves the Tanglegram Multiple Edge Insertion Problem.

We start in Section \ref{background} by outlining terminology, notation, and previous results. In Section \ref{planar}, we give our \algname{ModifiedUntangle} Algorithm, establish Theorem \ref{planarlayouts}, and then show the relation in Theorem \ref{Fxqrelation}. In Section \ref{TEI}, we give our \algname{Insertion} Algorithm and prove Theorem 1.3. In Section \ref{MEI}, we prove Theorem 1.4 and generalize some of our results from Section \ref{TEI} to the Tanglegram Multiple Edge Insertion Problem. We conclude by discussing future work in Section \ref{future}.

\setcounter{section}{1}

\section{Preliminaries}\label{background}

In this section, we begin by defining rooted binary trees and outlining the terminology and notation that we will use. We then introduce our notation for tanglegram layouts as ordered lists of leaves in the two trees. Afterwards, we define subtanglegrams, which correspond to subgraphs in graph theory. Note that removing vertices or edges in a tanglegram does not produce another tanglegram, so we give a description of the steps needed to construct subtanglegrams. An example of our notation and terminology will be given in Example \ref{layoutexample}, and the reader is encouraged to refer to this example as they read this section. We conclude  with some known results.

A \textit{rooted binary tree} $T$ is a tree in which every vertex has either zero or two children, and where a designated vertex called the root, denoted $\rut(T)$, is allowed to have degree two. A vertex that has children is called an \textit{internal vertex}, and a vertex with no children is called a \textit{leaf}. If $v$ has children $v_1$ and $v_2$, we call $v$ the parent of $v_1$ and $v_2$. 
We say vertex $v_1$ is a \textit{descendant} of $v_k$ or $v_k$ is an \textit{ancestor} of $v_1$ if there is a sequence of vertices $v_1,v_2,\ldots ,v_k$ such that $v_{i+1}$ is the parent of $v_i$ for  $i=1,2,\ldots, k-1$, and we use the notation $v_1<v_k$ or $v_k>v_1$ to denote this. When needed, we use a subscript with the name of a tree to specify ancestry in that tree, such as $v_k>_T v_1$. 

For an internal vertex $v\in T$, the \textit{subtree rooted at $v$} is the tree formed by all vertices $u$ with $u\leq v$, and this subtree then has $v$ as its root. Using subtrees, we can represent trees using the nested lists notation from Section 2.3.2 of \cite{knuth}, where each set of parenthesis represents a subtree. Unless otherwise stated, we will index leaves with $[n]=\{1,2,\ldots ,n\}$, and usually we will omit labels for internal vertices. 

All trees are considered up to isomorphism, so in particular, relabeling vertices does not produce a different tree. Given an internal vertex $v\in T$, a \textit{flip} at vertex $v$ is the operation that interchanges the order of the children for all $u\leq v$. Pictorially, if we start with a drawing of a tree $T$, a flip at $v\in T$ reflects the subtree rooted at $v$, which motivates the name ``flip." Notice that each flip has order two, all flips commute with one another, and for any rooted binary tree $T$, flips generate all trees isomorphic to $T$.

Tanglegrams $(T,S,\phi)$ are formed from a pair of rooted binary trees $T,S$ and a bijection $\phi$ matching their leaves. The \textit{size} of $(T,S,\phi)$ is the common number of leaves in $T$ or $S$. We will call the edges in $T$ and $S$ \textit{tree edges} and call the edges induced by $\phi$ \textit{between-tree edges}. For any vertex $u\in T$, we use $\leaf(u)$ to denote the leaves $\ell\in T$ such that $\ell \leq_T u$, and similarly for $\leaf(v)$ when $v\in S$. As with trees, we consider tanglegrams up to isomorphism. Relabeling vertices or replacing $T$ and $S$ with isomorphic trees does not produce a new tanglegram, provided that $\phi$ is modified appropriately. See \cite{reduction} for more details. Notice that for any tanglegram $(T,S,\phi)$, flips will generate all tanglegrams isomorphic to $(T,S,\phi)$, as they generate all isomorphisms of the underlying trees.

Our notation for tanglegram layouts builds on the notation used in \cite{layout}. In any layout, the number of crossings is completely determined by the order of the leaves in the two trees and the bijection $\phi$ matching these leaves, as the between-tree edges $(t_i,s_{\phi(i)})$ and $(t_j,s_{\phi(j)})$ intersect when $t_i$ is embedded above $t_j$ and $s_{\phi(i)}$ is embedded below $s_{\phi(j)}$. Since we are primarily interested in the number of crossings rather than specific coordinates of the plane embedding, we give the following definition.

\begin{definition}
Let $(T,S,\phi)$ be a tanglegram drawn in the plane with a given layout. The \textit{leaf order} of the given layout is a pair of ordered lists $(X,Y)$, where $X$ and $Y$ respectively list the leaves of $T$ and $S$ in order of appearance from top to bottom in the layout.
\end{definition}

One can view the leaf order of a layout $(X,Y)$ as an equivalence class of layouts, where two layouts are equivalent if they draw the leaves of $T$ and $S$ in the same order from top to bottom.  To recover a layout from the ordered lists $(X,Y)$, one can draw the leaves listed in $X$ and $Y$ from top to bottom respectively on $x=0$ and $x=1$, and then use the information from $T$, $S$, and $\phi$ to draw the trees and between-tree edges. Flips generate all trees isomorphic to $T$ or $S$, so they can act on leaf orders $(X,Y)$ to obtain all possible leaf orders, where a flip at an internal vertex $u$ acts on $(X,Y)$ by reversing the order of the elements in $\leaf(u)$ in the appropriate list $X$ or $Y$. Throughout this paper, we abuse terminology and refer to this pair of lists $(X,Y)$ also as a tanglegram layout.

We will often decompose $X$ and $Y$ into concatenated lists $(X_1X_2\ldots X_m,Y_1Y_2\ldots Y_n)$, where each $X_i$ or $Y_j$ is some ordered collection of consecutive leaves. We do not impose any restrictions on $X_i$ and $Y_j$ beyond the fact that they must contain consecutive leaves, but usually we will use $X_i$ or $Y_j$ to represent the leaves in $\leaf(u)$ or $\leaf(v)$ for some $u\in T$ or $v\in S$. If we start with the layout $(X_1\ldots X_i\ldots X_m,Y)$ and the sublist $X_i$ contains the elements in $\leaf(u)$, we will denote the layout after a flip at $u$ as $(X_1\ldots \overline{X_i}\ldots X_m,Y)$, where the bar indicates that the sublist $X_i$ is reversed. Note that while this notation using lists is convenient, the reader should visualize $(X,Y)$ as embeddings into the plane, and when possible, we will include drawings to help illustrate the arguments that we make using these ordered lists.

Finally, we will define induced subtrees and induced subtanglegrams using a similar definition as in \cite{kura}. Notice that layouts $(X',Y')$ of subtanglegrams correspond to taking sub-lists in layouts $(X,Y)$ of the original tanglegram.

\begin{definition}\label{subtreedefinition}
Let $T$ be a tree with leaves indexed by $[n]$. For any $I\subseteq [n]$, the \textit{rooted binary subtree induced by $I$}, denoted $T_I$, is formed by taking the minimal subtree of $T$ containing the leaves indexed by $I$ and suppressing all internal vertices that have only one child.
\end{definition}
\begin{definition}\label{subtanglegramdefinition}
Let $(T,S,\phi)$ be a tanglegram with the leaves of $T$ and $S$ indexed by $[n]$. For any $I\subseteq [n]$, the \textit{subtanglegram induced by $I$} is the tanglegram $(T_I,S_{\phi(I)},\phi|_I)$, that is, the tanglegram formed from the induced subtrees $T_I$ and $S_{\phi(I)}$ with leaves matched using the restriction $\phi|_I$. 
\end{definition}
\begin{example}\label{layoutexample}
Consider the tanglegram $(T,S,\phi)$ with

\begin{center}
    $T=(((t_1,t_2),t_3),(t_4,t_5))$ \quad $S=(((s_1,(s_2,s_3)),s_4),s_5)$ \quad 
    \begin{tabular}{c|c|c|c|c|c}
    $i$ & $1$ & $2$ & $3$ & $4$ & $5$ \\
    \hline
    $\phi(i)$ & $4$ & $2$ & $5$ & $1$ & $3$ 
\end{tabular}
\end{center}

\noindent
By writing the leaves of $T$ and $S$ in the order given, we obtain $(X,Y)=(t_1t_2t_3t_4t_5,s_1s_2s_3s_4s_5)$. A flip at the root of $((s_1,(s_2,s_3)),s_4)$ results in $(X',Y')=(t_1t_2t_3t_4t_5,s_4s_3s_2s_1s_5)$. By taking sub-lists of $(X',Y')$ based on the elements in $\{1,2,4,5\}$ and $\phi(\{1,2,4,5\})$, we obtain the layout $(t_1t_2t_4t_5,s_4s_3s_2s_1)$ for $(T_{\{1,2,4,5\}},S_{\phi(\{1,2,4,5\})},\phi|_{\{1,2,4,5\}})$. Using $T$ and $S$, we produce the drawings in Figure \ref{notation} corresponding to these ordered lists.
\end{example}

\begin{figure}[h]
\begin{center}
\begin{tikzpicture}[scale=0.6]
\filldraw[fill=black,draw=black] (-1,2) circle (1pt);
\filldraw[fill=black,draw=black] (-1,1) circle (1pt);
\filldraw[fill=black,draw=black] (-1,0) circle (1pt);
\filldraw[fill=black,draw=black] (-1,-1) circle (1pt);
\filldraw[fill=black,draw=black] (-1,-2) circle (1pt);
\filldraw[fill=black,draw=black] (1,2) circle (1pt);
\filldraw[fill=black,draw=black] (1,1) circle (1pt);
\filldraw[fill=black,draw=black] (1,0) circle (1pt);
\filldraw[fill=black,draw=black] (1,-1) circle (1pt);
\filldraw[fill=black,draw=black] (1,-2) circle (1pt);
\draw[dashed] (-1,2) -- (1,-1);
\draw[dashed] (-1,1) -- (1,1);
\draw[dashed] (-1,0) -- (1,-2);
\draw[dashed] (-1,-1) -- (1,2);
\draw[dashed] (-1,-2) -- (1,0);
\draw (-1,2) -- (-1.5,1.5) -- (-1,1) -- (-1.5,1.5) -- (-2,1) -- (-1,0) -- (-2,1) -- (-3,0) -- (-1,-2) -- (-1.5,-1.5) -- (-1,-1);
\draw (1,2) -- (2,1) --(1.5,0.5) -- (1,1) -- (1.5,0.5) -- (1,0) -- (1.5,0.5) -- (2,1) -- (2.5,.5) -- (1,-1) -- (2.5,.5)-- (3,0) -- (1,-2);
\filldraw[fill=black,draw=black] (-1.5,1.5) circle (1pt);
\filldraw[fill=black,draw=black] (-2,1) circle (1pt);
\filldraw[fill=black,draw=black] (-3,0) circle (1pt);
\filldraw[fill=black,draw=black] (-1.5,-1.5) circle (1pt);
\filldraw[fill=black,draw=black] (1.5,0.5) circle (1pt);
\filldraw[fill=black,draw=black] (2,1) circle (1pt);
\filldraw[fill=black,draw=black] (3,0) circle (1pt);
\filldraw[fill=black,draw=black] (2.5,0.5) circle (1pt);
\node at (-1,2.325) {\small{$t_1$}};
\node at (-1,1.325) {\small{$t_2$}};
\node at (-1,0.325) {\small{$t_3$}};
\node at (-1,-0.625) {\small{$t_4$}};
\node at (-1,-1.625) {\small{$t_5$}};
\node at (1,2.325) {\small{$s_1$}};
\node at (1,1.325) {\small{$s_2$}};
\node at (1,0.325) {\small{$s_3$}};
\node at (1,-0.625) {\small{$s_4$}};
\node at (1,-1.625) {\small{$s_5$}};


\filldraw[fill=black,draw=black] (6,2) circle (1pt);
\filldraw[fill=black,draw=black] (6,1) circle (1pt);
\filldraw[fill=black,draw=black] (6,0) circle (1pt);
\filldraw[fill=black,draw=black] (6,-1) circle (1pt);
\filldraw[fill=black,draw=black] (6,-2) circle (1pt);
\filldraw[fill=black,draw=black] (8,2) circle (1pt);
\filldraw[fill=black,draw=black] (8,1) circle (1pt);
\filldraw[fill=black,draw=black] (8,0) circle (1pt);
\filldraw[fill=black,draw=black] (8,-1) circle (1pt);
\filldraw[fill=black,draw=black] (8,-2) circle (1pt);
\draw[dashed] (6,2) -- (8,2);
\draw[dashed] (6,1) -- (8,0);
\draw[dashed] (6,0) -- (8,-2);
\draw[dashed] (6,-1) -- (8,-1);
\draw[dashed] (6,-2) -- (8,1);
\draw (6,2) -- (5.5,1.5) -- (6,1) -- (5.5,1.5) -- (5,1) -- (6,0) -- (5,1) -- (4,0) -- (6,-2) -- (5.5,-1.5) -- (6,-1);
\draw (8,2) -- (9.5,0.5) -- (9,0) -- (8,1) -- (8.5,0.5) -- (8,0) -- (8.5,0.5) -- (9,0) -- (8,-1) -- (9.5,0.5) -- (10,0) -- (8,-2);
\filldraw[fill=black,draw=black] (5.5,1.5) circle (1pt);
\filldraw[fill=black,draw=black] (5,1) circle (1pt);
\filldraw[fill=black,draw=black] (4,0) circle (1pt);
\filldraw[fill=black,draw=black] (5.5,-1.5) circle (1pt);
\filldraw[fill=black,draw=black] (8.5,0.5) circle (1pt);
\filldraw[fill=black,draw=black] (9,0) circle (1pt);
\filldraw[fill=black,draw=black] (10,0) circle (1pt);
\filldraw[fill=black,draw=black] (9.5,0.5) circle (1pt);
\node at (6,2.325) {\small{$t_1$}};
\node at (6,1.325) {\small{$t_2$}};
\node at (6,0.325) {\small{$t_3$}};
\node at (6,-0.625) {\small{$t_4$}};
\node at (6,-1.625) {\small{$t_5$}};
\node at (8,2.325) {\small{$s_4$}};
\node at (8,1.325) {\small{$s_3$}};
\node at (8,0.325) {\small{$s_2$}};
\node at (8,-0.625) {\small{$s_1$}};
\node at (8,-1.625) {\small{$s_5$}};


\filldraw[fill=black,draw=black] (12.5,1.5) circle (1pt);
\filldraw[fill=black,draw=black] (12.5,0.5) circle (1pt);
\filldraw[fill=black,draw=black] (12.5,-0.5) circle (1pt);
\filldraw[fill=black,draw=black] (12.5,-1.5) circle (1pt);
\filldraw[fill=black,draw=black] (14.5,1.5) circle (1pt);
\filldraw[fill=black,draw=black] (14.5,0.5) circle (1pt);
\filldraw[fill=black,draw=black] (14.5,-0.5) circle (1pt);
\filldraw[fill=black,draw=black] (14.5,-1.5) circle (1pt);
\node at (12.5,1.875) {\small{$t_1$}};
\node at (12.5,0.875) {\small{$t_2$}};
\node at (12.5,-0.125) {\small{$t_4$}};
\node at (12.5,-1.125) {\small{$t_5$}};
\node at (14.5,1.875) {\small{$s_4$}};
\node at (14.5,0.875) {\small{$s_3$}};
\node at (14.5,-0.125) {\small{$s_2$}};
\node at (14.5,-1.125) {\small{$s_1$}};
\draw[dashed] (12.5,1.5) -- (14.5,1.5);
\draw[dashed] (12.5,0.5) -- (14.5,-0.5);
\draw[dashed] (12.5,-0.5) -- (14.5,-1.5);
\draw[dashed] (12.5,-1.5) -- (14.5,0.5);
\draw (12.5,1.5) -- (12,1) -- (12.5,0.5) -- (12,1) -- (11,0) -- (12,-1) -- (12.5,-0.5) -- (12,-1) -- (12.5,-1.5);
\draw (14.5,1.5) -- (16,0) -- (15.5,-0.5) -- (14.5,0.5) -- (15,0) -- (14.5,-0.5) -- (15,0) -- (15.5,-0.5) -- (14.5,-1.5);
\filldraw[fill=black,draw=black] (12,1) circle (1pt);
\filldraw[fill=black,draw=black] (11,0) circle (1pt);
\filldraw[fill=black,draw=black] (12,-1) circle (1pt);
\filldraw[fill=black,draw=black] (15,0) circle (1pt);
\filldraw[fill=black,draw=black] (15.5,-0.5) circle (1pt);
\filldraw[fill=black,draw=black] (16,0) circle (1pt);
\end{tikzpicture}
\end{center}
\caption{Layouts from Example \ref{layoutexample}.}
\label{notation}
\end{figure}
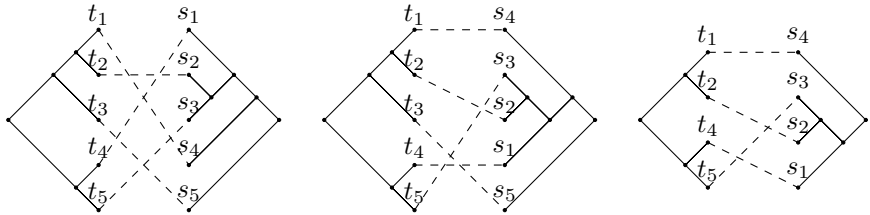

We conclude this section with known results that will be used in our work. These results were previously mentioned, and we include them below for ease of citation. Recall that $\crt(T,S,\phi)$ is the minimum number of crossings over all layouts of a tanglegram $(T,S,\phi)$.

\begin{theorem}\label{nphard}
\cite[Theorem 3]{tlhard} The Tanglegram Layout Problem is NP-hard. 
\end{theorem}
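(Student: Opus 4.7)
The plan is to prove NP-hardness by polynomial-time reduction from a known NP-hard problem. First I would phrase the decision version: given $(T,S,\phi)$ and an integer $k$, decide whether $\crt(T,S,\phi)\leq k$. Membership in NP is immediate, since a leaf order $(X,Y)$ is a polynomial-size certificate and the number of crossings in $(X,Y)$ can be counted in $O(n^2)$ time by counting inversions — for each pair of between-tree edges $(t_i,s_{\phi(i)})$ and $(t_j,s_{\phi(j)})$ one simply checks the relative order of $t_i,t_j$ in $X$ versus $s_{\phi(i)},s_{\phi(j)}$ in $Y$.

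For the hardness part, I would reduce from a well-studied NP-hard combinatorial problem with a crossing-minimization flavor, most naturally the Two-Layer Bipartite Crossing Minimization problem or \textsc{Max-Cut} on cubic graphs. Given an instance of the source problem, I would construct in polynomial time a tanglegram $(T,S,\phi)$ whose optimal crossing number encodes the optimum of the source instance. The gadget design would use caterpillar-like (or comb-shaped) subtrees in $T$ and $S$: each internal vertex of such a subtree has one child that is a leaf, so that a flip at that vertex swaps a single leaf past a subtree of bounded size. This allows the flip operations on $(T,S,\phi)$ to simulate the essentially independent swaps one needs to realize an arbitrary ordering of designated leaves. Between-tree edges are then chosen so that the crossings induced by a layout split into a fixed term plus a term that linearly mirrors the objective (cut size, number of inversions, etc.) of the source instance. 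Correctness would follow by checking that every feasible configuration of the source problem corresponds to a layout with the predicted crossing count and conversely that any layout can be modified without increasing crossings into one that is in the image of the encoding.

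The main obstacle is exactly the rigidity highlighted already in Section~\ref{background}: a flip at an internal vertex $u$ simultaneously reverses the order of all leaves in $\leaf(u)$, so the group generated by flips is far smaller than the symmetric group on the leaves. Consequently gadgets in different parts of the trees are not a priori independent — a single flip high in $T$ can scramble many of them at once. The usual remedy is to pad each gadget with a large, easily separable block of "buffer" leaves whose matching contributes the same number of crossings regardless of orientation, so that any global flip either acts trivially on a gadget or can be undone by cheap local flips without affecting the objective. Making this decoupling argument tight, so that the reduction is approximation-preserving enough to deduce NP-hardness of the optimization version, is the delicate step.
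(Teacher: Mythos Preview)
The paper does not prove this theorem at all: it is stated in Section~\ref{background} among ``known results that will be used in our work'' and is simply quoted from the literature as \cite[Theorem~3]{tlhard}. There is therefore no proof in the paper to compare your proposal against; the result is used as a black box (for instance in the proof of Theorem~\ref{NP-hard}).

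As for your sketch itself, it is a reasonable outline of how the cited result is actually established --- the original hardness proof in \cite{tlhard} does proceed by a polynomial reduction (from \textsc{Max-Cut}) using caterpillar-style trees so that flips correspond to binary choices at gadget sites. But what you have written is a plan, not a proof: the ``delicate step'' you flag at the end (decoupling gadgets so that a global flip cannot defeat the intended correspondence) is exactly where all the work lies, and you have not carried it out. If you were asked to supply a self-contained proof rather than a citation, you would need to specify the trees and matching explicitly, compute the crossing contribution of each gadget in each of its two states, and prove that any optimal layout can be normalized to one in canonical form without increasing crossings. None of that is present here.
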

\begin{theorem}\label{change}
\cite[Theorem 3]{analogy} Let $(T,S,\phi)$ be a tanglegram of size $n$ and let $(T_I,S_{\phi(I)},\phi|_I)$ be an induced subtanglegram of size $n-1$. Then $\crt(T,S,\phi)-\crt(T_I,S_{\phi(I)},\phi|_I)\leq n-3$  .
\end{theorem}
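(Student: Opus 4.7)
My plan is to take an optimal layout $L'$ of the subtanglegram $(T_I,S_{\phi(I)},\phi|_I)$ and extend it to a layout of $(T,S,\phi)$ by reinserting $t_i$, $s_{\phi(i)}$, and the between-tree edge $e_i=(t_i,s_{\phi(i)})$. Since all other between-tree edges keep their positions, the extended layout has exactly $\crt(T_I,S_{\phi(I)},\phi|_I)+k$ crossings, where $k$ is the number of crossings of $e_i$. It therefore suffices to exhibit an extension with $k\leq n-3$, yielding $\crt(T,S,\phi)\leq\crt(T_I,S_{\phi(I)},\phi|_I)+(n-3)$. I would argue by induction on $n$, handling the trivial base cases $n\leq 4$ by inspection, since size-$3$ tanglegrams are planar and size-$4$ tanglegrams have crossing number at most one.

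Let $v$ be the sibling of $t_i$ in $T$ and $w$ the sibling of $s_{\phi(i)}$ in $S$; the blocks $\leaf(v)$ and $\leaf(w)$, of sizes $m$ and $m'$, are contiguous in $L'$, and the reinstated parents constrain $t_i$ to lie immediately above or below the $v$-block, with an analogous constraint for $s_{\phi(i)}$ relative to the $w$-block. This yields four candidate extensions, which I denote TT, TB, BT, BB. I would partition the remaining indices $j\in I$ according to whether $t_j\in\leaf(v)$ and whether $s_{\phi(j)}\in\leaf(w)$, giving four classes of sizes $a,b,c,d$ satisfying $a+b=m$, $a+c=m'$, and $a+b+c+d=n-1$. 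Using the standard crossing criterion (two between-tree edges cross iff their endpoints appear in opposite vertical orders), a short case-check shows that each class-$A$ edge contributes $1$ to $N_{TB}$ and $N_{BT}$ only, each class-$B$ or class-$C$ edge contributes $1$ to exactly two of the four extensions (determined by the position of its free endpoint in $L'$), and the class-$D$ edges contribute a fixed total $d^{\ast}\leq d$ to every extension.

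When $m+m'\geq 4$, averaging gives $N_{TT}+N_{BB}=(m+m'-2a)+2d^{\ast}$, and using $d^{\ast}\leq d=n-1-m-m'+a$ yields $\min(N_{TT},N_{BB})\leq n-1-\tfrac{m+m'}{2}\leq n-3$. When $m+m'\leq 3$, I would enumerate the few remaining subcases and verify directly that the minimum of the four placement counts is at most $n-3$, using that smaller blocks force correspondingly smaller $B$- and $C$-contributions and a smaller $d^{\ast}\leq |D|$. The main obstacle is the \emph{matched cherry} case $m=m'=1$ with $\phi(v)=w$: here $B=C=\emptyset$, $|D|=n-2$, the TT placement draws $e_i$ parallel to the existing edge $(v,w)$, and $k$ equals $c_{vw}$, the number of crossings of $(v,w)$ in $L'$, which a priori could be as large as $n-2$. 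To break through, I would apply the induction hypothesis to $(T_I,S_{\phi(I)},\phi|_I)$ and its leaf $v$: deleting $v$ from the optimal $L'$ produces a layout of the size-$(n-2)$ subtanglegram with $\crt(T_I,S_{\phi(I)},\phi|_I)-c_{vw}$ crossings, whence $c_{vw}\leq\crt(T_I,S_{\phi(I)},\phi|_I)-\crt(\text{size-}(n-2)\text{ subtanglegram})\leq(n-1)-3=n-4$, giving $k\leq n-4<n-3$ and closing the induction.
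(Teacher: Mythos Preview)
The paper does not prove this statement at all: Theorem~\ref{change} is quoted from \cite[Theorem~3]{analogy} as background, with no argument given. So there is no ``paper's own proof'' to compare against.

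That said, your proposed argument is essentially sound and self-contained. The four-placement averaging over $N_{TT}+N_{BB}=b+c+2d^{\ast}$ together with $d^{\ast}\le d=n-1-a-b-c$ cleanly handles $m+m'\ge 4$; for $m+m'=3$ the same bound gives $\min(N_{TT},N_{BB})\le n-\tfrac52$, hence $\le n-3$ by integrality, so no separate enumeration is needed there. For $m=m'=1$ with $a=0$ you should note explicitly that the single class-$B$ edge contributes to $\{TT,TB\}$ or $\{BT,BB\}$ and the single class-$C$ edge to $\{TT,BT\}$ or $\{TB,BB\}$, so exactly one of the four placements receives zero from $B\cup C$ and achieves $N=d^{\ast}\le d=n-3$. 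The matched-cherry case $m=m'=1$, $a=1$ is the genuine obstruction, and your inductive step---observing that $d^{\ast}$ equals the number $c_{vw}$ of crossings on the parallel edge in $L'$, then bounding $c_{vw}\le\crt(T_I,S_{\phi(I)},\phi|_I)-\crt(\text{size-}(n{-}2))\le n-4$ via the induction hypothesis---is correct and closes the argument. The base cases $n\le 4$ are as you state.
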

\begin{theorem}\label{nchoose2}
\cite[Theorem 5]{analogy} If $(T,S,\phi)$ is a tanglegram of size $n$, then $\crt(T,S,\phi)<\frac{1}{2}{n\choose 2}$.
\end{theorem}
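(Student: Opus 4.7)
The plan is a standard averaging argument over a random layout, with a small extra step to obtain strict inequality. I would generate a random layout by independently flipping every internal vertex of $T$ and every internal vertex of $S$ with probability $1/2$, and write the total crossing count as $\sum_{i<j} X_{ij}$, where $X_{ij}$ is the indicator that the between-tree edges $(t_i, s_{\phi(i)})$ and $(t_j, s_{\phi(j)})$ cross. The key computation is that $P(X_{ij}=1) = \tfrac{1}{2}$ for every pair: the two edges cross exactly when the relative order of $t_i, t_j$ in the left list is opposite to that of $s_{\phi(i)}, s_{\phi(j)}$ in the right list; setting $u = \operatorname{LCA}_T(t_i, t_j)$, flips at proper descendants of $u$ leave this relative order unchanged, while flips at $u$ and at its ancestors each toggle it, so after conditioning on the flips at the ancestors the single fair coin at $u$ already makes the left pairwise order uniform. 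The right pairwise order is independently uniform by the same argument, so $P(X_{ij}=1) = \tfrac{1}{2}$. Linearity yields $\mathbb{E}\bigl[\sum_{i<j} X_{ij}\bigr] = \tfrac{1}{2}\binom{n}{2}$, so some flip outcome, hence some layout, has crossing count at most $\tfrac{1}{2}\binom{n}{2}$.

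To strengthen this to a strict inequality, I would show that not every layout can realize the value $\tfrac{1}{2}\binom{n}{2}$. If every layout did, then every flip outcome would (because the expectation already equals the minimum and each outcome has positive probability), so in particular flipping at any single internal vertex would preserve the crossing count. But $T$ contains at least one cherry $v$ (a deepest internal vertex of $T$ must have two leaf children when $n \geq 2$), and a flip at $v$ toggles exactly $X_{ab}$ for the two cherry leaves $t_a, t_b$ while leaving every other pairwise order unchanged, so it changes the crossing count by $\pm 1$ and contradicts the supposed invariance. Hence some layout has strictly fewer than $\tfrac{1}{2}\binom{n}{2}$ crossings, proving $\crt(T,S,\phi) < \tfrac{1}{2}\binom{n}{2}$.

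The main obstacle is conceptual rather than technical: the uniform distribution on flip configurations does not induce a uniform distribution on leaf orders, since distinct flip sets can yield the same layout. This is not actually a problem for the expectation computation, because linearity only needs the marginal distribution of each pairwise relative order, and that is uniform regardless. An alternative way to package the strict-inequality step is to pair each flip outcome with the one obtained by additionally flipping the root of $T$: that operation toggles every $X_{ij}$ simultaneously, so paired crossing counts sum to $\binom{n}{2}$, and the cherry argument can then be read as showing the pair cannot both hit the midpoint.
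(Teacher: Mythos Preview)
Your argument is correct. The averaging over independent fair flips at every internal vertex gives $\mathbb{E}[X_{ij}]=\tfrac12$ for each pair, hence expected crossings $\tfrac12\binom{n}{2}$, and your cherry-flip observation cleanly rules out the possibility that every outcome attains this value, yielding the strict inequality for $n\ge 2$. One small clarification: if you parameterize the random layout by an independent left/right choice at each internal vertex (rather than by the paper's ``flip'' operation, which reverses an entire subtree), then the relative order of $t_i,t_j$ depends \emph{only} on the choice at $\operatorname{LCA}_T(t_i,t_j)$, not on its ancestors; this makes the $\tfrac12$ marginal even more transparent, though your phrasing in terms of toggles is also fine.

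As for comparison with the paper: this theorem is not proved in the paper under review. It is stated in the background section as a citation of \cite[Theorem~5]{analogy} (Anderson et al.), included only ``for ease of citation,'' and is invoked later in the proof of Theorem~\ref{NP-hard}. So there is no in-paper proof to compare against. Your probabilistic argument is the natural one for a bound of this shape and is essentially the same idea used in the cited source.
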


\section{Characterization of planar tanglegram layouts}\label{planar}

In this section, we will start by giving our \algname{ModifiedUntangle} algorithm, based on the \algname{Untangle} algorithm  by Lozano et al. in \cite{layout}. Our additional steps involve the set $L$. We will describe the significance of elements in this set, and then use it to characterize all planar layouts of a planar tanglegram. We conclude this section with our enumerative result on planar tanglegrams.

\subsection{ModifiedUntangle Algorithm}

The \algname{Untangle} Algorithm by Lozano et al. starts by computing a table of Boolean values $P$, where $P[u,v]$ is True for $u\in T,v\in S$ if a descendant of $u$ is matched with a descendant of $v$ by $\phi$. It begins with the ordered lists $X=(\rut(T))$ and $Y=(\rut(S))$ and then refines these lists by replacing vertices with their children in an order chosen based on the Boolean table $P$. They call these ordered lists of vertices $(X,Y)$ from each step \textit{partial layouts}, as they prescribe the order that some of the vertices in the tanglegram are drawn. \algname{Untangle} terminates when the lists in $(X,Y)$ contain only leaves of $T$ and $S$, respectively. It then outputs this pair of lists $(X,Y)$, which is an actual tanglegram layout. We give our modified version of \algname{Untangle} below, which has additional steps involving a set $L$. Removing these steps yields the original \algname{Untangle} algorithm. We will explain the significance of the set $L$ in the next subsection. \\

\vspace{-11pt}

\begin{algorithm}[H]\DontPrintSemicolon
\caption{$\algname{ModifiedUntangle}$ (based on \cite[Algorithm 2]{layout})}\label{modifieduntangle}
\KwIn{planar tanglegram $(T,S,\phi)$ with leaves $\{t_1,\ldots ,t_n\}$ and $\{s_1,\ldots ,s_n\}$}
\KwOut{a planar layout $(X,Y)$ of $(T,S,\phi)$, list of leaf-matched pairs $L\subseteq T\times S$}
$P\coloneqq$ Boolean table with $P[u,v]=False$ $\forall$ vertices $u\in T$, $v\in S$\label{booleantable}\\
set $P[t_i,s_{\phi(i)}]=True$ $\forall$ $i\in [n]$\\
recursively set $P[u,v]=True$ for internal vertices $u\in T,v\in S$ if there exists $u'\leq_T u$, $v'\leq_S v$ with $P[u',v']=True$\\
$X\coloneqq (\rut(T))$, $Y\coloneqq (\rut(S))$ as ordered lists \label{rootlayout}\\
$E\coloneqq\{(\rut(T),\rut(S))\}$ as a set of edges\\
$L\coloneqq\emptyset$\\
\While{$X\cup Y$ contains an internal vertex of $T$ or $S$\label{partiallayouts}}
{$u\coloneqq$ internal vertex of $T\cup S$ with highest degree in the bipartite graph $G=(X,Y,E)$\label{highestdegree} \\
\If{$u\in X$} 
{
\If{$u$ has degree 1 in $G$} 
{update $L\coloneqq L\cup (u,v)$, where $v$ is the unique neighbor of $u$ in $G$}
update $X,E\coloneqq \algname{Refine}(X,Y,u,E,P)$ }
\ElseIf{$u\in Y$}
{
\If{$u$ has degree 1 in $G$} 
{update $L\coloneqq L\cup (v,u)$, where $v$ is the unique neighbor of $u$ in $G$}
update $Y,E\coloneqq \algname{Refine}(Y,X,u,E,P)$}
}
\Return $(X,Y),L$
\end{algorithm}

\phantom{-}\\
\vspace{-11pt}

\begin{algorithm}[H]\DontPrintSemicolon
\caption{$\algname{Refine}$ (based on  \cite[Algorithm 3]{layout})}
\KwIn{ordered lists of vertices $(A,B)$, $u\in A$, edges $E$ on $A\cup B$, Boolean table $P$}
\KwOut{$A,E$ after $u$ has been replaced with its children}
$u_1,u_2\coloneqq$ children of $u$ in $T\cup S$\\
\For{$j\in [m]$ such that $(u,b_j)\in E$ where $B=(b_1,\ldots ,b_m)$} 
{
update $E\coloneqq E\setminus \{(u,b_j)\}$ \tcp*{delete edges involving $u$}
\For{ $i\in \{1,2\}$ \label{edges}}
{
\If{$P[u_i,b_j]=True$}
{update $E\coloneqq E\cup \{(u_i,b_j)\}$ \tcp*{insert edges involving $u_1$ or $u_2$}}
}
}
$k\coloneqq \max\{j\in [m]:(u_1,b_j)\in E\}$ \tcp*{last vertex in $B$ adjacent to $u_1$}
\If {$j>k$ for all $(u_2,b_j)\in E$}
{
replace $u$ with $u_1u_2$ in $A$
}
\Else
{
replace $u$ with $u_2u_1$ in $A$
}
\Return $A,E$
\end{algorithm}

\begin{figure}[h]
    \begin{center}
    \begin{tikzpicture}[scale=0.6]
    \filldraw[fill=black,draw=black] (1,0) circle (2pt);
    \filldraw[fill=black,draw=black] (1,1) circle (2pt);
    \filldraw[fill=black,draw=black] (1,2) circle (2pt);
    \filldraw[fill=black,draw=black] (1,3) circle (2pt);
    \filldraw[fill=black,draw=black] (1,4) circle (2pt);
    \filldraw[fill=black,draw=black] (0,0.5) circle (2pt);
    \filldraw[fill=black,draw=black] (0,1.5) circle (2pt);
    \filldraw[fill=black,draw=black] (0,2.5) circle (2pt);
    \draw (0,2.5)--(1,4);
    \draw (0,2.5)--(1,3);
    \draw (0,1.5)--(1,3);
    \draw (0,1.5)--(1,2);
    \draw (0,1.5)--(1,1);
    \draw (0,0.5)--(1,0);
    \node at (-0.325,1.5) {$u$};
    \node at (1.5,0) {$b_5$};
    \node at (1.5,1) {$b_4$};
    \node at (1.5,2) {$b_3$};
    \node at (1.5,3) {$b_2$};
    \node at (1.5,4) {$b_1$};
    \end{tikzpicture}
    \qquad
        \begin{tikzpicture}[scale=0.6]
    \filldraw[fill=black,draw=black] (1,0) circle (2pt);
    \filldraw[fill=black,draw=black] (1,1) circle (2pt);
    \filldraw[fill=black,draw=black] (1,2) circle (2pt);
    \filldraw[fill=black,draw=black] (1,3) circle (2pt);
    \filldraw[fill=black,draw=black] (1,4) circle (2pt);
    \filldraw[fill=black,draw=black] (0,0.5) circle (2pt);
    \filldraw[fill=black,draw=black] (0,1.75) circle (2pt);
    \filldraw[fill=black,draw=black] (0,1.25) circle (2pt);
    \filldraw[fill=black,draw=black] (0,2.5) circle (2pt);
    \draw (0,2.5)--(1,4);
    \draw (0,2.5)--(1,3);
    \draw (0,1.25)--(1,3);
    \draw (0,1.25)--(1,2);
    \draw (0,1.75)--(1,2);
    \draw (0,1.75)--(1,1);
    \draw (0,0.5)--(1,0);
    \node at (-0.5,1.25) {$u_2$};
    \node at (-0.5,1.75) {$u_1$};
    \node at (1.5,0) {$b_5$};
    \node at (1.5,1) {$b_4$};
    \node at (1.5,2) {$b_3$};
    \node at (1.5,3) {$b_2$};
    \node at (1.5,4) {$b_1$};
    \end{tikzpicture}
    \qquad 
            \begin{tikzpicture}[scale=0.6]
    \filldraw[fill=black,draw=black] (1,0) circle (2pt);
    \filldraw[fill=black,draw=black] (1,1) circle (2pt);
    \filldraw[fill=black,draw=black] (1,2) circle (2pt);
    \filldraw[fill=black,draw=black] (1,3) circle (2pt);
    \filldraw[fill=black,draw=black] (1,4) circle (2pt);
    \filldraw[fill=black,draw=black] (0,0.5) circle (2pt);
    \filldraw[fill=black,draw=black] (0,1.75) circle (2pt);
    \filldraw[fill=black,draw=black] (0,1.25) circle (2pt);
    \filldraw[fill=black,draw=black] (0,2.5) circle (2pt);
    \draw (0,2.5)--(1,4);
    \draw (0,2.5)--(1,3);
    \draw (0,1.75)--(1,3);
    \draw (0,1.75)--(1,2);
    \draw (0,1.25)--(1,2);
    \draw (0,1.25)--(1,1);
    \draw (0,0.5)--(1,0);
    \node at (-0.5,1.25) {$u_1$};
    \node at (-0.5,1.75) {$u_2$};
    \node at (1.5,0) {$b_5$};
    \node at (1.5,1) {$b_4$};
    \node at (1.5,2) {$b_3$};
    \node at (1.5,3) {$b_2$};
    \node at (1.5,4) {$b_1$};
    \end{tikzpicture}
    \end{center}
    \caption{A visualization of the {\normalfont{\algname{Refine}}} algorithm. Draw the inputs as a bipartite graph $G=(A,E,B)$ shown on the left, where the vertices are drawn from top to bottom based on the order of elements in $A$ and $B$. To refine the vertex $u$ of highest degree, consider the embedding shown in the middle with $u_1$ drawn above $u_2$, where new edges involving $u_1$ and $u_2$ are drawn using the Boolean table $P$. Since this drawing has crossings, {\normalfont{\algname{Refine}}} will replace $u$ with $u_2u_1$. Notice that in this case, drawing $u_2$ above $u_1$ results in a planar drawing, as shown on the right.}
    \label{untangleex}
\end{figure}
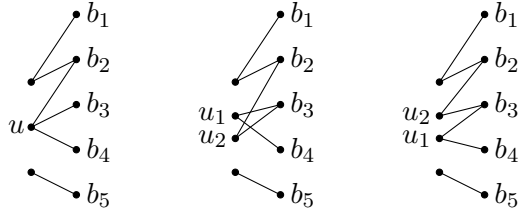

A description of the steps in \algname{Refine} is shown in Figure \ref{untangleex}. Note that \algname{Refine} does not actually check if drawing $u_2$ above $u_1$ results in a planar embedding. This is because planarity of $(T,S,\phi)$ guarantees that at least one of these embeddings will be planar, provided we made appropriate choices at previous refinements. Details are addressed in the proof of the next theorem. Since results from this proof will be relevant for our work, we repeat it below for the convenience of the reader. Note that Lemma \ref{extends} is stated more generally than in \cite{layout}. This extra generality will be useful in the next subsection. 

\begin{definition}
A partial layout $(X,Y)$ for a tanglegram is called \textit{promising} if it can be extended to a planar layout by successively replacing vertices with their children in some order.
\end{definition}

\begin{lemma}[\cite{layout}, Lemma 3]\label{extends}
Let $(X,Y)$ be a promising partial layout of a planar tanglegram $(T,S,\phi)$, and let $E$ be the set of edges on $X\cup Y$ generated using the Boolean table $P$, that is, for all $u\in X$ and $v\in Y$, $(u,v)\in E$ if and only if $P[u,v]=True$. Let $u$ be a vertex of highest degree in the bipartite graph $(X,E,Y)$. 
\begin{enumerate}[label=(\alph*)]
    \item If $\deg(u)= 1$, then replacing $u$ with $u_1u_2$ or $u_2u_1$ results in a promising partial layout.
    \item If $\deg(u)>1$, then either replacing $u$ with $u_1u_2$ or replacing $u$ with $u_2u_1$ results in a promising partial layout, but not both.
\end{enumerate}
In particular, if $(X,Y)$ is promising at the beginning of an iteration of the \textbf{while} loop in \algname{Untangle}, then it is promising at the end.
\end{lemma}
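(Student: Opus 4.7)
The plan is to work with the bipartite graph $(X,E,Y)$ and to exploit the following fact: any crossing in the straight-line drawing of this bipartite graph forces a crossing in every layout extending $(X,Y)$, because each bipartite edge $(u',v)$ corresponds to a bundle of $\phi$-edges that cannot cross the bundle witnessing any other bipartite edge without producing an actual crossing. Consequently every promising partial layout already has a crossing-free bipartite graph, and the proof reduces to analyzing which refinements of $u$ preserve this property.

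For part (a), the highest-degree hypothesis combined with $\deg(u)=1$ forces the unique neighbor $v$ of $u$ in $Y$ to satisfy $\deg(v)=1$ as well. This means $\phi^{-1}(\leaf(v))\subseteq \leaf(u)$ and $\phi(\leaf(u))\subseteq \leaf(v)$, so $\phi$ restricts to a bijection $\leaf(u)\leftrightarrow\leaf(v)$. In any planar extension $(X^*,Y^*)$ of $(X,Y)$ these two leaf sets therefore appear as contiguous blocks (no other edges enter them), and planarity forces $\phi$ to match them in the same top-to-bottom order. I would then construct a second planar extension by simultaneously performing a flip at $u$ in $T$ and a flip at $v$ in $S$: the two flips reverse the respective blocks in $X^*$ and $Y^*$, the order-preserving matching between them is preserved, and all other $\phi$-edges are untouched, so the new layout is still planar. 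The refinement of $u$ has switched from $u_1u_2$ to $u_2u_1$ (or vice versa), establishing that both orderings are promising.

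For part (b), let $v_1,\ldots,v_d$ with $d\geq 2$ be the neighbors of $u$ in $Y$ listed top to bottom, and set $N_i=\{v:P[u_i,v]=\mathrm{True}\}$ for $i=1,2$. Each $N_i$ is nonempty since every descendant of $u_i$ is matched to a descendant of some $v_j$, and $N_1\cup N_2=\{v_1,\ldots,v_d\}$. A crossing-free bipartite graph after refining to $u_1u_2$ (with $u_1$ above $u_2$) forces $N_1\subseteq\{v_1,\ldots,v_k\}$ and $N_2\subseteq\{v_k,\ldots,v_d\}$ for some index $k$, since otherwise edges $(u_1,v_j)$ and $(u_2,v_{j'})$ with $j>j'$ would cross. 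Symmetrically, refining to $u_2u_1$ forces $N_2\subseteq\{v_1,\ldots,v_{k'}\}$ and $N_1\subseteq\{v_{k'},\ldots,v_d\}$ for some $k'$. Intersecting these inclusions and requiring both $N_1$ and $N_2$ to be nonempty forces $k=k'$, and then $N_1\cup N_2\subseteq\{v_k\}$, contradicting $d\geq 2$. So at most one refinement preserves bipartite planarity and hence at most one is promising; since $(X,Y)$ is promising, at least one refinement is promising, so exactly one is.

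The main technical obstacle is the load-bearing step in part (a): the flip construction succeeds cleanly only because the highest-degree hypothesis upgrades $\deg(u)=1$ into $\deg(v)=1$, which makes the matched blocks on both sides coincide with entire subtrees of $T$ and $S$ respectively. Without this observation one would have to reverse a contiguous block of $\leaf(v)$ that is not necessarily a single subtree, which cannot in general be done by tree flips. Verifying that the highest-degree condition really does yield $\deg(v)=1$ is thus the crux of part (a), after which both parts collapse into the bipartite-graph combinatorics above.
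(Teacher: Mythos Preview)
Your proposal is correct and follows essentially the same approach as the paper: part (a) is identical (the highest-degree hypothesis gives $\deg(v)=1$, so $\leaf(u)$ and $\leaf(v)$ are matched, and a simultaneous flip at $u$ and $v$ converts one promising refinement into the other), and part (b) rests on the same principle that a crossing in the bipartite graph $(X,E,Y)$ forces a crossing in every extension. The only stylistic difference is in the combinatorics of (b): the paper argues via the symmetric difference $N(u_1)\,\Delta\,N(u_2)\neq\emptyset$ and then exhibits an unavoidable crossing from an element of the difference, whereas you argue directly that crossing-freeness of both refinements confines $N_1$ and $N_2$ to a single common vertex $v_k$, contradicting $d\geq 2$; these are equivalent repackagings of the same observation.
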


\begin{proof}
Without loss of generality, we will assume $u\in T$, as the result when $u\in S$ is done similarly. We let $(X_1,Y)$ and $(X_2,Y)$ be the partial layouts obtained by replacing $u$ with $u_1u_2$ or $u_2u_1$, respectively. Since $(X,Y)$ is promising, at least one of these partial layouts must be promising, so assume that $(X_1,Y)$ is promising. 

First, suppose $\deg(u)=1$ in $(X,E,Y)$. Since $u$ is a vertex of maximum degree, the unique neighbor of $u$, denoted $v$, also has degree 1. 
Notice that since $u$ and $v$ have degree 1, $\leaf(u)$ and $\leaf(v)$ must be matched by $\phi$. Extend $(X_1,Y)$ to a planar layout $(X',Y')$ of $(T,S,\phi)$, where $\leaf(u_1)$ appears before $\leaf(u_2)$. If we perform a flip at $u$ and a flip at $v$, then we obtain a layout where $\leaf(u_2)$ appears before $\leaf(u_1)$, as shown in Figure \ref{bothextend}. Notice that this is a planar layout that can be obtained by replacing $u$ with $u_2u_1$ instead, implying $(X_2,Y)$ is also promising. We see that regardless of how we replace $u$ with $u_1$ and $u_2$, the resulting partial layout is promising. 

\begin{figure}[h]
    \centering
    \begin{tikzpicture}
        \filldraw[fill=black,draw=black] (-3,0) circle (1pt);
        \filldraw[fill=black,draw=black] (-2,1) circle (1pt);
        \filldraw[fill=black,draw=black] (-2,-1) circle (1pt);
        \filldraw[fill=black,draw=black] (-1.5,1.5) circle (1pt);
        \filldraw[fill=black,draw=black] (-1.5,0.5) circle (1pt);
        \filldraw[fill=black,draw=black] (2,0) circle (1pt);
        \filldraw[fill=black,draw=black] (1,1) circle (1pt);
        \filldraw[fill=black,draw=black] (1,-1) circle (1pt);
        \draw (2,0) -- (1,1) -- (0.1,1.9) -- (0.1,0.1) -- (1,1);
        \draw (2,0) -- (1,-1) -- (0.1,-1.9) -- (0.1,-0.1) -- (1,-1);
        \draw (-3,0) -- (-2,-1) -- (-1.1,-1.9) -- (-1.1,-0.1) -- (-2,-1);
        \draw (-3,0) -- (-1.1,1.9) -- (-1.1,1.1) -- (-1.5,1.5);
        \draw (-2,1) -- (-1.1,0.1) -- (-1.1,0.9) -- (-1.5,0.5);
        \node at (-2,1.25) {$u$};
        \node at (1,1.25) {$v$};
        \node at (-1.625,1.75) {$u_1$};
        \node at (-1.625,0.25) {$u_2$};
        \node at (-0.875,1.75) {\small{$t_1$}};
        \node at (-0.875,1.05) {{$\vdots$}};
        \node at (-0.875,0.25) {\small{$t_k$}};
        \node at (-0.125,1.75) {\small{$s_1$}};
        \node at (-0.125,1.05) {{$\vdots$}};
        \node at (-0.125,0.25) {\small{$s_k$}};
        \draw[dashed,color=black!25!white] (-1.1,1.75) -- (0.1,1.75);
        \draw[dashed,color=black!25!white] (-1.1,1.5) -- (0.1,1.5);
        \draw[dashed,color=black!25!white] (-1.1,1.25) -- (0.1,1.25);
        \draw[dashed,color=black!25!white] (-1.1,.75) -- (0.1,.75);
        \draw[dashed,color=black!25!white] (-1.1,.5) -- (0.1,.5);
        \draw[dashed,color=black!25!white] (-1.1,.25) -- (0.1,.25);
        \draw[dashed,color=black!25!white] (-1.1,-1.75) -- (0.1,-1.75);
        \draw[dashed,color=black!25!white] (-1.1,-1.5) -- (0.1,-1.5);
        \draw[dashed,color=black!25!white] (-1.1,-1.25) -- (0.1,-1.25);
        \draw[dashed,color=black!25!white] (-1.1,-1) -- (0.1,-1);
        \draw[dashed,color=black!25!white] (-1.1,-.75) -- (0.1,-.75);
        \draw[dashed,color=black!25!white] (-1.1,-.5) -- (0.1,-.5);
        \draw[dashed,color=black!25!white] (-1.1,-.25) -- (0.1,-.25);
    \end{tikzpicture}
    \quad
    \begin{tikzpicture}
        \filldraw[fill=black,draw=black] (-3,0) circle (1pt);
        \filldraw[fill=black,draw=black] (-2,1) circle (1pt);
        \filldraw[fill=black,draw=black] (-2,-1) circle (1pt);
        \filldraw[fill=black,draw=black] (-1.5,1.5) circle (1pt);
        \filldraw[fill=black,draw=black] (-1.5,0.5) circle (1pt);
        \filldraw[fill=black,draw=black] (2,0) circle (1pt);
        \filldraw[fill=black,draw=black] (1,1) circle (1pt);
        \filldraw[fill=black,draw=black] (1,-1) circle (1pt);
        \draw (2,0) -- (1,1) -- (0.1,1.9) -- (0.1,0.1) -- (1,1);
        \draw (2,0) -- (1,-1) -- (0.1,-1.9) -- (0.1,-0.1) -- (1,-1);
        \draw (-3,0) -- (-2,-1) -- (-1.1,-1.9) -- (-1.1,-0.1) -- (-2,-1);
        \draw (-3,0) -- (-1.1,1.9) -- (-1.1,1.1) -- (-1.5,1.5);
        \draw (-2,1) -- (-1.1,0.1) -- (-1.1,0.9) -- (-1.5,0.5);
        \node at (-2,1.25) {$u$};
        \node at (1,1.25) {$v$};
        \node at (-1.625,1.75) {$u_2$};
        \node at (-1.625,0.25) {$u_1$};
        \node at (-0.875,1.75) {\small{$t_k$}};
        \node at (-0.875,1.05) {{$\vdots$}};
        \node at (-0.875,0.25) {\small{$t_1$}};
        \node at (-0.125,1.75) {\small{$s_k$}};
        \node at (-0.125,1.05) {{$\vdots$}};
        \node at (-0.125,0.25) {\small{$s_1$}};
        \draw[dashed,color=black!25!white] (-1.1,1.75) -- (0.1,1.75);
        \draw[dashed,color=black!25!white] (-1.1,1.5) -- (0.1,1.5);
        \draw[dashed,color=black!25!white] (-1.1,1.25) -- (0.1,1.25);
        \draw[dashed,color=black!25!white] (-1.1,.75) -- (0.1,.75);
        \draw[dashed,color=black!25!white] (-1.1,.5) -- (0.1,.5);
        \draw[dashed,color=black!25!white] (-1.1,.25) -- (0.1,.25);
        \draw[dashed,color=black!25!white] (-1.1,-1.75) -- (0.1,-1.75);
        \draw[dashed,color=black!25!white] (-1.1,-1.5) -- (0.1,-1.5);
        \draw[dashed,color=black!25!white] (-1.1,-1.25) -- (0.1,-1.25);
        \draw[dashed,color=black!25!white] (-1.1,-1) -- (0.1,-1);
        \draw[dashed,color=black!25!white] (-1.1,-.75) -- (0.1,-.75);
        \draw[dashed,color=black!25!white] (-1.1,-.5) -- (0.1,-.5);
        \draw[dashed,color=black!25!white] (-1.1,-.25) -- (0.1,-.25);
    \end{tikzpicture}
    \caption{Starting with the layout $(X',Y')$ on the left with $\{t_i\}_{i=1}^k$ and $\{s_i\}_{i=1}^k$ denoting leaves, performing flips at $u$ and $v$ produces another planar layout. }
    \label{bothextend}
\end{figure}
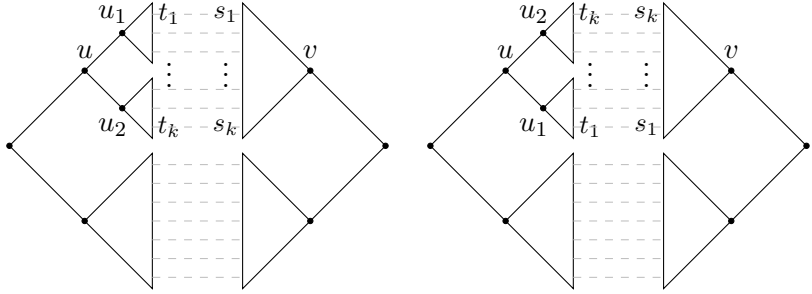

Next, suppose that $\deg(u)>1$. As before, we suppose $(X_1,Y)$ is promising. Let $E'$ be the edges on $X_1\cup Y$ constructed using the Boolean table $P$, and for $i=1,2$, we let $N(u_i)$ denote the set of neighbors of $u_i$ in $(X_1,E',Y)$. We claim that $N(u_1)\Delta N(u_2)\neq \emptyset$, where $\Delta$ denotes the symmetric difference of two sets. To see this, suppose that $N(u_1)=N(u_2)$. If $|N(u_1)|=|N(u_2)|=1$, then this would imply $\deg(u)=1$ in $(X,E,Y)$, which by assumption cannot be the case. Hence, $|N(u_1)|=|N(u_2)|\geq 2$. Then there exists some pair of vertices $v_1,v_2\in N(u_1)=N(u_2)$ that are each adjacent to both $u_1$ and $u_2$. However, this implies a crossing occurs in both $(X_1,E',Y)$ and $(X_2,E',Y)$, as shown in Figure \ref{neighbor2}. Then by construction of the Boolean table $P$, there exist some $t_{1,1},t_{1,2}\in \leaf(u_1)$ and $t_{2,1},t_{2,2}\in \leaf(u_2)$ where each $t_{i,j}\in \leaf(u_i)$ is matched to some $s_{i,j}\in \leaf(v_j)$. Regardless of any refinements of $(X,Y)$, the resulting layout will have either the crossing $(t_{1,2},s_{1,2})$ and $(t_{2,1},s_{2,1})$, or the crossing $(t_{1,1},s_{1,1})$ and $(t_{2,2},s_{2,2})$. Since $(X_1,Y)$ is assumed to be promising, it must be that $N(u_1)\neq N(u_2)$.

\begin{figure}[h]
    \centering
    \begin{tikzpicture}[scale=0.8]
\filldraw[fill=black,draw=black] (0,0) circle (2pt);
\filldraw[fill=black,draw=black] (1,0.5) circle (2pt);
\filldraw[fill=black,draw=black] (0,-0.75) circle (2pt);
\filldraw[fill=black,draw=black] (1,-1) circle (2pt);
\draw (0,0) -- (1,0.5);
\draw (0,-0.75) -- (1,-1);
\draw (0,0) -- (1,-1);
\draw (1,0.5) -- (0,-0.75);
\node at (-0.325,0) {$u_1$};
\node at (-0.325,-0.75) {$u_2$};
\node at (1.325,0.5) {$v_1$};
\node at (1.325,-1) {$v_2$};
\end{tikzpicture}
\quad 
    \begin{tikzpicture}[scale=0.8]
\filldraw[fill=black,draw=black] (0,0) circle (2pt);
\filldraw[fill=black,draw=black] (1,0.5) circle (2pt);
\filldraw[fill=black,draw=black] (0,-0.75) circle (2pt);
\filldraw[fill=black,draw=black] (1,-1) circle (2pt);
\draw (0,0) -- (1,0.5);
\draw (0,-0.75) -- (1,-1);
\draw (0,0) -- (1,-1);
\draw (1,0.5) -- (0,-0.75);
\node at (-0.325,0) {$u_2$};
\node at (-0.325,-0.75) {$u_1$};
\node at (1.325,0.5) {$v_1$};
\node at (1.325,-1) {$v_2$};
\end{tikzpicture}
\caption{Arrangements of the vertices $u_1,u_2,v_1$ and $v_2$ when $N(u_1)= N(u_2)$ and $\deg(u)\geq 2$.}\label{neighbor2}
\end{figure}
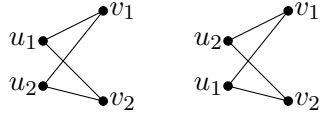

With $N(u_1)\Delta N(u_2)\neq \emptyset$ established, we let $v\in N(u_1)\Delta N(u_2)$. First, we assume that $v=v_1\in N(u_1)\setminus N(u_2)$. Since $(X_1,Y)$ is promising, it must be that drawing $(X_1,E',Y)$ with vertices in the orders indicated by $X_1$ and $Y$ produces no crossings. Then $v_1$ must appear before any $v_2\in N(u_2)$ in the list $Y$. Furthermore, for any $v_2\in N(u_2)$, if we interchange $u_1$ and $u_2$, then the edges $(u_1,v_1)$ and $(u_2,v_2)$ will intersect. Regardless of our future refinements, there will exist some leaves $t_i\in \leaf(u_1),t_j\in \leaf(u_2),s_{\phi(i)}\in \leaf(v_1),s_{\phi(j)}\in \leaf(v_2)$ such that $(t_i,s_{\phi(i)})$ and $(t_j,s_{\phi(j)})$ intersect, which implies $(X_2,Y)$ cannot be promising. A similar argument applies when $v=v_2\in N(u_2)\setminus N(u_1)$. 

Now consider the \textbf{while} loop in \algname{Untangle}. The algorithm uses \algname{Refine} on a vertex $u$ of highest degree in $(X,E,Y)$, which replaces $u$ with $u_1$ and $u_2$. If $\deg(u)=1$, then (a) shows that $(X,Y)$ is promising regardless of the choice at $u$. If $\deg(u)>1$, \algname{Refine} replaces $(X,Y)$ with $(X_1,Y)$ or $(X_2,Y)$ based on whichever bipartite graph $(X_1,E',Y)$ or $(X_2,E',Y)$ does not have crossings, and our proof of (b) shows that this results in a promising partial layout.
\end{proof}

\begin{theorem}\label{ModifiedUntangle} 
\cite[Theorem 1]{layout} For any planar tanglegram $(T,S,\phi)$, the 
\algname{Untangle} algorithm terminates in a planar layout $(X,Y)$.
\end{theorem}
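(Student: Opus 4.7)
The plan is to argue by induction on the iterations of the \textbf{while} loop that the current partial layout $(X,Y)$ is always promising, invoke Lemma \ref{extends} for the inductive step, and then observe that the loop must terminate after finitely many steps. The base case is the partial layout $(\rut(T),\rut(S))$ produced on Line \ref{rootlayout}, which is promising because $(T,S,\phi)$ is planar by hypothesis, so there exists \emph{some} planar layout, and any planar layout is obtained by successively refining the single-root layout. For the inductive step, assume $(X,Y)$ is promising at the start of an iteration, and let $u$ be the vertex of highest degree selected on Line \ref{highestdegree}. Lemma \ref{extends} says that after refining $u$, at least one of the two possible orderings $u_1u_2$ or $u_2u_1$ yields a promising partial layout.

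The one delicate point is to verify that the concrete rule used by \algname{Refine} — choose $u_1u_2$ when every neighbor $b_j$ of $u_2$ satisfies $j>k$, where $k$ is the index of the last neighbor of $u_1$ — actually selects the promising ordering. When $\deg(u)=1$, Lemma \ref{extends}(a) says both orderings are promising, so whichever way \algname{Refine} breaks the tie is fine. When $\deg(u)>1$, the proof of Lemma \ref{extends}(b) shows that the non-promising ordering is precisely the one in which some neighbor of $u_1$ lies below some neighbor of $u_2$ in $Y$, i.e., the one that forces a crossing in the drawn bipartite subgraph. The condition tested by \algname{Refine} is exactly the negation of this, so \algname{Refine} picks the promising ordering.

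Termination is easy: each iteration replaces an internal vertex in $X \cup Y$ by its two children, so $|X|+|Y|$ strictly increases while remaining bounded by the total vertex count of $T\cup S$. When the loop exits, $X$ and $Y$ contain only leaves and list the leaves of $T$ and $S$ in some order, i.e., $(X,Y)$ is an actual layout. A promising layout that already consists entirely of leaves has no further refinement to perform; its only extension is itself, so it must be planar. This yields the conclusion.

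The main obstacle, modulo Lemma \ref{extends}, is bookkeeping: matching the deterministic rule inside \algname{Refine} with the existential statement ``at least one ordering is promising'' in the lemma. Everything else is a clean induction plus a termination argument.
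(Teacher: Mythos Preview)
Your proposal is correct and follows essentially the same approach as the paper: induction on iterations, with the base case given by planarity of $(T,S,\phi)$ and the inductive step handled by Lemma~\ref{extends}. Note that the ``delicate point'' you spell out---that \algname{Refine}'s concrete rule actually selects the promising ordering---is already part of the statement of Lemma~\ref{extends} (its final sentence), so you need not re-argue it here; the paper's proof simply cites the lemma and is accordingly shorter.
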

\begin{proof}
If $(T,S,\phi)$ is planar, then $(X,Y)=(\rut(T),\rut(S))$ in line \ref{rootlayout} is promising.
By Lemma \ref{extends}, $(X,Y)$ is promising after each iteration of the \textbf{while} loop. This loop terminates when $(X,Y)$ contains only leaves of $T$ and $S$, which must then be a planar layout.
\end{proof}

\begin{remark}
In the proofs of Lemma \ref{extends} and Theorem \ref{ModifiedUntangle}, the arguments hold regardless of which vertex of highest degree $u$ is selected. In fact, we do not even need to select the vertex of highest degree at every step in \algname{Untangle}. We specified a vertex of highest degree for simplicity. As long as \algname{Untangle} does not use \algname{Refine} on a vertex in $(X,E,Y)$ with degree one while its neighbor has degree more than one, the algorithm will still output a planar layout for a planar tanglegram.
\end{remark}

\begin{lemma}\label{untanglespeed}\cite[Lemma 4]{layout}
\algname{Untangle} runs in $O(n^2)$ time and space, where $n$ is the size of the tanglegram.
\end{lemma}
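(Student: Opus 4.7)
The plan is to bound the time and space requirements separately. Both bounds follow from size estimates on the key data structures, supplemented by a short amortization argument for the work done by \algname{Refine}.

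For space, I would observe that each of $T$ and $S$ has $O(n)$ internal vertices, so the Boolean table $P$ contains $O(n^2)$ entries. At any point the edge set $E$ is a subset of $X \times Y$ and therefore has size at most $|X|\cdot|Y| = O(n^2)$. The remaining data (the lists $X$ and $Y$, the domain information for $\phi$, and the optional set $L$ in \algname{ModifiedUntangle}) are each $O(n)$, so total space is $O(n^2)$.

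For time, I would handle preprocessing and the main loop separately. The table $P$ is filled in by first inserting the $n$ values of $\phi$ and then propagating \textbf{True} upward. Processing entries $P[u,v]$ in order of nondecreasing subtree size lets each entry be computed in $O(1)$ time by inspecting the at most four child pairs, giving $O(n^2)$ total. For the main loop I would observe that each iteration of the \textbf{while} loop refines a single internal vertex, replacing it by its two children, so the combined length of $X$ and $Y$ increases by exactly one per iteration. Since that length starts at $2$ and ends at $2n$, there are exactly $2n-2$ iterations. In each iteration, scanning the vertices of $X$ and $Y$ to find one of maximum degree (using cached degree counts) and updating $L$ take $O(n)$ time, contributing $O(n^2)$ in total.

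The main obstacle will be arguing that the total work of \algname{Refine} is also $O(n^2)$, since a single call can cost $\Theta(\deg(u))$. The key observation I would exploit is the invariant that the edge set always satisfies $E = \{(u,v) \in X \times Y : P[u,v] = \text{True}\}$: when $u$ is refined into $u_1, u_2$, \algname{Refine} simply swaps each former neighbor $(u, b_j)$ for those $(u_i, b_j)$ with $P[u_i, b_j] = \text{True}$. Consequently every pair $(u, v)$ with $P[u,v] = \text{True}$ is inserted into, and later deleted from, $E$ at most once during the entire execution: it enters $E$ when both $u$ and $v$ first appear together in $X \cup Y$ (via the refinement that creates whichever vertex is second), and leaves $E$ when either $u$ or $v$ is itself refined. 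Because there are only $O(n^2)$ such pairs, the aggregate cost of all \algname{Refine} calls across the run is $O(n^2)$. Combining the $O(n^2)$ preprocessing, $O(n^2)$ loop overhead, and $O(n^2)$ amortized \algname{Refine} cost yields the claimed $O(n^2)$ total time.
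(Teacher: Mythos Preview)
The paper does not actually prove this lemma; it is quoted from \cite[Lemma~4]{layout} and stated without argument. There is therefore no ``paper's proof'' to compare against. That said, your proposal is a correct and complete justification of the $O(n^2)$ bounds. The space analysis is immediate, and the only nontrivial point in the time analysis---bounding the aggregate cost of all \algname{Refine} calls---is handled cleanly by your amortization: the invariant $E=\{(u,v)\in X\times Y:P[u,v]=\text{True}\}$ does hold throughout (the loop in \algname{Refine} only visits former neighbors $b_j$ of $u$, but any $b_j$ with $P[u,b_j]=\text{False}$ automatically has $P[u_i,b_j]=\text{False}$ for both children, so no edges are missed), and hence each of the $O(n^2)$ \textbf{True} entries of $P$ is inserted into and removed from $E$ at most once. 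The remaining per-iteration overhead (degree scan, list update) is $O(n)$ over $2n-2$ iterations, as you say.
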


\begin{remark}\label{modifieduntangletime}
The additional steps in $\algname{ModifiedUntangle}$ involve the set $L$, which has size at most $n-1$. Hence, for a planar tanglegram of size $n$, Theorem \ref{ModifiedUntangle} and Lemma \ref{untanglespeed} also imply that \algname{ModifiedUntangle} terminates in a planar layout $(X,Y)$ and runs in $O(n^2)$ time and space.
\end{remark}

\subsection{Leaf-matched pairs and paired flips}

We now consider the additional steps involving the set $L$ in \algname{ModifiedUntangle} algorithm. While the \algname{Untangle} algorithm produces a planar layout for planar tanglegrams, one might ask how to generate all of them. As noted in the proof of Lemma \ref{extends}, if $(X,Y)$ is a planar layout of $(T,S,\phi)$, then one method to generate additional planar layouts is using flips at vertices $u\in T$ and $v\in S$ where $\leaf(u)$ and $\leaf(v)$ are matched by $\phi$. We give a name for these pairs of vertices and the operation involving a flip at both vertices, followed by an example in Figure \ref{pairedflipexample}. Then we show that \algname{ModifiedUntangle} identifies these pairs in the set $L$.

\begin{definition}
Let $(T,S,\phi)$ be a tanglegram with layout $(X,Y)$. A pair of internal vertices $(u,v)$ with $u\in T$ and $v\in S$ is a \textit{leaf-matched pair} of $(T,S,\phi)$ if $\leaf (u)$ and $\leaf(v)$ are matched by $\phi$. A \textit{paired flip} at $(u,v)$ is the operation on $(T,S,\phi)$ corresponding to a flip at $u$ and a flip at $v$. This maps $(X,Y)$ to the layout $(X',Y')$, where $X'$ is the image of $X$ after a flip at $u$ and $Y'$ is the image of $Y$ after a flip at $v$.
\end{definition}

\begin{figure}[h]
\begin{center}
\begin{tikzpicture}[scale=0.5]
\filldraw[fill=black,draw=black] (1,4) circle (2pt);
\filldraw[fill=black,draw=black] (1,3) circle (2pt);
\filldraw[fill=black,draw=black] (1,2) circle (2pt);
\filldraw[fill=black,draw=black] (1,1) circle (2pt);
\filldraw[fill=black,draw=black] (1,0) circle (2pt);
\filldraw[fill=black,draw=black] (1,-1) circle (2pt);
\filldraw[fill=black,draw=black] (1,-2) circle (2pt);
\filldraw[fill=black,draw=black] (1,-3) circle (2pt);
\filldraw[fill=black,draw=black] (-1,4) circle (2pt);
\filldraw[fill=black,draw=black] (-1,3) circle (2pt);
\filldraw[fill=black,draw=black] (-1,2) circle (2pt);
\filldraw[fill=black,draw=black] (-1,1) circle (2pt);
\filldraw[fill=black,draw=black] (-1,0) circle (2pt);
\filldraw[fill=black,draw=black] (-1,-1) circle (2pt);
\filldraw[fill=black,draw=black] (-1,-2) circle (2pt);
\filldraw[fill=black,draw=black] (-1,-3) circle (2pt);
\draw[dashed] (-1,4) -- (1,4);
\draw[dashed] (-1,3) -- (1,3);
\draw[dashed] (-1,2) -- (1,2);
\draw[dashed] (-1,1) -- (1,1);
\draw[dashed] (-1,0) -- (1,0);
\draw[dashed] (-1,-1) -- (1,-1);
\draw[dashed] (-1,-2) -- (1,-2);
\draw[dashed] (-1,-3) -- (1,-3);
\draw (-1,4) -- (-1.5,3.5) -- (-1,3) -- (-1.5,3.5) -- (-2.5,2.5) -- (-1.5,1.5) -- (-1,2) -- (-1.5,1.5) -- (-1,1) -- (-2.5,2.5) -- (-4.5,0.5);
\draw (-1,-3) -- (-1.5,-2.5) -- (-1,-2) -- (-1.5,-2.5) -- (-2,-2) -- (-1,-1) -- (-2,-2) -- (-2.5,-1.5) -- (-1,0) -- (-2.5,-1.5) -- (-4.5,0.5);
\draw (1,4) -- (1.5,3.5) -- (1,3) -- (1.5,3.5) -- (2,3) -- (1,2) -- (2,3) -- (2.5,2.5) -- (1,1) -- (2.5,2.5) -- (4.5,0.5) ; 
\draw (1,-3) -- (1.5,-2.5) -- (1,-2) -- (1.5,-2.5) -- (4.5,0.5);
\draw (1,0) -- (1.5,-0.5) -- (1,-1) -- (1.5,-0.5) --(3.5,1.5);
\filldraw[fill=black,draw=black] (-1.5,1.5) circle (2pt);
\filldraw[fill=black,draw=black] (-2.5,-1.5) circle (2pt);
\filldraw[fill=black,draw=black] (-2,-2) circle (2pt);
\filldraw[fill=black,draw=black] (2,3) circle (2pt);
\filldraw[fill=black,draw=black] (1.5,-0.5) circle (2pt);
\filldraw[fill=black,draw=black] (3.5,1.5) circle (2pt);
\filldraw[fill=black,draw=black] (-1.5,3.5) circle (2pt);
\filldraw[fill=black,draw=black] (1.5,3.5) circle (2pt);
\filldraw[fill=black,draw=black] (2.5,2.5) circle (2pt);
\filldraw[fill=black,draw=black] (-2.5,2.5) circle (2pt);
\filldraw[fill=black,draw=black] (4.5,0.5) circle (2pt);
\filldraw[fill=black,draw=black] (-4.5,0.5) circle (2pt);
\node at (-2.75,2.75) {\small{$u$}};
\node at (2.75,2.75) {\small{$v$}};
\node at (-0.75,4.325) {\small{$t_1$}};
\node at (-0.75,3.325) {\small{$t_2$}};
\node at (-0.75,2.325) {\small{$t_3$}};
\node at (-0.75,1.325) {\small{$t_4$}};
\node at (-0.75,0.325) {\small{$t_5$}};
\node at (-0.75,-0.675) {\small{$t_6$}};
\node at (-0.75,-1.675) {\small{$t_7$}};
\node at (-0.75,-2.675) {\small{$t_8$}};
\node at (0.75,4.325) {\small{$s_1$}};
\node at (0.75,3.325) {\small{$s_2$}};
\node at (0.75,2.325) {\small{$s_3$}};
\node at (0.75,1.325) {\small{$s_4$}};
\node at (0.75,0.325) {\small{$s_5$}};
\node at (0.75,-0.675) {\small{$s_6$}};
\node at (0.75,-1.675) {\small{$s_7$}};
\node at (0.75,-2.675) {\small{$s_8$}};
\filldraw[fill=black,draw=black] (1.5,-2.5) circle (2pt);
\filldraw[fill=black,draw=black] (-1.5,-2.5) circle (2pt);
\end{tikzpicture}
\quad
\begin{tikzpicture}[scale=0.5]
\filldraw[fill=black,draw=black] (1,4) circle (2pt);
\filldraw[fill=black,draw=black] (1,3) circle (2pt);
\filldraw[fill=black,draw=black] (1,2) circle (2pt);
\filldraw[fill=black,draw=black] (1,1) circle (2pt);
\filldraw[fill=black,draw=black] (1,0) circle (2pt);
\filldraw[fill=black,draw=black] (1,-1) circle (2pt);
\filldraw[fill=black,draw=black] (1,-2) circle (2pt);
\filldraw[fill=black,draw=black] (1,-3) circle (2pt);
\filldraw[fill=black,draw=black] (-1,4) circle (2pt);
\filldraw[fill=black,draw=black] (-1,3) circle (2pt);
\filldraw[fill=black,draw=black] (-1,2) circle (2pt);
\filldraw[fill=black,draw=black] (-1,1) circle (2pt);
\filldraw[fill=black,draw=black] (-1,0) circle (2pt);
\filldraw[fill=black,draw=black] (-1,-1) circle (2pt);
\filldraw[fill=black,draw=black] (-1,-2) circle (2pt);
\filldraw[fill=black,draw=black] (-1,-3) circle (2pt);
\draw[dashed] (-1,4) -- (1,4);
\draw[dashed] (-1,3) -- (1,3);
\draw[dashed] (-1,2) -- (1,2);
\draw[dashed] (-1,1) -- (1,1);
\draw[dashed] (-1,0) -- (1,0);
\draw[dashed] (-1,-1) -- (1,-1);
\draw[dashed] (-1,-2) -- (1,-2);
\draw[dashed] (-1,-3) -- (1,-3);
\draw (-1,4) -- (-1.5,3.5) -- (-1,3) -- (-1.5,3.5) -- (-2.5,2.5) -- (-1.5,1.5) -- (-1,2) -- (-1.5,1.5) -- (-1,1) -- (-2.5,2.5) -- (-4.5,0.5);
\draw (-1,-3) -- (-1.5,-2.5) -- (-1,-2) -- (-1.5,-2.5) -- (-2,-2) -- (-1,-1) -- (-2,-2) -- (-2.5,-1.5) -- (-1,0) -- (-2.5,-1.5) -- (-4.5,0.5);
\draw (1,1) -- (1.5,1.5) -- (1,2) -- (1.5,1.5) -- (2,2) -- (1,3) -- (2,2) -- (2.5,2.5) -- (1,4)  -- (4.5,0.5) ; 
\draw (1,-3) -- (1.5,-2.5) -- (1,-2) -- (1.5,-2.5) -- (4.5,0.5);
\draw (1,0) -- (1.5,-0.5) -- (1,-1) -- (1.5,-0.5) --(3.5,1.5);
\filldraw[fill=black,draw=black] (-1.5,1.5) circle (2pt);
\filldraw[fill=black,draw=black] (-2.5,-1.5) circle (2pt);
\filldraw[fill=black,draw=black] (-2,-2) circle (2pt);
\filldraw[fill=black,draw=black] (2,2) circle (2pt);
\filldraw[fill=black,draw=black] (1.5,-0.5) circle (2pt);
\filldraw[fill=black,draw=black] (3.5,1.5) circle (2pt);
\filldraw[fill=black,draw=black] (-1.5,3.5) circle (2pt);
\filldraw[fill=black,draw=black] (1.5,1.5) circle (2pt);
\filldraw[fill=black,draw=black] (2.5,2.5) circle (2pt);
\filldraw[fill=black,draw=black] (-2.5,2.5) circle (2pt);
\filldraw[fill=black,draw=black] (4.5,0.5) circle (2pt);
\filldraw[fill=black,draw=black] (-4.5,0.5) circle (2pt);
\filldraw[fill=black,draw=black] (1.5,-2.5) circle (2pt);
\filldraw[fill=black,draw=black] (-1.5,-2.5) circle (2pt);
\node at (-0.75,4.325) {\small{$t_4$}};
\node at (-0.75,3.325) {\small{$t_3$}};
\node at (-0.75,2.325) {\small{$t_2$}};
\node at (-0.75,1.325) {\small{$t_1$}};
\node at (-0.75,0.325) {\small{$t_5$}};
\node at (-0.75,-0.675) {\small{$t_6$}};
\node at (-0.75,-1.675) {\small{$t_7$}};
\node at (-0.75,-2.675) {\small{$t_8$}};
\node at (0.75,4.325) {\small{$s_4$}};
\node at (0.75,3.325) {\small{$s_3$}};
\node at (0.75,2.325) {\small{$s_2$}};
\node at (0.75,1.325) {\small{$s_1$}};
\node at (0.75,0.325) {\small{$s_5$}};
\node at (0.75,-0.675) {\small{$s_6$}};
\node at (0.75,-1.675) {\small{$s_7$}};
\node at (0.75,-2.675) {\small{$s_8$}};
\node at (-2.75,2.75) {\small{$u$}};
\node at (2.75,2.75) {\small{$v$}};
\end{tikzpicture}
\caption{A paired flip at $(u,v)$ maps each layout to the other one.}
\label{pairedflipexample}
\end{center}
\end{figure}
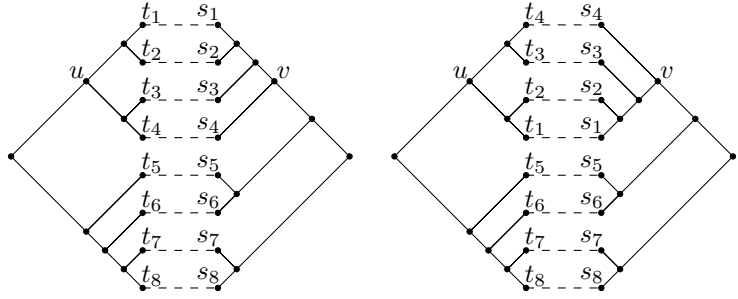

\begin{lemma}\label{degree1}
Let $(T,S,\phi)$ be a planar tanglegram. A pair of internal vertices $(u,v)$ is a leaf-matched pair of a planar tanglegram $(T,S,\phi)$ if and only if at some step of the \algname{ModifiedUntangle} algorithm, the internal vertices $u\in T$ and $v\in S$ appear as adjacent degree one vertices in $(X,E,Y)$. Hence, the set $L$ returned by \algname{ModifiedUntangle} is the set of leaf-matched pairs of $(T,S,\phi)$.
\end{lemma}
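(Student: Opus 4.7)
The plan is to prove the biconditional in two directions and then conclude the equality of $L$ with the set of leaf-matched pairs. I would handle the $(\Leftarrow)$ direction first, as it follows immediately from the construction of the edge set $E$ from the Boolean table $P$.

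Suppose that at some iteration of $\algname{ModifiedUntangle}$, the internal vertices $u\in X$ and $v\in Y$ are adjacent in $(X,E,Y)$ with $\deg(u)=\deg(v)=1$. A key invariant is that at every iteration, $\{\leaf(w):w\in X\}$ and $\{\leaf(y):y\in Y\}$ partition $\leaf(T)$ and $\leaf(S)$ respectively, since each call to $\algname{Refine}$ replaces a vertex by its two children. Because $(u,v)$ is the only $E$-edge incident to $u$, and $E$-edges from $u$ correspond to those $y\in Y$ with $\phi(\leaf(u))\cap\leaf(y)\neq\emptyset$, the partition property forces $\phi(\leaf(u))\subseteq \leaf(v)$. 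Symmetrically $\deg(v)=1$ gives $\phi^{-1}(\leaf(v))\subseteq\leaf(u)$. Hence $\phi$ restricts to a bijection $\leaf(u)\to\leaf(v)$, so $(u,v)$ is a leaf-matched pair.

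For the $(\Rightarrow)$ direction, suppose $(u,v)$ is leaf-matched. Using the same partition invariant, whenever $u\in X$ as an internal vertex, the only $v'\in Y$ with $P[u,v']=\text{True}$ is the unique $v'\in Y$ satisfying $\leaf(v)\subseteq\leaf(v')$, so $\deg(u)=1$ in $(X,E,Y)$; symmetrically $\deg(v)=1$ whenever $v\in Y$. Since $\algname{ModifiedUntangle}$ terminates only when $X\cup Y$ contains no internal vertices, and the ancestors of $u$ and $v$ are refined one-by-one before $u$ and $v$ themselves can be, the vertices $u$ and $v$ must each be selected for refinement at some iteration. Consider the first iteration at which either is selected; WLOG say it is $u$. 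At this iteration $u$ has maximum degree, which by the above equals $1$, so every vertex in $(X,E,Y)$ has degree at most $1$. The neighbor $v'\in Y$ of $u$ therefore has degree $1$, and applying the $(\Leftarrow)$ direction already established, $(u,v')$ is leaf-matched, giving $\leaf(v')=\phi(\leaf(u))=\leaf(v)$. Since internal vertices of a rooted binary tree are determined by their leaf sets, $v'=v$. Thus at this iteration $u\in X$ and $v\in Y$ are both internal, adjacent, and of degree one.

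The main obstacle is the possibility that, at the moment $u$ is selected for refinement, the representative of $\leaf(v)$ in $Y$ might be a strict ancestor $v'>v$ rather than $v$ itself; this is precisely what is ruled out by the max-degree condition on $u$ together with the invariant $\deg(u)=1$ throughout $u$'s tenure in $X$. The final statement about $L$ follows: by $(\Rightarrow)$ every leaf-matched pair is added to $L$ at the moment just described (in line~13 or line~19, depending on whether $u\in X$ or $v\in Y$ is being refined), and by $(\Leftarrow)$ every pair added to $L$ is leaf-matched, so $L$ is exactly the set of leaf-matched pairs of $(T,S,\phi)$.
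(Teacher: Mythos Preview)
Your proof is correct and follows essentially the same approach as the paper: both directions hinge on the construction of $E$ from the Boolean table $P$ and on the fact that \algname{ModifiedUntangle} always refines a vertex of maximum degree. Your $(\Rightarrow)$ argument is organized slightly differently---rather than iteratively showing (as the paper does) that any strict ancestor $v'>v$ in $Y$ has degree $>1$ and hence gets refined before $u$, you jump directly to the iteration where $u$ is selected and reuse the $(\Leftarrow)$ direction to identify its neighbor as $v$---but this is a cosmetic rearrangement of the same idea.
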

\begin{proof}
Suppose the internal vertices $u\in T$ and $v\in S$ appear as adjacent degree one vertices in $(X,E,Y)$ during some step of the \algname{ModifiedUntangle} algorithm. Since both vertices have degree one, the construction of the edges in $(X,E,Y)$ using the Boolean table $P$ in line \ref{edges} of \algname{Refine} implies that $\leaf(u)$ and $\leaf(v)$ are matched under $\phi$, and thus $(u,v)$ is a leaf-matched pair. 

Conversely, suppose $(u,v)$ is a leaf-matched pair. If $u$ and $v$ are the root vertices of $T$ and $S$, then these appear as degree one vertices at the first step of \algname{ModifiedUntangle}. Otherwise, at some step of the algorithm, either $u$ or $v$ will appear for the first time in a partial layout $(X,Y)$, and without loss of generality, we assume it is $u\in X$. Since $v$ has not appeared in a partial layout yet, there is some vertex $v'\in Y$ that is an ancestor of $v$. Then $\leaf(u)$ is matched with a proper subset of $\leaf(v')$ in the tanglegram $(T,S,\phi)$, so $\deg(v')>1$ in $(X,E,Y)$. From line \ref{highestdegree} of \algname{ModifiedUntangle}, we see that $v'$ will be replaced with its children before $u$ is. Repeating this argument, $v$ will eventually appear before we use \algname{Refine} on $u$, and at this time, $u$ and $v$ will be adjacent degree one vertices since they are a leaf-matched pair of $(T,S,\phi)$. 
\end{proof}

We know that given a planar layout $(X,Y)$ of $(T,S,\phi)$, paired flips will generate additional planar layouts, but we do not yet know that they generate all possible planar layouts. It may be possible that some appropriate choice of flips not equivalent to a sequence of paired flips also results in a planar layout. We will show that this is in fact not the case. Our proof for this uses Lemma \ref{extends}, which holds for arbitrary promising partial layouts, not just ones considered in \algname{ModifiedUntangle}.

First, notice that for a tanglegram $(T,S,\phi)$ of size $n$, \algname{ModifiedUntangle} starts with a promising partial layout $(X_1,Y_1)=(\rut(T),\rut(S))$. At each step, it replaces an internal vertex of $T$ or $S$ using \algname{Refine}. Since a tree on $n$ leaves has $n-1$ internal vertices, the algorithm uses \algname{Refine} a total of $2n-2$ times. Thus, it produces a sequence of promising partial layouts $\{(X_k,Y_k)\}_{k=1}^{2n-1}$ that terminates at $(X,Y)=(X_{2n-1},Y_{2n-1})$.
We give a name for such a sequence.

\begin{definition}\label{promisingsequence}
Let $(T,S,\phi)$ be a tanglegram with layout $(X,Y)$. We call $\{(X_k,Y_k)\}_{k=1}^{2n-1}$ a \textit{partial sequence for $(X,Y)$} if
\begin{itemize}
    \item for $k=1,2,\ldots,2n-1$, $(X_k,Y_k)$ is a partial layout,
    \item $(X_1,Y_1)=(\rut(T),\rut(S))$,
    \item $(X_{2n-1},Y_{2n-1})=(X,Y)$, and
    \item for $k=1,2,...,2n-2$, the partial layout $(X_{k+1},Y_{k+1})$ is obtained from $(X_k,Y_k)$ by refining some vertex $u\in X_k\cup Y_k$, that is, replacing $u$ with its children in an appropriate order.
\end{itemize}
A partial sequence is \textit{promising} if all $(X_k,Y_k)$ are promising partial layouts, or equivalently, if $(X,Y)=(X_{2n-1},Y_{2n-1})$ is a planar layout.
\end{definition}

If $(X',Y')$ is another planar layout of $(T,S,\phi)$, we can use $\{(X_k,Y_k)\}_{k=1}^{2n-1}$ to find a promising partial sequence $\{(X_k',Y_k')\}_{k=1}^{2n-1}$ for $(X',Y')$. We do this by constructing each $(X_k',Y_k')$ as follows.
\begin{itemize}
    \item Draw the trees $T$ and $S$ with leaves from top to bottom in the order described by $(X',Y')$. 
    \item For each $u\in X_k$, contract the subtree of $T$ rooted at $u$ to the vertex $u$ itself. Do the same for each $v\in Y_k$, and call the resulting trees $T_k$ and $S_k$.
    \item Let $X_k'$ be the leaves of $T_k$ listed from top to bottom, and similarly for $Y_k'$ and $S_k$. 
\end{itemize}
An example of these steps is shown below in Figure \ref{contraction}. Note that by construction, $(X_k,Y_k)$ and $(X_k',Y_k')$ contain the same vertices, but possibly in different orders. We now show that $\{(X_k',Y_k')\}_{k=1}^{2n-1}$ constructed in this manner is a promising partial sequence for $(X',Y')$ and then use this to establish Theorem \ref{planarlayouts}.

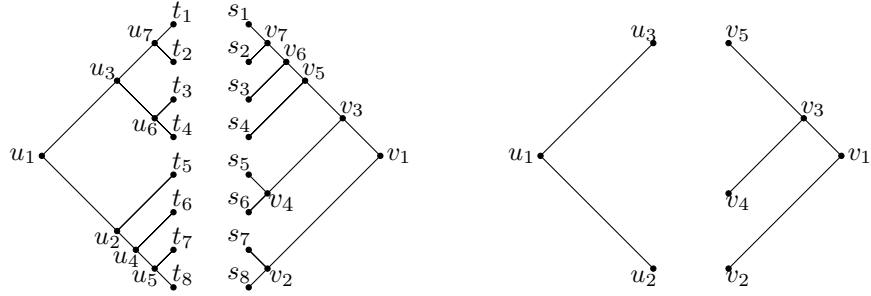
\begin{figure}[h]
    \centering
\begin{tikzpicture}[scale=0.5]
\filldraw[fill=black,draw=black] (1,4) circle (2pt);
\filldraw[fill=black,draw=black] (1,3) circle (2pt);
\filldraw[fill=black,draw=black] (1,2) circle (2pt);
\filldraw[fill=black,draw=black] (1,1) circle (2pt);
\filldraw[fill=black,draw=black] (1,0) circle (2pt);
\filldraw[fill=black,draw=black] (1,-1) circle (2pt);
\filldraw[fill=black,draw=black] (1,-2) circle (2pt);
\filldraw[fill=black,draw=black] (1,-3) circle (2pt);
\filldraw[fill=black,draw=black] (-1,4) circle (2pt);
\filldraw[fill=black,draw=black] (-1,3) circle (2pt);
\filldraw[fill=black,draw=black] (-1,2) circle (2pt);
\filldraw[fill=black,draw=black] (-1,1) circle (2pt);
\filldraw[fill=black,draw=black] (-1,0) circle (2pt);
\filldraw[fill=black,draw=black] (-1,-1) circle (2pt);
\filldraw[fill=black,draw=black] (-1,-2) circle (2pt);
\filldraw[fill=black,draw=black] (-1,-3) circle (2pt);
\draw (-1,4) -- (-1.5,3.5) -- (-1,3) -- (-1.5,3.5) -- (-2.5,2.5) -- (-1.5,1.5) -- (-1,2) -- (-1.5,1.5) -- (-1,1) -- (-2.5,2.5) -- (-4.5,0.5);
\draw (-1,-3) -- (-1.5,-2.5) -- (-1,-2) -- (-1.5,-2.5) -- (-2,-2) -- (-1,-1) -- (-2,-2) -- (-2.5,-1.5) -- (-1,0) -- (-2.5,-1.5) -- (-4.5,0.5);
\draw (1,4) -- (1.5,3.5) -- (1,3) -- (1.5,3.5) -- (2,3) -- (1,2) -- (2,3) -- (2.5,2.5) -- (1,1) -- (2.5,2.5) -- (4.5,0.5) ; 
\draw (1,-3) -- (1.5,-2.5) -- (1,-2) -- (1.5,-2.5) -- (4.5,0.5);
\draw (1,0) -- (1.5,-0.5) -- (1,-1) -- (1.5,-0.5) --(3.5,1.5);
\filldraw[fill=black,draw=black] (-1.5,1.5) circle (2pt);
\filldraw[fill=black,draw=black] (-2.5,-1.5) circle (2pt);
\filldraw[fill=black,draw=black] (-2,-2) circle (2pt);
\filldraw[fill=black,draw=black] (2,3) circle (2pt);
\filldraw[fill=black,draw=black] (1.5,-0.5) circle (2pt);
\filldraw[fill=black,draw=black] (3.5,1.5) circle (2pt);
\filldraw[fill=black,draw=black] (-1.5,3.5) circle (2pt);
\filldraw[fill=black,draw=black] (1.5,3.5) circle (2pt);
\filldraw[fill=black,draw=black] (2.5,2.5) circle (2pt);
\filldraw[fill=black,draw=black] (-2.5,2.5) circle (2pt);
\filldraw[fill=black,draw=black] (4.5,0.5) circle (2pt);
\filldraw[fill=black,draw=black] (-4.5,0.5) circle (2pt);
\node at (-5,0.5) {\small{$u_1$}};
\node at (-2.75,-1.75) {\small{$u_2$}};
\node at (-2.875,2.75) {\small{$u_3$}};
\node at (-2.25,-2.25) {\small{$u_4$}};
\node at (-1.75,-2.75) {\small{$u_5$}};
\node at (-1.875,3.75) {\small{$u_7$}};
\node at (-1.75,1.25) {\small{$u_6$}};
\node at (5,0.5) {\small{$v_1$}};
\node at (3.75,1.75) {\small{$v_3$}};
\node at (2.75,2.75) {\small{$v_5$}};
\node at (2.25,3.25) {\small{$v_6$}};
\node at (1.875,-2.75) {\small{$v_2$}};
\node at (1.75,3.75) {\small{$v_7$}};
\node at (1.875,-0.75) {$v_4$};
\node at (-0.75,4.325) {\small{$t_1$}};
\node at (-0.75,3.325) {\small{$t_2$}};
\node at (-0.75,2.325) {\small{$t_3$}};
\node at (-0.75,1.325) {\small{$t_4$}};
\node at (-0.75,0.325) {\small{$t_5$}};
\node at (-0.75,-0.675) {\small{$t_6$}};
\node at (-0.75,-1.675) {\small{$t_7$}};
\node at (-0.75,-2.675) {\small{$t_8$}};
\node at (0.75,4.325) {\small{$s_1$}};
\node at (0.75,3.325) {\small{$s_2$}};
\node at (0.75,2.325) {\small{$s_3$}};
\node at (0.75,1.325) {\small{$s_4$}};
\node at (0.75,0.325) {\small{$s_5$}};
\node at (0.75,-0.675) {\small{$s_6$}};
\node at (0.75,-1.675) {\small{$s_7$}};
\node at (0.75,-2.675) {\small{$s_8$}};
\filldraw[fill=black,draw=black] (1.5,-2.5) circle (2pt);
\filldraw[fill=black,draw=black] (-1.5,-2.5) circle (2pt);
\end{tikzpicture}
    $\qquad$ 
\begin{tikzpicture}[scale=0.5]
\draw (-1.5,3.5) -- (-4.5,0.5);
\draw  (-1.5,-2.5) -- (-4.5,0.5);
\draw (3.5,0.5) -- (0.5,3.5);
\draw (3.5,0.5) -- (0.5,-2.5);
\draw (2.5,1.5) -- (0.5,-0.5);
\filldraw[fill=black,draw=black] (-1.5,-2.5) circle (2pt);
\filldraw[fill=black,draw=black] (0.5,3.5) circle (2pt);
\filldraw[fill=black,draw=black] (-1.5,3.5) circle (2pt);
\filldraw[fill=black,draw=black] (3.5,0.5) circle (2pt);
\filldraw[fill=black,draw=black] (-4.5,0.5) circle (2pt);
\filldraw[fill=black,draw=black] (2.5,1.5) circle (2pt);
\filldraw[fill=black,draw=black] (0.5,-0.5) circle (2pt);
\node at (-5,0.5) {\small{$u_1$}};
\node at (-1.75,-2.75) {\small{$u_2$}};
\node at (-1.75,3.75) {\small{$u_3$}};
\node at (4,0.5) {\small{$v_1$}};
\node at (2.75,1.75) {\small{$v_3$}};
\node at (0.75,3.75) {\small{$v_5$}};
\node at (0.75,-2.75) {\small{$v_2$}};
\node at (0.75,-0.75) {\small{$v_4$}};
\filldraw[fill=black,draw=black] (0.5,-2.5) circle (2pt);
\end{tikzpicture}
    \caption{Starting with drawings of $T$ and $S$ corresponding to $(X',Y')$, we use $(X_k,Y_k)=(u_2u_3,v_2v_4v_5)$ to form the contracted trees $T_k$ and $S_k$ shown on the right. Listing leaves from top to bottom gives us the partial layout $(X'_k,Y'_k)=(u_3u_2,v_5v_4v_2)$.}
    \label{contraction}
\end{figure}

\begin{lemma}\label{sequence}
The sequence $\{(X_k',Y_k')\}_{k=1}^{2n-1}$ is a promising partial sequence for $(X',Y')$.
\end{lemma}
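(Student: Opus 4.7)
The plan is to verify the four conditions of Definition \ref{promisingsequence} for the constructed sequence $\{(X_k',Y_k')\}_{k=1}^{2n-1}$. The two endpoint conditions should be essentially immediate: since $(X_1,Y_1)=(\rut(T),\rut(S))$, contracting the subtrees rooted at $\rut(T)$ and $\rut(S)$ in the drawing induced by $(X',Y')$ collapses each tree to its root, giving $(X_1',Y_1')=(\rut(T),\rut(S))$. Likewise $(X_{2n-1},Y_{2n-1})=(X,Y)$ consists entirely of leaves of $T$ and $S$, so no proper contractions occur in the construction of $(X_{2n-1}',Y_{2n-1}')$, and reading leaves top-to-bottom in the drawing of $(X',Y')$ recovers $(X',Y')$ itself.

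The substantive step is to show that each transition $(X_k',Y_k')\to (X_{k+1}',Y_{k+1}')$ is a single-vertex refinement. Here I would reason as follows. Passing from $(X_k,Y_k)$ to $(X_{k+1},Y_{k+1})$ refines a single vertex $u\in X_k\cup Y_k$, replacing $u$ by its two children $u_1,u_2$ in some order. The contractions defining $T_k',S_k'$ and $T_{k+1}',S_{k+1}'$ therefore differ in exactly one place: the subtree of $T$ (or $S$) rooted at $u$ is contracted to the single vertex $u$ for the first layout but contracted only down to $u_1,u_2$ for the second, while all other contractions agree. Listing leaves of the contracted tree from top to bottom in the fixed drawing induced by $(X',Y')$ then leaves all other entries of $X_k'$ (or $Y_k'$) unchanged, and replaces $u$ by its two children in whichever order they appear in that drawing. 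This is exactly the refinement operation required by Definition \ref{promisingsequence}.

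With the structural properties verified, promisingness is immediate. For each $k$, iterating the construction argument shows that $(X_k',Y_k')$ can be successively refined (using the remaining steps of $\{(X_j,Y_j)\}_{j>k}$ to dictate which vertex to refine next) until one arrives at $(X_{2n-1}',Y_{2n-1}')=(X',Y')$, which is planar by hypothesis. Hence $(X_k',Y_k')$ extends to a planar layout and is promising.

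The main subtlety worth flagging, though not a genuine obstacle, is that the order in which $u_1,u_2$ appear in $X_{k+1}'$ is dictated by the fixed drawing coming from $(X',Y')$, and need not agree with the order of $u_1,u_2$ in $X_{k+1}$. This is harmless, because Definition \ref{promisingsequence} requires only that the transition be \emph{some} valid refinement at the chosen vertex; either of the two possible orders qualifies. I would make this explicit so the reader sees that the independent choices of the sequence $\{(X_k,Y_k)\}$ and of the layout $(X',Y')$ do not interact badly.
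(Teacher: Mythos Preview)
Your proposal is correct and follows essentially the same approach as the paper: verify the endpoint conditions directly from the construction, and argue that the contractions defining $(X_k',Y_k')$ and $(X_{k+1}',Y_{k+1}')$ differ only at the single vertex $u_k$ refined in the original sequence, so that the transition is a refinement of $u_k$. Your explicit remark that the order of $u_1,u_2$ in $X_{k+1}'$ is dictated by $(X',Y')$ rather than by $X_{k+1}$, and that this is permitted by Definition~\ref{promisingsequence}, is a useful clarification the paper leaves implicit.
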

\begin{proof}
 By construction, notice that each $(X_k',Y_k')$ is a partial layout, $(X_1',Y_1')=(\rut(T),\rut(S))$, and $(X_{2n-1}',Y_{2n-1}')=(X',Y')$. It remains to show that refining a vertex $u\in X_k'\cup Y_k'$ produces $(X_{k+1}',Y_{k+1}')$. Denote the vertex refined in $(X_k,Y_k)$ to obtain $(X_{k+1},Y_{k+1})$ as $u_k$, and without loss of generality, we assume $u_k\in X_k$. This implies that $(X_{k}',Y_{k}')$ and $(X_{k+1}',Y_{k+1}')$ have almost the same vertices, except $(X_{k}',Y_{k}')$ contains $u_k$, while $(X_{k+1}',Y_{k+1}')$ contains its two children $u_{k,1}$ and $u_{k,2}$. By our construction using contractions, the tree $T_{k}$ can be obtained from $T_{k+1}$ by contracting the two children of $u_k$ onto the vertex itself. Thus, we can obtain $(X_{k}',Y_{k}')$ from $(X_{k+1}',Y_{k+1}')$ by replacing the adjacent children of $u_k$ with $u_k$ itself. Then we can also obtain $(X_{k+1}',Y_{k+1}')$ from $(X_k',Y_k')$ by refining $u_k$. Thus, all conditions in Definition \ref{promisingsequence} are satisfied, so we see that $\{(X_k',Y_k')\}_{k=1}^{2n-1}$ is a promising partial sequence for $(X',Y')$.
\end{proof}

\begin{proof}[Proof of Theorem \ref{planarlayouts}]
By Lemma \ref{degree1}, $L$ is the set of leaf-matched pairs of $(T,S,\phi)$. It is clear that if $(u,v)\in L$, then starting with $(X,Y)$ and performing a paired flip at $(u,v)$ produces another layout in $ \mathscr{P}(T,S,\phi)$, so the same is true if we perform any sequence of paired flips starting with $(X,Y)$. We show that all planar layouts can  be obtained this way.

Let $(X',Y')\in \mathscr{P}(T,S,\phi)$ be distinct from $(X,Y)$. Let $\{(X_k,Y_k)\}_{k=1}^{2n-1}$ be the promising partial sequence for $(X,Y)$ produced in \algname{ModifiedUntangle}, with corresponding bipartite graphs $\{(X_k,E_k,Y_k)\}_{k=1}^{2n-1}$. By Lemma \ref{sequence}, we can use this sequence for $(X,Y)$ to construct a corresponding promising partial sequence $\{(X_k',Y_k')\}_{k=1}^{2n-1}$ for $(X',Y')$ where $(X_k,Y_k)$ and $(X_k',Y_k')$ contain the same vertices, though possibly in different orders.
Thus, if we use the Boolean table $P$ to construct edges $E_k'$ on the vertices in $X_k'\cup Y_k'$, then $E_k'=E_k$. Since $(X',Y')\neq (X,Y)$, there must be some minimal $m$ such that $(X_{m+1},Y_{m+1})\neq (X_{m+1}',Y_{m+1}')$. Without loss of generality, we assume the refined vertex at this step was $u\in X_m$ and that \algname{Refine} replaced $u$ with $u_{1}u_{2}$ to obtain $(X_{m+1},Y_{m+1})$, while $(X_{m+1}',Y_{m+1}')$ requires replacing $u$ with $u_{2}u_{1}$.

Consider the degree of $u$ in the bipartite graph $(X_m, E_m,Y_m)=(X_m',E_m,Y_m')$. If $\deg(u)\geq 2$, then Lemma \ref{extends} implies that $(X_{m+1}',Y_{m+1}')$ is not promising, which is not the case since $\{(X_k',Y_k')\}_{k=1}^{2n-1}$ is a promising partial sequence for $(X',Y')$. Thus, it must be that $\deg(u)=1$. Since \algname{ModifiedUntangle} always refines a vertex of highest degree, this implies that all vertices in $(X_m, E_m,Y_m)=(X_m',E_m,Y_m')$ must have degree 1.

Let $v\in Y_m$ be the unique neighbor of $u$ in $(X_m,E_m,Y_m)$. Once we replace $u$ with its children, notice that $v$ will be the unique vertex in $(X_{m+1},E_{m+1},Y_{m+1})$ with $\deg(v)\geq 2$. Thus, after \algname{ModifiedUntangle} replaces $u$ with $u_{1}u_{2}$ to obtain $(X_{m+1},Y_{m+1})$, it will replace $v$ with its children in some order to obtain $(X_{m+2},Y_{m+2})$. 

Lemma \ref{degree1} implies that $(u,v)\in L$, so we let $(\widetilde{X},\widetilde{Y})\in \mathscr{P}(T,S,\phi)$ be $(X,Y)$ after a paired flip at $(u,v)$. Using $\{(X_k,Y_k)\}_{k=1}^{2n-1}$, we again use Lemma \ref{sequence} to construct a promising partial sequence $\{(\widetilde{X}_k,\widetilde{Y}_k)\}_{k=1}^{2n-1}$ for $(\widetilde{X},\widetilde{Y})$. By construction, $(\widetilde{X}_k,\widetilde{Y}_k)=(X_k,Y_k)$ for all $k\leq m$, and because of the paired flip at $(u,v)$, we see that $(\widetilde{X}_k,\widetilde{Y}_k)=(X'_k,Y'_k)$ for all $k\leq m+1$. Furthermore, the preceding paragraph implies that $(\widetilde{X}_{m+2},\widetilde{Y}_{m+2})$ is obtained from $(\widetilde{X}_{m+1},\widetilde{Y}_{m+1})$ by refining the vertex $v$. Since $\deg(v)=2$ in $(\widetilde{X}_{m+1},E_{m+1},\widetilde{Y}_{m+1})$, Lemma \ref{extends} implies that a unique choice for the children of $v$ results in a promising partial layout, and thus it must be that $(\widetilde{X}_{m+2},\widetilde{Y}_{m+2})=(X'_{m+2},Y'_{m+2})$. 

If $(\widetilde{X},\widetilde{Y})\neq (X',Y')$, we can repeat the above argument. Eventually, this process terminates in a planar layout $(\widetilde{X},\widetilde{Y})$ obtained from a sequence of paired flips starting at $(X,Y)$, where  $(\widetilde{X}_k,\widetilde{Y}_k)=(X_k',Y'_k)$ for all $k$. Hence, we see that $(X',Y')=(\widetilde{X},\widetilde{Y})$, and any $(X',Y')\in\mathscr{P}(T,S,\phi)$ can be obtained using a sequence of paired flips starting with $(X,Y)$. 
\end{proof}

\begin{remark}\label{irreducible}
A tanglegram is \textit{irreducible} if its only leaf matched pair is given by the roots of the two trees. For irreducible tanglegrams $(T,S,\phi)$ of size at least three, Theorem \ref{planarlayouts} implies that there are exactly two planar layouts, which must be mirror images of one another. This specific case was established and used by Ralaivaosaona, Ravelomanana, and Wagner to enumerate planar tanglegrams \cite[Proposition 5]{countingplanar}.
\end{remark}

We now define an undirected graph called the flip graph of a planar tanglegram. Theorem \ref{planarlayouts} gives us a corollary about this graph. By using the outputted layout from \algname{ModifiedUntangle} and considering all subsets of $L$, one could in principle find all planar layouts of a planar tanglegram $(T,S,\phi)$ and produce the flip graph of a tanglegram, though there may be exponentially many planar layouts.

\begin{definition}
Let $(T,S,\phi)$ be a planar tanglegram. Define the \textit{flip graph of $(T,S,\phi)$} as $\Gamma(T,S,\phi)=(V,E)$ with vertices $v_{(X,Y)}\in V$ corresponding to planar layouts $(X,Y)$, and edges $(v_{(X,Y)},v_{(X',Y')})\in E$ if $(X',Y')$ can be obtained from $(X,Y)$ by a paired flip at some leaf-matched pair $(u,v)$ of $(T,S,\phi)$. 
\end{definition}

\begin{corollary}
The flip graph of a planar tanglegram is connected.
\end{corollary}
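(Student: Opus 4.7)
The plan is to apply Theorem \ref{planarlayouts} almost directly. Fix $(X,Y) = \algname{ModifiedUntangle}(T,S,\phi)$ and let $L$ be the associated set of leaf-matched pairs. I will treat $v_{(X,Y)}$ as a ``hub'' vertex of $\Gamma(T,S,\phi)$ and show that every other vertex is connected to it by a path; connectedness of the whole graph then follows by concatenating paths through the hub.

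Let $(X',Y') \in \mathscr{P}(T,S,\phi)$ be arbitrary. By Theorem \ref{planarlayouts}, there is a sequence of paired flips at pairs $(u_1,v_1), (u_2,v_2), \ldots, (u_k,v_k) \in L$ that, applied in order to $(X,Y)$, yields $(X',Y')$. Let $(X^{(i)}, Y^{(i)})$ denote the layout obtained after the first $i$ flips, so $(X^{(0)}, Y^{(0)}) = (X,Y)$ and $(X^{(k)}, Y^{(k)}) = (X',Y')$. The main technical point to verify is that each intermediate $(X^{(i)},Y^{(i)})$ is itself a planar layout, so that $v_{(X^{(i)},Y^{(i)})}$ is a vertex of $\Gamma(T,S,\phi)$; once this is established, each consecutive pair $(X^{(i)}, Y^{(i)}), (X^{(i+1)}, Y^{(i+1)})$ differs by a single paired flip at $(u_{i+1}, v_{i+1}) \in L$, giving an edge in $\Gamma(T,S,\phi)$, hence a walk from $v_{(X,Y)}$ to $v_{(X',Y')}$.

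For the planarity of each intermediate layout, the argument is the same one used at the beginning of the proof of Theorem \ref{planarlayouts}: if $(X'',Y'') \in \mathscr{P}(T,S,\phi)$ and $(u,v)$ is a leaf-matched pair, then a paired flip at $(u,v)$ reverses the order of $\leaf(u)$ on the left and of $\leaf(v)$ on the right simultaneously. Since $\leaf(u)$ and $\leaf(v)$ are matched by $\phi$, the between-tree edges incident to these leaves are reversed as a block on both sides, which introduces no new crossings and removes none, so the resulting layout is again planar. Applying this inductively to each $(X^{(i)}, Y^{(i)}) \to (X^{(i+1)}, Y^{(i+1)})$ confirms planarity throughout.

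Thus every vertex of $\Gamma(T,S,\phi)$ is joined to $v_{(X,Y)}$ by a path, so $\Gamma(T,S,\phi)$ is connected. There is no real obstacle in the argument; the entire content is already packaged inside Theorem \ref{planarlayouts}, and the corollary is essentially a reformulation of that theorem in graph-theoretic language.
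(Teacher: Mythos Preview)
Your proposal is correct and follows the same approach the paper has in mind: the corollary is stated without proof as an immediate consequence of Theorem~\ref{planarlayouts}, and the planarity of intermediate layouts is exactly the ``easy'' direction established at the start of that theorem's proof. You have simply written out the details the paper leaves implicit.
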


\subsection{Enumeration of planar tanglegrams by number of leaf-matched pairs}

In this subsection, we show an enumerative result for the number of planar tanglegrams of size $n$ with $k$ leaf-matched pairs. Recall the generating functions $F(x,q)$ and $H(x)$ from (\ref{Fxq}) and (\ref{Hx}). We first give a definition, and then establish our generalization of \cite[Theorem 1]{countingplanar}.

\begin{definition}
The \textit{irreducible component} of a tanglegram $(T,S,\phi)$, denoted $\text{Irr}(T,S,\phi)$, is the irreducible tanglegram formed by contracting each non-root leaf-matched pair of $(T,S,\phi)$ to a single pair of matched leaves.
\end{definition}

\begin{proof}[Proof of Theorem \ref{Fxqrelation}]
Equation (\ref{Fxqequation}) is equivalent to
\begin{equation}\label{Fxqequation2}
    F(x,q) =x+q\cdot \left( H(F(x,q))-\frac{F(x,q)^2}{2} \right)+q\cdot \frac{ F(x,q)^2+F(x^2,q^2)}{2},
\end{equation}
so we establish this relation instead.
The term $x$ accounts for the unique tanglegram of size 1, which has no leaf-matched pairs. For the remaining tanglegrams, we can form each tanglegram $(T,S,\phi)$ by starting with its irreducible component $\text{Irr}(T,S,\phi)$ and replacing matched leaves with planar tanglegrams (possibly of size 1). We consider two cases depending on the size of $\text{Irr}(T,S,\phi)$. 

First, consider tanglegrams with $\text{size}(\text{Irr}(T,S,\phi))\geq 3$. As noted in the proof of \cite[Theorem 1]{countingplanar}, these tanglegrams do not have any symmetry. The generating function $q\cdot [H(x)-x^2/2]$ counts irreducible planar tanglegrams of size $n\geq 3$. A term $qx^n$ corresponds to a planar irreducible tanglegram of size $n$, and replacing a pair of matched leaves with a planar tanglegram corresponds to replacing $x$ with $F(x,q)$. Hence, $q\cdot [H(F(x,q))-F(x,q)^2/2]$ enumerates tanglegrams with irreducible component of size $n \geq 3$.

Second, suppose $\text{size}(\text{Irr}(T,S,\phi))=2$. These tanglegrams are formed by starting with the unique planar tanglegram of size two corresponding to the term $qx^2$ and replacing the two pairs of leaves with two planar tanglegrams $\{(T_1,S_1,\phi_1),(T_2,S_2,\phi_2)\}$, where the order is not relevant. The generating function $F(x,q)^2$ would count ordered pairs of planar tanglegrams. This correctly counts the case when $(T_1,S_1,\phi_1)$ and $(T_2,S_2,\phi_2)$ are the same, but counts all other cases twice. To remedy this over-counting, we can add $F(x^2,q^2)$, which counts the pairs where $(T_1,S_1,\phi_1)$ and $(T_2,S_2,\phi_2)$ are the same, and then divide the result by two to account for the order not being relevant. Hence, $q\cdot \frac{F(x,q)^2+F(x^2,q^2)}{2}$ enumerates tanglegrams with irreducible component of size two. Combined, we obtain (\ref{Fxqequation2}).
\end{proof}

Note that substituting $q=1$ results in the original relation given in \cite[Theorem 1]{countingplanar}. Using this result and the coefficients of $H(x)$ from \cite{countingplanar}, it takes a computer-algebra system only a few seconds to generate several coefficients of $F(x,q)$. We collect some of these coefficients in Table \ref{leafmatchedtable}. See \cite[A349409]{oeis} for more terms. The corresponding planar tanglegrams for $n=4$ are shown in Figure \ref{size4tanglegrams}.

\begin{table}[h]
    \centering
    \begin{tabular}{c | c |  c | c| c| c |c |  c}
        $n,k$  & 1 &  2 & 3 & 4 & 5 & 6 & total \\  \hline 
        2 & 1 & & & & & &   1\\  \hline 
        3 & 1 & 1 & & & & &  2\\  \hline 
        4 & 5 & 4 & 2 & & & &  11\\  \hline 
        5 & 34 & 28 & 11 & 3  & & &  76\\  \hline 
        6 & 273 & 239 & 102 & 29 & 6 & &  649\\  \hline 
        7 & 2436 & 2283 & 1045 & 325 & 73 & 11 &  6173\\  
    \end{tabular}
    \caption{Coefficients of $x^nq^k$ in $F(x,q)$ for $2\leq n\leq 7$.}
    \label{leafmatchedtable}
\end{table}

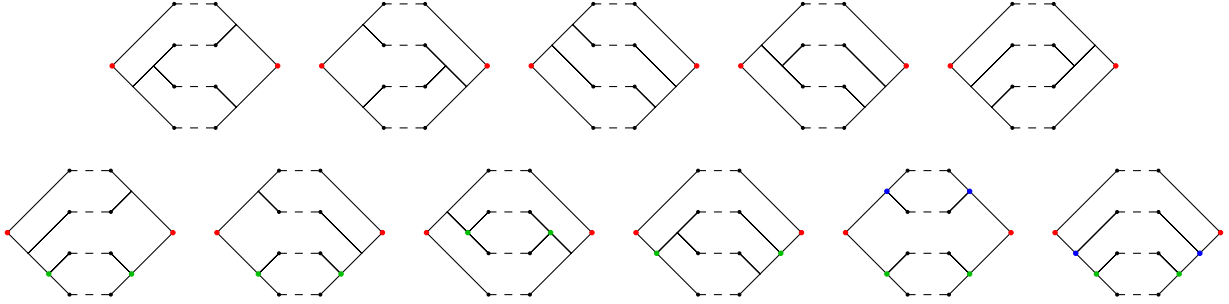
\begin{figure}[h]
    \centering
    \begin{tikzpicture}[scale=0.55]
    \filldraw[fill=black,draw=black] (0,3) circle (1pt);
    \filldraw[fill=black,draw=black] (0,2) circle (1pt);
    \filldraw[fill=black,draw=black] (0,1) circle (1pt);
    \filldraw[fill=black,draw=black] (0,0) circle (1pt);
    \filldraw[fill=black,draw=black] (1,3) circle (1pt);
    \filldraw[fill=black,draw=black] (1,2) circle (1pt);
    \filldraw[fill=black,draw=black] (1,1) circle (1pt);
    \filldraw[fill=black,draw=black] (1,0) circle (1pt);
    \draw[dashed] (0,3) -- (1,3);
    \draw[dashed] (0,2) -- (1,2);
    \draw[dashed] (0,1) -- (1,1);
    \draw[dashed] (0,0) -- (1,0);
    \draw (0,3) -- (-1.5,1.5) -- (-1,1) -- (0,2) -- (-0.5,1.5) -- (0,1) -- (-0.5,1.5) -- (-1,1) -- (0,0);
    \draw (1,3) -- (1.5,2.5) -- (1,2) -- (1.5,2.5) -- (2.5,1.5) -- (1.5,0.5) -- (1,1) -- (1.5,0.5) -- (1,0);
    \filldraw[fill=red,draw=red] (2.5,1.5) circle (1.5pt);
    \filldraw[fill=red,draw=red] (-1.5,1.5) circle (1.5pt);
    \end{tikzpicture}
    \quad 
        \begin{tikzpicture}[scale=0.55]
    \filldraw[fill=black,draw=black] (0,3) circle (1pt);
    \filldraw[fill=black,draw=black] (0,2) circle (1pt);
    \filldraw[fill=black,draw=black] (0,1) circle (1pt);
    \filldraw[fill=black,draw=black] (0,0) circle (1pt);
    \filldraw[fill=black,draw=black] (1,3) circle (1pt);
    \filldraw[fill=black,draw=black] (1,2) circle (1pt);
    \filldraw[fill=black,draw=black] (1,1) circle (1pt);
    \filldraw[fill=black,draw=black] (1,0) circle (1pt);
    \draw[dashed] (0,3) -- (1,3);
    \draw[dashed] (0,2) -- (1,2);
    \draw[dashed] (0,1) -- (1,1);
    \draw[dashed] (0,0) -- (1,0);
    \draw (0,3) -- (-0.5,2.5) -- (0,2) -- (-0.5,2.5)-- (-1.5,1.5) -- (-0.5,0.5) -- (0,1) -- (-0.5,0.5) -- (0,0);
    \draw (1,3) -- (2.5,1.5) -- (2,1) -- (1,2) -- (1.5,1.5) -- (1,1) -- (1.5,1.5) -- (2,1) -- (1,0);
    \filldraw[fill=red,draw=red] (2.5,1.5) circle (1.5pt);
    \filldraw[fill=red,draw=red] (-1.5,1.5) circle (1.5pt);
    \end{tikzpicture}
        \quad 
        \begin{tikzpicture}[scale=0.55]
    \filldraw[fill=black,draw=black] (0,3) circle (1pt);
    \filldraw[fill=black,draw=black] (0,2) circle (1pt);
    \filldraw[fill=black,draw=black] (0,1) circle (1pt);
    \filldraw[fill=black,draw=black] (0,0) circle (1pt);
    \filldraw[fill=black,draw=black] (1,3) circle (1pt);
    \filldraw[fill=black,draw=black] (1,2) circle (1pt);
    \filldraw[fill=black,draw=black] (1,1) circle (1pt);
    \filldraw[fill=black,draw=black] (1,0) circle (1pt);
    \draw[dashed] (0,3) -- (1,3);
    \draw[dashed] (0,2) -- (1,2);
    \draw[dashed] (0,1) -- (1,1);
    \draw[dashed] (0,0) -- (1,0);
    \draw (0,3) -- (-0.5,2.5) -- (0,2) -- (-0.5,2.5) -- (-1,2) -- (0,1) -- (-1,2) -- (-1.5,1.5) -- (0,0);
    \draw (1,3) -- (2.5,1.5) -- (2,1) -- (1,2) -- (2,1) -- (1.5,0.5) -- (1,1) -- (1.5,0.5) -- (1,0);
    \filldraw[fill=red,draw=red] (2.5,1.5) circle (1.5pt);
    \filldraw[fill=red,draw=red] (-1.5,1.5) circle (1.5pt);
    \end{tikzpicture}
            \quad 
        \begin{tikzpicture}[scale=0.55]
    \filldraw[fill=black,draw=black] (0,3) circle (1pt);
    \filldraw[fill=black,draw=black] (0,2) circle (1pt);
    \filldraw[fill=black,draw=black] (0,1) circle (1pt);
    \filldraw[fill=black,draw=black] (0,0) circle (1pt);
    \filldraw[fill=black,draw=black] (1,3) circle (1pt);
    \filldraw[fill=black,draw=black] (1,2) circle (1pt);
    \filldraw[fill=black,draw=black] (1,1) circle (1pt);
    \filldraw[fill=black,draw=black] (1,0) circle (1pt);
    \draw[dashed] (0,3) -- (1,3);
    \draw[dashed] (0,2) -- (1,2);
    \draw[dashed] (0,1) -- (1,1);
    \draw[dashed] (0,0) -- (1,0);
    \draw (0,3) -- (-1,2) -- (-0.5,1.5) -- (0,2) -- (-0.5,1.5) -- (0,1) -- (-1,2) -- (-1.5,1.5) -- (0,0);
    \draw (1,3) -- (2.5,1.5) -- (2,1) -- (1,2) -- (2,1) -- (1.5,0.5) -- (1,1) -- (1.5,0.5) -- (1,0);
    \filldraw[fill=red,draw=red] (2.5,1.5) circle (1.5pt);
    \filldraw[fill=red,draw=red] (-1.5,1.5) circle (1.5pt);
    \end{tikzpicture}
    \quad 
    \begin{tikzpicture}[scale=0.55]
    \filldraw[fill=black,draw=black] (0,3) circle (1pt);
    \filldraw[fill=black,draw=black] (0,2) circle (1pt);
    \filldraw[fill=black,draw=black] (0,1) circle (1pt);
    \filldraw[fill=black,draw=black] (0,0) circle (1pt);
    \filldraw[fill=black,draw=black] (1,3) circle (1pt);
    \filldraw[fill=black,draw=black] (1,2) circle (1pt);
    \filldraw[fill=black,draw=black] (1,1) circle (1pt);
    \filldraw[fill=black,draw=black] (1,0) circle (1pt);
    \draw[dashed] (0,3) -- (1,3);
    \draw[dashed] (0,2) -- (1,2);
    \draw[dashed] (0,1) -- (1,1);
    \draw[dashed] (0,0) -- (1,0);
    \draw (0,3) -- (-1.5,1.5) -- (-1,1) -- (0,2) -- (-1,1) -- (-0.5,0.5) -- (0,1) -- (-0.5,0.5) -- (0,0);
    \draw (1,3) -- (2,2) -- (1.5,1.5) -- (1,2) -- (1.5,1.5) -- (1,1) -- (2,2) -- (2.5,1.5) -- (1,0);
    \filldraw[fill=red,draw=red] (2.5,1.5) circle (1.5pt);
    \filldraw[fill=red,draw=red] (-1.5,1.5) circle (1.5pt);
    \end{tikzpicture} \\
    \phantom{-}\\
    \begin{tikzpicture}[scale=0.55]
    \filldraw[fill=black,draw=black] (0,3) circle (1pt);
    \filldraw[fill=black,draw=black] (0,2) circle (1pt);
    \filldraw[fill=black,draw=black] (0,1) circle (1pt);
    \filldraw[fill=black,draw=black] (0,0) circle (1pt);
    \filldraw[fill=black,draw=black] (1,3) circle (1pt);
    \filldraw[fill=black,draw=black] (1,2) circle (1pt);
    \filldraw[fill=black,draw=black] (1,1) circle (1pt);
    \filldraw[fill=black,draw=black] (1,0) circle (1pt);
    \draw[dashed] (0,3) -- (1,3);
    \draw[dashed] (0,2) -- (1,2);
    \draw[dashed] (0,1) -- (1,1);
    \draw[dashed] (0,0) -- (1,0);
        \draw (0,3) -- (-1.5,1.5) -- (-1,1) -- (0,2) -- (-1,1) -- (-0.5,0.5) -- (0,1) -- (-0.5,0.5) -- (0,0);
 \draw (1,3) -- (1.5,2.5) -- (1,2) -- (1.5,2.5) -- (2.5,1.5) -- (1.5,0.5) -- (1,1) -- (1.5,0.5) -- (1,0);
 \filldraw[fill=red,draw=red] (2.5,1.5) circle (1.5pt);
    \filldraw[fill=red,draw=red] (-1.5,1.5) circle (1.5pt);
    \filldraw[fill=green!75!black,draw=green!75!black] (1.5,0.5) circle (1.5pt);
    \filldraw[fill=green!75!black,draw=green!75!black] (-0.5,0.5) circle (1.5pt);
    \end{tikzpicture}
    \quad
        \begin{tikzpicture}[scale=0.55]
    \filldraw[fill=black,draw=black] (0,3) circle (1pt);
    \filldraw[fill=black,draw=black] (0,2) circle (1pt);
    \filldraw[fill=black,draw=black] (0,1) circle (1pt);
    \filldraw[fill=black,draw=black] (0,0) circle (1pt);
    \filldraw[fill=black,draw=black] (1,3) circle (1pt);
    \filldraw[fill=black,draw=black] (1,2) circle (1pt);
    \filldraw[fill=black,draw=black] (1,1) circle (1pt);
    \filldraw[fill=black,draw=black] (1,0) circle (1pt);
    \draw[dashed] (0,3) -- (1,3);
    \draw[dashed] (0,2) -- (1,2);
    \draw[dashed] (0,1) -- (1,1);
    \draw[dashed] (0,0) -- (1,0);
    \draw (0,3) -- (-0.5,2.5) -- (0,2) -- (-0.5,2.5)-- (-1.5,1.5) -- (-0.5,0.5) -- (0,1) -- (-0.5,0.5) -- (0,0);
    \draw (1,3) -- (2.5,1.5) -- (2,1) -- (1,2) -- (2,1) -- (1.5,0.5) -- (1,1) -- (1.5,0.5) -- (1,0);
    \filldraw[fill=red,draw=red] (2.5,1.5) circle (1.5pt);
    \filldraw[fill=red,draw=red] (-1.5,1.5) circle (1.5pt);
    \filldraw[fill=green!75!black,draw=green!75!black] (1.5,0.5) circle (1.5pt);
    \filldraw[fill=green!75!black,draw=green!75!black] (-0.5,0.5) circle (1.5pt);
    \end{tikzpicture}
    \quad 
    \begin{tikzpicture}[scale=0.55]
    \filldraw[fill=black,draw=black] (0,3) circle (1pt);
    \filldraw[fill=black,draw=black] (0,2) circle (1pt);
    \filldraw[fill=black,draw=black] (0,1) circle (1pt);
    \filldraw[fill=black,draw=black] (0,0) circle (1pt);
    \filldraw[fill=black,draw=black] (1,3) circle (1pt);
    \filldraw[fill=black,draw=black] (1,2) circle (1pt);
    \filldraw[fill=black,draw=black] (1,1) circle (1pt);
    \filldraw[fill=black,draw=black] (1,0) circle (1pt);
    \draw[dashed] (0,3) -- (1,3);
    \draw[dashed] (0,2) -- (1,2);
    \draw[dashed] (0,1) -- (1,1);
    \draw[dashed] (0,0) -- (1,0);
    \draw (0,3) -- (-1,2) -- (-0.5,1.5) -- (0,2) -- (-0.5,1.5) -- (0,1) -- (-1,2) -- (-1.5,1.5) -- (0,0);
    \draw (1,3) -- (2.5,1.5) -- (2,1) -- (1,2) -- (1.5,1.5) -- (1,1) -- (1.5,1.5) -- (2,1) -- (1,0);
    \filldraw[fill=red,draw=red] (2.5,1.5) circle (1.5pt);
    \filldraw[fill=red,draw=red] (-1.5,1.5) circle (1.5pt);
    \filldraw[fill=green!75!black,draw=green!75!black] (1.5,1.5) circle (1.5pt);
    \filldraw[fill=green!75!black,draw=green!75!black] (-0.5,1.5) circle (1.5pt);
    \end{tikzpicture}
    \quad 
    \begin{tikzpicture}[scale=0.55]
    \filldraw[fill=black,draw=black] (0,3) circle (1pt);
    \filldraw[fill=black,draw=black] (0,2) circle (1pt);
    \filldraw[fill=black,draw=black] (0,1) circle (1pt);
    \filldraw[fill=black,draw=black] (0,0) circle (1pt);
    \filldraw[fill=black,draw=black] (1,3) circle (1pt);
    \filldraw[fill=black,draw=black] (1,2) circle (1pt);
    \filldraw[fill=black,draw=black] (1,1) circle (1pt);
    \filldraw[fill=black,draw=black] (1,0) circle (1pt);
    \draw[dashed] (0,3) -- (1,3);
    \draw[dashed] (0,2) -- (1,2);
    \draw[dashed] (0,1) -- (1,1);
    \draw[dashed] (0,0) -- (1,0);
    \draw (0,3) -- (-1.5,1.5) -- (-1,1) -- (0,2) -- (-0.5,1.5) -- (0,1) -- (-0.5,1.5) -- (-1,1) -- (0,0);
   \draw (1,3) -- (2.5,1.5) -- (2,1) -- (1,2) -- (2,1) -- (1.5,0.5) -- (1,1) -- (1.5,0.5) -- (1,0);
   \filldraw[fill=red,draw=red] (2.5,1.5) circle (1.5pt);
    \filldraw[fill=red,draw=red] (-1.5,1.5) circle (1.5pt);
    \filldraw[fill=green!75!black,draw=green!75!black] (2,1) circle (1.5pt);
    \filldraw[fill=green!75!black,draw=green!75!black] (-1,1) circle (1.5pt);
    \end{tikzpicture}
    \quad 
    \begin{tikzpicture}[scale=0.55]
    \filldraw[fill=black,draw=black] (0,3) circle (1pt);
    \filldraw[fill=black,draw=black] (0,2) circle (1pt);
    \filldraw[fill=black,draw=black] (0,1) circle (1pt);
    \filldraw[fill=black,draw=black] (0,0) circle (1pt);
    \filldraw[fill=black,draw=black] (1,3) circle (1pt);
    \filldraw[fill=black,draw=black] (1,2) circle (1pt);
    \filldraw[fill=black,draw=black] (1,1) circle (1pt);
    \filldraw[fill=black,draw=black] (1,0) circle (1pt);
    \draw[dashed] (0,3) -- (1,3);
    \draw[dashed] (0,2) -- (1,2);
    \draw[dashed] (0,1) -- (1,1);
    \draw[dashed] (0,0) -- (1,0);
    \draw (0,3) -- (-0.5,2.5) -- (0,2) -- (-0.5,2.5)-- (-1.5,1.5) -- (-0.5,0.5) -- (0,1) -- (-0.5,0.5) -- (0,0);
    \draw (1,3) -- (1.5,2.5) -- (1,2) -- (1.5,2.5) -- (2.5,1.5) -- (1.5,0.5) -- (1,1) -- (1.5,0.5) -- (1,0);
    \filldraw[fill=red,draw=red] (2.5,1.5) circle (1.5pt);
    \filldraw[fill=red,draw=red] (-1.5,1.5) circle (1.5pt);
    \filldraw[fill=green!75!black,draw=green!75!black] (1.5,0.5) circle (1.5pt);
    \filldraw[fill=green!75!black,draw=green!75!black] (-0.5,0.5) circle (1.5pt);
    \filldraw[fill=blue,draw=blue] (1.5,2.5) circle (1.5pt);
    \filldraw[fill=blue,draw=blue] (-0.5,2.5) circle (1.5pt);
    \end{tikzpicture}
    \quad 
    \begin{tikzpicture}[scale=0.55]
    \filldraw[fill=black,draw=black] (0,3) circle (1pt);
    \filldraw[fill=black,draw=black] (0,2) circle (1pt);
    \filldraw[fill=black,draw=black] (0,1) circle (1pt);
    \filldraw[fill=black,draw=black] (0,0) circle (1pt);
    \filldraw[fill=black,draw=black] (1,3) circle (1pt);
    \filldraw[fill=black,draw=black] (1,2) circle (1pt);
    \filldraw[fill=black,draw=black] (1,1) circle (1pt);
    \filldraw[fill=black,draw=black] (1,0) circle (1pt);
    \draw[dashed] (0,3) -- (1,3);
    \draw[dashed] (0,2) -- (1,2);
    \draw[dashed] (0,1) -- (1,1);
    \draw[dashed] (0,0) -- (1,0);
        \draw (0,3) -- (-1.5,1.5) -- (-1,1) -- (0,2) -- (-1,1) -- (-0.5,0.5) -- (0,1) -- (-0.5,0.5) -- (0,0);
    \draw (1,3) -- (2.5,1.5) -- (2,1) -- (1,2) -- (2,1) -- (1.5,0.5) -- (1,1) -- (1.5,0.5) -- (1,0);
    \filldraw[fill=red,draw=red] (2.5,1.5) circle (1.5pt);
    \filldraw[fill=red,draw=red] (-1.5,1.5) circle (1.5pt);
    \filldraw[fill=green!75!black,draw=green!75!black] (1.5,0.5) circle (1.5pt);
    \filldraw[fill=green!75!black,draw=green!75!black] (-0.5,0.5) circle (1.5pt);
    \filldraw[fill=blue,draw=blue] (2,1) circle (1.5pt);
    \filldraw[fill=blue,draw=blue] (-1,1) circle (1.5pt);
    \end{tikzpicture}
    \caption{The 11 planar tanglegrams of size 4. The first five tanglegrams are irreducible, the next four have two leaf-matched pairs, and the final two have three pairs.} 
    \label{size4tanglegrams}
\end{figure}

\section{The Tanglegram  Single Edge Insertion Problem}\label{TEI}

In this section, we solve the Tanglegram Single Edge Insertion Problem. For convenience of the reader, we restate the problem below.

\begin{problem*}[\textbf{Tanglegram  Single Edge Insertion}]
Given a tanglegram $(T,S,\phi)$ and a planar subtanglegram $(T_I,S_{\phi(I)},\phi|_I)$ induced by $I=[n]\setminus \{i\}$ for $i\in [n]$, find a layout of $(T,S,\phi)$ that restricts to a planar layout of $(T_I,S_{\phi(I)},\phi|_I)$ and has the minimal number of crossings possible.
\end{problem*}

Since planar layouts of $(T_I,S_{\phi(I)},\phi|_I)$ are relevant for this problem, we can apply our work from Section \ref{planar}. We use the notation $(T,S,\phi|_I)=(T,S,\phi)\setminus \{(t_j,s_{\phi(j)})\}_{j\notin I}$ for a tanglegram with some between-tree edges removed. Note that $(T,S,\phi|_I)$ is well-defined as an input into \algname{ModifiedUntangle}, motivating this notation. Letting $(X,Y),L=\algname{ModifiedUntangle}(T,S,\phi|_I)$, we will show that $(X,Y)$ does restrict to a planar layout of $(T_I,S_{\phi(I)},\phi|_I)$ whenever it is a planar subtanglegram. However, $L$ may contain pairs $(u,v)$ where $u\in T$ or $v\in S$ are not vertices in $(T_I,S_{\phi(I)},\phi|_I)$. One simple example of this situation is shown in Figure \ref{smoothing}.


\begin{figure}[h]
    \centering
\begin{tikzpicture}[scale=0.6]
\filldraw[fill=black,draw=black] (-1,4) circle (1pt);
\filldraw[fill=black,draw=black] (-1,3) circle (1pt);
\filldraw[fill=black,draw=black] (-1,2) circle (1pt);
\filldraw[fill=black,draw=black] (-1,1) circle (1pt);
\filldraw[fill=black,draw=black] (1,4) circle (1pt);
\filldraw[fill=black,draw=black] (1,3) circle (1pt);
\filldraw[fill=black,draw=black] (1,2) circle (1pt);
\filldraw[fill=black,draw=black] (1,1) circle (1pt);
\draw (-1,4) -- (-1.5,3.5) -- (-1,3) -- (-1.5,3.5) -- (-2,3) -- (-1,2) -- (-2,3) -- (-2.5,2.5) -- (-1,1);
\draw (1,4) -- (1.5,3.5) -- (1,3) -- (1.5,3.5) -- (2,3) -- (1,2) -- (2,3) -- (2.5,2.5) -- (1,1);
\filldraw[fill=black,draw=black] (-1.5,3.5) circle (1pt);
\filldraw[fill=black,draw=black] (-2,3) circle (1pt);
\filldraw[fill=black,draw=black] (-2.5,2.5) circle (1pt);
\filldraw[fill=black,draw=black] (1.5,3.5) circle (1pt);
\filldraw[fill=black,draw=black] (2,3) circle (1pt);
\filldraw[fill=black,draw=black] (2.5,2.5) circle (1pt);
\draw[dashed] (-1,4) -- (1,4);
\draw[dashed] (-1,2) -- (1,1);
\draw[dashed] (-1,3) -- (1,2);
\node at (-0.875,4.375) {$t_1$};
\node at (-0.875,3.375) {$t_2$};
\node at (-0.875,2.375) {$t_3$};
\node at (-0.875,1.375) {$t_4$};
\node at (0.875,4.375) {$s_1$};
\node at (0.875,3.375) {$s_2$};
\node at (0.875,2.375) {$s_3$};
\node at (0.875,1.375) {$s_4$};
\node at (-2.75,2.75) {$u$};
\node at (2.75,2.75) {$v$};

\filldraw[fill=black,draw=black] (6,3.5) circle (1pt);
\filldraw[fill=black,draw=black] (6,2.5) circle (1pt);
\filldraw[fill=black,draw=black] (6,1.5) circle (1pt);
\filldraw[fill=black,draw=black] (8,3.5) circle (1pt);
\filldraw[fill=black,draw=black] (8,2.5) circle (1pt);
\filldraw[fill=black,draw=black] (8,1.5) circle (1pt);
\draw[dashed] (6,3.5) -- (8,3.5);
\draw[dashed] (6,2.5) -- (8,2.5);
\draw[dashed] (6,1.5) -- (8,1.5);
\draw (6,3.5) -- (5.5,3) -- (6,2.5) -- (5.5,3) -- (5,2.5) -- (6,1.5);
\draw (8,3.5) -- (8.5,3) -- (8,2.5) -- (8.5,3) -- (9,2.5) -- (8,1.5);
\node at (6.125,3.875) {$t_1$};
\node at (6.125,2.875) {$t_2$};
\node at (6.125,1.875) {$t_3$};
\node at (7.875,3.875) {$s_1$};
\node at (7.875,2.875) {$s_3$};
\node at (7.875,1.875) {$s_4$};
\node at (9.25,2.75) {$v$};
\filldraw[fill=black,draw=black] (5.5,3) circle (1pt);
\filldraw[fill=black,draw=black] (6,2.5) circle (1pt);
\filldraw[fill=black,draw=black] (8.5,3) circle (1pt);
\filldraw[fill=black,draw=black] (9,2.5) circle (1pt);
\filldraw[fill=black,draw=black] (5,2.5) circle (1pt);
\end{tikzpicture}
\caption{When used on $(T,S,\phi|_{\{1,2,3\}})$ shown on the left, {\normalfont{\algname{ModifiedUntangle}}} will add $(u,v)$ to $L$ on the first step of the algorithm. However, $u$ is not a vertex of the subtanglegram $(T_{\{1,2,3\}},S_{\{1,3,4\}},\phi|_{\{1,2,3\}})$, shown on the right.}
\label{smoothing}
\end{figure}
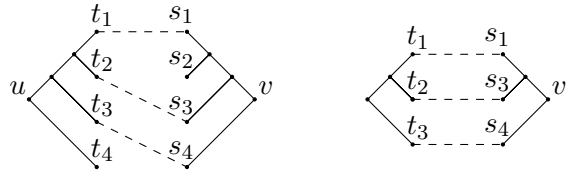

To find the leaf-matched pairs of the subtanglegram, one could input $(T_I,S_{\phi(I)},\phi|_I)$ itself into \algname{ModifiedUntangle}, and then relate the outputs to $(T,S,\phi)$, though this would require extending the outputted layout of $(T_I,S_{\phi(I)},\phi|_I)$ to a layout for $(T,S,\phi)$. Alternatively, we can directly construct the set of leaf-matched pairs of $(T_I,S_{\phi(I)},\phi|_I)$ using $L$. 

\begin{definition}\label{Lsubtanglegram}
\label{L(I)}
Let $(T,S,\phi)$ be a tanglegram of size $n$, and let $I\subseteq [n]$. For any pair of vertices $u\in T$, $v\in S$  such that $\leaf(u)\cap T_I$ is paired with $\leaf(v)\cap S_{\phi(I)}$ by $\phi|_I$, define their \textit{reduced} vertices to be
$$u_{red}=\text{minimal $u'\leq_T u$ such that $\leaf(u)\cap T_I=\leaf(u')\cap T_I$}$$
$$v_{red}=\text{minimal $v'\leq_S v$ such that $\leaf(v)\cap S_{\phi(I)}=\leaf(v')\cap S_{\phi(I)}$}.$$
Given the set $L$ from \algname{ModifiedUntangle}$(T,S,\phi|_I)$, define $L(I)$ to be the set of all $(u_{red},v_{red})$ such that $(u,v)\in L$ and $|\leaf(u)\cap T_I|=|\leaf(v)\cap S_{\phi(I)}|>1$.
\end{definition}

\begin{lemma}
\label{Lrestrict}
Suppose $(T,S,\phi)$ is a tanglegram with planar subtanglegram $(T_I,S_{\phi(I)},\phi|_I)$ induced by $I\subseteq [n]$, and let $(X,Y),L=\algname{ModifiedUntangle}(T,S,\phi|_I)$. If $(u,v)\in L$, then $\leaf(u)\cap T_I$ and $\leaf(v)\cap S_{\phi(I)}$ are matched by $\phi|_I$. 
\end{lemma}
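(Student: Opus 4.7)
The plan is to unpack what membership in $L$ means in the context of $\algname{ModifiedUntangle}(T,S,\phi|_I)$ and then translate the ``degree one'' condition into a bijection statement about leaves. By the added step in $\algname{ModifiedUntangle}$, a pair $(u,v)$ is put into $L$ exactly when, at some iteration of the \textbf{while} loop, $u \in X$ and $v \in Y$ are adjacent in $(X,E,Y)$ and both have degree $1$. I will fix such an iteration and argue about the edge set $E$ at that step.

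First I would recall how $E$ is built. The Boolean table $P$ constructed in lines 1--5 of $\algname{ModifiedUntangle}$ uses the matching $\phi|_I$, so $P[u',v'] = True$ iff there exists $j \in I$ with $t_j \leq_T u'$ and $s_{\phi(j)} \leq_S v'$. Moreover, by the invariant maintained in $\algname{Refine}$, at any iteration of the \textbf{while} loop the partial layouts $X$ and $Y$ partition $\leaf(T)$ and $\leaf(S)$ in the sense that for every leaf $t_j \in T$ there is a unique vertex of $X$ above $t_j$, and similarly for $S$; and the edge set $E$ on $X \cup Y$ consists precisely of those pairs $(u',v')$ with $P[u',v'] = True$.

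Next I would exploit degree $1$. Since $u \in X$ has $v$ as its only neighbor in $(X,E,Y)$, for every $j \in I$ with $t_j \leq_T u$ the unique ancestor of $s_{\phi(j)}$ in $Y$ must be $v$ (else we would obtain a second edge at $u$), which forces $s_{\phi(j)} \leq_S v$. Symmetrically, using $\deg(v) = 1$, for every $k \in \phi(I)$ with $s_k \leq_S v$ we get $t_{\phi^{-1}(k)} \leq_T u$. Together these inclusions say exactly that $\phi|_I$ restricts to a bijection between $\leaf(u) \cap T_I$ and $\leaf(v) \cap S_{\phi(I)}$, which is the claim.

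There is no real obstacle here; the only subtle point is to double-check that the partition/edge-set invariant for $\algname{Refine}$ is still a valid description at a general step (which follows immediately from the initialization $(X,Y) = (\rut(T),\rut(S))$, $E = \{(\rut(T),\rut(S))\}$ together with the update in the inner \textbf{for} loop of $\algname{Refine}$). Once that is in hand, the argument is a single pass through the definitions.
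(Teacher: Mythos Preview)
Your proposal is correct and follows essentially the same approach as the paper: both arguments use that $(u,v)\in L$ means $u$ and $v$ appear as adjacent degree-one vertices in some $(X,E,Y)$, interpret the edge set $E$ via the Boolean table $P$ computed from $\phi|_I$, and conclude from the degree-one condition on both sides that $\phi|_I$ matches $\leaf(u)\cap T_I$ with $\leaf(v)\cap S_{\phi(I)}$. Your write-up is more explicit about the partition and edge-set invariants maintained by $\algname{Refine}$, whereas the paper states the conclusion tersely, but the underlying logic is identical.
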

\begin{proof}
Since $(u,v)\in L$, we know $u$ and $v$ appeared as adjacent degree one vertices in a bipartite graph $(X_j,E_j,Y_j)$ during some step of $\algname{ModifiedUntangle}$. The edges $E_j$ are constructed using the Boolean table $P$, and because of the restriction $\phi|_I$, $P[u,v]=True$ if and only if some element of $\leaf(u)\cap T_I$ is matched with an element of $\leaf(v)\cap S_{\phi(I)}$ by $\phi|_I$. From this, we see that $u$ and $v$ being adjacent degree one vertices in $(X_j,E_j,Y_j)$ implies $\leaf(u)\cap T_I$ and $\leaf(v)\cap S_{\phi(I)}$ are matched by $\phi|_I$.
\end{proof}

\begin{lemma}\label{modified}
Suppose $(T,S,\phi)$ is a tanglegram with planar subtanglegram $(T_I,S_{\phi(I)},\phi|_I)$ induced by $I\subseteq [n]$, and let $(X,Y),L=\algname{ModifiedUntangle}(T,S,\phi|_I)$. 
\begin{enumerate}[label=(\alph*)]
    \item The layout $(X,Y)$ restricts to a planar layout of $(T_I,S_{\phi(I)},\phi|_I)$. 
    \item The set $L(I)$ is the set of leaf-matched pairs of $(T_I,S_{\phi(I)},\phi|_I)$. 
\end{enumerate}

\end{lemma}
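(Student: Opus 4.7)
The plan is to run \algname{ModifiedUntangle} on the partial input $(T,S,\phi|_I)$ as if it were an ordinary tanglegram, reinterpreting ``planar layout'' to mean a layout of the full trees $T$ and $S$ in which no two between-tree edges indexed by $I$ cross. Under this convention the Boolean table $P$, \algname{Refine}, and the promising-partial-layout machinery depend only on $\phi|_I$, so the proofs of Lemma \ref{extends} and Theorem \ref{ModifiedUntangle} go through verbatim, and $(T,S,\phi|_I)$ is planar in this generalized sense exactly when $(T_I,S_{\phi(I)},\phi|_I)$ is planar (any planar layout of the subtanglegram extends to one of the full trees by placing the leaves outside $I$ arbitrarily). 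Thus the output $(X,Y)$ has no crossings among edges indexed by $I$, and taking the sub-lists indexed by the leaves of $T_I$ and $S_{\phi(I)}$ produces a planar layout of the subtanglegram, giving part (a).

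For part (b) I prove both inclusions. The forward direction $L(I) \subseteq \{\text{leaf-matched pairs of the subtanglegram}\}$ is immediate from Lemma \ref{Lrestrict}: if $(u,v) \in L$ with $|\leaf(u) \cap T_I| > 1$ produces $(u_{red},v_{red}) \in L(I)$, then $\leaf(u_{red}) \cap T_I = \leaf(u) \cap T_I$ is matched by $\phi|_I$ to $\leaf(v_{red}) \cap S_{\phi(I)} = \leaf(v) \cap S_{\phi(I)}$, and the cardinality condition forces $u_{red}$ and $v_{red}$ to be non-suppressed internal vertices of the induced subtrees, yielding a leaf-matched pair of $(T_I,S_{\phi(I)},\phi|_I)$.

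The reverse inclusion is where the main work lies. Given a leaf-matched pair $(u',v')$ of the subtanglegram, since $u'$ and $v'$ are non-suppressed internal vertices of their induced subtrees we have $u'_{red}=u'$ and $v'_{red}=v'$, so it suffices to exhibit $(u,v) \in L$ with $u_{red}=u'$ and $v_{red}=v'$. The plan is to track the execution of \algname{ModifiedUntangle} on $(T,S,\phi|_I)$: at each step let $u$ be the unique ancestor of $u'$ in $X$ and $v$ the unique ancestor of $v'$ in $Y$. Because $\leaf(u') \cap T_I$ and $\leaf(v') \cap S_{\phi(I)}$ are matched by $\phi|_I$, the only $Y$-neighbor of $u$ is $v$, so $\deg(u)=1$ throughout; the vertex $v$ may by contrast carry extra $\phi(I)$-descendants outside $v'$'s subtree, in which case $\deg(v) \geq 2$ and the highest-degree rule refines $v$ before $u$, pushing the ancestor chain one step down towards $v'$. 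The main obstacle is verifying that this descent terminates at a step where $v_{red}=v'$ before $u$ is itself refined; the key observation is that any ancestor $v$ of $v'$ with $v_{red} \neq v'$ necessarily has degree at least two in the bipartite graph (its extra $\phi(I)$-descendants are matched with leaves in $T_I$ outside $u'$'s subtree, whose $X$-ancestors are distinct from $u$), so the degree-one vertex $u$ is never selected while the descent is still in progress. Once the descent reaches a $v$ with $\deg(v)=1$, the matching forces $\leaf(v) \cap S_{\phi(I)} = \leaf(v') \cap S_{\phi(I)}$ and hence $v_{red}=v'$; the next refinement of $u$ or $v$ then triggers the degree-one check in \algname{ModifiedUntangle} and adds $(u,v)$ to $L$, and reducing yields $(u',v') \in L(I)$.
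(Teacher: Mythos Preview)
Your overall strategy matches the paper's, but there is a genuine gap in the reverse inclusion of part~(b). You claim that if $u$ denotes the $X$-ancestor of $u'$ at each step, then ``the only $Y$-neighbor of $u$ is $v$, so $\deg(u)=1$ throughout.'' This fails while $u$ is still a strict ancestor of $u'$: leaves in $(\leaf(u)\cap T_I)\setminus(\leaf(u')\cap T_I)$ may be matched into a $Y$-vertex other than $v$, giving $u$ a second neighbor. Concretely, if at some step $u$ and $v$ are each other's unique neighbor and the algorithm refines $v$ into $v_1,v_2$ with $v'\leq_S v_1$, then $u$ acquires $v_2$ as a neighbor whenever any of those extra leaves are matched into $\leaf(v_2)$. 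Your subsequent claim that $v$'s extra descendants have ``$X$-ancestors distinct from $u$'' likewise presupposes $\leaf(u)\cap T_I=\leaf(u')\cap T_I$, which you have not established.

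The paper repairs this by not tracking both ancestors from the beginning: instead, wait until one of $u'$ or $v'$ first appears in a partial layout (without loss of generality $u'$). From that moment $u=u'$, so $\leaf(u)\cap T_I=\leaf(u')\cap T_I$ and your degree-one argument becomes valid; the descent of $v$ toward some $w$ with $w_{red}=v'$ then proceeds exactly as you describe. With this fix your argument coincides with the paper's. Two smaller points: for (a), Lemma~\ref{extends} does not go through entirely verbatim, since refining $u$ may now produce a child with no incident between-tree edges at all, a case the paper handles separately by observing that either order is then promising; and for the forward direction of (b), you should spell out why $u_{red}$ is non-suppressed in $T_I$, namely that minimality of $u_{red}$ forces both of its children in $T$ to contain leaves of $T_I$.
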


\begin{proof}
To prove (a), we use a similar argument as in Theorem \ref{ModifiedUntangle}. Call $(X,Y)$ promising if it extends to a planar drawing of $(T,S,\phi|_I)$. The initial partial layout of roots is promising by assumption. We claim that $(X,Y)$ is still promising after an iteration of \algname{Refine}. Suppose we are refining a vertex of highest degree $u\in X$. It is possible that when we use \algname{Refine} on $u$ to replace it with its children $u_1$ and $u_2$, the Boolean table values $P[u_i,v]$ are false for all $v\in Y$, and this occurs precisely when all between-tree edges incident to leaves in $\leaf(u_i)$ have been removed. If this occurs, we assume without loss of generality that it occurs for $u_1$. Since $(X,Y)$ was promising, it extends to some planar drawing, and if we interchange the subtrees rooted at $u_1$ and $u_2$, we obtain another planar drawing since leaves in $\leaf(u_1)$ are not incident to any of the between-tree edges. 
Thus, replacing $u$ with $u_1u_2$ or $u_2u_1$ both result in a promising partial layout. For all other cases, the same arguments from Lemma \ref{extends} apply, and thus \algname{ModifiedUntangle} terminates in a drawing of $(T,S,\phi|_I)$ that is planar. Restricting to just the leaves in $I$, this must be a planar layout of $(T_I,S_{\phi(I)},\phi|_I)$. 

For (b), we let $(X',Y'),L'=\algname{ModifiedUntangle}(T_I,S_{\phi(I)},\phi|_I)$, so we must show that $L(I)=L'$. For the remainder of this proof, we will use $\leaf(u)$ to denote the leaves of $u$ in $T$ and use $\leaf(u)\cap T_I$ to denote the leaves of $u$ in $T_I$. We use a corresponding notation for vertices $v\in S$.  

We claim that $L(I)\subseteq L'$. Let $(u_{red},v_{red})\in L(I)$ for some $(u,v)\in L$. By Definition \ref{L(I)} and Lemma \ref{Lrestrict}, $\leaf(u)\cap T_I=\leaf(u_{red})\cap T_I$ is matched with $\leaf(v)\cap S_{\phi(I)}=\leaf(v_{red})\cap S_{\phi(I)}$ by $\phi|_I$ and $|\leaf(u_{red})\cap T_I|=|\leaf(v_{red})\cap S_{\phi(I)}|>1$.
Also recall from Definition \ref{L(I)} that $u_{red}$ is the minimal $u'\leq_T u$ with $\leaf(u)\cap T_I=\leaf(u')\cap T_I$. Note that this $u_{red}$ cannot be a leaf in $T$ since we know $|\leaf(u)\cap T_I)|=|\leaf(u_{red})\cap T_I|>1$. Now let $T_1$ and $T_2$ be the subtrees of $T$ such that $\rut(T_1)$ and $\rut(T_2)$ are the children of $u_{red}$. Then the definition of $u_{red}$ implies that $\leaf(\rut(T_1))\cap T_I$ and $\leaf(\rut(T_2))\cap T_I$ are both proper, nonempty subsets of $\leaf(u_{red})\cap T_I$, and from this, we conclude that nonempty subtrees $T_1'\subseteq T_1$ and $T_2'\subseteq T_2$ appear in the minimal subtree containing $\{t_i\}_{i\in I}$. This implies that $u_{red}$ must appear in the minimal subtree containing $\{t_i\}_{i\in I}$, as it is the minimal vertex that is an ancestor of the vertices in $T_1'\cup T_2'$. Furthermore, this also implies that $u_{red}$ has a child in $T_1'$ and a child in $T_2'$. Since $u_{red}$ has two children in the minimal subtree containing $\{t_i\}_{i\in I}$, it will not be suppressed when forming $T_I$. We conclude that $u_{red}$ is an internal vertex of $T_I$, and a similar argument shows $v_{red}$ is an internal vertex of $S_{\phi(I)}$. Combined, we see that $u_{red}$ and $v_{red}$ are internal vertices of $(T_I,S_{\phi(I)},\phi|_I)$, and $\leaf(u_{red})\cap T_I$ is matched with $\leaf(v_{red})\cap S_{\phi(I)}$ by $\phi|_I$. Thus, $(u_{red},v_{red})\in L'$, and we conclude that $L(I)\subseteq L'$.

To finish proving (b), we must show that $L(I)\supset L'$. Consider any  leaf-matched pair $(u',v')\in L'$. Observe that $u'=u'_{red}$ and $v'=v'_{red}$ since $u'$ and $v'$ are vertices in $(T_I,S_{\phi(I)},\phi|_I)$. During $\algname{ModifiedUntangle}(T,S,\phi|_I)$, either $u'$ or $v'$ appears first in some partial layout $(X_j,Y_j)$, so we assume without loss of generality that it is $u'\in X_j$. Since $v'$ has not appeared yet in a partial layout, some vertex $y'\geq_S v'$ must be in $Y_j$. Since $\leaf(u')\cap T_I$ is matched with a subset of $\leaf(y')\cap S_{\phi(I)}$ by $\phi|_I$, we know that $y'$ is the unique neighbor of $u'$ in $(X_j,E_j,Y_j)$, and if $\deg(y')\geq 1$ in $(X_j,E_j,Y_j)$, then $y'$ will be refined before we refine $u'$. Eventually, we will obtain some partial layout $(X_k,Y_k)$ where some $w'\geq_S v'$ will appear in $Y_k$ with $\deg(w')=1$ in $(X_k,E_k,Y_k)$, and hence $u'$ and $w'$ are unique neighbors of one another in $(X_k,E_k,Y_k)$. After this occurs, \algname{ModifiedUntangle} will add $(u',w')$ to $L$ before it refines $u'$ or $w'$. Since $(u',v')\in L'$, we know $\leaf(u')\cap T_I$ is matched with $\leaf(v')\cap S_{\phi(I)}$. Since $u'$ and $w'$ had degree one in $(X_k,E_k,Y_k)$, Lemma \ref{Lrestrict} implies $\leaf(u')\cap T_I$ is also matched with $\leaf(w')\cap S_{\phi(I)}$ by $\phi|_I$, and therefore $\leaf(v')\cap S_{\phi(I)}=\leaf(w')\cap S_{\phi(I)}$. Since $v'_{red}=v'$, we conclude that $w'_{red}=v'_{red}=v'$, and thus $(u',v')=(u_{red}',w'_{red})\in L(I)$.  
\end{proof}

\subsection{Preserving subtanglegram planarity and reducing crossings}\label{4-1}

Throughout this subsection, fix $(T,S,\phi)$ as a tanglegram of size $n$, and fix $I=[n]\setminus \{i\}$ for some $i\in [n]$. Let $L(I)$ be the leaf-matched pairs of $(T_I,S_{\phi(I)},\phi|_I)$, and let $(X,Y)$ be a layout of $(T,S,\phi)$ that restricts to a planar layout of $(T_I,S_{\phi(I)},\phi|_I)$. We also use the notation
\begin{equation}
u_0=\text{parent of $t_i\in T$} \quad \text{ and } \quad  v_0=\text{parent of $s_{\phi(i)}\in S$},
\end{equation}
which are also the unique internal vertices in $(T,S,\phi)$ that are not in $(T_I,S_{\phi(I)},\phi|_I)$.
To solve the Tanglegram Single Edge Insertion Problem, we pursue the following questions:
\begin{enumerate}[label=(\arabic*)]
    \item What operations on $(X,Y)$ produce another layout $(X',Y')$ that is also planar when restricted to $(T_I,S_{\phi(I)},\phi|_I)$?
    \item How can we efficiently find the operation(s) that correspond to a solution to the Tanglegram Single Edge Insertion Problem?
\end{enumerate}
In this section, we answer (1) and establish some results that will lead to (2). A complete answer to (2) requires some cases, which we detail in Section \ref{4-2}. Before we answer (1), we start with a definition. While a special case of this definition is sufficient for answering (1), the extra generality will be useful later in Section \ref{MEI}.

\begin{definition}
Let $T$ be a tree, let $u\in T$ be an internal vertex, and let $u_1,u_2$ be the children of $u$. A \textit{subtree switch} (sometimes abbreviated \textit{switch}) at $u$ is the operation on $T$ that interchanges the two subtrees rooted at $u_1$ and $u_2$, while maintaining the relative order of all leaves within each subtree. Note that a subtree switch is equivalent to a flip at $u$ and then a flip at each child of $u$ that is not a leaf.
\end{definition}

\begin{lemma}\label{planarsubtanglegram}
Suppose $(X,Y)$ and $(X',Y')$ are both layouts of $(T,S,\phi)$ that restrict to a planar layout of $(T_I,S_{\phi(I)},\phi|_I)$. Then $(X',Y')$ can be obtained from $(X,Y)$ using a sequence of the following operations:
\begin{itemize}
    \item paired flips at $(u,v)\in L(I)$, and 
    \item subtree switches at $u_0$ and $v_0$.
\end{itemize}
\end{lemma}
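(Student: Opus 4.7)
My plan is to reduce the statement to Theorem \ref{planarlayouts} by first restricting both layouts to the subtanglegram, and then analyzing how two full layouts that share a common subtanglegram restriction can differ.

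First I would restrict $(X,Y)$ and $(X',Y')$ to layouts $(X_I,Y_I)$ and $(X_I',Y_I')$ of $(T_I,S_{\phi(I)},\phi|_I)$, both of which are planar by hypothesis. Lemma \ref{modified}(b) identifies $L(I)$ as exactly the set of leaf-matched pairs of the subtanglegram, so by Theorem \ref{planarlayouts} applied to $(T_I,S_{\phi(I)},\phi|_I)$ there is a sequence of paired flips at elements of $L(I)$ taking $(X_I,Y_I)$ to $(X_I',Y_I')$.

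Second, each $(u,v)\in L(I)$ is a pair of internal vertices that appear in both $T_I,S_{\phi(I)}$ and in $T,S$ (this is precisely the content of the proof of Lemma \ref{modified}(b)). Hence the same sequence of paired flips can be performed on $(X,Y)$ viewed as a layout of the full tanglegram, producing an intermediate layout $(X'',Y'')$ of $(T,S,\phi)$. A flip at an internal vertex of $T$ reverses the order of $\leaf(u)$ in $X$, which commutes with restriction to the index set $I$; consequently the restriction of $(X'',Y'')$ to the subtanglegram equals $(X_I',Y_I')$, which is also the restriction of $(X',Y')$.

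The main content of the argument is the third step: I must show that any two layouts of $(T,S,\phi)$ with the same restriction to $(T_I,S_{\phi(I)},\phi|_I)$ differ by at most a subtree switch at $u_0$ and a subtree switch at $v_0$. Let $u_0'$ be the sibling of $t_i$ in $T$. The leaves $\leaf(u_0')$ lie in $T_I$ and, because they are precisely the leaves of the subtree rooted at $u_0$ minus $\{t_i\}$, they form a consecutive block appearing in the same order in both $X''$ and $X'$ (this order is pinned down by the shared subtanglegram restriction, using that suppression of $u_0$ in Definition \ref{subtreedefinition} does not alter the embedding order of $\leaf(u_0')$). In any layout of $(T,S,\phi)$, the leaves $\{t_i\}\cup \leaf(u_0')$ must be consecutive since they equal $\leaf(u_0)$, so $t_i$ is forced to sit immediately above or immediately below this block. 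These two options correspond precisely to whether or not a subtree switch at $u_0$ has been applied, and the analogous argument with $s_{\phi(i)}$ and $v_0$ handles the $Y$-side. Applying at most one switch at each of $u_0$ and $v_0$ to $(X'',Y'')$ then produces $(X',Y')$, completing the proof.

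The main obstacle I expect is the bookkeeping in the third step, in particular handling degenerate configurations such as $u_0=\rut(T)$ or $u_0'$ being a leaf; in each case the consecutive-block argument still goes through, but one must check that ``immediately above or below'' is well-defined at the boundary of the layout and that the two resulting full layouts are genuinely distinct unless $u_0$-subtree-switch acts trivially.
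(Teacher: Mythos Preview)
Your argument is correct and complete. The approach differs from the paper's in its organization, though both rest on Theorem~\ref{planarlayouts}.

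The paper starts with an arbitrary flip sequence $f_1,\ldots,f_m$ from $(X,Y)$ to $(X',Y')$, then isolates any flip at $u_0$ or $v_0$ and converts it into a subtree switch by inserting a canceling pair $g,g$ (where $g$ is the flip at the non-$t_i$ child of $u_0$); the remaining flips avoid $u_0,v_0$ and hence, viewed in the subtanglegram, must form a sequence of paired flips at $L(I)$ by Theorem~\ref{planarlayouts}. Your route is in some sense the reverse: you first invoke Theorem~\ref{planarlayouts} on the restricted layouts, lift the resulting paired flips to $(T,S,\phi)$, and then observe directly that two full layouts sharing a restriction can only differ in whether $t_i$ (resp.\ $s_{\phi(i)}$) sits above or below $\leaf(u_0')$ (resp.\ $\leaf(v_0')$), which is exactly the effect of a subtree switch at $u_0$ (resp.\ $v_0$). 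Your argument is geometrically cleaner for this single-edge case; the paper's flip-sequence manipulation, on the other hand, generalizes verbatim to the multi-edge setting (Lemma~\ref{MEIsolution}), where $M(I)$ may contain many vertices and the ``only two positions for $t_i$'' picture no longer applies directly.
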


\begin{proof}
Note that flips at internal vertices $u\in T$ and $v\in S$ generate all trees isomorphic to $T$ and $S$, and all of these flips commute with one another. Hence, sequences of flips at internal vertices generate all layouts of a tanglegram. From this, we know that some sequence of flips $f_1,\ldots,f_m$ at vertices in $T$ and $S$ maps $(X,Y)$ to $(X',Y')$, and all of these flips commute with one another. We can assume that we do not flip at any vertex twice, as all flips commute and have order two. 

Recall that $u_0$ is the parent of $t_i$ and $v_0$ is the parent of $s_{\phi(i)}$. Let $u'$ be the child of $u_0$ that is not $t_i$, and let $v'$ be the child of $v_0$ that is not $s_{\phi(i)}$. Note that these may or may not be internal vertices. If $u'$ is an internal vertex, define $g$ to be a flip at $u'$. Otherwise, define $g$ as the identity map. Similarly, define $h$ to be a flip at $v'$ or the identity map, respectively corresponding to when $v'$ is an internal vertex or a leaf.

First, suppose that none of the flips $f_1,\ldots,f_m$ are flips at $u_0$ and $v_0$. Then we can restrict the layouts $(X,Y)$ and $(X',Y')$ to $(T_I,S_{\phi(I)},\phi|_I)$ and consider $f_1,\ldots,f_m$ as a sequence of flips in the subtanglegram $(T_I,S_{\phi(I)},\phi|_I)$. Since the restrictions of $(X,Y)$ and $(X',Y')$ are planar layouts of $(T_I,S_{\phi(I)},\phi|_I)$, Theorem \ref{planarlayouts} implies that $f_1,\ldots,f_m$ must be equivalent to a sequence of paired flips at elements in $L(I)$. 

Now suppose that some $f_j$ corresponds to a flip at $u_0$, but no flip at $v_0$ occurs. Since all flips commute, we can assume without loss of generality that this is $f_1$. Then another sequence of flips that maps $(X,Y)$ to $(X',Y')$ is
$$f_1,g,g,f_2,\ldots,f_m,$$
as all flips have order 2. The composition $g\circ f_1$ is a subtree switch at $u_0$ that maps $(X,Y)$ to a layout $(X'',Y'')$ that is also planar when restricted to $(T_I,S_{\phi(I)},\phi|_I)$. The sequence $g,f_2,\ldots,f_m$ maps $(X'',Y'')$ to $(X',Y')$, and none of the flips involve the vertices $u_0$ and $v_0$. By the preceding paragraph, this sequence must be equivalent to a sequence of paired flips at elements in $L(I)$. We can use a similar argument when some $f_j$ corresponds to a flip at $v_0$ and no flip at $u_0$ occurs.

Finally, if flips at both $u_0$ and $v_0$ occur, then we assume without loss of generality that these flips are $f_1$ and $f_2$, respectively. We then consider the sequence
$$f_1,g,f_2,h,g,h,f_3,\ldots,f_m,$$
which is equivalent to $f_1,\ldots,f_m$ since all flips commute and have order 2. Using similar reasoning, $g\circ f_1$ and $h\circ f_2$ are subtree switches, and $g,h,f_3,\ldots,f_m$ correspond to a sequence of paired flips at elements in $L(I)$. 
\end{proof}

\begin{lemma}\label{automorphisms}
Let $(u,v)\in L(I)$, and let $(X',Y')$ be the image of $(X,Y)$ after a paired flip at $(u,v)$.
\begin{enumerate}[label=(\alph*)]
    \item If $u\not>_T t_i$ and $v\not>_S s_{\phi(i)}$, then $(X,Y)$ and $(X',Y')$ have the exact same crossings.
    \item If $u>_T t_i$ and $v>_S s_{\phi(i)}$, then $(X,Y)$ and $(X',Y')$ have the exact same crossings.
\end{enumerate}
\end{lemma}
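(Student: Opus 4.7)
The plan is to reduce both cases to a single key fact: a paired flip at a leaf-matched pair of the \emph{full} tanglegram $(T,S,\phi)$ preserves every crossing of any layout, not just the crossing number. I would split the argument into two steps.

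First I would verify that under either hypothesis, $(u,v)$ is actually a leaf-matched pair of $(T,S,\phi)$ and not merely of the subtanglegram $(T_I,S_{\phi(I)},\phi|_I)$. In case (a), $u \not>_T t_i$ gives $t_i \notin \leaf(u)$, so $\leaf(u) \cap T_I = \leaf(u)$, and similarly $\leaf(v) \cap S_{\phi(I)} = \leaf(v)$; the membership $(u,v) \in L(I)$ then says exactly that $\phi$ matches $\leaf(u)$ to $\leaf(v)$. In case (b), the ancestor hypotheses give $t_i \in \leaf(u)$ and $s_{\phi(i)} \in \leaf(v)$, and the definition of $L(I)$ together with Lemma \ref{Lrestrict} shows $\phi|_I$ matches $\leaf(u) \setminus \{t_i\}$ to $\leaf(v) \setminus \{s_{\phi(i)}\}$; since $\phi(i) = \phi(i)$ also pairs $t_i$ with $s_{\phi(i)}$, $\phi$ again matches $\leaf(u)$ to $\leaf(v)$.

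Next I would show that for any leaf-matched pair $(u,v)$ of $(T,S,\phi)$, a paired flip at $(u,v)$ preserves the crossing status of every pair of between-tree edges. The key combinatorial input is that, because $\phi$ restricts to a bijection $\leaf(u)\to\leaf(v)$, every between-tree edge $e=(t_j,s_{\phi(j)})$ is either entirely ``inside'' (both endpoints in $\leaf(u)\cup\leaf(v)$) or entirely ``outside''. For a pair of edges $e_1,e_2$ I would analyze three configurations: both outside (no endpoint moves, so the crossing is unchanged), both inside (the $X$-order within $\leaf(u)$ and the $Y$-order within $\leaf(v)$ are reversed simultaneously, so the crossing predicate is preserved), and one inside and one outside (the outside endpoints keep their positions while the inside endpoints stay within the consecutive blocks $\leaf(u)$ in $X$ and $\leaf(v)$ in $Y$, so the above/below relationship between an outside endpoint and any inside endpoint is unchanged).

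The main obstacle is the mixed configuration; it relies crucially on the fact that $\leaf(u)$ and $\leaf(v)$ appear as consecutive sublists in $X$ and $Y$, so that an outside endpoint lies unambiguously above or below the entire block and its relative order with a moving inside endpoint is preserved by the reversal. Once all three configurations are handled, every pair of between-tree edges crosses in $(X,Y)$ if and only if it crosses in $(X',Y')$, which is stronger than the stated conclusion about crossing counts.
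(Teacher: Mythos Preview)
Your proposal is correct and in fact proves a stronger statement than the paper does.  The paper's argument leans on the standing hypothesis from the start of Section~\ref{4-1} that $(X,Y)$ restricts to a planar layout of $(T_I,S_{\phi(I)},\phi|_I)$, so that \emph{every} crossing in $(X,Y)$ involves the single edge $(t_i,s_{\phi(i)})$.  With that in hand, the paper treats the two cases separately and briefly: in (a) it observes that $(t_i,s_{\phi(i)})$ lies entirely above or below the consecutive blocks $\leaf(u)$ and $\leaf(v)$, hence crosses all or none of the block edges, and a reversal of the blocks cannot change which of those two alternatives holds; in (b) it notes that all crossings are inside the subtanglegram rooted at $(u,v)$, which is simply reflected by the paired flip.

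Your route is different: you first show that under either hypothesis $(u,v)$ is a leaf-matched pair of the \emph{full} tanglegram $(T,S,\phi)$, and then give a layout-independent three-case analysis (both inside, both outside, mixed) showing that a paired flip at any such pair preserves the crossing status of every pair of between-tree edges.  This unifies (a) and (b), does not use the planarity assumption on the subtanglegram, and yields a cleaner invariant you could reuse later (indeed, the paper essentially reproves the ``both inside'' and ``mixed'' cases again in Lemma~\ref{samecrossings}).  One minor point: your appeal to Lemma~\ref{Lrestrict} in Step~1, case~(b), is unnecessary---by the standing convention at the start of Section~\ref{4-1}, $L(I)$ is already defined to be the set of leaf-matched pairs of $(T_I,S_{\phi(I)},\phi|_I)$, so the matching of $\leaf(u)\setminus\{t_i\}$ with $\leaf(v)\setminus\{s_{\phi(i)}\}$ follows immediately.
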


\begin{proof}
For (a), we suppose $u\not>_T t_i$ and $v\not>_S s_{\phi(i)}$. Suppose $(t_i,s_{\phi(i)})$ crosses some but not all of the edges between $\leaf(u)$ and $\leaf (v)$. Then $t_i$ appears in the middle of the leaves in $\leaf(u)$, or $s_{\phi(i)}$ appears in the middle of the leaves in $\leaf(v)$. These respectively contradict the assumptions $u\not>_T t_i$ and $v\not>_S s_{\phi(i)}$, as the leaves in $\leaf(u)$ and $\leaf(v)$ must appear consecutively in any layout. From this, we conclude that $(t_i,s_{\phi(i)})$ either crosses all or none of the edges between $\leaf(u)$ and $\leaf(v)$. Then a paired flip at $(u,v)$ does not affect any crossings. 

For (b), suppose that for the pair $(u,v)$, both $u>_T t_i$ and $v>_S s_{\phi(i)}$. Then all crossings of $(T,S,\phi)$ are contained in the subtanglegram with trees rooted at $u$ and $v$. A paired flip at $(u,v)$ reflects this subtanglegram, preserving all of the crossings. Visualizations of these arguments are shown in Figure \ref{badpairs}.
\end{proof}

\begin{figure}[h]
    \centering
\begin{tikzpicture}[scale=0.3]
\filldraw[fill=blue,draw=blue] (-2,5) circle (3pt);
\filldraw[fill=blue,draw=blue] (-2,1) circle (3pt);
\filldraw[fill=black,draw=black] (-4,3) circle (3pt);
\filldraw[fill=blue,draw=blue] (-7,0) circle (3pt);
\filldraw[fill=blue,draw=blue] (-3,-4) circle (3pt);
\filldraw[fill=blue,draw=blue] (2,5) circle (3pt);
\filldraw[fill=blue,draw=blue] (2,1) circle (3pt);
\filldraw[fill=black,draw=black] (4,3) circle (3pt);
\filldraw[fill=blue,draw=blue] (7,0) circle (3pt);
\filldraw[fill=blue,draw=blue] (3,-4) circle (3pt);
\draw (-1,6) -- (-2,5) -- (-1,4) -- (-1,6);
\draw (-1,2)--(-2,1) -- (-1,0) -- (-1,2);
\draw (-2,5) -- (-4,3) -- (-2,1) -- (-4,3) -- (-7,0) -- (-1,-6) -- (-1,-2) -- (-3,-4);
\draw (1,6) -- (2,5) -- (1,4) -- (1,6);
\draw (1,2)--(2,1) -- (1,0) -- (1,2);
\draw (2,5) -- (4,3) -- (2,1) -- (4,3) -- (7,0) -- (1,-6) -- (1,-2) -- (3,-4);
\draw[dashed,color=black!50!white] (-1,5.5) -- (1,5.5);
\draw[dashed,color=black!50!white] (-1,5) -- (1,5);
\draw[dashed,color=black!50!white] (-1,4.5) -- (1,4.5);
\draw[dashed,color=black!50!white] (-1,1.5) -- (1,1.5);
\draw[dashed,color=black!50!white] (-1,1) -- (1,1);
\draw[dashed,color=black!50!white] (-1,0.5) -- (1,0.5);
\draw[dashed,color=black!50!white] (-1,-5) -- (1,-5);
\draw[dashed,color=black!50!white] (-1,-3) -- (1,-3);
\draw[dashed,color=black!50!white] (-1,-4) -- (1,-4);

\draw[color=red] (-2.5,1.5) -- (-1,3) -- (1,-3.5);
\filldraw[fill=red,draw=red] (-2.5,1.5) circle (3pt);
\filldraw[fill=red,draw=red] (-1,3) circle (3pt);
\filldraw[fill=red,draw=red] (1,-3.5) circle (3pt);

\node[color=red] at (-0.25,3) {\small{$t_i$}};
\node[color=red] at (0.25,-4) {\small{$s_{\phi(i)}$}};
\node[color=blue] at (-2.5,5.5) {\small{$b$}};
\node[color=blue] at (2.5,5.5) {\small{$x$}};
\node[color=blue] at (-2.5,0.5) {\small{$c$}};
\node[color=blue] at (2.5,0.5) {\small{$y$}};
\node[color=blue] at (-7.5,.5) {\small{$a$}};
\node[color=blue] at (7.5,.5) {\small{$w$}};
\node[color=blue] at (-3.5,-4.5) {\small{$d$}};
\node[color=blue] at (3.5,-4.5) {\small{$z$}};
\node[color=red] at (-3.25,1.25) {\small{$u_0$}};

\end{tikzpicture}
    \caption{Lemma \ref{automorphisms} shows that paired flips at $(a,w),(v,x),(c,y)$ do not change any crossings. Notice that a paired flip at $(d,z)$ affects crossings involving the edges between $\leaf(d)$ and $\leaf(z)$.}
    \label{badpairs}
\end{figure}
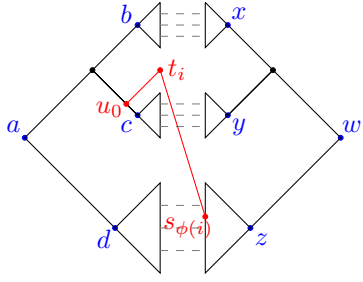


The preceding lemmas suggest that we should focus on subtree switches at $u_0$ and $v_0$, as well as the leaf-matched pairs $(u,v)\in L(I)$ where exactly one of $u>_Tt_i$ or $v>_Ss_{\phi(i)}$ is true. Motivated by these observations, we define the following sets:
\begin{equation}
\begin{split}\label{maximums}
L(I)_T=\{(u,v)\in L(I):u>_T t_i,v\not>_S s_{\phi(i)}\},\\
L(I)_S=\{(u,v)\in L(I):u\not>_T t_i,v>_S s_{\phi(i)}\}.
\end{split}
\end{equation}
The two preceding lemmas then imply the following result concerning solutions to the Single Edge Insertion Problem.

\begin{corollary}\label{existence}
A solution to the Tanglegram Single Edge Insertion Problem can be obtained by starting at $(X,Y)$ and performing a sequence of the following operations:
\begin{itemize}
    \item paired flips at $(u,v)\in L(I)_T\cup L(I)_S$, and
    \item subtree switches at $u_0$ and $v_0$.
\end{itemize}
\end{corollary}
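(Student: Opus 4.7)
The plan is to combine Lemmas \ref{planarsubtanglegram} and \ref{automorphisms}. Let $(X^*, Y^*)$ denote any solution to the Tanglegram Single Edge Insertion Problem; by definition it is a layout of $(T, S, \phi)$ whose restriction to $(T_I, S_{\phi(I)}, \phi|_I)$ is planar. Since $(X, Y)$ has the same property, Lemma \ref{planarsubtanglegram} furnishes a sequence $\sigma$ of paired flips at pairs in $L(I)$ together with subtree switches at $u_0$ and $v_0$ that transforms $(X, Y)$ into $(X^*, Y^*)$.

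Next, I would split $\sigma$ into the \emph{essential} operations, namely the subtree switches at $u_0, v_0$ together with the paired flips at pairs in $L(I)_T \cup L(I)_S$, and the \emph{redundant} operations, namely the paired flips at pairs $(u,v) \in L(I)$ with either both $u\not>_T t_i$ and $v\not>_S s_{\phi(i)}$ or both $u>_T t_i$ and $v>_S s_{\phi(i)}$. Every one of these operations is a composition of flips at internal vertices of $T$ or $S$ — recall that a subtree switch at $u_0$ decomposes into a flip at $u_0$ followed by a flip at any non-leaf child of $u_0$, and analogously for $v_0$ — and all such flips commute. I may therefore reorder $\sigma$ so that all essential operations are applied first, producing an intermediate layout $(X', Y')$, followed by all redundant operations, which then map $(X', Y')$ to $(X^*, Y^*)$.

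Finally, I would argue that $(X', Y')$ is itself a solution. The essential operations preserve the property of restricting to a planar subtanglegram layout: a subtree switch at $u_0$ (respectively $v_0$) merely swaps $t_i$ (respectively $s_{\phi(i)}$) with its sibling subtree, leaving the relative order of the remaining leaves — and hence the restricted layout — unchanged; and each paired flip at a pair $(u,v) \in L(I)$ restricts to a paired flip at the leaf-matched pair $(u,v)$ of $(T_I, S_{\phi(I)}, \phi|_I)$ (by Lemma \ref{modified}(b)), which preserves planarity of the restricted layout by Theorem \ref{planarlayouts} applied to the subtanglegram. Thus $(X', Y')$ restricts to a planar layout. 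On the other hand, Lemma \ref{automorphisms} says that each redundant operation preserves the number of crossings of the full tanglegram, so $(X', Y')$ has the same crossing count as $(X^*, Y^*)$ and is therefore an optimal solution produced by essential operations alone.

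The only point that requires care is the commutativity that licenses the reordering step, but since every operation in $\sigma$ decomposes into flips at internal vertices of $T$ or $S$ and all such flips commute, this poses no genuine obstacle.
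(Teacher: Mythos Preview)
Your proposal is correct and follows essentially the same approach as the paper: apply Lemma \ref{planarsubtanglegram} to get a sequence of operations, use commutativity to push the paired flips at pairs in $L(I)\setminus(L(I)_T\cup L(I)_S)$ to the end, and invoke Lemma \ref{automorphisms} to drop them without changing the crossing count. Your extra paragraph verifying that the intermediate layout still restricts to a planar subtanglegram layout is a detail the paper leaves implicit, but it is correct and harmless.
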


\begin{proof}
Let $(X_{min},Y_{min})$ correspond to a solution of the Tanglegram Single Edge Insertion Problem. By Lemma \ref{planarsubtanglegram}, we can obtain $(X_{min},Y_{min})$ from $(X,Y)$ using a sequence $f_1,f_2,\ldots,f_m$ of paired flips at $(u,v)\in L(I)$ and subtree switches at $u_0$ and $v_0$. Some of these $f_j$ may correspond to paired flips at $(u,v)\in L(I)$ where both or neither of $u>_T t_i$, $v>_S s_{\phi(i)}$ are true. We can assume these paired flips correspond to $f_{k+1},\ldots ,f_m$ for some $0\leq k \leq m$, as all flips commuting implies that paired flips and subtree switches also commute. By Lemma \ref{automorphisms} with induction, if we only perform $f_1,f_2,\ldots,f_k$, then we obtain another layout $(X_{min}',Y_{min}')$ that has the exact same crossings as $(X_{min},Y_{min})$. Thus, $(X_{min}',Y_{min}')$ also solves the Tanglegram Single Edge Insertion Problem, and the result follows. 
\end{proof}

From Corollary \ref{existence}, we see that solving the Tanglegram Single Edge Insertion Problem reduces to finding specific sequences of paired flips at $(u,v)\in L(I)_T\cup L(I)_S$ and subtree switches at $u_0,v_0$. Identifying the correct sequences will require some cases. Before we consider these cases, note that the sets $\{u\in T:(u,v)\in L(I)_T \text{ for some $v\in S$}\}$ and $\{v\in S:(u,v)\in L(I)_S \text{ for some $u\in T$}\}$ are linearly ordered since they form subsets of the ancestors of $t_i$ and $s_{\phi(i)}$, respectively. In particular, this implies that $u>_Tu_0$ for all $(u,v)\in L(I)_T$, and $v>_Sv_0$ for all $(u,v)\in L(I)_S$ since $u_0$ and $v_0$ are respectively the parents of $t_i$ and $s_{\phi(i)}$. Additionally, whenever $L(I)_T$ and $L(I)_S$ are nonempty, each set has a unique ``maximal" element, which we denote
\begin{equation}
\begin{split}
(u_{Tmax},v_{Tmax})=\text{unique $(u,v)\in L(I)_T$ such that $u\geq_T u'$ for all $(u',v')\in L(I)_T$},\\
(u_{Smax},v_{Smax})=\text{unique $(u,v)\in L(I)_S$ such that $v\geq_S v'$ for all $(u',v')\in L(I)_S$}.
\end{split}
\end{equation}
Some examples are shown in Figure \ref{insertioncases}. 
We conclude this section with one final lemma describing the implications of ancestry relations between $u_0$ and $u_{Smax}$, or $v_0$ and $v_{Tmax}$.

\begin{figure}[h]
    \centering
    \begin{tikzpicture}[scale=0.55]
\filldraw[fill=black,draw=black] (1,4) circle (2pt);
\filldraw[fill=black,draw=black] (1,3) circle (2pt);
\filldraw[fill=black,draw=black] (1,2) circle (2pt);
\filldraw[fill=black,draw=black] (1,1) circle (2pt);
\filldraw[fill=black,draw=black] (1,0) circle (2pt);
\filldraw[fill=black,draw=black] (1,-1) circle (2pt);
\filldraw[fill=black,draw=black] (1,-2) circle (2pt);
\filldraw[fill=black,draw=black] (1,-3) circle (2pt);
\filldraw[fill=black,draw=black] (-1,4) circle (2pt);
\filldraw[fill=black,draw=black] (-1,3) circle (2pt);
\filldraw[fill=black,draw=black] (-1,2) circle (2pt);
\filldraw[fill=black,draw=black] (-1,1) circle (2pt);
\filldraw[fill=black,draw=black] (-1,0) circle (2pt);
\filldraw[fill=black,draw=black] (-1,-1) circle (2pt);
\filldraw[fill=black,draw=black] (-1,-2) circle (2pt);
\filldraw[fill=black,draw=black] (-1,-3) circle (2pt);
\draw[dashed] (-1,4) -- (1,4);
\draw[dashed] (-1,3) -- (1,3);
\draw[dashed] (-1,2) -- (1,2);
\draw[dashed] (-1,1) -- (1,1);
\draw[dashed] (-1,0) -- (1,0);
\draw[dashed] (-1,-1) -- (1,-1);
\draw[dashed] (-1,-2) -- (1,-2);
\draw[dashed] (-1,-3) -- (1,-3);
\draw (-1,4) -- (-1.5,3.5) -- (-1,3) -- (-1.5,3.5) -- (-2.5,2.5) -- (-1.5,1.5) -- (-1,2) -- (-1.5,1.5) -- (-1,1) -- (-2.5,2.5) -- (-4.5,0.5);
\draw (-1,-3) -- (-1.5,-2.5) -- (-1,-2) -- (-1.5,-2.5) -- (-2,-2) -- (-1,-1) -- (-2,-2) -- (-2.5,-1.5) -- (-1,0) -- (-2.5,-1.5) -- (-4.5,0.5);
\draw (1,4) -- (1.5,3.5) -- (1,3) -- (1.5,3.5) -- (2,3) -- (1,2) -- (2,3) -- (2.5,2.5) -- (1,1) -- (2.5,2.5) -- (4.5,0.5) ; 
\draw (1,-3) -- (1.5,-2.5) -- (1,-2) -- (1.5,-2.5) -- (4.5,0.5);
\draw (1,0) -- (1.5,-0.5) -- (1,-1) -- (1.5,-0.5) --(3.5,1.5);
\filldraw[fill=black,draw=black] (-1.5,1.5) circle (2pt);
\filldraw[fill=black,draw=black] (-2.5,-1.5) circle (2pt);
\filldraw[fill=black,draw=black] (-2,-2) circle (2pt);
\filldraw[fill=black,draw=black] (2,3) circle (2pt);
\filldraw[fill=black,draw=black] (1.5,-0.5) circle (2pt);
\filldraw[fill=black,draw=black] (3.5,1.5) circle (2pt);
\filldraw[fill=black,draw=black] (-1.5,3.5) circle (2pt);
\filldraw[fill=black,draw=black] (1.5,3.5) circle (2pt);
\filldraw[fill=black,draw=black] (2.5,2.5) circle (2pt);
\filldraw[fill=black,draw=black] (-2.5,2.5) circle (2pt);
\filldraw[fill=black,draw=black] (4.5,0.5) circle (2pt);
\filldraw[fill=black,draw=black] (-4.5,0.5) circle (2pt);
\draw[color=red] (-2.75,-1.25) -- (-1,0.5) -- (1,-2.5) -- (1.25,-2.25);
\filldraw[fill=red,draw=red] (-2.75,-1.25) circle (2pt);
\node[color=red] at (-3.125,-1.625) {$u_0$};
\filldraw[fill=red,draw=red] (-1,0.5) circle (2pt);
\filldraw[fill=red,draw=red] (1,-2.5) circle (2pt);
\filldraw[fill=red,draw=red] (1.25,-2.25) circle (2pt);
\node[color=green!75!black] at (-2.5,-2.75) {$u_{Smax}$};
\filldraw[fill=green!75!black,draw=green!75!black] (1.5,-2.5) circle (2pt);
\filldraw[fill=green!75!black,draw=green!75!black] (-1.5,-2.5) circle (2pt);
\node[color=red] at (-0.5,0.5) {$t_i$};
\node[color=red] at (0.25,-2.5) {$s_{\phi(i)}$};
\end{tikzpicture}
\text{ }
    \begin{tikzpicture}[scale=0.55]
\filldraw[fill=black,draw=black] (1,4) circle (2pt);
\filldraw[fill=black,draw=black] (1,3) circle (2pt);
\filldraw[fill=black,draw=black] (1,2) circle (2pt);
\filldraw[fill=black,draw=black] (1,1) circle (2pt);
\filldraw[fill=black,draw=black] (1,0) circle (2pt);
\filldraw[fill=black,draw=black] (1,-1) circle (2pt);
\filldraw[fill=black,draw=black] (1,-2) circle (2pt);
\filldraw[fill=black,draw=black] (1,-3) circle (2pt);
\filldraw[fill=black,draw=black] (-1,4) circle (2pt);
\filldraw[fill=black,draw=black] (-1,3) circle (2pt);
\filldraw[fill=black,draw=black] (-1,2) circle (2pt);
\filldraw[fill=black,draw=black] (-1,1) circle (2pt);
\filldraw[fill=black,draw=black] (-1,0) circle (2pt);
\filldraw[fill=black,draw=black] (-1,-1) circle (2pt);
\filldraw[fill=black,draw=black] (-1,-2) circle (2pt);
\filldraw[fill=black,draw=black] (-1,-3) circle (2pt);
\draw[dashed] (-1,4) -- (1,4);
\draw[dashed] (-1,3) -- (1,3);
\draw[dashed] (-1,2) -- (1,2);
\draw[dashed] (-1,1) -- (1,1);
\draw[dashed] (-1,0) -- (1,0);
\draw[dashed] (-1,-1) -- (1,-1);
\draw[dashed] (-1,-2) -- (1,-2);
\draw[dashed] (-1,-3) -- (1,-3);
\draw (-1,4) -- (-1.5,3.5) -- (-1,3) -- (-1.5,3.5) -- (-2.5,2.5) -- (-1.5,1.5) -- (-1,2) -- (-1.5,1.5) -- (-1,1) -- (-2.5,2.5) -- (-4.5,0.5);
\draw (-1,-3) -- (-1.5,-2.5) -- (-1,-2) -- (-1.5,-2.5) -- (-2,-2) -- (-1,-1) -- (-2,-2) -- (-2.5,-1.5) -- (-1,0) -- (-2.5,-1.5) -- (-4.5,0.5);
\draw (1,4) -- (1.5,3.5) -- (1,3) -- (1.5,3.5) -- (2,3) -- (1,2) -- (2,3) -- (2.5,2.5) -- (1,1) -- (2.5,2.5) -- (4.5,0.5) ; 
\draw (1,-3) -- (1.5,-2.5) -- (1,-2) -- (1.5,-2.5) -- (4.5,0.5);
\draw (1,0) -- (1.5,-0.5) -- (1,-1) -- (1.5,-0.5) --(3.5,1.5);
\filldraw[fill=black,draw=black] (-1.5,1.5) circle (2pt);
\filldraw[fill=black,draw=black] (-2.5,-1.5) circle (2pt);
\filldraw[fill=black,draw=black] (-2,-2) circle (2pt);
\filldraw[fill=black,draw=black] (2,3) circle (2pt);
\filldraw[fill=black,draw=black] (1.5,-0.5) circle (2pt);
\filldraw[fill=black,draw=black] (3.5,1.5) circle (2pt);
\filldraw[fill=green!75!black,draw=green!75!black] (-1.5,3.5) circle (2pt);
\filldraw[fill=green!75!black,draw=green!75!black] (1.5,3.5) circle (2pt);
\filldraw[fill=green!75!black,draw=green!75!black] (2.5,2.5) circle (2pt);
\filldraw[fill=green!75!black,draw=green!75!black] (-2.5,2.5) circle (2pt);
\filldraw[fill=black,draw=black] (4.5,0.5) circle (2pt);
\filldraw[fill=black,draw=black] (-4.5,0.5) circle (2pt);
\filldraw[fill=black,draw=black] (1.5,-2.5) circle (2pt);
\filldraw[fill=black,draw=black] (-1.5,-2.5) circle (2pt);
\draw[color=red] (-1.25,3.25) -- (-1,3.5) -- (1,0.5) -- (2.75,2.25);
\filldraw[fill=red,draw=red] (-1.25,3.25) circle (2pt);
\filldraw[fill=red,draw=red] (-1,3.5) circle (2pt);
\filldraw[fill=red,draw=red] (1,0.5) circle (2pt);
\filldraw[fill=red,draw=red] (2.75,2.25) circle (2pt);
\node[color=red] at (2.75,1.75) {$v_0$};
\node[color=green!75!black] at (3.5,2.625) {$v_{Tmax}$};
\node[color=red] at (-0.5,3.5) {$t_i$};
\node[color=red] at (0.25,0.5) {$s_{\phi(i)}$};
\end{tikzpicture}
\text{ }
    \begin{tikzpicture}[scale=0.55]
\filldraw[fill=black,draw=black] (1,4) circle (2pt);
\filldraw[fill=black,draw=black] (1,3) circle (2pt);
\filldraw[fill=black,draw=black] (1,2) circle (2pt);
\filldraw[fill=black,draw=black] (1,1) circle (2pt);
\filldraw[fill=black,draw=black] (1,0) circle (2pt);
\filldraw[fill=black,draw=black] (1,-1) circle (2pt);
\filldraw[fill=black,draw=black] (1,-2) circle (2pt);
\filldraw[fill=black,draw=black] (1,-3) circle (2pt);
\filldraw[fill=black,draw=black] (-1,4) circle (2pt);
\filldraw[fill=black,draw=black] (-1,3) circle (2pt);
\filldraw[fill=black,draw=black] (-1,2) circle (2pt);
\filldraw[fill=black,draw=black] (-1,1) circle (2pt);
\filldraw[fill=black,draw=black] (-1,0) circle (2pt);
\filldraw[fill=black,draw=black] (-1,-1) circle (2pt);
\filldraw[fill=black,draw=black] (-1,-2) circle (2pt);
\filldraw[fill=black,draw=black] (-1,-3) circle (2pt);
\draw[dashed] (-1,4) -- (1,4);
\draw[dashed] (-1,3) -- (1,3);
\draw[dashed] (-1,2) -- (1,2);
\draw[dashed] (-1,1) -- (1,1);
\draw[dashed] (-1,0) -- (1,0);
\draw[dashed] (-1,-1) -- (1,-1);
\draw[dashed] (-1,-2) -- (1,-2);
\draw[dashed] (-1,-3) -- (1,-3);
\draw (-1,4) -- (-1.5,3.5) -- (-1,3) -- (-1.5,3.5) -- (-2.5,2.5) -- (-1.5,1.5) -- (-1,2) -- (-1.5,1.5) -- (-1,1) -- (-2.5,2.5) -- (-4.5,0.5);
\draw (-1,-3) -- (-1.5,-2.5) -- (-1,-2) -- (-1.5,-2.5) -- (-2,-2) -- (-1,-1) -- (-2,-2) -- (-2.5,-1.5) -- (-1,0) -- (-2.5,-1.5) -- (-4.5,0.5);
\draw (1,4) -- (1.5,3.5) -- (1,3) -- (1.5,3.5) -- (2,3) -- (1,2) -- (2,3) -- (2.5,2.5) -- (1,1) -- (2.5,2.5) -- (4.5,0.5) ; 
\draw (1,-3) -- (1.5,-2.5) -- (1,-2) -- (1.5,-2.5) -- (4.5,0.5);
\draw (1,0) -- (1.5,-0.5) -- (1,-1) -- (1.5,-0.5) --(3.5,1.5);
\filldraw[fill=black,draw=black] (-1.5,1.5) circle (2pt);
\filldraw[fill=black,draw=black] (-2.5,-1.5) circle (2pt);
\filldraw[fill=black,draw=black] (-2,-2) circle (2pt);
\filldraw[fill=black,draw=black] (2,3) circle (2pt);
\filldraw[fill=black,draw=black] (1.5,-0.5) circle (2pt);
\filldraw[fill=black,draw=black] (3.5,1.5) circle (2pt);
\filldraw[fill=black,draw=black] (-1.5,3.5) circle (2pt);
\filldraw[fill=black,draw=black] (1.5,3.5) circle (2pt);
\filldraw[fill=green!75!black,draw=green!75!black] (2.5,2.5) circle (2pt);
\filldraw[fill=green!75!black,draw=green!75!black] (-2.5,2.5) circle (2pt);
\filldraw[fill=black,draw=black] (4.5,0.5) circle (2pt);
\filldraw[fill=black,draw=black] (-4.5,0.5) circle (2pt);
\filldraw[fill=green!75!black,draw=green!75!black] (1.5,-2.5) circle (2pt);
\filldraw[fill=green!75!black,draw=green!75!black] (-1.5,-2.5) circle (2pt);
\draw[color=red] (-1.75,1.75) -- (-1,2.5) -- (1,-2.5) -- (1.25,-2.25);
\filldraw[fill=red,draw=red] (-1.75,1.75) circle (2pt);
\filldraw[fill=red,draw=red] (-1,2.5) circle (2pt);
\filldraw[fill=red,draw=red] (1,-2.5) circle (2pt);
\filldraw[fill=red,draw=red] (1.25,-2.25) circle (2pt);
\node[color=red] at (-2,1.5) {$u_0$};
\node[color=red] at (1.5,-2) {$v_0$};
\node[color=green!75!black] at (-2.5,-2.75) {$u_{Smax}$};
\node[color=green!75!black] at (3.5,2.75) {$v_{Tmax}$};
\node[color=red] at (-0.5,2.5) {$t_i$};
\node[color=red] at (0.25,-2.5) {$s_{\phi(i)}$};
\end{tikzpicture}
    \caption{Examples of $u_{Smax}$ and $v_{Tmax}$.}
    \label{insertioncases} 
\end{figure}
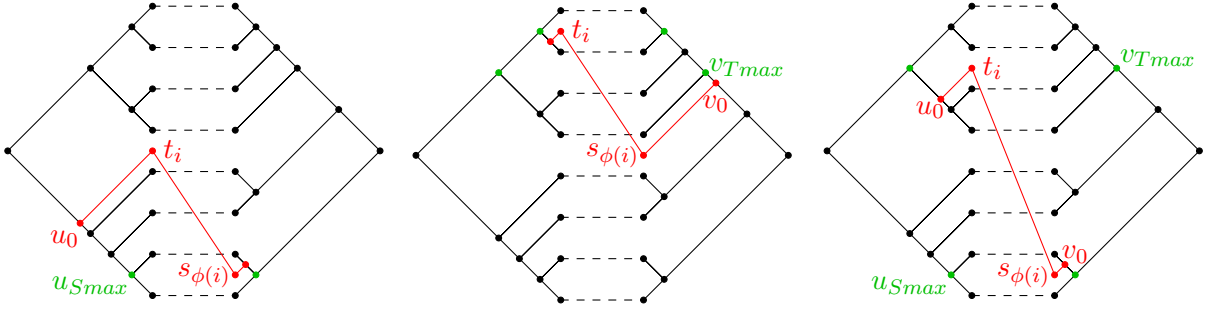

\begin{lemma}\label{case1order}  
The  following properties hold when the appropriate vertex $u_{Smax}$ or $v_{Tmax}$ exists.
\begin{enumerate}[label=(\roman*)]
    \item Suppose $u_0>_T u_{Smax}$, and let $(u,v)\in L(I)$. If $u>_T t_i$, then $v>_S s_{\phi(i)}$. Hence, $L(I)_T=\emptyset$.
    \item Suppose $u_0\not>_T u_{Smax}$. Then $u_0\not>_T u$ for any $(u,v)\in L(I)_S$.
    \item Suppose $v_0>_S v_{Tmax}$, and let $(u,v)\in L(I)$. If $v>_S s_{\phi(i)}$, then $u>_T t_i$. Hence, $L(I)_S=\emptyset$.
    \item Suppose $v_0\not>_S v_{Tmax}$. Then $v_0\not>_S v$ for any $(u,v)\in L(I)_T$.
\end{enumerate}
\end{lemma}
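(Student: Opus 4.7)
The plan is to prove (i) and (ii) directly, with (iii) and (iv) following by an evident symmetry that swaps $T\leftrightarrow S$, $u\leftrightarrow v$, and $t_i\leftrightarrow s_{\phi(i)}$. The running setup is the key observation that $u_0$ becomes a one-child internal vertex of the minimal subtree of $T$ containing $\{t_j\}_{j\in I}$ and so is suppressed by Definition \ref{subtreedefinition}; likewise $v_0$. Hence $u_0\notin T_I$ and $v_0\notin S_{\phi(I)}$, which in particular forces $u_{Smax}\neq u_0$ and $v_{Smax}\neq v_0$, and implies that any $u\in T_I$ with $u>_T t_i$ satisfies $u>_T u_0$ (and symmetrically in $S$).

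For (i), fix $(u,v)\in L(I)$ with $u>_T t_i$. Combining $u>_T u_0$ with the hypothesis $u_0>_T u_{Smax}$ gives $u>_T u_{Smax}$ and so $\leaf(u_{Smax})\subsetneq \leaf(u)\cap T_I$. Applying the bijection $\phi|_I$ to the matched leaf-sets of $(u_{Smax},v_{Smax})$ and of $(u,v)$ pushes this inclusion forward to $\leaf(v_{Smax})\setminus\{s_{\phi(i)}\}\subset \leaf(v)$, a nonempty set since $u_{Smax}$ is internal. Hence $v$ and $v_{Smax}$ share a leaf and are therefore comparable in $S$. The case $v<_S v_{Smax}$ would confine $\leaf(v_{Smax})\setminus\{s_{\phi(i)}\}$ to a single child-subtree of $v_{Smax}$, forcing the other child of $v_{Smax}$ to be the leaf $s_{\phi(i)}$ itself and hence $v_{Smax}=v_0$, contradicting the initial observation. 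Therefore $v\geq_S v_{Smax}>_S s_{\phi(i)}$, which yields the implication and the vacuity of $L(I)_T$.

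For (ii), I first show that $\{u:(u,v)\in L(I)_S\}$ is linearly ordered by $\leq_T$ with $u_{Smax}$ as the maximum. Given two pairs $(u_1,v_1),(u_2,v_2)\in L(I)_S$ with $v_1\leq_S v_2$, the inclusion $\leaf(v_1)\cap S_{\phi(I)}\subset \leaf(v_2)\cap S_{\phi(I)}$ pulls back through $\phi|_I$ to $\leaf(u_1)\cap T_I\subset \leaf(u_2)\cap T_I$; since $t_i\notin \leaf(u_j)$ for $j=1,2$, this upgrades to $\leaf(u_1)\subset \leaf(u_2)$ and so $u_1\leq_T u_2$. Now assume for contradiction that $u_0>_T u$ for some $(u,v)\in L(I)_S$. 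Then $u_0$ and $u_{Smax}$ are both ancestors of $u$ and are therefore comparable in $T$. The case $u_0\geq_T u_{Smax}$ must be strict, since $u_0>_T t_i$ while $u_{Smax}\not>_T t_i$, and this contradicts the hypothesis $u_0\not>_T u_{Smax}$; the case $u_{Smax}>_T u_0$ forces $u_{Smax}>_T t_i$, again contradicting $(u_{Smax},v_{Smax})\in L(I)_S$.

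The most delicate step is the case-analysis in (i) ruling out $v<_S v_{Smax}$: this is precisely where the suppression observation $v_{Smax}\neq v_0$ is indispensable, since otherwise the leaf containment $\leaf(v_{Smax})\setminus\{s_{\phi(i)}\}\subset \leaf(v)$ could degenerate through the missing leaf $s_{\phi(i)}$. Everything else reduces to ancestor-chain comparability together with pulling and pushing inclusions across the bijection $\phi|_I$, and the symmetric setup then gives (iii) and (iv) verbatim.
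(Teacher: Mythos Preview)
Your proof is correct and follows essentially the same approach as the paper's. The only stylistic difference is that the paper transfers the ancestry relation $u>_T u_{Smax}$ directly to $v>_{S_{\phi(I)}} v_{Smax}$ by working inside the induced subtrees $T_I$ and $S_{\phi(I)}$, where the leaf-matched correspondence is a clean bijection; you instead work in the full trees $T$ and $S$, which forces you to track the extra leaf $s_{\phi(i)}$ and rule out $v<_S v_{Smax}$ via the suppression observation $v_{Smax}\neq v_0$. Both routes are valid and use the same underlying ideas (pushing leaf-set inclusions through $\phi|_I$ and exploiting comparability of ancestors), so this is a cosmetic rather than substantive difference.
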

\begin{proof}
For (i), suppose $u_0>_T u_{Smax}$, and consider $(u,v)\in L(I)$ with $u>_T t_i$. Since $u_0$ is the parent of $t_i$, this implies that $u>_T u_0$. Combined with the assumption, we conclude $$u>_T u_0>_T u_{Smax}.$$
Then in the induced subtree $T_I$, we know $u>_{T_I} u_{Smax}$. This implies a corresponding relation for their leaf-matched vertices $v$ and $v_{Smax}$ in the induced subtree $S_{\phi(I)}$, so $v>_{S_{\phi(I)}} v_{Smax}$. Since the ancestry relations on $S_{\phi(I)}$ are restrictions of the relations on $S$, this implies $v>_S v_{Smax}$. We know $v_{Smax}>_S s_{\phi(i)}$ by the definition of $L(I)_S$, so we conclude that $$v>_S v_{Smax}>_S s_{\phi(i)}.$$ 

For (ii), suppose $u_0\not>_T u_{Smax}$. Assume by contradiction that $u_0>_T u$ for some $(u,v)\in L(I)_S$. Then by the definition of $(u_{Smax},v_{Smax})$, we know that $v_{Smax}>_S v$, and therefore $u_{Smax}>_T u$ by the same argument involving $T_I$ and $S_{\phi(I)}$ in the proof of (i) above. We see that $u_{Smax}$ and $u_0$ are both ancestors of $u$, and the assumption $u_0\not>_T u_{Smax}$ then implies $$u_{Smax}>_T u_0>_T u$$
since the ancestors of $u$ are linearly ordered. However, $u_0$ is the parent of $t_i$, so this implies $u_{Smax}>_T t_i$, contradicting $(u_{Smax},v_{Smax})\in L(I)_S$. Results (iii) and (iv) follow by similar arguments.
\end{proof}

\subsection{Insertion Algorithm}\label{4-2}

In the previous section, we established that a solution to the Single Edge Insertion Problem only requires us to consider $L(I)_T,L(I)_S,u_0$, and $v_0$. Now we wish to efficiently find a sequence of paired flips and subtree switches that solves the Tanglegram Single Edge Insertion Problem. Our approach for finding such a sequence will be different depending on where $u_0$ and $v_0$ are inserted. An example of each of the three cases we will consider was previously shown in Figure \ref{insertioncases}. In this subsection, we solve the three different cases, and then we will combine our results to form the \algname{Insertion} algorithm that solves the Tanglegram Single Edge Insertion Problem.

We start with the case $u_0>_T u_{Smax}$. Lemma \ref{case1order} implies $L(I)_T$ is empty, so we can focus our attention on $v_0$, $u_0$, and $L(I)_S=\{(u_j,v_j)\}_{j=1}^m$, where elements have been indexed so that $v_1<_S v_2<_S \ldots  <_S v_m$. Below we give an algorithm for this first case of single edge insertion. Notice that we do not consider a flip or switch at any vertex until we have considered all ancestors of that vertex, and we make these flip and switch choices based on edges in a set we call $E(u_0),E(v_0)$, and $E(v_j)$ for $j=1,2,\ldots,m$. These sets track the crossings that can be affected by an operation at that vertex and cannot be affected by operations at descendants of that vertex. An example of the algorithm is shown in Figure \ref{insertioncase1ex}.

\begin{algorithm}[H]\DontPrintSemicolon
\caption{\algname{Insertion Case $u_0>_T u_{Smax}$}}\label{case1algorithm}
\KwIn{tanglegram $(T,S,\phi)$, index $i$ such that $(T_I,S_{\phi(I)},\phi|_I)$ is planar for $I=[n]\setminus \{i\}$}
\KwOut{layout of $(T,S,\phi)$ that restricts to a planar layout of $(T_I,S_{\phi(I)},\phi|_I)$}
\tcp*{Step 1: initialize the algorithm.}
 $(X,Y),L\coloneqq\algname{ModifiedUntangle}(T,S,\phi|_I)$\\
construct $L(I)$ from $L$ using Definition \ref{Lsubtanglegram}\\
$u_0\coloneqq$ parent of $t_i$, $v_0\coloneqq$ parent of $s_{\phi(i)}$\\
$L(I)_S\coloneqq\{(u,v)\in L(I):u\not>_T t_i,v>_S s_{\phi(i)}\}$\\
\tcp*{Step 2: construct edge sets.}
linearly order $L(I)_S=\{(u_j,v_j)\}_{j=1}^m$ so that $v_1<_S v_2<_S \ldots <_S v_m$\\
$E(u_0) \coloneqq$ between-tree edges with an endpoint in $\leaf(u_0)\setminus \leaf(u_m)$\\
$E(v_0) \coloneqq$ between-tree edges with an endpoint in $\leaf(v_0)\setminus \{s_{\phi(i)}\}$\\
\For{$j=1,2,\ldots ,m$,}
    {
    $E(v_j) \coloneqq$ between-tree edges with an endpoint in $\leaf(v_j)\setminus \leaf(v_{j-1})$
    }
\tcp*{Step 3: use paired flips and subtree switches to change crossings.}
\If{$(t_i,s_{\phi(i)})$ crosses more than half of the edges in $E(u_0)$ in the layout $(X,Y)$, \label{u0start}} 
    {
    update $X \coloneqq \algname{SubtreeSwitch}(X,u_0)$ \label{u0end}
    }
\For{$j=m,\ldots ,2,1$,} 
    {
    \If{$(t_i,s_{\phi(i)})$ crosses more than half of the edges in $E(v_j)$ in the layout $(X,Y)$,\label{vjstart}}  
        {
        update $(X,Y)\coloneqq \algname{PairedFlip}((X,Y),(u_j,v_j))$ \label{vjend}
        }
    }
\If{$(t_i,s_{\phi(i)})$ crosses more than half of the edges in $E(v_0)$ in the layout $(X,Y)$,}
    {
    update $Y\coloneqq \algname{SubtreeSwitch}(Y,v_0)$
    }
\Return $(X,Y)$
\end{algorithm}

\begin{figure}[h]
\begin{center}
    \begin{tikzpicture}[scale=0.55]
\filldraw[fill=black,draw=black] (1,4) circle (2pt);
\filldraw[fill=black,draw=black] (1,3) circle (2pt);
\filldraw[fill=black,draw=black] (1,2) circle (2pt);
\filldraw[fill=black,draw=black] (1,1) circle (2pt);
\filldraw[fill=black,draw=black] (1,0) circle (2pt);
\filldraw[fill=black,draw=black] (1,-1) circle (2pt);
\filldraw[fill=black,draw=black] (1,-2) circle (2pt);
\filldraw[fill=black,draw=black] (1,-3) circle (2pt);
\filldraw[fill=black,draw=black] (-1,4) circle (2pt);
\filldraw[fill=black,draw=black] (-1,3) circle (2pt);
\filldraw[fill=black,draw=black] (-1,2) circle (2pt);
\filldraw[fill=black,draw=black] (-1,1) circle (2pt);
\filldraw[fill=black,draw=black] (-1,0) circle (2pt);
\filldraw[fill=black,draw=black] (-1,-1) circle (2pt);
\filldraw[fill=black,draw=black] (-1,-2) circle (2pt);
\filldraw[fill=black,draw=black] (-1,-3) circle (2pt);
\draw[dashed] (-1,4) -- (1,4);
\draw[dashed] (-1,3) -- (1,3);
\draw[dashed] (-1,2) -- (1,2);
\draw[dashed] (-1,1) -- (1,1);
\draw[dashed] (-1,0) -- (1,0);
\draw[dashed] (-1,-1) -- (1,-1);
\draw[dashed] (-1,-2) -- (1,-2);
\draw[dashed] (-1,-3) -- (1,-3);
\draw (-1,4) -- (-1.5,3.5)-- (-1,3) -- (-1.5,3.5) -- (-4.5,0.5) -- (-3.5,-0.5) -- (-1,2) -- (-1.5,1.5) -- (-1,1) -- (-1.5,1.5) -- (-3.5,-0.5) -- (-2.5,-1.5) -- (-1,0) -- (-2,-1) -- (-1.5,-1.5) -- (-1,-1) -- (-1.5,-1.5) -- (-1,-2) -- (-2,-1) -- (-2.5,-1.5) -- (-1,-3);
\draw (1,4) -- (1.5,3.5) -- (1,3) -- (1.5,3.5) -- (2,3) -- (1,2) -- (2,3) -- (2.5,2.5) -- (1,1) -- (2.5,2.5) -- (4.5,0.5) -- (2.5,-1.5) -- (1,0) -- (2.5,-1.5) -- (1,-3) -- (2,-2) -- (1,-1) -- (1.5,-1.5) -- (1,-2);
\filldraw[fill=black,draw=black] (-1.5,1.5) circle (2pt);
\filldraw[fill=black,draw=black] (-2.5,-1.5) circle (2pt);
\filldraw[fill=black,draw=black] (-2,-1) circle (2pt);
\filldraw[fill=black,draw=black] (2,3) circle (2pt);
\filldraw[fill=black,draw=black] (2,-2) circle (2pt);
\filldraw[fill=black,draw=black] (2.5,-1.5) circle (2pt);
\filldraw[fill=black,draw=black] (-1.5,3.5) circle (2pt);
\filldraw[fill=black,draw=black] (1.5,3.5) circle (2pt);
\filldraw[fill=black,draw=black] (2.5,2.5) circle (2pt);
\filldraw[fill=black,draw=black] (-2,-1) circle (2pt);
\filldraw[fill=black,draw=black] (4.5,0.5) circle (2pt);
\filldraw[fill=black,draw=black] (-4.5,0.5) circle (2pt);
\filldraw[fill=black,draw=black] (-3.5,-0.5) circle (2pt);
\draw[color=red] (-3.75,-0.25) -- (-1,2.5) -- (1,-0.5) -- (1.25,-1.25);
\filldraw[fill=red,draw=red] (-3.75,-0.25) circle (2pt);
\filldraw[fill=red,draw=red] (-1,2.5) circle (2pt);
\filldraw[fill=red,draw=red] (1,-0.5) circle (2pt);
\filldraw[fill=red,draw=red] (1.25,-1.25) circle (2pt);
\node[color=red] at (-0.5,2.625) {\small{$t_i$}};
\node[color=red] at (0.25,-0.5) {\small{$s_{\phi(i)}$}};
\filldraw[fill=black,draw=black] (1.5,-1.5) circle (2pt);
\filldraw[fill=black,draw=black] (-1.5,-1.5) circle (2pt);
\node[color=white] at (-0.5,-3.625) {\small{$t_i$}};
\node[color=red] at (-4,-0.5) {\small{$u_0$}};
\node[color=red] at (1.5,-1) {\small{$v_0$}};
\node at (-1.875,-1.5) {\small{$u_1$}};
\node at (1.875,-1.5) {\small{$v_1$}};
\node at (-3,-1.5) {\small{$u_2$}};
\node at (3,-1.5) {\small{$v_2$}};
\draw (1.2,2.2) -- (-1.2,2.2) -- (-1.2,0.8) -- (1.2,0.8) -- (1.2,2.2);
\node at (0,1.5) {\small{$E(u_0)$}};
\end{tikzpicture}
\phantom{-}
\begin{tikzpicture}[scale=0.55]
\filldraw[fill=black,draw=black] (1,4) circle (2pt);
\filldraw[fill=black,draw=black] (1,3) circle (2pt);
\filldraw[fill=black,draw=black] (1,2) circle (2pt);
\filldraw[fill=black,draw=black] (1,1) circle (2pt);
\filldraw[fill=black,draw=black] (1,0) circle (2pt);
\filldraw[fill=black,draw=black] (1,-1) circle (2pt);
\filldraw[fill=black,draw=black] (1,-2) circle (2pt);
\filldraw[fill=black,draw=black] (1,-3) circle (2pt);
\filldraw[fill=black,draw=black] (-1,4) circle (2pt);
\filldraw[fill=black,draw=black] (-1,3) circle (2pt);
\filldraw[fill=black,draw=black] (-1,2) circle (2pt);
\filldraw[fill=black,draw=black] (-1,1) circle (2pt);
\filldraw[fill=black,draw=black] (-1,0) circle (2pt);
\filldraw[fill=black,draw=black] (-1,-1) circle (2pt);
\filldraw[fill=black,draw=black] (-1,-2) circle (2pt);
\filldraw[fill=black,draw=black] (-1,-3) circle (2pt);
\draw[dashed] (-1,4) -- (1,4);
\draw[dashed] (-1,3) -- (1,3);
\draw[dashed] (-1,2) -- (1,2);
\draw[dashed] (-1,1) -- (1,1);
\draw[dashed] (-1,0) -- (1,0);
\draw[dashed] (-1,-1) -- (1,-1);
\draw[dashed] (-1,-2) -- (1,-2);
\draw[dashed] (-1,-3) -- (1,-3);
\draw (-1,4) -- (-1.5,3.5)-- (-1,3) -- (-1.5,3.5) -- (-4.5,0.5) -- (-3.5,-0.5) -- (-1,2) -- (-1.5,1.5) -- (-1,1) -- (-1.5,1.5) -- (-3.5,-0.5) -- (-2.5,-1.5) -- (-1,0) -- (-2,-1) -- (-1.5,-1.5) -- (-1,-1) -- (-1.5,-1.5) -- (-1,-2) -- (-2,-1) -- (-2.5,-1.5) -- (-1,-3);
\draw (1,4) -- (1.5,3.5) -- (1,3) -- (1.5,3.5) -- (2,3) -- (1,2) -- (2,3) -- (2.5,2.5) -- (1,1) -- (2.5,2.5) -- (4.5,0.5) -- (2.5,-1.5) -- (1,0) -- (2.5,-1.5) -- (1,-3) -- (2,-2) -- (1,-1) -- (1.5,-1.5) -- (1,-2);
\filldraw[fill=black,draw=black] (-1.5,1.5) circle (2pt);
\filldraw[fill=black,draw=black] (-2.5,-1.5) circle (2pt);
\filldraw[fill=black,draw=black] (-2,-1) circle (2pt);
\filldraw[fill=black,draw=black] (2,3) circle (2pt);
\filldraw[fill=black,draw=black] (2,-2) circle (2pt);
\filldraw[fill=black,draw=black] (2.5,-1.5) circle (2pt);
\filldraw[fill=black,draw=black] (-1.5,3.5) circle (2pt);
\filldraw[fill=black,draw=black] (1.5,3.5) circle (2pt);
\filldraw[fill=black,draw=black] (2.5,2.5) circle (2pt);
\filldraw[fill=black,draw=black] (-2,-1) circle (2pt);
\filldraw[fill=black,draw=black] (4.5,0.5) circle (2pt);
\filldraw[fill=black,draw=black] (-4.5,0.5) circle (2pt);
\filldraw[fill=black,draw=black] (-3.5,-0.5) circle (2pt);
\draw[color=red] (-3.75,-0.25) -- (-1,-3.5) -- (1,-0.5) -- (1.25,-1.25);
\filldraw[fill=red,draw=red] (-3.75,-0.25) circle (2pt);
\filldraw[fill=red,draw=red] (-1,-3.5) circle (2pt);
\filldraw[fill=red,draw=red] (1,-0.5) circle (2pt);
\filldraw[fill=red,draw=red] (1.25,-1.25) circle (2pt);
\node[color=red]  at (-0.5,-3.625) {\small{$t_i$}};
\node[color=red] at (0.125,-0.625) {\small{$s_{\phi(i)}$}};
\filldraw[fill=black,draw=black] (1.5,-1.5) circle (2pt);
\filldraw[fill=black,draw=black] (-1.5,-1.5) circle (2pt);
\node[color=red] at (-4,-0.5) {\small{$u_0$}};
\node[color=red] at (1.5,-1) {\small{$v_0$}};
\node at (-1.875,-1.5) {\small{$u_1$}};
\node at (1.875,-1.5) {\small{$v_1$}};
\node at (-3,-1.5) {\small{$u_2$}};
\node at (3,-1.5) {\small{$v_2$}};
\node at (0,0.5) {\small{$E(v_2)$}};
\draw (1.2,0.2) -- (-1.2,0.2) -- (-1.2,-0.2) -- (1.2,-0.2) -- (1.2,0.2);
\draw (1.2,-2.8) -- (-1.2,-2.8) -- (-1.2,-3.2) -- (1.2,-3.2) -- (1.2,-2.8);
\end{tikzpicture}
\phantom{-}
\begin{tikzpicture}[scale=0.55]
\filldraw[fill=black,draw=black] (1,4) circle (2pt);
\filldraw[fill=black,draw=black] (1,3) circle (2pt);
\filldraw[fill=black,draw=black] (1,2) circle (2pt);
\filldraw[fill=black,draw=black] (1,1) circle (2pt);
\filldraw[fill=black,draw=black] (1,0) circle (2pt);
\filldraw[fill=black,draw=black] (1,-1) circle (2pt);
\filldraw[fill=black,draw=black] (1,-2) circle (2pt);
\filldraw[fill=black,draw=black] (1,-3) circle (2pt);
\filldraw[fill=black,draw=black] (-1,4) circle (2pt);
\filldraw[fill=black,draw=black] (-1,3) circle (2pt);
\filldraw[fill=black,draw=black] (-1,2) circle (2pt);
\filldraw[fill=black,draw=black] (-1,1) circle (2pt);
\filldraw[fill=black,draw=black] (-1,0) circle (2pt);
\filldraw[fill=black,draw=black] (-1,-1) circle (2pt);
\filldraw[fill=black,draw=black] (-1,-2) circle (2pt);
\filldraw[fill=black,draw=black] (-1,-3) circle (2pt);
\draw[dashed] (-1,4) -- (1,4);
\draw[dashed] (-1,3) -- (1,3);
\draw[dashed] (-1,2) -- (1,2);
\draw[dashed] (-1,1) -- (1,1);
\draw[dashed] (-1,0) -- (1,0);
\draw[dashed] (-1,-1) -- (1,-1);
\draw[dashed] (-1,-2) -- (1,-2);
\draw[dashed] (-1,-3) -- (1,-3);
\draw (-1,4) -- (-1.5,3.5)-- (-1,3) -- (-1.5,3.5) -- (-4.5,0.5) -- (-3.5,-0.5) -- (-1,2) -- (-1.5,1.5) -- (-1,1) -- (-1.5,1.5) -- (-3.5,-0.5) -- (-2.5,-1.5) -- (-1,0) -- (-2,-1) -- (-1.5,-1.5) -- (-1,-1) -- (-1.5,-1.5) -- (-1,-2) -- (-2,-1) -- (-2.5,-1.5) -- (-1,-3);
\draw (1,4) -- (1.5,3.5) -- (1,3) -- (1.5,3.5) -- (2,3) -- (1,2) -- (2,3) -- (2.5,2.5) -- (1,1) -- (2.5,2.5) -- (4.5,0.5) -- (2.5,-1.5) -- (1,0) -- (2.5,-1.5) -- (1,-3) -- (2,-2) -- (1,-1) -- (1.5,-1.5) -- (1,-2);
\filldraw[fill=black,draw=black] (-1.5,1.5) circle (2pt);
\filldraw[fill=black,draw=black] (-2.5,-1.5) circle (2pt);
\filldraw[fill=black,draw=black] (-2,-1) circle (2pt);
\filldraw[fill=black,draw=black] (2,3) circle (2pt);
\filldraw[fill=black,draw=black] (2,-2) circle (2pt);
\filldraw[fill=black,draw=black] (2.5,-1.5) circle (2pt);
\filldraw[fill=black,draw=black] (-1.5,3.5) circle (2pt);
\filldraw[fill=black,draw=black] (1.5,3.5) circle (2pt);
\filldraw[fill=black,draw=black] (2.5,2.5) circle (2pt);
\filldraw[fill=black,draw=black] (-2,-1) circle (2pt);
\filldraw[fill=black,draw=black] (4.5,0.5) circle (2pt);
\filldraw[fill=black,draw=black] (-4.5,0.5) circle (2pt);
\filldraw[fill=black,draw=black] (-3.5,-0.5) circle (2pt);
\draw[color=red] (-3.75,-0.25) -- (-1,-3.5) -- (1,-2.5) -- (1.25,-1.75);
\filldraw[fill=red,draw=red] (-3.75,-0.25) circle (2pt);
\filldraw[fill=red,draw=red] (-1,-3.5) circle (2pt);
\filldraw[fill=red,draw=red] (1,-2.5) circle (2pt);
\filldraw[fill=red,draw=red] (1.25,-1.75) circle (2pt);
\node[color=red] at (-0.5,-3.625) {\small{$t_i$}};
\node[color=red] at (0.25,-2.375) {\small{$s_{\phi(i)}$}};
\filldraw[fill=black,draw=black] (1.5,-1.5) circle (2pt);
\filldraw[fill=black,draw=black] (-1.5,-1.5) circle (2pt);
\node[color=red] at (-4,-0.5) {\small{$u_0$}};
\node[color=red] at (1.5,-2.125) {\small{$v_0$}};
\node at (-1.875,-1.5) {\small{$u_1$}};
\node at (1.875,-1.5) {\small{$v_1$}};
\node at (-3,-1.5) {\small{$u_2$}};
\node at (3,-1.5) {\small{$v_2$}};
\end{tikzpicture}
\caption{If {\normalfont{\algname{ModifiedUntangle}}$(T,S,\phi|_I)$} returns the layout on the left, then Algorithm \ref{case1algorithm} would first perform a subtree switch at $u_0$ to obtain the layout in the middle. Then it would not perform a paired flip at $(u_2,v_2)$, would perform a paired flip at $(u_1,v_1)$, and would not perform a subtree switch at $v_0$, returning the layout on the right with one crossing.}
\label{insertioncase1ex}
\end{center}
\end{figure}
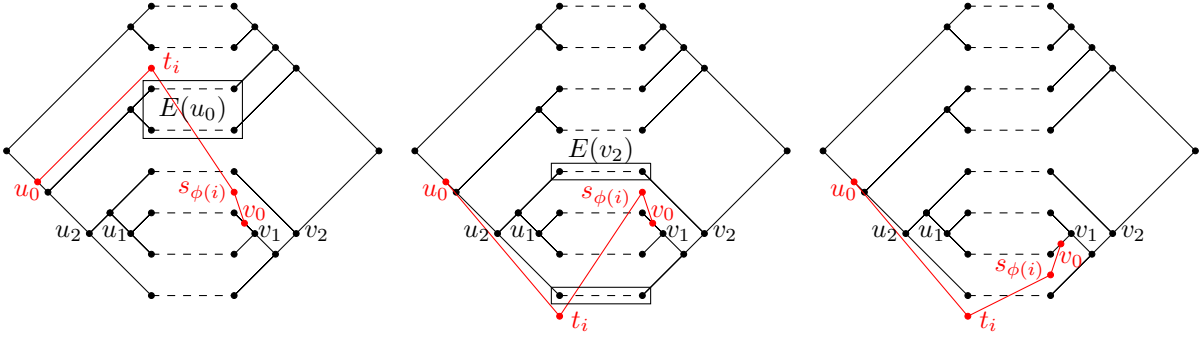

\begin{lemma}\label{insertioncase1}
Algorithm \ref{case1algorithm} solves the Tanglegram Single Edge Insertion Problem when $L(I)_S\neq \emptyset$ and $u_0>_T u_{Smax}$.
\end{lemma}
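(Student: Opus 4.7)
The plan is to verify correctness by partitioning the between-tree edges of $(T,S,\phi)$ other than $(t_i,s_{\phi(i)})$ into the sets $E(u_0), E(v_m), E(v_{m-1}),\ldots,E(v_1), E(v_0)$ together with an ``inert'' class whose crossings with $(t_i,s_{\phi(i)})$ cannot be changed by any admissible operation, and then showing that the algorithm's processing order and greedy rule minimize the crossings in each set independently. Under the hypothesis $u_0>_T u_{Smax}$, Lemma \ref{case1order}(i) gives $L(I)_T=\emptyset$, so by Corollary \ref{existence} every optimal layout is obtainable from the $(X,Y)$ produced by $\algname{ModifiedUntangle}(T,S,\phi|_I)$ using some composition of the subtree switch at $u_0$, the paired flips at the $(u_j,v_j)\in L(I)_S$, and the subtree switch at $v_0$. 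The nestings $u_1<_T\cdots<_T u_m<_T u_0$ (from the linear order of $L(I)_S$ together with $u_0>_T u_{Smax}$) and $v_0\leq_S v_1<_S\cdots<_S v_m$ will underlie all of the bookkeeping.

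The first step is to pin down the effect of each admissible operation on a crossing between $(t_i,s_{\phi(i)})$ and another edge $(t,s_{\phi(t)})$, using that each $\leaf(\cdot)$ is a contiguous block in any valid layout. The subtree switch at $u_0$ swaps $t_i$ with the block $\leaf(u_0)\setminus\{t_i\}$, toggling the crossings with precisely those edges having $t\in\leaf(u_0)\setminus\{t_i\}$. A paired flip at $(u_j,v_j)$ reverses both $\leaf(u_j)$ and $\leaf(v_j)$: since $t_i$ lies outside $\leaf(u_j)$, the relative order of $t_i$ and any $t\in\leaf(u_j)$ on the $T$-side is preserved, whereas both $s_{\phi(i)}\in\leaf(v_0)\subseteq\leaf(v_j)$ and $s_{\phi(t)}\in\leaf(v_j)$ lie inside the reversed $S$-side block and have their relative order flipped; the paired flip therefore toggles precisely the crossings with edges whose $S$-endpoint lies in $\leaf(v_j)$. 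Analogously, the subtree switch at $v_0$ toggles precisely the crossings with edges whose $S$-endpoint lies in $\leaf(v_0)\setminus\{s_{\phi(i)}\}$. Any edge $(t,s_{\phi(t)})$ with $t\notin\leaf(u_0)$ has $s_{\phi(t)}$ outside both $\leaf(v_m)$ and $\leaf(v_0)$ by the leaf-matched pair correspondence, so it is inert. For $t\in\leaf(u_0)\setminus\{t_i\}$, the nestings give a trichotomy -- either $t\in\leaf(u')\setminus\leaf(u_m)$ (where $u'$ is the non-$t_i$ child of $u_0$), or $t\in\leaf(u_j)\setminus\leaf(u_{j-1})$ for some $j\geq 2$, or $t\in\leaf(u_1)$ -- that places the edge into exactly one of $E(u_0),E(v_2),\ldots,E(v_m)$, or, depending on whether $s_{\phi(t)}$ lies in $\leaf(v_0)\setminus\{s_{\phi(i)}\}$ or in $\leaf(v_1)\setminus\leaf(v_0)$, into $E(v_0)$ or $E(v_1)$ respectively.

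Combining these facts, the subtree switch at $u_0$ toggles the crossings inside $E(u_0)\cup E(v_0)\cup\bigcup_{k=1}^{m}E(v_k)$, the paired flip at $(u_j,v_j)$ toggles those inside $E(v_0)\cup\bigcup_{k=1}^{j}E(v_k)$, and the subtree switch at $v_0$ toggles only those inside $E(v_0)$. In the processing order $u_0,(u_m,v_m),\ldots,(u_1,v_1),v_0$, each step is the last admissible operation that can alter the crossings in its associated edge set, so the local rule ``perform the operation iff more than half of the edges in the associated set are currently crossed by $(t_i,s_{\phi(i)})$'' minimizes the post-algorithm crossings within that set. Summing over the partition, the total number of crossings of $(t_i,s_{\phi(i)})$ achieves the minimum over all layouts reachable from $(X,Y)$ by Corollary \ref{existence}, which is optimal. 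The main technical obstacle will be the case-work in the second paragraph: verifying that each operation affects exactly the stated edges (in particular, that $s_{\phi(i)}$ remains in the block $\leaf(v_0)$ throughout every partial sequence of admissible operations, which is what makes the containment arguments go through) and that the $E$-sets are mutually disjoint.
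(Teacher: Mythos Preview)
Your proof is correct and, in spirit, quite close to the paper's, but the execution is somewhat different. The paper argues step by step that the greedy choice at each of $u_0,(u_m,v_m),\ldots,(u_1,v_1),v_0$ \emph{extends to some optimal solution}: concretely, it shows that a paired flip at $(u_j,v_j)$ followed by a paired flip at $(u_{j-1},v_{j-1})$ (or a subtree switch at the boundary) leaves the crossings involving the inner block $U,V$ unchanged while swapping the role of the shells $X_1,X_2$ (or $Y_1,Y_2$); hence only the shell crossings in $E(\cdot)$ are forced at that step. Your argument instead computes the full ``toggle matrix'': you identify exactly which of the disjoint edge classes $E(u_0),E(v_m),\ldots,E(v_1),E(v_0)$ each operation flips, observe that in the processing order this matrix is upper triangular with ones on the diagonal, and conclude that any parity pattern on the classes is realizable, so the greedy rule independently minimizes each class.

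What your route buys is a cleaner global picture---the total crossings decompose as a sum over independent classes, and optimality is immediate once the triangular structure is in hand---at the cost of having to verify carefully that the $E$-sets genuinely partition the non-inert edges (which you do, using $u_m<_T u_0$ and $v_0<_S v_1$). The paper's route is more local and phrases things as ``this choice can still be corrected downstream,'' which is slightly less explicit about why the problem decouples but dovetails more smoothly with the analogous argument in the $u_0\not>_Tu_{Smax}$ case (Lemma \ref{insertioncase3}), where the $E$-sets for $L(I)_T$ and $L(I)_S$ can in principle interact and a disjointness lemma (Lemma \ref{case3order}) is needed.
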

\begin{proof}
By Lemma \ref{modified}, the initial layout $(X,Y)$ of $(T,S,\phi)$ in Step 1 of Algorithm \ref{case1algorithm} restricts to a planar layout of $(T_I,S_{\phi(I)},\phi|_I)$, and $L(I)$ is the set of leaf-matched pairs of $(T_I,S_{\phi(I)},\phi|_I)$. The assumption $u_0>_T u_{Smax}$ with Lemma \ref{case1order} implies $L(I)_T=\emptyset$. Corollary \ref{existence} with $L(I)_T=\emptyset$ implies that starting with $(X,Y)$, some sequence of paired flips at $(u_j,v_j)\in L(I)_S$ and subtree switches at $u_0$ and $v_0$ solves the Tanglegram Single Edge Insertion Problem. 
The algorithm makes choices in the order $u_0,(u_m,v_m),\ldots ,(u_1,v_1),v_0$, so we will show that our choice at each step can extend to a solution to the  Single Edge Insertion Problem, and thus the algorithm terminates at a solution.

The algorithm starts with $u_0$. Some operations at $u_0,(u_j,v_j)\in L(I)_S$, and $v_0$ produces a solution $(X_{min},Y_{min})$ to the Single Edge Insertion Problem. This layout contains sublists for the elements in $\leaf(u_0)$ and the leaves paired with them, which have the form either $(t_iX_1UX_2,Y_1VY_2)$ or $(X_1UX_2t_i,Y_1VY_2)$, where $U,V$ is an ordering of $\leaf(u_m),\leaf(v_m)$ and $X_1,X_2,Y_1,Y_2$ order the remaining leaves. We focus on the case $(t_iX_1UX_2,Y_1VY_2)$ illustrated in Figure \ref{case1drawing1}, as the other case follows by similar reasoning. Notice that $E(u_0)=\op{Edges}(X_1)\cup \op{Edges}(X_2)$, where $\op{Edges}(X_i)$ is the set of between-tree edges with a leaf in $X_i$.

\begin{figure}[h]
    \centering
    \begin{tikzpicture}[scale=.5]
    \draw (-1,7) -- (-2,6) -- (-1,5) -- (-1,7) -- (-4.5,3.5) -- (-3,2) -- (-1,4) -- (-1,0) -- (-4.5,3.5) -- (-6,2) -- (-1,-3) -- (-1,-1) -- (-2,-2);
    \draw[color=blue!50!white] (-2,2) -- (-1,3) -- (-1,1) -- (-2,2);
    \draw (1,7) -- (2,6) -- (1,5) -- (1,7) -- (6,2) -- (4.5,0.5) -- (1,4) -- (1,-3)-- (6,2);
    \draw[color=blue!50!white] (2,2) -- (1,3) -- (1,1) -- (2,2);
    \draw[dashed] (-1,6.5) -- (1,6.5);
    \draw[dashed] (-1,5.5) -- (1,5.5);
    \draw[dashed] (-1,6) -- (1,6);
    \draw[dashed] (-1,-2) -- (1,-2);
    \draw[dashed] (-1,-1.5) -- (1,-1.5);
    \draw[dashed] (-1,-2.5) -- (1,-2.5);
    \draw[color=red] (1,2.25) -- (-1,4.5)-- (-3.25,2.25);
    \draw[color=black!25!white,dashed] (-1,.5) -- (1,.5);
    \draw[color=black!25!white,dashed] (-1,2.5) -- (1,2.5);
    \draw[color=black!25!white,dashed] (-1,1.5) -- (1,1.5);
    \draw[color=black!25!white,dashed] (-1,3.5) -- (1,3.5);
    \draw[color=black!25!white,dashed] (-1,3) -- (1,3);
    \draw[color=black!25!white,dashed] (-1,2) -- (1,2);
    \draw[color=black!25!white,dashed] (-1,1) -- (1,1);
    \filldraw[fill=black,draw=black] (-6,2) circle (2pt);
    \filldraw[fill=black,draw=black] (-4.5,3.5) circle (2pt);
    \filldraw[fill=black,draw=black] (-2,6) circle (2pt);
    \filldraw[fill=black,draw=black] (-3,2) circle (2pt);
    \filldraw[fill=blue,draw=blue] (-2,2) circle (2pt);
    \filldraw[fill=black,draw=black] (-2,-2) circle (2pt);
    \filldraw[fill=black,draw=black] (6,2) circle (2pt);
    \filldraw[fill=black,draw=black] (4.5,0.5) circle (2pt);
    \filldraw[fill=black,draw=black] (2,6) circle (2pt);
    \filldraw[fill=blue,draw=blue] (2,2) circle (2pt);
    \filldraw[fill=red,draw=red] (1,2.25) circle (2pt);
    \filldraw[fill=red,draw=red] (-1,4.5) circle (2pt);
    \filldraw[fill=red,draw=red] (-3.25,2.25) circle (2pt);
    \node[color=blue] at (-2.25,1.75) {\small{$u_{m}$}};
    \node[color=blue] at (2.375,1.75) {\small{$v_{m}$}};
    \node[color=red] at (1.75,2.5) {\small{$s_{\phi(i)}$}};
    \node[color=red] at (-0.375,4.5) {\small{$t_i$}};
    \node at (-.5,3.5) {$X_1$};
    \node at (-.5,2) {$U$};
    \node at (-.5,0.5) {$X_2$};
    \node at (0.5,3.5) {$Y_1$};
    \node at (0.5,2) {$V$};
    \node at (0.5,0.5) {$Y_2$};
    \node[color=red] at (-3.5,2) {\small{$u_0$}};
    \end{tikzpicture}
    \quad 
    \begin{tikzpicture}[scale=.5]
    \draw (-1,7) -- (-2,6) -- (-1,5) -- (-1,7) -- (-4.5,3.5) -- (-3,2) -- (-1,4) -- (-1,0) -- (-4.5,3.5) -- (-6,2) -- (-1,-3) -- (-1,-1) -- (-2,-2);
    \draw[color=blue!50!white] (-2,2) -- (-1,3) -- (-1,1) -- (-2,2);
    \draw (1,7) -- (2,6) -- (1,5) -- (1,7) -- (6,2) -- (4.5,0.5) -- (1,4) -- (1,-3)-- (6,2);
    \draw[color=blue!50!white] (2,2) -- (1,3) -- (1,1) -- (2,2);
    \draw[dashed] (-1,6.5) -- (1,6.5);
    \draw[dashed] (-1,5.5) -- (1,5.5);
    \draw[dashed] (-1,6) -- (1,6);
    \draw[dashed] (-1,-2) -- (1,-2);
    \draw[dashed] (-1,-1.5) -- (1,-1.5);
    \draw[dashed] (-1,-2.5) -- (1,-2.5);
    \draw[color=red] (1,1.75) -- (-1,-0.5)-- (-3.25,2.25);
    \draw[color=black!25!white,dashed] (-1,.5) -- (1,.5);
    \draw[color=black!25!white,dashed] (-1,2.5) -- (1,2.5);
    \draw[color=black!25!white,dashed] (-1,1.5) -- (1,1.5);
    \draw[color=black!25!white,dashed] (-1,3.5) -- (1,3.5);
    \draw[color=black!25!white,dashed] (-1,3) -- (1,3);
    \draw[color=black!25!white,dashed] (-1,2) -- (1,2);
    \draw[color=black!25!white,dashed] (-1,1) -- (1,1);
    \filldraw[fill=black,draw=black] (-6,2) circle (2pt);
    \filldraw[fill=black,draw=black] (-4.5,3.5) circle (2pt);
    \filldraw[fill=black,draw=black] (-2,6) circle (2pt);
    \filldraw[fill=black,draw=black] (-3,2) circle (2pt);
    \filldraw[fill=blue,draw=blue] (-2,2) circle (2pt);
    \filldraw[fill=black,draw=black] (-2,-2) circle (2pt);
    \filldraw[fill=black,draw=black] (6,2) circle (2pt);
    \filldraw[fill=black,draw=black] (4.5,0.5) circle (2pt);
    \filldraw[fill=black,draw=black] (2,6) circle (2pt);
    \filldraw[fill=blue,draw=blue] (2,2) circle (2pt);
    \filldraw[fill=red,draw=red] (1,1.75) circle (2pt);
    \filldraw[fill=red,draw=red] (-1,-0.5) circle (2pt);
    \filldraw[fill=red,draw=red] (-3.25,2.25) circle (2pt);
    \node[color=blue] at (-2.25,1.75) {\small{$u_{m}$}};
    \node[color=blue] at (2.375,1.75) {\small{$v_{m}$}};
    \node[color=red] at (1.75,1.25) {\small{$s_{\phi(i)}$}};
    \node[color=red] at (-0.375,-.5) {\small{$t_i$}};
    \node at (-.5,3.5) {$X_1$};
    \node at (-.5,2) {$\overline{U}$};
    \node at (-.5,0.5) {$X_2$};
    \node at (0.5,3.5) {$Y_1$};
    \node at (0.5,2) {$\overline{V}$};
    \node at (0.5,0.5) {$Y_2$};
    \node[color=red] at (-3.5,2) {\small{$u_0$}};
    \end{tikzpicture}
    \caption{The effect of a subtree switch at $u_0$ and a paired flip at $(u_m,v_m)$.}
    \label{case1drawing1}
\end{figure}
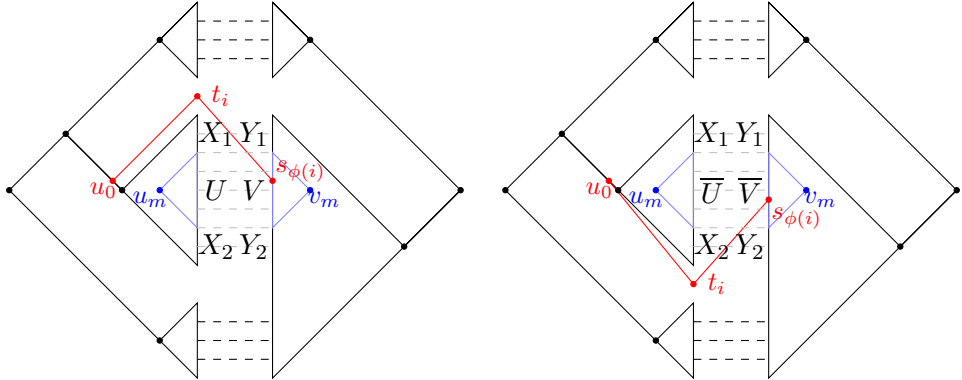

Beginning with $(t_iX_1UX_2,Y_1VY_2)$, if we perform a subtree switch at $u_0$, notice that we can also perform a paired flip at $(u_m,v_m)$ to obtain
$$(t_iX_1UX_2,Y_1VY_2)\xra{\text{subtree switch at $u_0$}}(X_1UX_2t_i,Y_1VY_2)\xra{\text{paired flip at $(u_m,v_m)$}}(X_1\overline{U}X_2t_i,Y_1\overline{V}Y_2),$$
which is also illustrated in  Figure \ref{case1drawing1}. We call this new layout $(X',Y')$. Notice that crossings between $(t_i,s_{\phi(i)})$ and $\op{Edges}(U)$ are the same in $(X_{min},Y_{min})$ and $(X',Y')$, so the choice of subtree switch at $u_0$ does not prevent minimization of crossings in $\op{Edges}(U)$. However, while $(t_i,s_{\phi(i)})$ crosses $\op{Edges}(X_1)$ in $(X_{min},Y_{min})$, it crosses $\op{Edges}(X_2)$ in $(X',Y')$, and these  crossings cannot be affected by choices in $L(I)_S\cup \{v_0\}$. Thus, the choice at $u_0$ that extends to a solution to the Tanglegram Single Edge Insertion Problem must be one that minimizes crossings in $E(u_0)=\op{Edges}(X_1)\cup \op{Edges}(X_2)$, so the algorithm's choice at $u_0$ in lines \ref{u0start}-\ref{u0end} extends to a solution.

Now consider paired flips $(u_j,v_j)\in L(I)_S$. Suppose that we made choices that extend to a solution to the  Single Edge Insertion Problem at $u_0$ and any $(u_{j'},v_{j'})\in L(I)_S$ with $v_{j'}>_S v_j$. Then we know that some choices at $(u_j,v_j),\ldots,(u_1,v_1),$ and $v_0$ lead to a solution to the  Single Edge Insertion Problem. This solution contains an ordering of $\leaf(u_j),\leaf(v_j)$. If $j\geq 2$, denote this ordering as $(X_1UX_2,Y_1VY_2)$ with $U,V$ an ordering of $\leaf(u_{j-1}),\leaf(v_{j-1})$ as shown on the left in Figure \ref{case1drawing2}. If we perform a paired flip at $(u_j,v_j)$, then we can also perform a paired flip at $(u_{j-1},v_{j-1})$ to obtain $$(X_1UX_2,Y_1VY_2)\xra{\text{paired flip at $(u_j,v_j)$}} (\overline{X_2}\,\overline{U}\,\overline{X_1},\overline{Y_2}\,\overline{V}\,\overline{Y_1})\xra{\text{paired flip at $(u_{j-1},v_{j-1})$}}(\overline{X_2}U\overline{X_1},\overline{Y_2}V\overline{Y_1}).$$

\begin{figure}[h]
    \centering
    \begin{tikzpicture}[scale=.5]
    \draw (-1,7) -- (-2,6) -- (-1,5) -- (-1,7) -- (-4.5,3.5) -- (-3,2) -- (-1,4) -- (-1,0) -- (-4.5,3.5) -- (-6,2) -- (-1,-3) -- (-1,-1) -- (-2,-2);
    \draw[color=blue!50!white] (-2,2) -- (-1,3) -- (-1,1) -- (-2,2);
    \draw (1,7) -- (2,6) -- (1,5) -- (1,7) -- (4.5,3.5) -- (3,2) -- (1,4) -- (1,0) -- (4.5,3.5) -- (6,2) -- (1,-3) -- (1,-1) -- (2,-2);
    \draw[color=blue!50!white] (2,2) -- (1,3) -- (1,1) -- (2,2);
    \draw[dashed] (-1,6.5) -- (1,6.5);
    \draw[dashed] (-1,5.5) -- (1,5.5);
    \draw[dashed] (-1,6) -- (1,6);
    \draw[dashed] (-1,-2) -- (1,-2);
    \draw[dashed] (-1,-1.5) -- (1,-1.5);
    \draw[dashed] (-1,-2.5) -- (1,-2.5);
    \draw[color=red] (-4.75,3.25) -- (-1,-0.5) -- (1,2);
    \draw[color=black!25!white,dashed] (-1,.5) -- (1,.5);
    \draw[color=black!25!white,dashed] (-1,2.5) -- (1,2.5);
    \draw[color=black!25!white,dashed] (-1,1.5) -- (1,1.5);
    \draw[color=black!25!white,dashed] (-1,3.5) -- (1,3.5);
    \draw[color=black!25!white,dashed] (-1,3) -- (1,3);
    \draw[color=black!25!white,dashed] (-1,2) -- (1,2);
    \draw[color=black!25!white,dashed] (-1,1) -- (1,1);
    \filldraw[fill=black,draw=black] (-6,2) circle (2pt);
    \filldraw[fill=black,draw=black] (-4.5,3.5) circle (2pt);
    \filldraw[fill=black,draw=black] (-2,6) circle (2pt);
    \filldraw[fill=black,draw=black] (-3,2) circle (2pt);
    \filldraw[fill=blue,draw=blue] (-2,2) circle (2pt);
    \filldraw[fill=black,draw=black] (-2,-2) circle (2pt);
    \filldraw[fill=black,draw=black] (6,2) circle (2pt);
    \filldraw[fill=black,draw=black] (4.5,3.5) circle (2pt);
    \filldraw[fill=black,draw=black] (2,6) circle (2pt);
    \filldraw[fill=black,draw=black] (3,2) circle (2pt);
    \filldraw[fill=blue,draw=blue] (2,2) circle (2pt);
    \filldraw[fill=black,draw=black] (2,-2) circle (2pt);
    \filldraw[fill=red,draw=red] (-1,-0.5) circle (2pt);
    \filldraw[fill=red,draw=red] (1,2) circle (2pt);
    \filldraw[fill=red,draw=red] (-4.75,3.25) circle (2pt);
    \node at (-3,2.5) {\small{$u_j$}};
    \node[color=blue] at (-1.75,1.5) {\small{$u_{j-1}$}};
    \node at (3,2.5) {\small{$v_j$}};
    \node[color=blue] at (2,1.5) {\small{$v_{j-1}$}};
    \node[color=red] at (-0.5,-0.5) {\small{$t_i$}};
    \node[color=red] at (1.75,2.25) {\small{$s_{\phi(i)}$}};
    \node[color=red] at (-5.25,3.5) {\small{$u_0$}};
    \node at (-.5,3.5) {$X_1$};
    \node at (-.5,2) {$U$};
    \node at (-.5,0.5) {$X_2$};
    \node at (0.5,3.5) {$Y_1$};
    \node at (0.5,2) {$V$};
    \node at (0.5,0.5) {$Y_2$};
    \end{tikzpicture}
    \quad 
    \begin{tikzpicture}[scale=.5]
    \draw (-1,7) -- (-2,6) -- (-1,5) -- (-1,7) -- (-4.5,3.5) -- (-3,2) -- (-1,4) -- (-1,0) -- (-4.5,3.5) -- (-6,2) -- (-1,-3) -- (-1,-1) -- (-2,-2);
    \draw[color=blue!50!white] (-2,2) -- (-1,3) -- (-1,1) -- (-2,2);
    \draw (1,7) -- (2,6) -- (1,5) -- (1,7) -- (4.5,3.5) -- (3,2) -- (1,4) -- (1,0) -- (4.5,3.5) -- (6,2) -- (1,-3) -- (1,-1) -- (2,-2);
    \draw[color=blue!50!white] (2,2) -- (1,3) -- (1,1) -- (2,2);
    \draw[dashed] (-1,6.5) -- (1,6.5);
    \draw[dashed] (-1,5.5) -- (1,5.5);
    \draw[dashed] (-1,6) -- (1,6);
    \draw[dashed] (-1,-2) -- (1,-2);
    \draw[dashed] (-1,-1.5) -- (1,-1.5);
    \draw[dashed] (-1,-2.5) -- (1,-2.5);
    \draw[color=red] (-4.75,3.25) -- (-1,-0.5) -- (1,2);
    \draw[color=black!25!white,dashed] (-1,.5) -- (1,.5);
    \draw[color=black!25!white,dashed] (-1,2.5) -- (1,2.5);
    \draw[color=black!25!white,dashed] (-1,1.5) -- (1,1.5);
    \draw[color=black!25!white,dashed] (-1,3.5) -- (1,3.5);
    \draw[color=black!25!white,dashed] (-1,3) -- (1,3);
    \draw[color=black!25!white,dashed] (-1,2) -- (1,2);
    \draw[color=black!25!white,dashed] (-1,1) -- (1,1);
    \filldraw[fill=black,draw=black] (-6,2) circle (2pt);
    \filldraw[fill=black,draw=black] (-4.5,3.5) circle (2pt);
    \filldraw[fill=black,draw=black] (-2,6) circle (2pt);
    \filldraw[fill=black,draw=black] (-3,2) circle (2pt);
    \filldraw[fill=blue,draw=blue] (-2,2) circle (2pt);
    \filldraw[fill=black,draw=black] (-2,-2) circle (2pt);
    \filldraw[fill=black,draw=black] (6,2) circle (2pt);
    \filldraw[fill=black,draw=black] (4.5,3.5) circle (2pt);
    \filldraw[fill=black,draw=black] (2,6) circle (2pt);
    \filldraw[fill=black,draw=black] (3,2) circle (2pt);
    \filldraw[fill=blue,draw=blue] (2,2) circle (2pt);
    \filldraw[fill=black,draw=black] (2,-2) circle (2pt);
    \filldraw[fill=red,draw=red] (-1,-0.5) circle (2pt);
    \filldraw[fill=red,draw=red] (1,2) circle (2pt);
    \filldraw[fill=red,draw=red] (-4.75,3.25) circle (2pt);
    \node at (-3,2.5) {\small{$u_j$}};
    \node[color=blue] at (-1.75,1.5) {\small{$u_{j-1}$}};
    \node at (3,2.5) {\small{$v_j$}};
    \node[color=blue] at (2,1.5) {\small{$v_{j-1}$}};
    \node[color=red] at (-0.5,-0.5) {\small{$t_i$}};
    \node[color=red] at (1.75,2.25) {\small{$s_{\phi(i)}$}};
    \node[color=red] at (-5.25,3.5) {\small{$u_0$}};
    \node at (-.5,3.5) {$\overline{X_2}$};
    \node at (-.5,2) {$U$};
    \node at (-.5,0.5) {$\overline{X_1}$};
    \node at (0.5,3.5) {$\overline{Y_2}$};
    \node at (0.5,2) {$V$};
    \node at (0.5,0.5) {$\overline{Y_1}$};
    \end{tikzpicture}
    \caption{The effect of paired flips at $(u_j,v_j)$ and $(u_{j-1},v_{j-1})$ when $2\leq j\leq m$.}
    \label{case1drawing2}
\end{figure}
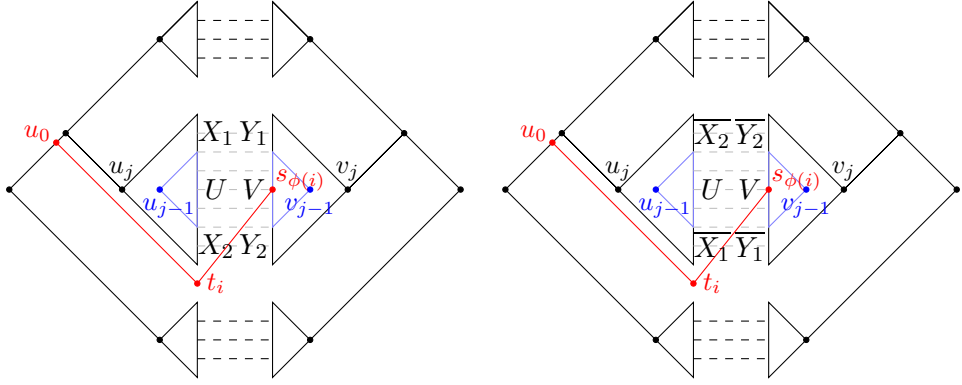

\noindent 
If $j=1$, the ordering of $\leaf(u_1),\leaf(v_1)$ is either $(X_0,Y_1s_{\phi(i)}VY_2)$ or $(X_0,Y_1V s_{\phi(i)}Y_2)$, where $V$ is an ordering of $\leaf(v_0)\setminus \{s_{\phi(i)}\}$. Performing a paired flip at $(u_1,v_1)$ and a subtree switch at $v_0$ respectively results in $(\overline{X}_0,\overline{Y_2}s_{\phi(i)}\overline{V}\,\overline{Y_1})$ or $(\overline{X}_0,\overline{Y_2}\,\overline{V}s_{\phi(i)}\overline{Y_1})$.

In all cases, choosing to perform a paired flip at $(u_j,v_j)$ does not prevent minimization of crossings between $(t_i,s_{\phi(i)})$ and $\op{Edges}(V)$. Thus, the choice at $(u_j,v_j)$ that extends to a solution must be the one that minimizes crossings in $E(v_j)=\op{Edges}(Y_1)\cup \op{Edges}(Y_2)$, as these crossings cannot be affected by operations at $(u_{j-1},v_{j-1}),\ldots,(u_1,v_1)$, and $v_0$. Then the algorithm's choice at $(u_j,v_j)$  in lines \ref{vjstart}-\ref{vjend} extends to a solution. Using induction, we conclude that the sequence of paired flips at elements in $L(I)_S$ extends to a solution.

Finally, for $v_0$, we must minimize crossings involving $(t_i,s_{\phi(i)})$ and between-tree edges with an endpoint in $\leaf(v_0)$. This is precisely what the algorithm does with the set $E(v_0)$. Combined, we conclude that Algorithm \ref{case1algorithm} solves the Tanglegram  Single Edge Insertion Problem when $u_0>_T u_{Smax}$.
\end{proof}

\begin{remark}
When we consider whether or not to perform a subtree switch at $u_0$, it is possible that $|X_1|=|X_2|$, and in this case, both choices at $u_0$ extend to a solution to the Tanglegram  Single Edge Insertion Problem. Similarly, when $|Y_1|=|Y_2|$, both choices of whether or not to 
perform a paired flip at $(u_j,v_j)\in L(I)_S$ extend to a solution. In these situations, we choose not to perform the subtree switch or paired flip for efficiency reasons.
\end{remark}

\begin{remark}\label{insertioncase2}
One can analogously construct an \algname{Insertion Case $v_0>_S v_{Tmax}$} algorithm for the situation $L(I)_T\neq \emptyset$ and $v_0>_S v_{Tmax}$. 
We include the algorithm in the Appendix as Algorithm \ref{case2algorithm}. Proof of its effectiveness follows from a similar argument to the one for Lemma \ref{insertioncase1}. 
\end{remark}

We now consider the remaining cases, where $u_0\not>_T u_{Smax}$ and $v_0\not>_S v_{Tmax}$. 
In these cases, Lemma \ref{case1order} implies $u_0\not>_T u$ for any $(u,v)\in L(I)_S$ and $v_0\not>_S v$ for any $(u,v)\in L(I)_T$. We linearly order $L(I)_S=\{(u_j,v_j)\}_{j=1}^k$ and $L(I)_T=\{(u_j,v_j)\}_{j=k+1}^{k+m}$ so that $v_0<_S v_1<_S v_2 <_S\ldots <_S v_k$ and $u_0<_T u_{k+1}<_T u_{k+2} <_T \ldots <_T u_{k+m}$. Then we define the $E$ sets in a similar way as before:
$$E(v_j)\coloneqq \text{ between-tree edges with an endpoint in }\leaf(v_j)\setminus \leaf(v_{j-1}) \quad \text{ for $j=1,2,\ldots ,k$,}$$
$$E(u_j)\coloneqq \text{ between-tree  edges with an endpoint in }\leaf(u_j)\setminus \leaf(u_{j-1}) \quad \text{ for $j=k+2,k+3,\ldots ,k+m$,}$$
$$E(u_{k+1})\coloneqq \text{ between-tree  edges with an endpoint in }\leaf(u_{k+1})\setminus \leaf(u_{0}),$$
$$E(v_0)\coloneqq \text{ between-tree  edges with an endpoint in }\leaf(v_0)\setminus \{s_{\phi(i)}\},$$
$$E(u_0)\coloneqq \text{ between-tree  edges with an endpoint in }\leaf(u_0)\setminus \{t_i\}.$$
The next lemma gives us a key property concerning these sets. We wish to use the $E$ sets to minimize crossings, and this lemma will show that we do not need to worry about these sets intersecting in most cases.

\begin{lemma}\label{case3order}
Suppose $u_0\not>_T u_{Smax}$ and $v_0\not>_S v_{Tmax}$. 
Then $E(u_j)\cap E(v_\ell)\neq \emptyset$ can only occur when $j=\ell=0$.
\end{lemma}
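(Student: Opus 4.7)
The plan is to fix a hypothetical between-tree edge $(t_a, s_{\phi(a)}) \in E(u_j) \cap E(v_\ell)$ with $(j,\ell)\neq(0,0)$ and derive a contradiction. The set-differences in the definitions of the edge sets immediately force $t_a\neq t_i$ and $s_{\phi(a)}\neq s_{\phi(i)}$, so I will be free to apply the matching identity for any $(u,v)\in L(I)$, namely that $\phi|_I$ pairs $\leaf(u)\cap T_I$ with $\leaf(v)\cap S_{\phi(I)}$, both to $t_a$ and to $s_{\phi(a)}$. I split into the cases (A) $j=0$ and $\ell\ge 1$, (B) $j\ge k+1$ and $\ell=0$, and (C) $j\ge k+1$ and $\ell\ge 1$.

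Cases (A) and (B) are symmetric and quick. In (A), the matching for $(u_\ell,v_\ell)\in L(I)_S$ turns $s_{\phi(a)}\in\leaf(v_\ell)\setminus\{s_{\phi(i)}\}$ into $t_a\in\leaf(u_\ell)$. The hypothesis $u_0\not>_T u_{Smax}$ then activates Lemma \ref{case1order}(ii) to give $u_0\not>_T u_\ell$; combined with $u_\ell\not>_T t_i$ (built into the definition of $L(I)_S$, and ruling out $u_\ell\ge_T u_0$), this forces $u_0$ and $u_\ell$ into disjoint subtrees of $T$, so $\leaf(u_0)\cap\leaf(u_\ell)=\emptyset$, contradicting $t_a\in\leaf(u_0)\cap\leaf(u_\ell)$. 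Case (B) is the mirror argument with $T$ and $S$ swapped, using $v_0\not>_S v_{Tmax}$ and Lemma \ref{case1order}(iv) in place of (ii).

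The main obstacle is Case (C), where both leaf-matchings fire simultaneously. My plan is to first combine the ancestry conditions $u_j>_T t_i$, $u_\ell\not>_T t_i$ and $v_\ell>_S s_{\phi(i)}$, $v_j\not>_S s_{\phi(i)}$ with the fact that $t_a$ lies in $\leaf(u_j)\cap\leaf(u_\ell)$ (and similarly $s_{\phi(a)}$ in $\leaf(v_j)\cap\leaf(v_\ell)$) to deduce $u_j>_T u_\ell$ and $v_\ell>_S v_j$, yielding the containments $\leaf(u_\ell)\cup\{t_i\}\subset\leaf(u_j)$ and $\leaf(v_j)\subset\leaf(v_\ell)\setminus\{s_{\phi(i)}\}$. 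The sharp step is the reverse inclusion $\leaf(u_j)\subset\leaf(u_\ell)\cup\{t_i\}$: any hypothetical $t_c\in\leaf(u_j)\setminus(\leaf(u_\ell)\cup\{t_i\})$ would satisfy $s_{\phi(c)}\in\leaf(v_j)\subset\leaf(v_\ell)\setminus\{s_{\phi(i)}\}$ by the $(u_j,v_j)$ matching, whereupon the $(u_\ell,v_\ell)$ matching would force $t_c\in\leaf(u_\ell)$, contradicting the choice of $t_c$. Therefore $\leaf(u_j)=\leaf(u_\ell)\cup\{t_i\}$ as a disjoint union, and since $u_j>_T u_\ell$, $u_j>_T t_i$, and $\leaf(u_\ell)\cap\{t_i\}=\emptyset$, the two children of $u_j$ in $T$ must be $u_\ell$ and the leaf $t_i$. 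This makes $u_j$ the parent of $t_i$, i.e.\ $u_j=u_0$, contradicting $j\ge k+1>0$.
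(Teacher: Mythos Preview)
Your proof is correct. Cases (A) and (B) match the paper's treatment almost exactly (the paper phrases the final step as ``since $u_0$ and $u_\ell$ share the ancestor $t_a$ they are comparable, hence $u_\ell>_T u_0>_T t_i$'', while you phrase it as ``they are forced to be incomparable, yet share the leaf $t_a$''; these are the same contradiction read in two directions).

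Case (C), however, takes a genuinely different and considerably longer route than the paper. After establishing $v_\ell>_S v_j$, the paper simply transfers this inequality through the subtanglegram: since $(u_j,v_j)$ and $(u_\ell,v_\ell)$ are leaf-matched pairs of $(T_I,S_{\phi(I)},\phi|_I)$, the relation $v_\ell>_{S_{\phi(I)}} v_j$ forces $u_\ell>_{T_I} u_j$, hence $u_\ell>_T u_j>_T t_i$, contradicting $(u_\ell,v_\ell)\in L(I)_S$ immediately. You instead derive the \emph{opposite} inequality $u_j>_T u_\ell$ from the $T$-side (which is also valid, since the hypothesis is contradictory), and then carry out a careful leaf-set computation to pin down $\leaf(u_j)=\leaf(u_\ell)\sqcup\{t_i\}$ and conclude $u_j=u_0$. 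Your argument is self-contained and avoids appealing to the order-transfer between leaf-matched pairs, but the paper's one-line transfer is much quicker. Indeed, once you have both $u_j>_T u_\ell$ (your argument) and $v_\ell>_S v_j$, you could have invoked the same transfer to get $u_\ell>_T u_j$ and stop there, without ever computing $\leaf(u_j)$.
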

\begin{proof}
Suppose $E(u_j)\cap E(v_\ell)\neq \emptyset$ for some $j,\ell\neq 0$, that is, there exists $(u_j,v_j)\in L(I)_T$, $(u_{\ell},v_\ell)\in L(I)_S$, and a between-tree edge $(t,s)$ such that $t\in \leaf(u_{j})\setminus \leaf(u_{j-1})$ and $s\in \leaf(v_\ell)\setminus \leaf(v_{\ell-1})$. The assumption $t\in \leaf(u_{j})\setminus \leaf(u_{j-1})$ implies $u_j>_T t$. Since $(u_j,v_j)\in L(I)_T$, the definition of $L(I)_T$ implies that $u_j>_T t_i$ and $v_j \not>_S s_{\phi(i)}$. Furthermore, $(u_j,v_j)$ is a leaf-matched pair of $(T_I,S_{\phi(I)},\phi|_I)$, so $u_j >_T t$ implies $v_{j}>_S s$. Combined, we see that $u_j>_T t$, $u_{j}>_T t_i$, $v_{j}>_S s$, and $v_{j}\not>_S s_{\phi(i)}$.

Using similar reasoning, the assumption $s\in \leaf(v_\ell)\setminus \leaf(v_{\ell-1})$ implies $v_\ell>_S s$, and $(u_\ell,v_\ell)\in L(I)_S$ implies $v_\ell>_S s_{\phi(i)}$.
Since the ancestors of $s$ are linearly ordered, the fact that $v_j$ is an ancestor of both $s$ and $s_{\phi(i)}$ while $v_j$ is only an ancestor of $s$ implies $v_\ell>_S v_j$, giving us the situation illustrated in Figure \ref{disjoint}. This implies $u_\ell>_T u_{j}>_T t_i$, which  contradicts $(u_\ell,v_\ell)\in L(I)_S$.

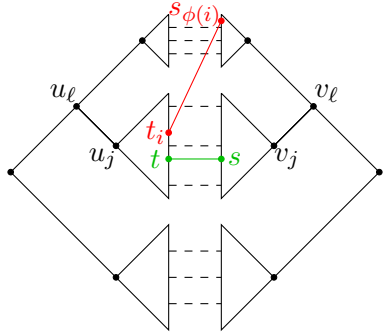
\begin{figure}[h]
    \centering
\begin{tikzpicture}[scale=0.35]
\filldraw[fill=black,draw=black] (-2,5) circle (3pt);
\filldraw[fill=black,draw=black] (-3,1) circle (3pt);
\filldraw[fill=black,draw=black] (-4.5,2.5) circle (3pt);
\filldraw[fill=black,draw=black] (-7,0) circle (3pt);
\filldraw[fill=black,draw=black] (-3,-4) circle (3pt);
\filldraw[fill=black,draw=black] (2,5) circle (3pt);
\filldraw[fill=black,draw=black] (3,1) circle (3pt);
\filldraw[fill=black,draw=black] (4.5,2.5) circle (3pt);
\filldraw[fill=black,draw=black] (7,0) circle (3pt);
\filldraw[fill=black,draw=black] (3,-4) circle (3pt);
\draw (-1,6) -- (-2,5) -- (-1,4) -- (-1,6);
\draw (-1,3)--(-3,1) -- (-1,-1) -- (-1,3);
\draw (-2,5) -- (-4.5,2.5) -- (-3,1) -- (-4.5,2.5) -- (-7,0) -- (-1,-6) -- (-1,-2) -- (-3,-4);
\draw (1,6) -- (2,5) -- (1,4) -- (1,6);
\draw (1,3)--(3,1) -- (1,-1) -- (1,3);
\draw (2,5) -- (4.5,2.5) -- (3,1) -- (4.5,2.5) -- (7,0) -- (1,-6) -- (1,-2) -- (3,-4);
\draw[dashed] (-1,5.5) -- (1,5.5);
\draw[dashed] (-1,5) -- (1,5);
\draw[dashed] (-1,4.5) -- (1,4.5);
\draw[dashed] (-1,2.5) -- (1,2.5);
\draw[dashed] (-1,1) -- (1,1);
\draw[dashed] (-1,-0.5) -- (1,-0.5);
\draw[dashed] (-1,-5) -- (1,-5);
\draw[dashed] (-1,-3) -- (1,-3);
\draw[dashed] (-1,-4) -- (1,-4);

\draw[color=red] (-1,1.5) -- (1,5.75)  ;
\filldraw[fill=red,draw=red] (-1,1.5) circle (3pt);
\filldraw[fill=red,draw=red] (1,5.75) circle (3pt);
\draw[color=green!75!black] (-1,0.5) -- (1,0.5);
\filldraw[fill=green!75!black,draw=green!75!black] (-1,0.5) circle (3pt);
\filldraw[fill=green!75!black,draw=green!75!black] (1,0.5) circle (3pt);
\node at (-5,3) {$u_\ell$};
\node at (5,3) {$v_\ell$};
\node at (-3.5,.5) {$u_j$};
\node at (3.5,.5) {$v_j$};
\node[color=red] at (-1.5,1.5) {$t_i$};
\node[color=red] at (0,6) {$s_{\phi(i)}$};
\node[color=green!75!black] at (-1.5,0.5) {$t$};
\node[color=green!75!black] at (1.5,0.5) {$s$};
\end{tikzpicture}
    \caption{The proof of Lemma \ref{case3order} (i).}
    \label{disjoint}
\end{figure}

Now suppose that $E(u_0)\cap E(v_j)\neq \emptyset$ for some $j\neq 0$, that is,
there exists some $(u_j,v_j)\in L(I)_S$ and some between-tree edge $(t,s)$ with $t\in \leaf(u_0)\setminus \{t_i\}$ and $s\in \leaf(v_j)\setminus \leaf(v_{j-1})$. Since $t\in \leaf(u_0)\setminus \{t_i\}$, we see that $u_0>_T t$. Using the fact that $(u_j,v_j)$ is a leaf-matched pair of $(T_I,S_{\phi(I)},\phi|_I)$, $v_j>_S s$ implies $u_j>_T t$. Since ancestors of $t$ are linearly ordered, either $u_j>_T u_0$ or $u_j<_T u_0$. Using our assumption $u_0\not>_T u_{Smax}$ with Lemma \ref{case1order}, we conclude that $u_{j}>_T u_0>_T t_i$, which contradicts $(u_j,v_j)\in L(I)_S$. The case $E(u_j)\cap E(v_0)\neq \emptyset$ for $j\neq 0$ is ruled out by similar reasoning.
\end{proof}

We now define an algorithm to solve the remaining cases of the Tanglegram Single Edge Insertion Problem. 
As before, we consider flips at all ancestors of a vertex before considering the vertex itself, and make flip and switch choices according to crossings in the $E(u_j)$ and $E(v_j)$ sets. 
An example of the algorithm is shown in Figure \ref{insertion-ex}.

\begin{algorithm}[H]\DontPrintSemicolon
\caption{\algname{Insertion Case $u_0\not>_T u_{Smax}$ and $v_0\not>_S v_{Tmax}$}}\label{case3algorithm}
\KwIn{tanglegram $(T,S,\phi)$, index $i$ such that $(T_I,S_{\phi(I)},\phi|_I)$ is planar for $I=[n]\setminus \{i\}$}
\KwOut{layout of $(T,S,\phi)$ that restricts to a planar layout of $(T_I,S_{\phi(I)},\phi|_I)$}
\tcp*{Step 1: initialize the algorithm.}
$(X,Y),L\coloneqq\algname{ModifiedUntangle}(T,S,\phi|_I)$\\
construct $L(I)$ from $L$ using Definition \ref{Lsubtanglegram}\\
$u_0\coloneqq$ parent of $t_i$, $v_0\coloneqq$ parent of $s_{\phi(i)}$\\
$L(I)_S\coloneqq\{(u,v)\in L(I):u\not>_T t_i,v>_S s_{\phi(i)}\}$\\
$L(I)_T\coloneqq\{(u,v)\in L(I):u>_T t_i,v\not>_S s_{\phi(i)}\}$\\
\tcp*{Step 2: construct edge sets.}
linearly order $L(I)_S=\{(u_j,v_j)\}_{j=1}^{k}$ so that $v_1<_S v_2<_S \ldots <_S v_k$\\
linearly order $L(I)_T=\{(u_j,v_j)\}_{j=k+1}^{k+m}$ so that $u_{k+1}<_T u_{k+2}<_T \ldots <_T u_{k+m}$ \\
\For{$j=1,2,\ldots ,k$,}
    {$E(v_j) \coloneqq$ between-tree edges with an endpoint in $\leaf(v_j)\setminus \leaf(v_{j-1})$}
\For{$j=k+2,k+3,\ldots ,m$,} 
    {$E(u_j) \coloneqq$ between-tree edges with an endpoint in $\leaf(u_j)\setminus \leaf(u_{j-1})$}
$E(u_{k+1})\coloneqq \text{ between-tree edges with an endpoint in $\leaf(u_{k+1})\setminus \leaf(u_0)$}$\\
$E(v_0)\coloneqq \text{ between-tree  edges with an endpoint in }\leaf(v_0)\setminus \{s_{\phi(i)}\}$\\
$E(u_0)\coloneqq \text{ between-tree  edges with an endpoint in }\leaf(u_0)\setminus \{t_i\}$\\
\tcp*{Step 3: use paired flips in $L(I)_T$ and $L(I)_S$ to change crossings.}
\For{$j=k+m,\ldots ,k+2,k+1$,}
    {
    \If{$(t_i,s_{\phi(i)})$ crosses more than half of the edges in $E(u_j)$ in the layout $(X,Y)$,\label{ujstart}}
        {update $(X,Y)\coloneqq \algname{PairedFlip}((X,Y),(u_j,v_j))$ \label{ujend}}
    }
\For{$j=k,\ldots ,2,1$,}
    {
    \If{$(t_i,s_{\phi(i)})$ crosses more than half of the edges in $E(v_j)$ in the layout $(X,Y)$,}
        {update $(X,Y)\coloneqq\algname{PairedFlip}((X,Y),(u_j,v_j))$}
    }
\tcp*{Step 4: use subtree switches at $u_0$ and $v_0$ to change crossings.}
\If{$E(u_0)\cap E(v_0)=\emptyset$,}
    {
    \If{$(t_i,s_{\phi(i)})$ crosses the edges in $E(u_0)$,}
        {update $X\coloneqq \algname{SubtreeSwitch}(X,u_0)$}
    \If{$(t_i,s_{\phi(i)})$ crosses the edges in $E(v_0)$,}
        {update $Y\coloneqq \algname{SubtreeSwitch}(Y,v_0)$}
        \Return $X,Y$\\
    }
\Else
    {
    $X',Y'\coloneqq \algname{SubtreeSwitch}(X,u_0),\algname{SubtreeSwitch}(Y,v_0)$\label{tryallstart}\\
    $(X_{min},Y_{min})\coloneqq $ layout in $\{(X,Y),(X',Y),(X,Y'),(X',Y')\}$ with fewest crossings\\
    \Return $X_{min},Y_{min}$ \label{tryallend}
    }
\end{algorithm}

\begin{figure}[h]
\begin{center}
\begin{tikzpicture}[scale=0.55]
\filldraw[fill=black,draw=black] (1,4) circle (2pt);
\filldraw[fill=black,draw=black] (1,3) circle (2pt);
\filldraw[fill=black,draw=black] (1,2) circle (2pt);
\filldraw[fill=black,draw=black] (1,1) circle (2pt);
\filldraw[fill=black,draw=black] (1,0) circle (2pt);
\filldraw[fill=black,draw=black] (1,-1) circle (2pt);
\filldraw[fill=black,draw=black] (1,-2) circle (2pt);
\filldraw[fill=black,draw=black] (1,-3) circle (2pt);
\filldraw[fill=black,draw=black] (-1,4) circle (2pt);
\filldraw[fill=black,draw=black] (-1,3) circle (2pt);
\filldraw[fill=black,draw=black] (-1,2) circle (2pt);
\filldraw[fill=black,draw=black] (-1,1) circle (2pt);
\filldraw[fill=black,draw=black] (-1,0) circle (2pt);
\filldraw[fill=black,draw=black] (-1,-1) circle (2pt);
\filldraw[fill=black,draw=black] (-1,-2) circle (2pt);
\filldraw[fill=black,draw=black] (-1,-3) circle (2pt);
\draw[dashed] (-1,4) -- (1,4);
\draw[dashed] (-1,3) -- (1,3);
\draw[dashed] (-1,2) -- (1,2);
\draw[dashed] (-1,1) -- (1,1);
\draw[dashed] (-1,0) -- (1,0);
\draw[dashed] (-1,-1) -- (1,-1);
\draw[dashed] (-1,-2) -- (1,-2);
\draw[dashed] (-1,-3) -- (1,-3);
\draw (-1,4) -- (-1.5,3.5) -- (-1,3) -- (-1.5,3.5) -- (-2.5,2.5) -- (-1.5,1.5) -- (-1,2) -- (-1.5,1.5) -- (-1,1) -- (-2.5,2.5) -- (-4.5,0.5);
\draw (-1,-3) -- (-1.5,-2.5) -- (-1,-2) -- (-1.5,-2.5) -- (-2,-2) -- (-1,-1) -- (-2,-2) -- (-2.5,-1.5) -- (-1,0) -- (-2.5,-1.5) -- (-4.5,0.5);
\draw (1,4) -- (1.5,3.5) -- (1,3) -- (1.5,3.5) -- (2,3) -- (1,2) -- (2,3) -- (2.5,2.5) -- (1,1) -- (2.5,2.5) -- (4.5,0.5) ; 
\draw (1,-3) -- (1.5,-2.5) -- (1,-2) -- (1.5,-2.5) -- (4.5,0.5);
\draw (1,0) -- (1.5,-0.5) -- (1,-1) -- (1.5,-0.5) --(3.5,1.5);
\filldraw[fill=black,draw=black] (-1.5,1.5) circle (2pt);
\filldraw[fill=black,draw=black] (-2.5,-1.5) circle (2pt);
\filldraw[fill=black,draw=black] (-2,-2) circle (2pt);
\filldraw[fill=black,draw=black] (2,3) circle (2pt);
\filldraw[fill=black,draw=black] (1.5,-0.5) circle (2pt);
\filldraw[fill=black,draw=black] (3.5,1.5) circle (2pt);
\filldraw[fill=black,draw=black] (-1.5,3.5) circle (2pt);
\filldraw[fill=black,draw=black] (1.5,3.5) circle (2pt);
\filldraw[fill=black,draw=black] (2.5,2.5) circle (2pt);
\node at (-2.875,2.75) {$u_3$};
\node at (2.875,2.75) {$v_3$};
\node at (-2,3.5) {$u_2$};
\node at (2,3.5) {$v_2$};
\node at (-1.875,-2.75) {$u_1$};
\node at (1.875,-2.75) {$v_1$};
\node[color=red] at (-1.625,4) {$u_0$};
\node[color=red] at (1.625,-2) {$v_0$};
\filldraw[fill=black,draw=black] (-2.5,2.5) circle (2pt);
\filldraw[fill=black,draw=black] (4.5,0.5) circle (2pt);
\filldraw[fill=black,draw=black] (-4.5,0.5) circle (2pt);
\draw[color=red] (-1.25,3.75) -- (-1,3.5) -- (1,-2.5) -- (1.25,-2.25);
\filldraw[fill=red,draw=red] (-1.25,3.75) circle (2pt);
\filldraw[fill=red,draw=red] (-1,3.5) circle (2pt);
\filldraw[fill=red,draw=red] (1,-2.5) circle (2pt);
\filldraw[fill=red,draw=red] (1.25,-2.25) circle (2pt);
\node[color=red] at (-.625,3.5) {$t_i$};
\node[color=red] at (.375,-2.5) {$s_{\phi(i)}$};
\filldraw[fill=black,draw=black] (1.5,-2.5) circle (2pt);
\filldraw[fill=black,draw=black] (-1.5,-2.5) circle (2pt);
\draw (-1.2,2.2)--(1.2,2.2) -- (1.2,0.8) -- (-1.2,0.8) -- (-1.2,2.2);
\node at (0,1.5) {\small{$E(u_3)$}};
\node[color=white] at (0,-3.5) {\small{$E(v_1)$}};
\end{tikzpicture}
\phantom{-}
\begin{tikzpicture}[scale=0.55]
\filldraw[fill=black,draw=black] (1,4) circle (2pt);
\filldraw[fill=black,draw=black] (1,3) circle (2pt);
\filldraw[fill=black,draw=black] (1,2) circle (2pt);
\filldraw[fill=black,draw=black] (1,1) circle (2pt);
\filldraw[fill=black,draw=black] (1,0) circle (2pt);
\filldraw[fill=black,draw=black] (1,-1) circle (2pt);
\filldraw[fill=black,draw=black] (1,-2) circle (2pt);
\filldraw[fill=black,draw=black] (1,-3) circle (2pt);
\filldraw[fill=black,draw=black] (-1,4) circle (2pt);
\filldraw[fill=black,draw=black] (-1,3) circle (2pt);
\filldraw[fill=black,draw=black] (-1,2) circle (2pt);
\filldraw[fill=black,draw=black] (-1,1) circle (2pt);
\filldraw[fill=black,draw=black] (-1,0) circle (2pt);
\filldraw[fill=black,draw=black] (-1,-1) circle (2pt);
\filldraw[fill=black,draw=black] (-1,-2) circle (2pt);
\filldraw[fill=black,draw=black] (-1,-3) circle (2pt);
\draw[dashed] (-1,4) -- (1,4);
\draw[dashed] (-1,3) -- (1,3);
\draw[dashed] (-1,2) -- (1,2);
\draw[dashed] (-1,1) -- (1,1);
\draw[dashed] (-1,0) -- (1,0);
\draw[dashed] (-1,-1) -- (1,-1);
\draw[dashed] (-1,-2) -- (1,-2);
\draw[dashed] (-1,-3) -- (1,-3);
\draw (-1,4) -- (-1.5,3.5) -- (-1,3) -- (-1.5,3.5) -- (-2.5,2.5) -- (-1.5,1.5) -- (-1,2) -- (-1.5,1.5) -- (-1,1) -- (-2.5,2.5) -- (-4.5,0.5);
\draw (-1,-3) -- (-1.5,-2.5) -- (-1,-2) -- (-1.5,-2.5) -- (-2,-2) -- (-1,-1) -- (-2,-2) -- (-2.5,-1.5) -- (-1,0) -- (-2.5,-1.5) -- (-4.5,0.5);
\draw (1,1) -- (1.5,1.5) -- (1,2) -- (1.5,1.5) -- (2,2) -- (1,3) -- (2,2) -- (2.5,2.5) -- (1,4)  -- (4.5,0.5) ; 
\draw (1,-3) -- (1.5,-2.5) -- (1,-2) -- (1.5,-2.5) -- (4.5,0.5);
\draw (1,0) -- (1.5,-0.5) -- (1,-1) -- (1.5,-0.5) --(3.5,1.5);
\filldraw[fill=black,draw=black] (-1.5,1.5) circle (2pt);
\filldraw[fill=black,draw=black] (-2.5,-1.5) circle (2pt);
\filldraw[fill=black,draw=black] (-2,-2) circle (2pt);
\filldraw[fill=black,draw=black] (2,2) circle (2pt);
\filldraw[fill=black,draw=black] (1.5,-0.5) circle (2pt);
\filldraw[fill=black,draw=black] (3.5,1.5) circle (2pt);
\filldraw[fill=black,draw=black] (-1.5,3.5) circle (2pt);
\filldraw[fill=black,draw=black] (1.5,1.5) circle (2pt);
\filldraw[fill=black,draw=black] (2.5,2.5) circle (2pt);
\filldraw[fill=black,draw=black] (-2.5,2.5) circle (2pt);
\filldraw[fill=black,draw=black] (4.5,0.5) circle (2pt);
\filldraw[fill=black,draw=black] (-4.5,0.5) circle (2pt);
\filldraw[fill=black,draw=black] (1.5,-2.5) circle (2pt);
\filldraw[fill=black,draw=black] (-1.5,-2.5) circle (2pt);
\node at (-2.875,2.75) {$u_3$};
\node at (2.875,2.75) {$v_3$};
\node at (-2,1.5) {$u_2$};
\node at (2,1.5) {$v_2$};
\node at (-1.875,-2.75) {$u_1$};
\node at (1.75,-2.75) {$v_1$};
\node[color=red] at (-1.5,1) {$u_0$};
\node[color=red] at (1.5,-2) {$v_0$};
\draw[color=red] (-1.25,1.25) -- (-1,1.5) -- (1,-2.5) -- (1.25,-2.25);
\filldraw[fill=red,draw=red] (-1.25,1.25) circle (2pt);
\filldraw[fill=red,draw=red] (-1,1.5) circle (2pt);
\filldraw[fill=red,draw=red] (1,-2.5) circle (2pt);
\filldraw[fill=red,draw=red] (1.25,-2.25) circle (2pt);
\node[color=red] at (-.625,1.5) {$t_i$};
\node[color=red] at (.25,-2.5) {$s_{\phi(i)}$};
\draw (-1.2,2.2) -- (1.2,2.2) -- (1.2,1.8) -- (-1.2,1.8) -- (-1.2,2.2);
\node at (0,2.5) {\small{$E(u_2)$}};
\draw (-1.2,-3.2) -- (1.2,-3.2) -- (1.2,-2.8) -- (-1.2,-2.8) -- (-1.2,-3.2);
\node at (0,-3.5) {\small{$E(v_1)$}};
\end{tikzpicture}
\phantom{-}
\begin{tikzpicture}[scale=0.55]
\filldraw[fill=black,draw=black] (1,4) circle (2pt);
\filldraw[fill=black,draw=black] (1,3) circle (2pt);
\filldraw[fill=black,draw=black] (1,2) circle (2pt);
\filldraw[fill=black,draw=black] (1,1) circle (2pt);
\filldraw[fill=black,draw=black] (1,0) circle (2pt);
\filldraw[fill=black,draw=black] (1,-1) circle (2pt);
\filldraw[fill=black,draw=black] (1,-2) circle (2pt);
\filldraw[fill=black,draw=black] (1,-3) circle (2pt);
\filldraw[fill=black,draw=black] (-1,4) circle (2pt);
\filldraw[fill=black,draw=black] (-1,3) circle (2pt);
\filldraw[fill=black,draw=black] (-1,2) circle (2pt);
\filldraw[fill=black,draw=black] (-1,1) circle (2pt);
\filldraw[fill=black,draw=black] (-1,0) circle (2pt);
\filldraw[fill=black,draw=black] (-1,-1) circle (2pt);
\filldraw[fill=black,draw=black] (-1,-2) circle (2pt);
\filldraw[fill=black,draw=black] (-1,-3) circle (2pt);
\draw[dashed] (-1,4) -- (1,4);
\draw[dashed] (-1,3) -- (1,3);
\draw[dashed] (-1,2) -- (1,2);
\draw[dashed] (-1,1) -- (1,1);
\draw[dashed] (-1,0) -- (1,0);
\draw[dashed] (-1,-1) -- (1,-1);
\draw[dashed] (-1,-2) -- (1,-2);
\draw[dashed] (-1,-3) -- (1,-3);
\draw (-1,4) -- (-1.5,3.5) -- (-1,3) -- (-1.5,3.5) -- (-2.5,2.5) -- (-1.5,1.5) -- (-1,2) -- (-1.5,1.5) -- (-1,1) -- (-2.5,2.5) -- (-4.5,0.5);
\draw (-1,-3) -- (-1.5,-2.5) -- (-1,-2) -- (-1.5,-2.5) -- (-2,-2) -- (-1,-1) -- (-2,-2) -- (-2.5,-1.5) -- (-1,0) -- (-2.5,-1.5) -- (-4.5,0.5);
\draw (1,1) -- (1.5,1.5) -- (1,2) -- (1.5,1.5) -- (2,2) -- (1,3) -- (2,2) -- (2.5,2.5) -- (1,4)  -- (4.5,0.5) ; 
\draw (1,-3) -- (1.5,-2.5) -- (1,-2) -- (1.5,-2.5) -- (4.5,0.5);
\draw (1,0) -- (1.5,-0.5) -- (1,-1) -- (1.5,-0.5) --(3.5,1.5);
\filldraw[fill=black,draw=black] (-1.5,1.5) circle (2pt);
\filldraw[fill=black,draw=black] (-2.5,-1.5) circle (2pt);
\filldraw[fill=black,draw=black] (-2,-2) circle (2pt);
\filldraw[fill=black,draw=black] (2,2) circle (2pt);
\filldraw[fill=black,draw=black] (1.5,-0.5) circle (2pt);
\filldraw[fill=black,draw=black] (3.5,1.5) circle (2pt);
\filldraw[fill=black,draw=black] (-1.5,3.5) circle (2pt);
\filldraw[fill=black,draw=black] (1.5,1.5) circle (2pt);
\filldraw[fill=black,draw=black] (2.5,2.5) circle (2pt);
\filldraw[fill=black,draw=black] (-2.5,2.5) circle (2pt);
\filldraw[fill=black,draw=black] (4.5,0.5) circle (2pt);
\filldraw[fill=black,draw=black] (-4.5,0.5) circle (2pt);
\filldraw[fill=black,draw=black] (1.5,-2.5) circle (2pt);
\filldraw[fill=black,draw=black] (-1.5,-2.5) circle (2pt);
\node at (-2.875,2.75) {$u_3$};
\node at (2.875,2.75) {$v_3$};
\node at (-2,1.5) {$u_2$};
\node at (2,1.5) {$v_2$};
\node at (-1.875,-2.75) {$u_1$};
\node at (1.75,-2.75) {$v_1$};
\node[color=red] at (-1.5,1) {$u_0$};
\node[color=red] at (1.5,-2) {$v_0$};
\draw[color=red] (-1.25,1.25) -- (-1,0.5) -- (1,-1.5) -- (1.25,-2.25);
\filldraw[fill=red,draw=red] (-1.25,1.25) circle (2pt);
\filldraw[fill=red,draw=red] (-1,0.5) circle (2pt);
\filldraw[fill=red,draw=red] (1,-1.5) circle (2pt);
\filldraw[fill=red,draw=red] (1.25,-2.25) circle (2pt);
\node[color=red] at (-.625,0.5) {$t_i$};
\node[color=red] at (.25,-1.5) {$s_{\phi(i)}$};
\end{tikzpicture}
\caption{If the layout from the left is the output of {\normalfont{\algname{ModifiedUntangle}}$(T,S,\phi|_I)$}, then Algorithm \ref{case3algorithm} will first perform a paired flip at $(u_3,v_3)$. Afterwards, it will not perform a paired flip at $(u_2,v_2)$ or $(u_1,v_1)$. Since $E(u_0)\cap E(v_0)=\emptyset$, the algorithm will perform subtree switches at $u_0$ and $v_0$, returning the layout shown on the right.}
\label{insertion-ex}
\end{center}
\end{figure}
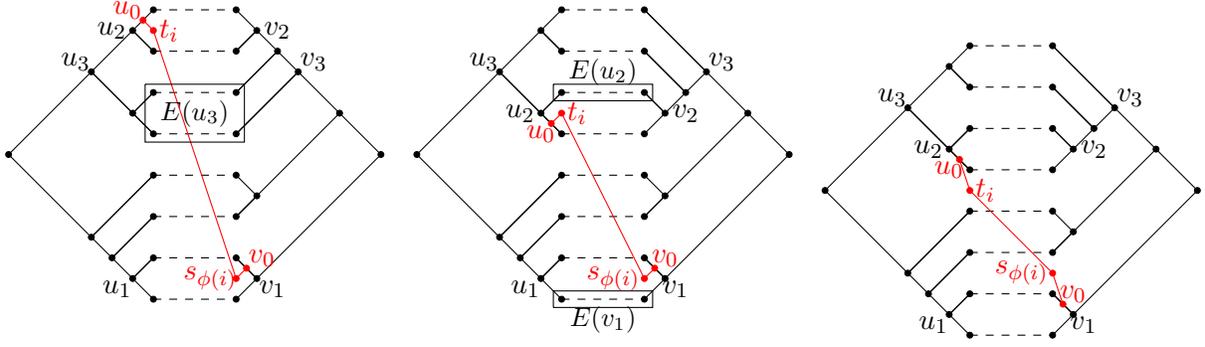

\begin{lemma}\label{insertioncase3}
Algorithm \ref{case3algorithm} solves the Tanglegram Single Edge Insertion Problem when $u_0\not>_T u_{Smax}$ and $v_0\not>_S v_{Tmax}$, where we also consider $L(I)_S=\emptyset$ and $L(I)_T=\emptyset$, respectively, as cases of $u_0\not>_T u_{Smax}$ and $v_0\not>_S v_{Tmax}$.
\end{lemma}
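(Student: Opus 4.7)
The plan is to follow the template of the proof of Lemma \ref{insertioncase1}, extending it to simultaneously handle $L(I)_T$, $L(I)_S$, and subtree switches at both $u_0$ and $v_0$. By Lemma \ref{modified}, the initial layout $(X,Y)$ produced in Step~1 restricts to a planar layout of $(T_I,S_{\phi(I)},\phi|_I)$ and $L(I)$ is exactly its set of leaf-matched pairs, so by Corollary \ref{existence} some sequence of paired flips at pairs in $L(I)_T\cup L(I)_S$ together with subtree switches at $u_0$ and $v_0$ produces an optimal layout. The task is to show that the specific greedy sequence constructed by Algorithm \ref{case3algorithm} reaches one such optimum.

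The core tool is Lemma \ref{case3order}: the edge sets $E(u_j)$ and $E(v_\ell)$ are pairwise disjoint except possibly for the pair $E(u_0)\cap E(v_0)$. This disjointness implies that, once we have fixed the flip/switch choices at the strict ancestors of a vertex $w\in\{u_{k+1},\dots,u_{k+m},v_1,\dots,v_k\}$ in the appropriate chain, the operation at $w$ controls the crossings of $(t_i,s_{\phi(i)})$ with $E(w)$ and leaves every other $E$-set's contribution to the crossing count unchanged. The same holds for $u_0$ and $v_0$ on the component $E(u_0)\setminus E(v_0)$ and $E(v_0)\setminus E(u_0)$, respectively.

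With this setup, the inductive compensation argument of Lemma \ref{insertioncase1} transfers to each of the two chains $L(I)_T=\{(u_j,v_j)\}_{j=k+1}^{k+m}$ and $L(I)_S=\{(u_j,v_j)\}_{j=1}^{k}$. Processing $L(I)_T$ from $j=k+m$ down to $j=k+1$ (and analogously for $L(I)_S$), if some layout optimal among the extensions already fixed requires the opposite flip choice at $(u_j,v_j)$, we can compose with a paired flip at $(u_{j-1},v_{j-1})$ (or with a subtree switch at $u_0$ when $j=k+1$, resp.\ at $v_0$ when $j=1$) to obtain another optimal layout agreeing with the algorithm at $(u_j,v_j)$. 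Because by Lemma \ref{case3order} the compensating operation acts on edges disjoint from $E(u_j)$, the two layouts have the same number of crossings between $(t_i,s_{\phi(i)})$ and $E(u_j)$, so the greedy choice at $(u_j,v_j)$ in lines \ref{ujstart}--\ref{ujend} extends to an optimum. The two chains may be processed in either order because all flips commute and, again by Lemma \ref{case3order}, they act on disjoint edge sets.

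Finally, at $u_0$ and $v_0$, the only crossings of $(t_i,s_{\phi(i)})$ that remain alterable lie in $E(u_0)\cup E(v_0)$. When $E(u_0)\cap E(v_0)=\emptyset$, the two switch decisions decouple and independent greedy minimization on each set is optimal. When $E(u_0)\cap E(v_0)\neq\emptyset$, the two decisions are coupled through the shared edges, but since only four Boolean combinations remain, lines \ref{tryallstart}--\ref{tryallend} enumerate them all and return the best, which is necessarily optimal. The main obstacle in executing this plan is the verification that the compensation moves used to realign an arbitrary optimum with the algorithm's greedy choice do not disturb any $E$-set crossing controlled at a later step; this reduces, after a careful enumeration of sublist patterns as in Figures \ref{case1drawing1} and \ref{case1drawing2}, to the disjointness asserted by Lemma \ref{case3order}.
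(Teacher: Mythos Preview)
Your proposal is correct and follows essentially the same approach as the paper: reduce to Corollary~\ref{existence}, process the two chains $L(I)_T$ and $L(I)_S$ top-down with the same compensation argument as in Lemma~\ref{insertioncase1}, invoke Lemma~\ref{case3order} to ensure the two chains do not interfere, and handle the possibly coupled pair $u_0,v_0$ by brute force. One small clarification: the reason the compensating flip at $(u_{j-1},v_{j-1})$ does not disturb crossings in $E(u_j)$ is simply the nesting $\leaf(u_{j-1})\subset\leaf(u_j)$, whereas Lemma~\ref{case3order} is what guarantees that later operations on the \emph{other} chain (at $L(I)_S$ and $v_0$) cannot alter the $E(u_j)$ crossings already fixed.
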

\begin{proof}
Using the same argument as in Lemma \ref{insertioncase1}, starting at $(X,Y)$ in Step 1 and performing some sequence of paired flips at elements in $L(I)_T\cup L(I)_S$ and subtree switches at $\{u_0,v_0\}$ solves the Tanglegram Single Edge Insertion Problem. 
Algorithm \ref{case3algorithm}  first considers the elements of $L(I)_T$ in the order $(u_{k+m},v_{k+m}),\ldots,(u_{k+1},v_{k+1})$ and then elements of $L(I)_S$ in the order $(u_k,v_k),\ldots,(u_1,v_1)$. Similar to Lemma \ref{insertioncase1}, we show that our choice at each element of $L(I)_T$ and $L(I)_S$ in this order extends to a solution to the Tanglegram Single Edge Insertion Problem. 

Consider $(u_j,v_j)\in L(I)_T$, and assume that our choices at all previously considered vertices extend to a solution to the Single Edge Insertion Problem. Some choice of operations at $(u_{j'},v_{j'})\in L(I)_T$ with $u_{j'}\leq_T u_j$, $(u_{\ell},v_{\ell})\in L(I)_S$, $u_0$, and $v_0$ solves the Single Edge Insertion Problem. This gives us a layout $(X_{min},Y_{min})$ that includes an ordering of $\leaf(u_j),\leaf(v_j)$. When $j\neq k+1$, this ordering has the form $(X_1UX_2,Y_1VY_2)$ with $U,V$ an ordering of $\leaf(u_{j-1}),\leaf(v_{j-1})$ as shown in Figure \ref{case3graph}. If we perform a paired flip at $(u_j,v_j)$, then we can perform a paired flip at $(u_{j-1},v_{j-1})$ to obtain the ordering $(\overline{X_2}U\overline{X_1},\overline{Y_2}V\overline{Y_1})$ on $\leaf(u_j),\leaf(v_j)$. 
Letting $\op{Edges}(U)$ be the between-tree edges with an endpoint in $U$, we see that the original layout $(X_{min},Y_{min})$ and the new layout have the same crossings between $(t_i,s_{\phi(i)})$ and $\op{Edges}(U)$. Thus, regardless of our choice at $(u_j,v_j)$, we can minimize crossings in $\op{Edges}(U)$.

\begin{figure}[h]
    \centering
    \begin{tikzpicture}[scale=.5]
    \draw (-1,7) -- (-2,6) -- (-1,5) -- (-1,7) -- (-4.5,3.5) -- (-3,2) -- (-1,4) -- (-1,0) -- (-4.5,3.5) -- (-6,2) -- (-1,-3) -- (-1,-1) -- (-2,-2);
    \draw[color=blue] (-2,2) -- (-1,3) -- (-1,1) -- (-2,2);
    \draw (1,7) -- (2,6) -- (1,5) -- (1,7) -- (4.5,3.5) -- (3,2) -- (1,4) -- (1,0) -- (4.5,3.5) -- (6,2) -- (1,-3) -- (1,-1) -- (2,-2);
    \draw[color=blue] (2,2) -- (1,3) -- (1,1) -- (2,2);
    \draw[dashed] (-1,6.5) -- (1,6.5);
    \draw[dashed] (-1,5.5) -- (1,5.5);
    \draw[dashed] (-1,6) -- (1,6);
    \draw[dashed] (-1,-2) -- (1,-2);
    \draw[dashed] (-1,-1.5) -- (1,-1.5);
    \draw[dashed] (-1,-2.5) -- (1,-2.5);
    \draw[color=red] (-1,2.25) -- (1,5.75);
    \draw[color=black!25!white,dashed] (-1,.5) -- (1,.5);
    \draw[color=black!25!white,dashed] (-1,2.5) -- (1,2.5);
    \draw[color=black!25!white,dashed] (-1,1.5) -- (1,1.5);
    \draw[color=black!25!white,dashed] (-1,3.5) -- (1,3.5);
    \draw[color=black!25!white,dashed] (-1,3) -- (1,3);
    \draw[color=black!25!white,dashed] (-1,2) -- (1,2);
    \draw[color=black!25!white,dashed] (-1,1) -- (1,1);
    \filldraw[fill=black,draw=black] (-6,2) circle (2pt);
    \filldraw[fill=black,draw=black] (-4.5,3.5) circle (2pt);
    \filldraw[fill=black,draw=black] (-2,6) circle (2pt);
    \filldraw[fill=black,draw=black] (-3,2) circle (2pt);
    \filldraw[fill=blue,draw=blue] (-2,2) circle (2pt);
    \filldraw[fill=black,draw=black] (-2,-2) circle (2pt);
    \filldraw[fill=black,draw=black] (6,2) circle (2pt);
    \filldraw[fill=black,draw=black] (4.5,3.5) circle (2pt);
    \filldraw[fill=black,draw=black] (2,6) circle (2pt);
    \filldraw[fill=black,draw=black] (3,2) circle (2pt);
    \filldraw[fill=blue,draw=blue] (2,2) circle (2pt);
    \filldraw[fill=black,draw=black] (2,-2) circle (2pt);
    \filldraw[fill=red,draw=red] (-1,2.25) circle (2pt);
    \filldraw[fill=red,draw=red] (1,5.75) circle (2pt);
    \node at (-3.5,2) {\small{$u_j$}};
    \node[color=blue] at (-2.25,1.5) {\small{$u_{j-1}$}};
    \node at (3.5,2) {\small{$v_j$}};
    \node[color=blue] at (2.25,1.5) {\small{$v_{j-1}$}};
    \node[color=red] at (-1.25,2) {\small{$t_i$}};
    \node[color=red] at (1.75,5.5) {\small{$s_{\phi(i)}$}};
    \node at (-2.375,6.25) {\small{$u_k$}};
    \node at (2.375,6.25) {\small{$v_k$}};
    \node at (-.5,3.5) {$X_1$};
    \node at (-.5,2) {$U$};
    \node at (-.5,0.5) {$X_2$};
    \node at (0.5,3.5) {$Y_1$};
    \node at (0.5,2) {$V$};
    \node at (0.5,0.5) {$Y_2$};
    \end{tikzpicture}
    \quad 
    \begin{tikzpicture}[scale=.5]
    \draw (-1,7) -- (-2,6) -- (-1,5) -- (-1,7) -- (-4.5,3.5) -- (-3,2) -- (-1,4) -- (-1,0) -- (-4.5,3.5) -- (-6,2) -- (-1,-3) -- (-1,-1) -- (-2,-2);
    \draw[color=blue] (-2,2) -- (-1,3) -- (-1,1) -- (-2,2);
    \draw (1,7) -- (2,6) -- (1,5) -- (1,7) -- (4.5,3.5) -- (3,2) -- (1,4) -- (1,0) -- (4.5,3.5) -- (6,2) -- (1,-3) -- (1,-1) -- (2,-2);
    \draw[color=blue] (2,2) -- (1,3) -- (1,1) -- (2,2);
    \draw[dashed] (-1,6.5) -- (1,6.5);
    \draw[dashed] (-1,5.5) -- (1,5.5);
    \draw[dashed] (-1,6) -- (1,6);
    \draw[dashed] (-1,-2) -- (1,-2);
    \draw[dashed] (-1,-1.5) -- (1,-1.5);
    \draw[dashed] (-1,-2.5) -- (1,-2.5);
    \draw[color=red] (-1,2.25) -- (1,5.75);
    \draw[color=black!25!white,dashed] (-1,.5) -- (1,.5);
    \draw[color=black!25!white,dashed] (-1,2.5) -- (1,2.5);
    \draw[color=black!25!white,dashed] (-1,1.5) -- (1,1.5);
    \draw[color=black!25!white,dashed] (-1,3.5) -- (1,3.5);
    \draw[color=black!25!white,dashed] (-1,3) -- (1,3);
    \draw[color=black!25!white,dashed] (-1,2) -- (1,2);
    \draw[color=black!25!white,dashed] (-1,1) -- (1,1);
    \filldraw[fill=black,draw=black] (-6,2) circle (2pt);
    \filldraw[fill=black,draw=black] (-4.5,3.5) circle (2pt);
    \filldraw[fill=black,draw=black] (-2,6) circle (2pt);
    \filldraw[fill=black,draw=black] (-3,2) circle (2pt);
    \filldraw[fill=blue,draw=blue] (-2,2) circle (2pt);
    \filldraw[fill=black,draw=black] (-2,-2) circle (2pt);
    \filldraw[fill=black,draw=black] (6,2) circle (2pt);
    \filldraw[fill=black,draw=black] (4.5,3.5) circle (2pt);
    \filldraw[fill=black,draw=black] (2,6) circle (2pt);
    \filldraw[fill=black,draw=black] (3,2) circle (2pt);
    \filldraw[fill=blue,draw=blue] (2,2) circle (2pt);
    \filldraw[fill=black,draw=black] (2,-2) circle (2pt);
    \filldraw[fill=red,draw=red] (-1,2.25) circle (2pt);
    \filldraw[fill=red,draw=red] (1,5.75) circle (2pt);
    \node at (-3.5,2) {\small{$u_j$}};
    \node[color=blue] at (-2.25,1.5) {\small{$u_{j-1}$}};
    \node at (3.5,2) {\small{$v_j$}};
    \node[color=blue] at (2.25,1.5) {\small{$v_{j-1}$}};
    \node[color=red] at (-1.25,2) {\small{$t_i$}};
    \node[color=red] at (1.75,5.5) {\small{$s_{\phi(i)}$}};
    \node at (-2.375,6.25) {\small{$u_k$}};
    \node at (2.375,6.25) {\small{$v_k$}};
    \node at (-.5,3.5) {$\overline{X_2}$};
    \node at (-.5,2) {$U$};
    \node at (-.5,0.5) {$\overline{X_1}$};
    \node at (0.5,3.5) {$\overline{Y_2}$};
    \node at (0.5,2) {$V$};
    \node at (0.5,0.5) {$\overline{Y_1}$};
    \end{tikzpicture}
    \caption{The effect of a paired flip at $(u_j,v_j)$ and $(u_{j-1},v_{j-1})$ when $j\geq 2$.}
    \label{case3graph}
\end{figure}
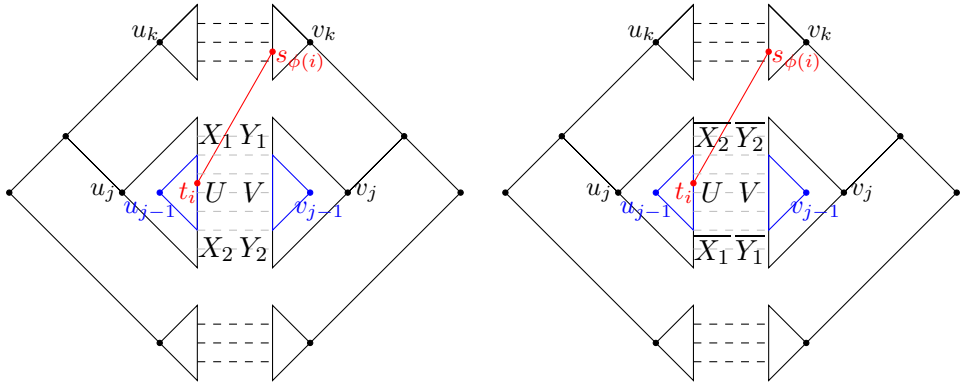

When $j=k+1$, the original ordering has the form $(X_1t_iUX_2,Y_0)$ or $(X_1Ut_iX_2,Y_0)$ with $U$ an ordering of $\leaf(u_0)\setminus \{t_i\}$. If we make a paired flip at $(u_{k+1},v_{k+1})$, we can perform a subtree switch at $u_0$ to respectively obtain $(\overline{X_2}t_i\overline{U}\,\overline{X_1},\overline{Y_0})$ or $(\overline{X_2}\,\overline{U}t_i\overline{X_1},\overline{Y_0})$. In either case, the resulting layout will again have the same crossings between $(t_i,s_{\phi(i)})$ and $\op{Edges}(U)$, so regardless of our choice at $(u_{k+1},v_{k+1})$, we can still minimize crossings in $\op{Edges}(U)$.

In both of the above cases, notice that the crossings involving $E(u_j)=\op{Edges}(X_1)\cup \op{Edges}(X_2)$ are affected, as $(t_i,s_{\phi(i)})$ will either cross $\op{Edges}(X_1)$ or $\op{Edges}(X_2)$. These crossings cannot be affected by paired flips at any $(u_{j'},v_{j'})\in L(I)_T$ with $j'<j$, nor by a subtree switch at $u_0$. Additionally, by Lemma \ref{case3order}, $E(u_j)$ does not intersect $E(v_{\ell})$ for $(u_\ell,v_\ell)\in L(I)_S$, nor does it intersect $E(v_0)$. Then $E(u_j)$ does not contain any edges with a leaf in $\leaf(v_\ell)$ for $\ell=0,1,\ldots, k$, and crossings in $E(u_j)$ cannot be affected by any of the choices at $(u_{\ell},v_{\ell})\in L(I)_S$ and $v_0$. Combined, we conclude that the choice at $(u_j,v_j)$ that extends to a solution to the Single Edge Insertion Problem must be the one that minimizes crossings in $E(u_j)$, which is precisely what the algorithm does in lines \ref{ujstart}-\ref{ujend}. Using induction, the sequence of choices in $L(I)_T$ extends to a solution.

Using a similar argument, we obtain the same conclusion for the choices of paired flips in $L(I)_S$. After making appropriate choices of paired flips for elements in $L(I)_T$ and $L(I)_S$, we know some combination of subtree switches at $\{u_0,v_0\}$ solves the Tanglegram Single Edge Insertion Problem.  When $E(u_0)\cap E(v_0)=\emptyset$, the choices at $u_0$ and $v_0$ affect different crossings, and the algorithm chooses to minimize crossings in each set.  When $E(u_0)\cap E(v_0)\neq \emptyset$, the algorithm checks all four possible layouts in lines \ref{tryallstart}-\ref{tryallend} and returns a layout with minimal crossings. In all cases, the output must be a solution to the Tanglegram Single Edge Insertion Problem. 
\end{proof}

Using these algorithms and results, we now define a combined algorithm that solves the Tanglegram Single Edge Insertion Problem below. The initial steps are similar to the previous algorithms. We then check for the conditions corresponding to each case, and proceed appropriately.

\begin{algorithm}[H]\DontPrintSemicolon
\caption{\algname{Insertion}}\label{insertionalgorithm}
\KwIn{tanglegram $(T,S,\phi)$, index $i$ such that $(T_I,S_{\phi(I)},\phi|_I)$ is planar for $I=[n]\setminus \{i\}$}
\KwOut{layout of $(T,S,\phi)$ that restricts to a planar layout of $(T_I,S_{\phi(I)},\phi|_I)$}
\tcp*{Step 1: initialize the algorithm.}
$(X,Y),L\coloneqq\algname{ModifiedUntangle}(T,S,\phi|_I)$\\
construct $L(I)$ from $L$ using Definition \ref{Lsubtanglegram}\\
$u_0\coloneqq$ parent of $t_i$, $v_0\coloneqq$ parent of $s_{\phi(i)}$\\
$L(I)_S\coloneqq\{(u,v)\in L(I):u\not>_T t_i,v>_S s_{\phi(i)}\}$ \label{lstart}\\
$L(I)_T\coloneqq\{(u,v)\in L(I):u>_T t_i,v\not>_S s_{\phi(i)}\}$ \label{lend}\\
\tcp*{Step 2: consider cases.}
use (\ref{maximums}) to define $u_{Smax}$ when $L(I)_S\neq\emptyset$ and $v_{Tmax}$ when $L(I)_T\neq \emptyset$\\
\If{$L(I)_S\neq \emptyset $ and $u_0>_T u_{Smax}$,\label{condition1}}
    {proceed from Step 2 of Algorithm \ref{case1algorithm}} 
\ElseIf{$L(I)_T\neq \emptyset$ and $v_0>_S v_{Tmax}$,\label{condition2}} 
    {proceed from Step 2 Algorithm \ref{case2algorithm}}
\Else 
    {proceed from Step 2 of Algorithm \ref{case3algorithm}} 
\end{algorithm}

\begin{proof}[Proof of Theorem \ref{insertiontime}]
Effectiveness of the \algname{Insertion} Algorithm follows from the cases considered in Step 2 combined with Lemma \ref{insertioncase1}, Remark \ref{insertioncase2}, and Lemma \ref{insertioncase3}, so it remains to show that the algorithm runs in $O(n^2)$ time and space. In Step 1, \algname{ModifiedUntangle} runs in $O(n^2)$ time and space by Remark \ref{modifieduntangletime}. Additionally, constructing $L(I)$ takes $O(n^2)$ time and space, as $L$ has size at most $n$, and for every $(u,v)\in L$, there are at most $n$ descendants of $u$ and $v$. Lines \ref{lstart}-\ref{lend} take $O(n)$ time since the list $L(I)$ also has size at most $n$. Defining $u_{Smax}$ and $v_{Tmax}$ and then checking the conditions in lines \ref{condition1} and \ref{condition2} also take $O(n)$ time since $L(I)_T$ and $L(I)_S$ have size at most $n$.

Now in Step 2 of all three cases, ordering the lists $L(I)_T$ and $L(I)_S$ can be done in $O(n \log n)$ time. Next, each of the $E$ sets can be calculated in $O(n)$ time and space, and we perform these calculations at most $n$ times for a total of $O(n^2)$ time and space. The paired flip and subtree switch choices in Step 3 of Algorithm \ref{case1algorithm}, \ref{case2algorithm}, or \ref{case3algorithm} run in $O(n)$ time each since counting crossings involving $(t_i,s_{\phi(i)})$ can be done in linear time, and performing paired flips and subtree switches can also be done in linear time. We make at most $n$ paired flip and subtree switch choices for a total of $O(n^2)$ time. In Algorithm \ref{case3algorithm}, we may also consider four layouts corresponding to combinations of $\{u_0,v_0\}$ in lines \ref{tryallstart}-\ref{tryallend}. Since each layout takes $O(n^2)$ space and crossings can be counted in $O(n^2)$ time by checking all ${n\choose 2}$ pairs of edges, these remaining steps also take $O(n^2)$ time and space. Thus, the \algname{Insertion} algorithm runs in $O(n^2)$ time and space, regardless of the case.
\end{proof}

If there exists a solution to the Tanglegram Layout Problem for $(T,S,\phi)$ such that all crossings involve a single edge, then \algname{Insertion} can be used to find a solution to the Tanglegram Layout Problem. In particular, one can use $\algname{Insertion}(T,S,\phi,i)$ over all possible $i$ to verify if $\crt(T,S,\phi)=1$. However, in general, the solution to the Single Edge Insertion Problem and the Layout Problem can differ by arbitrarily many crossings.

\begin{definition}
Let $(T,S,\phi)$ be a tanglegram of size $n$. For any $i\in [n]$ such that $(T_I,S_{\phi(I)},\phi|_I)$ with $I=[n]\setminus \{i\}$ is planar, define $\crtei((T,S,\phi),i)$ to be the common number of crossings in any solution to the Tanglegram Single Edge Insertion Problem. Define
\begin{equation}
    \crtei(T,S,\phi)=\min_i\{\crtei((T,S,\phi),i)\},
\end{equation}
where the minimum is taken over all well-defined choices of $i$.
\end{definition}

\begin{corollary}
\label{ex1}
For any $k\in \bbN$, there exists a tanglegram $(T,S,\phi)$ and such that $\crtei(T,S,\phi)-\crt(T,S,\phi)=k$.
\end{corollary}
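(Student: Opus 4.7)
My plan is to construct, for each $k \in \mathbb{N}$, an explicit tanglegram $(T^{(k)}, S^{(k)}, \phi^{(k)})$ of size growing with $k$ that achieves $\crtei - \crt = k$. The strategy exploits the rigidity of \emph{irreducible} planar tanglegrams (Remark \ref{irreducible}), which admit only two planar layouts (mirror images of each other). This severely restricts the layouts admissible in the Single Edge Insertion Problem while leaving the unconstrained Tanglegram Layout Problem substantially more flexible, creating room for a gap.

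Concretely, fix a parameter $m = m(k)$ to be determined and start from an irreducible planar tanglegram $(T_0, S_0, \phi_0)$ of size $m$ (which exists for each $m \geq 2$ by Table \ref{leafmatchedtable}) with planar layout $(X_0, Y_0)$. Let $t_\alpha$ denote the top leaf of $X_0$ and $s_\beta$ the bottom leaf of $Y_0$. Build $(T, S, \phi) := (T^{(k)}, S^{(k)}, \phi^{(k)})$ of size $m+1$ by attaching a new matched pair $(t^*, s^*)$: take $t^*$ to be a sibling of $t_\alpha$ in $T$, take $s^*$ to be a sibling of $s_\beta$ in $S$, and extend $\phi$ by $\phi(t^*) = s^*$. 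The first verification is to identify, using irreducibility of $(T_0, S_0, \phi_0)$, the leaf-matched pairs of $(T, S, \phi)$ (the root pair and possibly the new parent pair $(u_0, v_0) := (\op{parent}(t^*), \op{parent}(s^*))$), and to argue that among single-leaf removals of $(T, S, \phi)$ the deletion of $t^*$ is the one controlling $\crtei$.

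The lower bound on $\crtei$ then proceeds by Lemma \ref{planarsubtanglegram}: starting from any extension of $(X_0, Y_0)$, every layout of $(T, S, \phi)$ that restricts to a planar layout of the subtanglegram is obtained by paired flips at $L(I)$ (which, by irreducibility, reduces to the root pair) together with subtree switches at $u_0$ and $v_0$. This gives an explicit and small set of candidate layouts, and in each one the edge $(t^*, s^*)$ is forced to span from near the top of the $T$-side to near the bottom of the $S$-side, thereby crossing a number of other between-tree edges that grows linearly in $m$. For the upper bound on $\crt$, one exhibits a layout that breaks subtanglegram planarity by a single carefully chosen non-leaf-matched flip inside $(T_0, S_0, \phi_0)$, trading a small number of ``internal'' crossings for a much larger reduction in the span of $(t^*, s^*)$. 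Comparing the two bounds and tuning $m$ yields the claimed gap of $k$.

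The main obstacle is calibrating the construction so that $\crtei - \crt$ equals exactly $k$ rather than some other positive integer. Small cases ($m \leq 4$) reveal a pitfall: the naive attachment can create a spurious extra leaf-matched pair $(u_0, v_0)$ of $(T, S, \phi)$ whose subtree switches compress $\crtei$ down to $\crt$, giving gap $0$. This is avoided either by attaching $s^*$ as a sibling of a leaf that is not $\phi_0$-matched to $t_\alpha$ (so that $(u_0, v_0)$ is no longer leaf-matched in $(T, S, \phi)$ and hence the set of planar-restricted layouts shrinks), or by enlarging $(T_0, S_0, \phi_0)$ so that the savings from the internal flip exceed its cost by exactly $k$. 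Fine-tuning these parameters is the technical heart of the argument, and if need be additional matched pairs attached at interior positions of $(T_0, S_0, \phi_0)$ can be used to interpolate $\crtei - \crt$ down to its target value.
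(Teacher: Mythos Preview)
Your proposal is a strategy sketch rather than a proof: the three steps that actually carry the argument---identifying which single-leaf removals yield planar subtanglegrams and hence which index realizes $\crtei$, computing $\crtei$ and $\crt$ exactly, and calibrating the construction to hit the gap $k$ on the nose---are all deferred to ``fine-tuning'' and never carried out. In particular, you assert that ``the deletion of $t^*$ is the one controlling $\crtei$'', but since $\crtei$ is a \emph{minimum} over all admissible $i$, you must either rule out every other $j$ (by showing $(T,S,\phi)\setminus\{t_j\}$ is nonplanar) or show each such $j$ gives at least as many insertion crossings; neither is done, and for a generic irreducible $(T_0,S_0,\phi_0)$ there is no reason this holds. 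The upper bound on $\crt$ is similarly schematic: the claim that a single ``carefully chosen non-leaf-matched flip'' trades a few internal crossings for a large reduction is plausible, but no candidate flip is named and no crossing count is given. Finally, the interpolation device (``additional matched pairs attached at interior positions'') is asserted to fix calibration issues, but adding matched pairs typically creates new leaf-matched pairs in the subtanglegram, enlarging $L(I)$ and hence the set of planar-restricted layouts---which works \emph{against} your lower bound on $\crtei$.

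The paper's approach avoids all of these issues by being entirely explicit. It exhibits one size-$6$ tanglegram, verifies (via the Tanglegram Kuratowski Theorem or \algname{ModifiedUntangle}) that exactly one index $i=2$ yields a planar size-$5$ subtanglegram, and computes $\crtei=3$, $\crt=2$ directly, giving gap $1$. For general $k$, it replaces a single between-tree edge $(t_4,s_3)$ in this base example by a planar subtanglegram of size $k$: this preserves the ``only $i=2$ works'' property (since the non-planar subtanglegrams persist as induced subtanglegrams), keeps $\crt=2$, and pushes $\crtei$ to $k+2$ because the inserted edge now crosses $k$ edges instead of one. The substitution trick gives exact control over the gap with no tuning. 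If you want to salvage your approach, you would need to commit to a specific family of irreducible $(T_0,S_0,\phi_0)$, carry out the three verifications above for that family, and then devise an exact-calibration mechanism; the paper's edge-replacement idea is one such mechanism and is likely simpler than the interpolation you suggest.
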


\begin{proof}
For $k=1$, consider the tanglegram $(T,S,\phi)$ with two layouts in Figure \ref{example1}. Direct application of \algname{ModifiedUntangle} or the Tanglegram Kuratowski Theorem in \cite{kura} shows that the induced subtanglegram on $[n]\setminus \{i\}$ is planar only when $i=2$, and hence $\crtei(T,S,\phi)=\crtei((T,S,\phi),2)$.  $\algname{Insertion}((T,S,\phi),2)$ outputs the first layout in Figure \ref{example1} with three crossings, which is a solution to the Tanglegram Single Edge Insertion Problem by Theorem \ref{insertiontime}. Hence, $\crtei(T,S,\phi)=3$. Theorem \ref{insertiontime} also implies that if $\crt(T,S,\phi)=1$, then for some choice of $i\in [6]$, $\algname{Insertion}((T,S,\phi),i)$ will have one crossing. Since this not the case, the second layout in Figure \ref{example1} implies $\crt(T,S,\phi)=2$. Combined, we see that $\crtei(T,S,\phi)-\crt(T,S,\phi)=1$.

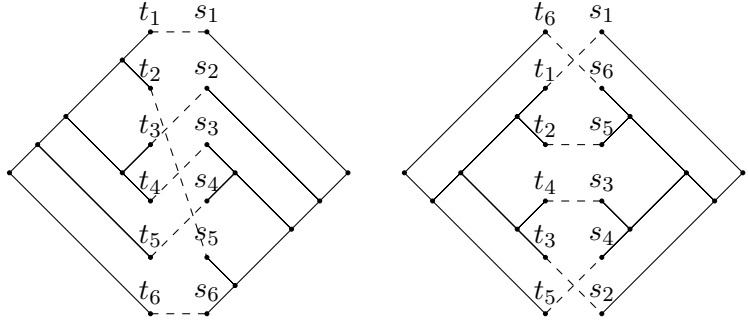
\begin{figure}[h]
    \centering
\begin{tikzpicture}[scale=0.75]
\filldraw[fill=black,draw=black] (-1,3) circle (1pt);
\filldraw[fill=black,draw=black] (-1,2) circle (1pt);
\filldraw[fill=black,draw=black] (-1,1) circle (1pt);
\filldraw[fill=black,draw=black] (-1,0) circle (1pt);
\filldraw[fill=black,draw=black] (-1,-1) circle (1pt);
\filldraw[fill=black,draw=black] (-1,-2) circle (1pt);
\filldraw[fill=black,draw=black] (0,3) circle (1pt);
\filldraw[fill=black,draw=black] (0,2) circle (1pt);
\filldraw[fill=black,draw=black] (0,1) circle (1pt);
\filldraw[fill=black,draw=black] (0,0) circle (1pt);
\filldraw[fill=black,draw=black] (0,-1) circle (1pt);
\filldraw[fill=black,draw=black] (0,-2) circle (1pt);
\draw (-1,3)-- (-1.5,2.5) -- (-1,2) -- (-1.5,2.5) -- (-2.5,1.5) -- (-1.5,0.5) -- (-1,1) -- (-1.5,0.5) -- (-1,0) -- (-2.5,1.5) -- (-3,1) -- (-1,-1) -- (-3,1) -- (-3.5,0.5)--(-1,-2);
\draw (0,3) -- (2.5,0.5) -- (2,0) -- (0,2) -- (2,0) -- (1.5,-0.5) -- (0.5,0.5) -- (0,1) -- (0.5,0.5) -- (0,0) -- (0.5,0.5) -- (1.5,-0.5) -- (0.5,-1.5) -- (0,-1) -- (0.5,-1.5) -- (0,-2) ;
\filldraw[fill=black,draw=black] (-1.5,2.5) circle (1pt);
\filldraw[fill=black,draw=black] (-2.5,1.5) circle (1pt);
\filldraw[fill=black,draw=black] (-1.5,0.5) circle (1pt);
\filldraw[fill=black,draw=black] (-3,1) circle (1pt);
\filldraw[fill=black,draw=black] (-3.5,0.5) circle (1pt);
\filldraw[fill=black,draw=black] (2.5,0.5) circle (1pt);
\filldraw[fill=black,draw=black] (2,0) circle (1pt);
\filldraw[fill=black,draw=black] (1.5,-0.5) circle (1pt);
\filldraw[fill=black,draw=black] (0.5,-1.5) circle (1pt);
\filldraw[fill=black,draw=black] (0.5,0.5) circle (1pt);
\draw[dashed] (-1,3) -- (0,3);
\draw[dashed] (-1,2) -- (0,-1);
\draw[dashed] (-1,1) -- (0,2);
\draw[dashed] (-1,0) -- (0,1);
\draw[dashed] (-1,-1) -- (0,0);
\draw[dashed] (-1,-2) -- (0,-2);
\node at (-1,3.325) {$t_1$};
\node at (-1,2.325) {$t_2$};
\node at (-1,1.325) {$t_3$};
\node at (-1,0.325) {$t_4$};
\node at (-1,-0.625) {$t_5$};
\node at (-1,-1.625) {$t_6$};
\node at (0,3.325) {$s_1$};
\node at (0,2.325) {$s_2$};
\node at (0,1.325) {$s_3$};
\node at (0,0.325) {$s_4$};
\node at (0,-0.625) {$s_5$};
\node at (0,-1.625) {$s_6$};

\filldraw[fill=black,draw=black] (6,3) circle (1pt);
\filldraw[fill=black,draw=black] (6,2) circle (1pt);
\filldraw[fill=black,draw=black] (6,1) circle (1pt);
\filldraw[fill=black,draw=black] (6,0) circle (1pt);
\filldraw[fill=black,draw=black] (6,-1) circle (1pt);
\filldraw[fill=black,draw=black] (6,-2) circle (1pt);
\filldraw[fill=black,draw=black] (7,3) circle (1pt);
\filldraw[fill=black,draw=black] (7,2) circle (1pt);
\filldraw[fill=black,draw=black] (7,1) circle (1pt);
\filldraw[fill=black,draw=black] (7,0) circle (1pt);
\filldraw[fill=black,draw=black] (7,-1) circle (1pt);
\filldraw[fill=black,draw=black] (7,-2) circle (1pt);
\draw[dashed] (6,3) -- (7,2);
\draw[dashed] (6,2) -- (7,3);
\draw[dashed] (6,1) -- (7,1);
\draw[dashed] (6,0) -- (7,0);
\draw[dashed] (6,-1) -- (7,-2);
\draw[dashed] (6,-2) -- (7,-1);
\node at (6,3.325) {$t_6$};
\node at (6,2.325) {$t_1$};
\node at (6,1.325) {$t_2$};
\node at (6,0.325) {$t_4$};
\node at (6,-0.625) {$t_3$};
\node at (6,-1.625) {$t_5$};
\node at (7,3.325) {$s_1$};
\node at (7,2.325) {$s_6$};
\node at (7,1.325) {$s_5$};
\node at (7,0.325) {$s_3$};
\node at (7,-0.625) {$s_4$};
\node at (7,-1.625) {$s_2$};
\draw (6,3) -- (3.5,0.5) -- (4,0) -- (6,2) -- (5.5,1.5) -- (6,1) -- (5.5,1.5) -- (4.5,0.5) -- (5.5,-.5) -- (6,0) -- (5.5,-.5) -- (6,-1) -- (4.5,0.5) -- (4,0) -- (6,-2)  ;
\draw (7,3) -- (9.5,0.5) -- (9,0) -- (8.5,0.5) -- (7.5,1.5) -- (7,2) -- (7.5,1.5) -- (7,1) -- (7.5,1.5) -- (8.5,0.5) -- (7.5,-.5) -- (7,0) -- (7.5,-.5) -- (7,-1) -- (8.5,0.5) -- (9,0) -- (7,-2);
\filldraw[fill=black,draw=black] (3.5,0.5) circle (1pt);
\filldraw[fill=black,draw=black] (4,0) circle (1pt);
\filldraw[fill=black,draw=black] (5.5,1.5) circle (1pt);
\filldraw[fill=black,draw=black] (4.5,0.5) circle (1pt);
\filldraw[fill=black,draw=black] (5.5,-0.5) circle (1pt);
\filldraw[fill=black,draw=black] (9.5,0.5) circle (1pt);
\filldraw[fill=black,draw=black] (9,0) circle (1pt);
\filldraw[fill=black,draw=black] (8.5,0.5) circle (1pt);
\filldraw[fill=black,draw=black] (7.5,1.5) circle (1pt);
\filldraw[fill=black,draw=black] (7.5,-0.5) circle (1pt);
\end{tikzpicture}
\caption{The output of $\normalfont{\algname{Insertion}}((T,S,\phi),2)$ and another layout for the same tanglegram.}
\label{example1}
\end{figure}

For $k>1$, form the size $k+5$ tanglegram $(T',S',\phi')$ by starting with $(T,S,\phi)$ and replacing the edge $(t_4,s_3)$ with a planar tanglegram $(\widetilde{T},\widetilde{S},\widetilde{\phi})$ of size $k$. Since $(T',S',\phi')$ contains $(T,S,\phi)$ as a subtanglegram, we know $\crt(T',S',\phi')\geq 2$, as any layout of $(T',S',\phi')$ with fewer than two crossings would restrict to a layout of $(T,S,\phi)$ with fewer than two crossings. We can take the second layout in Figure \ref{example1} and replace $(t_4,s_3)$ with a planar layout of $(\widetilde{T},\widetilde{S},\widetilde{\phi})$ to obtain a layout of $(T',S',\phi')$ with exactly 2 crossings, so $\crt(T',S',\phi')=2$.

Since $(T_{[6]\setminus \{i\}},S_{\phi([6]\setminus \{i\})},\phi|_{[6]\setminus \{i\}})$ is planar only when $i=2$, the same is true for the induced subtanglegrams $(T'_{[k+5]\setminus \{i\}},S'_{\phi([k+5]\setminus \{i\})},\phi'|_{[k+5]\setminus \{i\}})$, as  $(T'_{[k+5]\setminus \{i\}},S'_{\phi([k+5]\setminus \{i\})},\phi'|_{[k+5]\setminus \{i\}})$ contains $(T_{[6]\setminus \{i\}},S_{\phi([6]\setminus \{i\})},\phi|_{[6]\setminus \{i\}})$ as an induced subtanglegram. 
Starting with the first layout in Figure \ref{example1}, construct the layout $(X',Y')$ for $(T',S',\phi')$ by replacing $(t_4,s_3)$ with a planar layout of $(\widetilde{T},\widetilde{S},\widetilde{\phi})$. Notice that $(X',Y')$ has exactly $k+2$ crossings and restricts to a planar layout of $(T'_{[k+5]\setminus \{2\}},S'_{\phi([k+5]\setminus \{2\})},\phi'|_{[k+5]\setminus \{2\}})$. Furthermore, notice that the only leaf-matched pairs of $(T'_{[k+5]\setminus \{2\}},S'_{\phi([k+5]\setminus \{2\})},\phi'|_{[k+5]\setminus \{2\}})$ are the roots of the two trees and leaf-matched pairs of $(\widetilde{T},\widetilde{S},\widetilde{\phi})$, so Corollary \ref{existence} implies that a solution to the Tanglegram Single Edge Insertion Problem for $(T',S',\phi')$ and $i=2$ can be obtained by starting with $(X',Y')$ and using subtree switches at the parents of $t_2$ and $s_5$. However, these subtree switches increase the number of crossings, so $(X',Y')$ itself must be a solution to the Single Edge Insertion Problem. Hence, $\crtei(T',S',\phi')=\crtei((T',S',\phi'),2)=k+2$, and therefore $\crtei(T',S',\phi')-\crt(T',S',\phi')=k$.
\end{proof}

\section{Multiple edge insertion}\label{MEI}

In this section, we consider the Tanglegram Multiple Edge Insertion Problem, which we restate below for convenience of the reader. Similar to the corresponding problem for graphs, this problem is NP-hard, which we now show.

\begin{problem*}[\textbf{Tanglegram Multiple Edge Insertion}]
Given a tanglegram $(T,S,\phi)$ and a planar subtanglegram $(T_I,S_{\phi(I)},\phi|_I)$ induced by $I\subseteq [n]$, find a layout of $(T,S,\phi)$ that restricts to a planar layout of $(T_I,S_{\phi(I)},\phi|_I)$ and has the minimal number of crossings possible.
\end{problem*}

\begin{proof}[Proof of Theorem \ref{NP-hard}]
By Theorem \ref{nchoose2}, the crossing number of any tanglegram $(T,S,\phi)$ of size $n$ is strictly less than $\frac{1}{2}{n\choose 2}$. Then in an optimal layout of a tanglegram $(T,S,\phi)$, there must exist some edges $(t_i,s_{\phi(i)})$ and $(t_j,s_{\phi(j)})$ that do not cross. If we solve the Multiple Edge Insertion Problem for all index sets $I$ of size two, one of them will be $I=\{i,j\}$. After all ${n\choose 2}=O(n^2)$ iterations, one of these layouts will be a solution to the Tanglegram Layout Problem, which is NP-hard by Theorem \ref{nphard}.
\end{proof}

\subsection{Iterated single edge insertion}

We start by giving an algorithm that inserts a single edge at a time using the approach from the \algname{Insertion} algorithm. While this will not solve the Tanglegram Multiple Edge Insertion Problem, it has two advantages. First, the algorithm will run in polynomial time. Second, Theorem \ref{change} implies that if $(T_I,S|_{\phi(I)},\phi|_I)$ is a size $n-1$ subtanglegram of the size $n$ tanglegram $(T,S,\phi)$, then $\crt(T,S,\phi)$ and $\crt(T_I,S|_{\phi(I)},\phi|_I)$ differ by at most $n-3$. This can be used to obtain a general bound on a tanglegram's crossing number as a function of the number of edges that we remove to obtain a planar subtanglegram. The algorithm we give in this subsection will achieve this bound. 

\begin{lemma}\label{bound}
Let $(T,S,\phi)$ be a size $n$ tanglegram, and let $I\subseteq [n]$. If $(T_{I},S_{\phi(I)},\phi|_{I})$ is planar, then $\crt(T,S,\phi)\leq \frac{(n-|I|)\cdot (n+|I|-5)}{2}$.
\end{lemma}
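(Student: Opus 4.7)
The plan is to prove this by induction on $n - |I|$, the number of between-tree edges we need to insert, using Theorem \ref{change} at each stage. The base case $n - |I| = 0$ is immediate since $(T_I, S_{\phi(I)}, \phi|_I) = (T,S,\phi)$ is assumed planar and has crossing number zero, and $\frac{0 \cdot (n+|I|-5)}{2}=0$.

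For the inductive step, I would choose any index $i \in [n] \setminus I$ and set $I' = I \cup \{i\}$. Then $(T_{I'}, S_{\phi(I')}, \phi|_{I'})$ is a subtanglegram of size $|I|+1$ that contains the planar $(T_I, S_{\phi(I)}, \phi|_I)$ as an induced subtanglegram of size $(|I|+1)-1$. By the inductive hypothesis applied to $(T_{I'}, S_{\phi(I')}, \phi|_{I'})$ (in which the role of $n$ is played by $|I|+1$ and the planar subtanglegram has index set $I$ of size $|I|$), we have
\[
\crt(T_{I'}, S_{\phi(I')}, \phi|_{I'}) \leq \frac{(|I|+1-|I|)(|I|+1+|I|-5)}{2} = \frac{2|I|-4}{2} = |I|-2.
\]
I would then iterate: order the indices in $[n] \setminus I$ as $i_1, i_2, \ldots, i_k$ with $k = n - |I|$, and for each $j = 1, \ldots, k$ let $I_j = I \cup \{i_1, \ldots, i_j\}$, so $|I_j| = |I|+j$ and $(T_{I_k}, S_{\phi(I_k)}, \phi|_{I_k}) = (T,S,\phi)$. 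Applying Theorem \ref{change} to the size $|I|+j$ tanglegram $(T_{I_j}, S_{\phi(I_j)}, \phi|_{I_j})$ with size $|I|+j-1$ subtanglegram $(T_{I_{j-1}}, S_{\phi(I_{j-1})}, \phi|_{I_{j-1}})$ gives
\[
\crt(T_{I_j}, S_{\phi(I_j)}, \phi|_{I_j}) - \crt(T_{I_{j-1}}, S_{\phi(I_{j-1})}, \phi|_{I_{j-1}}) \leq (|I|+j)-3.
\]
Telescoping from $j=1$ to $j=k$ and using that $\crt(T_{I_0}, S_{\phi(I_0)}, \phi|_{I_0}) = \crt(T_I, S_{\phi(I)}, \phi|_I) = 0$, I obtain
\[
\crt(T,S,\phi) \leq \sum_{j=1}^{k} \bigl((|I|+j)-3\bigr) = k(|I|-3) + \frac{k(k+1)}{2}.
\]

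The remaining step is the algebraic simplification. Substituting $k = n - |I|$ and combining over a common denominator gives
\[
k(|I|-3) + \frac{k(k+1)}{2} = \frac{k\bigl(2|I|-6 + k + 1\bigr)}{2} = \frac{k(2|I|+k-5)}{2} = \frac{(n-|I|)(n+|I|-5)}{2},
\]
which is exactly the claimed bound. There is no real obstacle here: the entire content is that Theorem \ref{change} can be applied repeatedly along any chain of single-edge deletions connecting $(T,S,\phi)$ to its planar subtanglegram, and the arithmetic of summing the bounds $|I|-2, |I|-1, \ldots, n-3$ matches the target formula exactly.
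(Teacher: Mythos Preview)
Your proof is correct and follows essentially the same approach as the paper: telescope the bound from Theorem~\ref{change} along a chain of single-edge insertions from the planar subtanglegram up to $(T,S,\phi)$, then sum the resulting arithmetic progression. The paper simply relabels so that $I=\{1,\ldots,k\}$ and writes the telescoping sum directly; your initial induction framing and the computation for $I'$ are redundant once you set up the full chain $I=I_0\subset I_1\subset\cdots\subset I_k=[n]$ and telescope, but the content is the same.
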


\begin{proof}
We assume without loss of generality that $I=\{1,2,\ldots,k\}$, as otherwise we can relabel the tanglegram. Using the assumption $\crt(T_I,S_{\phi(I)},\phi|_I)=0$ with Theorem \ref{change},
\begin{equation*}
    \begin{split}
        \crt(T,S,\phi) & =\sum_{j=k+1}^{n} \left[\crt(T_{[j]},S_{\phi([j])},\phi|_{[j]})-\crt(T_{[j-1]},S_{\phi([j-1])},\phi|_{[j-1]})\right] \\
        & \leq \sum_{j=k+1}^{n} (j-3) 
        = \frac{(n-k)(n+k-5)}{2}. \qedhere
    \end{split}
\end{equation*}
\end{proof}

We now give an algorithm that starts with a planar layout of $(T_{I},S_{ \phi(I)},\phi|_{ I})$ and inserts one edge at a time, with operations chosen in a similar manner as in $\algname{Insertion}$. As before, we restrict ourselves to paired flips and subtree switches, though we perform a paired flip at each $(u,v)\in L(I)$ at most once to ensure we achieve the bound in Lemma \ref{bound}. 

\begin{algorithm}[H]\DontPrintSemicolon
\caption{\algname{IteratedInsertion}}
\KwIn{tanglegram $(T,S,\phi)$, index set $I$ such that $(T_{I},S_{ \phi(I)},\phi|_{ I})$ is planar}
\KwOut{layout of $(T,S,\phi)$ that restricts to a planar layout of $(T_{I},S_{ \phi(I)},\phi|_{ I})$}
$(X,Y),L(I)\coloneqq  \algname{ModifiedUntangle} (T_{I},S_{ \phi(I)},\phi|_{I})$ \label{iterstart1} 
$J \coloneqq I$, $M\coloneqq L(I)$ \label{iterstart2}\\
\While{$[n]\setminus J \neq \emptyset$,}
    {
    $i\coloneqq$ smallest element of $[n]\setminus J$ \tcp*{Choose an edge to insert.}
    extend $(X,Y)$ to a layout of $(T_{J\cup \{i\}},S_{\phi(J\cup \{i\})},\phi|_{J\cup \{i\}})$ \label{iterextend}\\
    $M_T\coloneqq\{(u,v)\in M:u>_T t_i,v\not>_S s_{\phi(i)}\}$ \tcp*{Perform paired flips at}
    $M_S\coloneqq\{(u,v)\in M:u\not>_T t_i,v>_Ss_{\phi(i)}\}$ \tcp*{ these pairs of vertices.}
    update $M\coloneqq M\setminus (M_T\cup M_S)$ \tcp*{Prevent future flips in $M_T\cup M_S$.}
    $u_0,v_0\coloneqq$ parents of $t_i,s_{\phi(i)}$ in $(T_{J\cup \{i\}},S_{\phi(J\cup \{i\})},\phi|_{J\cup \{i\}})$\\
    proceed from Step 2 of \algname{Insertion} with $L(I)_T,L(I)_S$ respectively replaced by $M_T,M_S$, and obtain $(X,Y)$ after operations at $M_T,M_S,u_0,v_0$\\
    update $J\coloneqq J\cup \{i\}$
    }
\Return $(X,Y)$
\end{algorithm}

\begin{theorem}
\label{iteratedinsertion}
Let $(T,S,\phi)$ be a size $n$ tanglegram and $I\subseteq [n]$ such that $(T_{I},S_{ \phi(I)},\phi|_{ I})$ is planar. \algname{IteratedInsertion} finds a layout of $(T,S,\phi)$ with at most $\frac{(n-|I|)\cdot (n+|I|-5)}{2}$ crossings in $O(n^3)$ time and $O(n^2)$ space.
\end{theorem}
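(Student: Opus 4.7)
The plan is to establish three things in turn: that every layout produced maintains a planar restriction to $(T_I, S_{\phi(I)}, \phi|_I)$, that the total crossing count meets the stated bound, and that the time and space complexities hold.

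For correctness, I will induct on the iterations of the while loop. Initially, \algname{ModifiedUntangle} produces a planar layout $(X,Y)$ of $(T_I, S_{\phi(I)}, \phi|_I)$ together with its leaf-matched pair set $L(I)$, by Theorem \ref{ModifiedUntangle} and Lemma \ref{modified}. At each subsequent iteration, the extension in line \ref{iterextend} adds only two leaves $t_i, s_{\phi(i)}$ attached to fresh internal vertices $u_0, v_0$, so the restriction to $(T_I, S_{\phi(I)}, \phi|_I)$ is unchanged. The operations that follow are paired flips at pairs in $M \subset L(I)$ and subtree switches at the new vertices $u_0, v_0$. The former preserve planarity of $(T_I, S_{\phi(I)}, \phi|_I)$ by Theorem \ref{planarlayouts}, since each $(u,v) \in M$ is a leaf-matched pair of the original planar subtanglegram. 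The latter touch only vertices absent from $(T_I, S_{\phi(I)}, \phi|_I)$, so they too preserve the planar sublayout.

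For the crossing bound, I will show by induction that after the iteration inserting index $i$ into a layout of size $j-1$, the total number of crossings is at most $\sum_{k=|I|+1}^{j} (k-3)$. Since adding one between-tree edge can only introduce new crossings involving that edge, it suffices to show the algorithm's operations in each iteration limit these new crossings to at most $j-3$. The strategy mirrors the proof of Theorem \ref{change} in \cite{analogy}: the positions of $t_i$ and $s_{\phi(i)}$ are chosen via subtree switches at the new vertices $u_0, v_0$, and the arrangement of relevant ancestors' descendants is chosen via paired flips at $(u,v) \in M_T \cup M_S$, so that $(t_i, s_{\phi(i)})$ crosses the minimum number of edges in each edge set $E(\cdot)$, exactly as in Lemmas \ref{insertioncase1} and \ref{insertioncase3} and Remark \ref{insertioncase2}. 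The hard part will be confirming that the shrinking set $M$ still supplies enough paired flips at each iteration to reach the $j-3$ bound; however, the fresh vertices $u_0, v_0$ introduced at every step always admit subtree switches, and since every pair in $M_T \cup M_S$ is considered before removal from $M$, the choices available match those of a single \algname{Insertion} call on the current subtanglegram restricted to ancestors of $t_i$ and $s_{\phi(i)}$. A direct adaptation of the case analysis from Section \ref{TEI} then yields the per-iteration bound, and summing over $j$ from $|I|+1$ to $n$ gives the claimed $\frac{(n-|I|)(n+|I|-5)}{2}$ total.

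For complexity, the initial calls in lines \ref{iterstart1}--\ref{iterstart2} run in $O(n^2)$ time and space by Remark \ref{modifieduntangletime}. Each pass through the while loop performs work comparable to a single \algname{Insertion} call (partitioning $M$ into $M_T, M_S$, extending the layout, constructing the $E(\cdot)$ sets, and making $O(n)$ paired flip and subtree switch choices), which is $O(n^2)$ by the proof of Theorem \ref{insertiontime}, together with $O(n)$ bookkeeping for $J$ and $M$. With at most $n - |I| = O(n)$ iterations, the total time is $O(n^3)$. The space remains $O(n^2)$ throughout, because only the current layout, the sets $L(I)$ and $M$ of size at most $n-1$, and the $O(n)$ edge sets of size $O(n)$ each are stored at any given time.
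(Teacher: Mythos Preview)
Your complexity argument is fine and matches the paper. The gap is in the crossing bound, specifically in the sentence ``Since adding one between-tree edge can only introduce new crossings involving that edge, it suffices to show the algorithm's operations in each iteration limit these new crossings to at most $j-3$.'' That premise is true for the act of inserting $e_i$, but the iteration also performs paired flips at pairs in $M_T\cup M_S$, and those flips rearrange leaves of \emph{previously inserted} edges as well. Nothing you have written rules out that a paired flip at some $(u,v)\in M$ creates a new crossing between two edges $e_{i'}, e_{i''}$ with $i',i''\in J\setminus I$. You only argue that the restriction to $(T_I,S_{\phi(I)},\phi|_I)$ stays planar, which controls crossings among edges indexed by $I$, not among the already-inserted extra edges.

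The paper closes this gap with the following observation, which is really the heart of the proof: if $(u,v)$ is still in $M$ at the current iteration, then it was never placed in $M_T\cup M_S$ in any earlier iteration. By the definitions of $M_T$ and $M_S$, this means that for every previously inserted index $i'$ either both $u>_T t_{i'}$ and $v>_S s_{\phi(i')}$ hold, or neither does. Exactly as in Lemma~\ref{automorphisms}, a paired flip at such a pair leaves every crossing involving $e_{i'}$ unchanged, both against edges of $(T_I,S_{\phi(I)},\phi|_I)$ and against other previously inserted edges. Once that is established, only crossings involving the fresh edge $e_i$ can change, and the $j-3$ bound follows. Relatedly, your claim that ``the choices available match those of a single \algname{Insertion} call on the current subtanglegram'' is not correct: the pairs in $M$ are leaf-matched pairs of $(T_I,S_{\phi(I)},\phi|_I)$, not of $(T_J,S_{\phi(J)},\phi|_J)$, and the paper never asserts per-step optimality---only the weaker per-step increment bound.
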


\begin{proof}
We start by proving the run-time and space claims. Lines \ref{iterstart1}-\ref{iterstart2} run in $O(n^2)$ time and space. By construction, the \textbf{while} loop will have at most $n-|I|$ iterations since we add an element to $J$ at the end of each iteration. Line \ref{iterextend} can be done in $O(n^2)$ time and space by starting with the trees $T_{J\cup \{i\}}$ and $S_{\phi(J\cup \{i\})}$ and performing operations until the leaves of $T_{J}$ and $S_{\phi(J)}$ appear in the order indicated by $(X,Y)$. The remaining steps of the \textbf{while} loop also run in $O(n^2)$ time since \algname{Insertion} runs in $O(n^2)$ time, for a combined total of $O(n^3)$ time. Since we re-use the same variables $X,Y,J,M,M_T,M_S$, we also see that the algorithm runs in $O(n^2)$ space since \algname{Insertion} runs in $O(n^2)$ space by Theorem \ref{insertiontime}.

We now show that the output has at most $\frac{(n-|I|)\cdot (n+|I|-5)}{2}$ crossings. First, note that since $(T_I,S_{\phi(I)},\phi|_I)$ is planar, the layout $(X,Y)$ originally obtained from $\algname{ModifiedUntangle}(T_I,S_{\phi(I)},\phi|_I)$ in line 1 has 0 crossings. We show that during each iteration of the \textbf{while} loop, the algorithm adds at most $|J|-3$ crossings. This would imply that the total number of crossings at the end of \algname{IteratedInsertion} is bounded by the sum from the proof of Lemma \ref{bound}, implying the claim. 

We consider two cases for each iteration of the \textbf{while} loop. First, suppose that in this iteration of the loop, the sets $M_T$ and $M_S$ are empty. Then the algorithm only considers subtree switches at $u_0$ and $v_0$, the parents of $t_i$ and $s_{\phi(i)}$. In particular, we do not affect the relative ordering of any vertices in $(T_{J},S_{\phi(J)},\phi|_J)$, and thus we do not affect any previous crossings. Some combination of subtree switches will produce at most $|J|-3$ of the $|J|-1$ possible crossings. Proceeding from Step 2 of \algname{Insertion} will lead to using Algorithm \ref{case3algorithm}, which will consider subtree switches at $u_0$ and $v_0$ before returning the option with the fewest number of crossings. Thus, the algorithm adds at most $|J|-3$ new crossings from this iteration, all involving edge $(t_i,s_{\phi(i)})$.

Next, suppose that $M_T\cup M_S$ is nonempty, and let $(u,v)\in M_T\cup M_S$. By Lemma \ref{modified}, $(u,v)$ is a leaf-matched pair of $(T_I,S_{\phi(I)},\phi|_I)$. Since $(u,v)$ was not previously considered in the algorithm, it must be that for all previously inserted edges $(t,s)$, both or neither of $u>_T t$ and $v>_S s$ are true. From this, we see that crossings between $(t,s)$ and edges in $(T_I,S_{\phi(I)},\phi|_I)$ are not affected by paired flips at $(u,v)$. Furthermore, for any  other previously inserted edge $(t',s')$, we also know that either both or neither of $u>_T t'$ and $v>_S s'$ are true. In all cases, whether or not $(t,s)$ and $(t',s')$ intersect is not affected by a paired flip at $(u,v)$. Thus, all previously existing crossings are unaffected, and we only create new crossings involving the current edge $(t_i,s_{\phi(i)})$. Using the same argument as in the previous paragraph, proceeding from Step 2 of \algname{Insertion} will add at most $|J|-3$ new crossings, all of which involve edge $(t_i,s_{\phi(i)})$.
\end{proof}

\subsection{MultiInsertion Algorithm}

We now generalize our results from Section 4 and then give an algorithm for solving the Tanglegram Multiple Edge Insertion Problem. For this subsection, fix a tanglegram $(T,S,\phi)$ of size $n$, and fix $I\subseteq [n]$ such that $(T_I,S_{\phi(I)},\phi|_I)$ is a planar subtanglegram. Let $(X,Y)$ be a layout of $(T,S,\phi)$ that restricts to a planar layout of $(T_I,S_{\phi(I)},\phi|_I)$, and let $L(I)$ be the set of leaf-matched pairs of $(T_I,S_{\phi(I)},\phi|_I)$. Recall that for $u\in T$ and $v\in S$, $\leaf(u)$ and $\leaf(v)$ respectively denote the descendants of $u$ and $v$ that are leaves in $T$ and $S$. For $J\subseteq [n]$, we will abuse notation by using $\leaf(u)\cap J$ or $\leaf(v)\cap \phi(J)$ to denote the leaves in $\leaf(u)$ or $\leaf(v)$ that are indexed by $J$ or $\phi(J)$, respectively. Letting $I^c$ denote the complement of $I$, define the following sets:
\begin{equation}\label{LIsubsets}
    \begin{split}
        L(I)_0 &= \{(u,v)\in L(I): \leaf(u)\cap I^c \text{ is matched with } \leaf(v) \cap \phi(I)^c \text{ by $\phi$}\}\\
        L(I)_T &= \{(u,v)\in L(I): \text{ $|\leaf(u)\cap I^c|=1$ and $|\leaf(v)\cap \phi(I)^c|=0$}\}\\
        L(I)_S &= \{(u,v)\in L(I): \text{ $|\leaf(u)\cap I^c|=0$ and $|\leaf(v)\cap \phi(I)^c|=1$}\}\\
        L(I)_1& = L(I)\setminus (L(I)_0 \cup L(I)_T \cup L(I)_S )\\
        M(I) &= \{\text{internal vertices of $(T,S,\phi)$ that are not in $(T_I,S_{\phi(I)},\phi|_I)$.}\}
    \end{split}
\end{equation}
Notice that $L(I)_0,L(I)_T,L(I)_S$, and $L(I)_1$ partition the set $L(I)$. When $|I|=n-1$, the sets $L(I)_0$, $L(I)_T$ and $L(I)_S$ reduce to the same ones defined in Section \ref{TEI}, and $L(I)_1=\emptyset$. An example of these sets is shown in Figure \ref{LIexample}.

\begin{figure}[h]
    \centering
\begin{tikzpicture}[scale=0.725]
\filldraw[fill=black,draw=black] (1,4) circle (2pt);
\filldraw[fill=black,draw=black] (1,3) circle (2pt);
\filldraw[fill=black,draw=black] (1,2) circle (2pt);
\filldraw[fill=black,draw=black] (1,1) circle (2pt);
\filldraw[fill=black,draw=black] (1,0) circle (2pt);
\filldraw[fill=black,draw=black] (1,-1) circle (2pt);
\filldraw[fill=black,draw=black] (1,-2) circle (2pt);
\filldraw[fill=black,draw=black] (1,-3) circle (2pt);
\filldraw[fill=black,draw=black] (-1,4) circle (2pt);
\filldraw[fill=black,draw=black] (-1,3) circle (2pt);
\filldraw[fill=black,draw=black] (-1,2) circle (2pt);
\filldraw[fill=black,draw=black] (-1,1) circle (2pt);
\filldraw[fill=black,draw=black] (-1,0) circle (2pt);
\filldraw[fill=black,draw=black] (-1,-1) circle (2pt);
\filldraw[fill=black,draw=black] (-1,-2) circle (2pt);
\filldraw[fill=black,draw=black] (-1,-3) circle (2pt);
\draw[dashed] (-1,4) -- (1,4);
\draw[dashed] (-1,3) -- (1,3);
\draw[dashed] (-1,2) -- (1,2);
\draw[dashed] (-1,1) -- (1,1);
\draw[dashed] (-1,0) -- (1,0);
\draw[dashed] (-1,-1) -- (1,-1);
\draw[dashed] (-1,-2) -- (1,-2);
\draw[dashed] (-1,-3) -- (1,-3);
\draw (-1,4) -- (-1.5,3.5) -- (-1,3) -- (-1.5,3.5) -- (-2.5,2.5) -- (-1.5,1.5) -- (-1,2) -- (-1.5,1.5) -- (-1,1) -- (-2.5,2.5) -- (-4.5,0.5);
\draw (-1,-3) -- (-1.5,-2.5) -- (-1,-2) -- (-1.5,-2.5) -- (-2,-2) -- (-1,-1) -- (-2,-2) -- (-2.5,-1.5) -- (-1,0) -- (-2.5,-1.5) -- (-4.5,0.5);
\draw (1,4) -- (1.5,3.5) -- (1,3) -- (1.5,3.5) -- (2,3) -- (1,2) -- (2,3) -- (2.5,2.5) -- (1,1) -- (2.5,2.5) -- (4.5,0.5) ; 
\draw (1,-3) -- (1.5,-2.5) -- (1,-2) -- (1.5,-2.5) -- (4.5,0.5);
\draw (1,0) -- (1.5,-0.5) -- (1,-1) -- (1.5,-0.5) --(3.5,1.5);
\filldraw[fill=black,draw=black] (-1.5,1.5) circle (2pt);
\filldraw[fill=black,draw=black] (-2.5,-1.5) circle (2pt);
\filldraw[fill=black,draw=black] (-2,-2) circle (2pt);
\filldraw[fill=black,draw=black] (2,3) circle (2pt);
\filldraw[fill=black,draw=black] (1.5,-0.5) circle (2pt);
\filldraw[fill=black,draw=black] (3.5,1.5) circle (2pt);
\filldraw[fill=black,draw=black] (-1.5,3.5) circle (2pt);
\filldraw[fill=black,draw=black] (1.5,3.5) circle (2pt);
\filldraw[fill=black,draw=black] (2.5,2.5) circle (2pt);
\filldraw[fill=black,draw=black] (-2.5,2.5) circle (2pt);
\filldraw[fill=black,draw=black] (4.5,0.5) circle (2pt);
\filldraw[fill=black,draw=black] (-4.5,0.5) circle (2pt);
\filldraw[fill=black,draw=black] (1.5,-2.5) circle (2pt);
\filldraw[fill=black,draw=black] (-1.5,-2.5) circle (2pt);
\draw[color=red] (-2.875,2.125) -- (-1,0.25) -- (-1.25,0.5) -- (-1,0.75) -- (1,3.5) -- (1.25,3.25);
\draw[color=red] (-1,0.25) -- (1,-2.5) -- (1.25,-2.75);
\draw[color=red] (-1.75,3.25) -- (-1,2.5) -- (1,0.5) -- (1.75,-0.25);
\filldraw[color=red] (-1,0.25) circle (2pt);
\filldraw[color=red] (-1,0.75) circle (2pt);
\filldraw[color=red] (-2.875,2.125) circle (2pt);
\filldraw[color=red] (-1.25,0.5) circle (2pt);
\filldraw[color=red] (1,3.5) circle (2pt);
\filldraw[color=red] (1.25,3.25) circle (2pt);
\filldraw[color=red] (1,-2.5) circle (2pt);
\filldraw[color=red] (1.25,-2.75) circle (2pt);
\filldraw[color=red] (-1.75,3.25) circle (2pt);
\filldraw[color=red] (-1,2.5) circle (2pt);
\filldraw[color=red] (1,0.5) circle (2pt);
\filldraw[color=red] (1.75,-0.25) circle (2pt);
\node at (-0.625,3.75) {$t_1$};
\node at (-0.625,3.25) {$t_{2}$};
\node[color=red] at (-0.625,2.5) {$t_{3}$};
\node at (-0.625,1.75) {$t_4$};
\node at (-0.625,1.25) {$t_{5}$};
\node[color=red] at (-0.625,0.75) {$t_{6}$};
\node[color=red] at (-0.625,0.25) {$t_7$};
\node at (-0.625,-0.25) {$t_{8}$};
\node at (-0.625,-1.25) {$t_9$};
\node at (-0.625,-1.75) {$t_{10}$};
\node at (-0.625,-2.75) {$t_{11}$};
\node at (-5,0.5) {$u_{4}$};
\node at (5,0.5) {$v_{4}$};
\node[color=red] at (-3.125,2.375) {$u_7$};
\node[color=red] at (-1.625,0.5) {$u_6$};
\node at (-2.75,2.75) {$u_3$};
\node at (2.75,2.75) {$v_3$};
\node[color=red] at (-2,3.5) {$u_5$};
\node at (-1.75,3.75) {$u_2$};
\node at (1.75,3.75) {$v_2$};
\node[color=red] at (1.5,3) {$v_7$};
\node[color=red] at (2,-0.5) {$v_6$};
\node at (-1.75,-2.75) {$u_1$};
\node at (1.75,-2.75) {$v_1$};
\node[color=red] at (1.5,-3.125) {$v_{5}$};
\end{tikzpicture}
\caption{The subtanglegram induced by $I=\{1,2,4,5,8,9,10,11\}\subseteq [11]$ is shown in black. We see that $L(I)=\{(u_1,v_1),(u_2,v_2),(u_3,v_3),(u_{4},v_{4})\}$ and $M(I)=\{u_5,u_6,u_7,v_5,v_6,v_7\}$. Using (\ref{LIsubsets}), we partition $L(I)$ into $L(I)_0=\{(u_{4},v_{4})\}$, $L(I)_T=\emptyset$, $L(I)_S=\{(u_1,v_1),(u_2,v_2)\}$, and $L(I)_1=\{(u_3,v_3)\}$.}
\label{LIexample}
\end{figure}
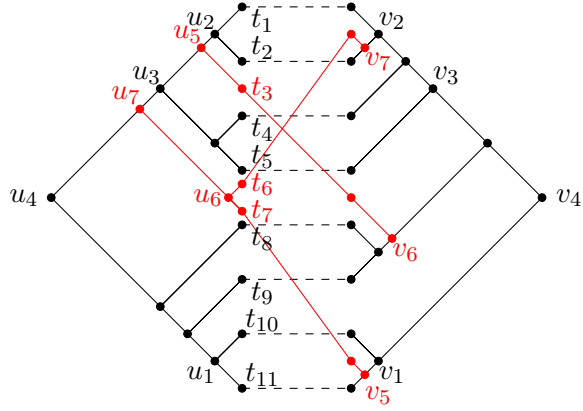

\begin{lemma}\label{multioperations}
Let $(X',Y')$ be the image of $(X,Y)$ after any sequence of paired flips at elements in $L(I)$ and subtree switches at elements of $M(I)$. Then $(X',Y')$ also restricts to a planar layout of $(T_I,S_{\phi(I)},\phi|_I)$.
\end{lemma}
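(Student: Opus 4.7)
The plan is to induct on the length of the sequence of operations, reducing the proof to the case of a single operation. Fix a layout $(X,Y)$ that restricts to a planar layout of $(T_I, S_{\phi(I)}, \phi|_I)$, and verify separately that a single paired flip at $(u,v) \in L(I)$, or a single subtree switch at $w \in M(I)$, sends the restricted layout to another planar layout of the subtanglegram; the inductive step is then immediate.

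First consider a paired flip at $(u,v) \in L(I)$. By Lemma \ref{modified}(b), $(u,v)$ is a leaf-matched pair of the subtanglegram, so $u$ is an internal vertex of $T_I$ whose $T_I$-descendant leaves are precisely $\leaf(u) \cap T_I$, and symmetrically for $v$. A flip at $u$ in the full tree $T$ reverses the list $\leaf(u)$ in the layout, and this reversal restricts to a reversal of the sublist $\leaf(u) \cap T_I$, which is exactly the action of flipping at $u$ viewed as a vertex of $T_I$. The analogous statement holds for $v$. Thus the paired flip at $(u,v)$ in $(T,S,\phi)$ restricts to a paired flip at $(u,v)$ in $(T_I, S_{\phi(I)}, \phi|_I)$, and Theorem \ref{planarlayouts} applied to the subtanglegram guarantees that the restricted layout remains planar.

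Next consider a subtree switch at $w \in M(I)$ with children $w_1, w_2$. Because $w$ is an internal vertex of $T$ (or $S$) that fails to appear in the induced subtree, exactly one of two cases holds: either (a) $\leaf(w) \cap T_I = \emptyset$, so $w$ lies outside the minimal subtree spanning the $I$-indexed leaves, or (b) $w$ lies in that minimal subtree but only one child—say $w_1$—has any descendant in $I$, so $w$ is suppressed when forming $T_I$. In case (a) the switch merely reorders non-$I$ leaves, and the restricted layout is pointwise unchanged. In case (b) the switch interchanges the contiguous blocks $\leaf(w_1)$ and $\leaf(w_2)$ inside the layout while preserving their internal orders; since all $I$-indexed descendants of $w$ lie in $\leaf(w_1)$ and $\leaf(w_2) \cap T_I = \emptyset$, the restricted list of $I$-indexed leaves is again unchanged. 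In either case the restriction to $(T_I, S_{\phi(I)}, \phi|_I)$ is identical before and after the switch, hence still planar. The only delicate point in the whole argument is the identification, via Lemma \ref{modified}(b), of an element of $L(I)$—nominally a pair of reduced vertices in $T$ and $S$—with an honest pair of internal vertices of $(T_I, S_{\phi(I)}, \phi|_I)$; once that identification is in hand, both cases reduce to direct combinatorial bookkeeping.
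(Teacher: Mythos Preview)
Your proof is correct and follows essentially the same approach as the paper: reduce to a single operation, observe that a paired flip at $(u,v)\in L(I)$ restricts to a paired flip at a leaf-matched pair of the subtanglegram (hence preserves planarity), and that a subtree switch at $w\in M(I)$ leaves the restricted layout literally unchanged via the same two-case analysis on whether $w$ lies outside the minimal spanning subtree or is a suppressed degree-two vertex. The only cosmetic difference is that you invoke Theorem~\ref{planarlayouts} for the paired-flip case, whereas the more elementary fact that a single paired flip at a leaf-matched pair preserves planarity (established in the proof of Lemma~\ref{extends}) already suffices.
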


\begin{proof}
It suffices to show the claim when $(X',Y')$ is the image of $(X,Y)$ after a single subtree switch or paired flip. If we start with $(X,Y)$ and perform a flip at $(u,v)\in L(I)$, then restricted to the subtanglegram $(T_I,S_{\phi(I)},\phi|_I)$, this has the same effect as performing a paired flip on a planar layout of $(T_I,S_{\phi(I)},\phi|_I)$. Thus, $(X',Y')$ is still planar when restricted to $(T_I,S_{\phi(I)},\phi|_I)$.

Now suppose we start with $(X,Y)$ and perform a subtree switch at $u\in M(I)$ to obtain $(X',Y')$. Without loss of generality, we assume $u\in T$, as $u\in S$ is done similarly. Then we can decompose $(X,Y)=(X_1U_1U_2X_2,Y)$ and $(X',Y')=(X_1U_2U_1X_2,Y)$, where $U_1$ and $U_2$ order the leaves of the subtrees rooted at the children of $u$. Since $u$ is not a vertex of $(T_I,S_{\phi(I)},\phi|_I)$, Definitions~\ref{subtreedefinition}~and~\ref{subtanglegramdefinition} imply that either 
\begin{enumerate}[label=(\arabic*)]
    \item $u$ is not in the minimal subtree of $T$ containing the leaves indexed by $I$, or
    \item $u$ is one of the internal vertices with one child that is suppressed when forming $T_I$ from $T$. 
\end{enumerate}
In the case of (1), the entire subtree rooted at $u$ does not appear in $T_I$. Then both $U_1$ and $U_2$ are deleted when we restrict $(X,Y)$ and $(X',Y')$ to $(T_I,S_{\phi(I)},\phi|_I)$, and both layouts restrict to the same layout of the subtanglegram. In the case of (2), the entire subtree rooted at one of the children of $u$ does not appear in $T_I$, so either $U_1$ or $U_2$ is deleted when $(X,Y)$ and $(X',Y')$ are restricted to $(T_I,S_{\phi(I)},\phi|_I)$. Again, $(X',Y')$ and $(X,Y)$ will restrict to the exact same layout of $(T_I,S_{\phi(I)},\phi|_I)$ as $(X,Y)$. In both cases, since $(X,Y)$ restricts to a planar layout of $(T_I,S_{\phi(I)},\phi|_I)$, so does $(X',Y')$. 
\end{proof}

\begin{lemma}
\label{samecrossings}
Let $(X',Y')$ be the image of $(X,Y)$ after a paired flip at $(u,v) \in L(I)_0$. Then $(X,Y)$ and $(X',Y')$ have the same crossings.
\end{lemma}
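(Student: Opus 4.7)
The plan is to first unpack what $(u,v)\in L(I)_0$ actually says about the original tanglegram. Membership of $(u,v)$ in $L(I)$ gives (via Lemma \ref{modified}) that $\leaf(u)\cap I$ is matched with $\leaf(v)\cap \phi(I)$ by $\phi|_I$, while the extra $L(I)_0$ condition gives that $\leaf(u)\cap I^c$ is matched with $\leaf(v)\cap \phi(I)^c$ by $\phi$. Combining these, I get that $\phi$ restricts to a bijection $\leaf(u)\to \leaf(v)$, so $(u,v)$ is in fact a leaf-matched pair of the full tanglegram $(T,S,\phi)$, not merely of the subtanglegram. This is the key structural observation that drives the proof; after this the argument becomes a direct analogue of Lemma \ref{automorphisms}(b), just extended to the multiple-edge setting.

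Next, I would describe the local effect of the paired flip on the layout. Because $T$ is planarly embedded in any layout, the leaves in $\leaf(u)$ occupy a contiguous sublist $U$ of $X$, and likewise $\leaf(v)$ occupies a contiguous sublist $V$ of $Y$ of the same length as $U$. A paired flip at $(u,v)$ sends $(X,Y)=(X_1 U X_2, Y_1 V Y_2)$ to $(X',Y')=(X_1 \overline U X_2, Y_1 \overline V Y_2)$ and leaves every leaf outside the block in place. Using the structural observation of the previous paragraph, every between-tree edge of $(T,S,\phi)$ is either \emph{internal} (both endpoints in the $U$--$V$ block) or \emph{external} (neither endpoint in the block); no ``mixed'' edges exist, because no leaf of $\leaf(u)$ is matched by $\phi$ to a leaf outside $\leaf(v)$ and vice versa.

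The main body of the proof is then a short case analysis on an arbitrary pair of edges $e_1,e_2$, using the fact that edges $(t_a,s_{\phi(a)})$ and $(t_b,s_{\phi(b)})$ cross exactly when the vertical orders of $t_a,t_b$ in $X$ and of $s_{\phi(a)},s_{\phi(b)}$ in $Y$ disagree. If both edges are external, neither endpoint has moved, so the crossing status is unchanged. If both are internal, each endpoint stays within its block, and reversing $U$ and $\overline V$ simultaneously flips both the $T$-side order and the $S$-side order of the two endpoints, preserving the ``orders disagree'' criterion. If $e_1$ is internal and $e_2$ external, the relative order of an endpoint of $e_1$ in $U$ with respect to the external endpoint of $e_2$ depends only on the position of the whole block $U$ relative to that external endpoint, and similarly on the $S$-side; since the blocks themselves are not translated, this coarser information is unchanged by the flip. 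In all three cases, the crossing is preserved, so $(X,Y)$ and $(X',Y')$ have identical crossing sets.

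I do not expect any real obstacle here; the lemma is essentially ``a paired flip at a leaf-matched pair of the whole tanglegram is an automorphism of the layout's crossing structure,'' which is a generalization of Lemma \ref{automorphisms}(b) from the single-edge-insertion context. The only subtlety worth flagging is the first paragraph: one must be careful to distinguish ``leaf-matched pair of the subtanglegram'' from ``leaf-matched pair of the full tanglegram,'' and to observe that $L(I)_0$ is precisely the part of $L(I)$ where these two notions coincide. Once that is spelled out, the remaining case analysis is routine book-keeping on positions in the ordered lists $X$ and $Y$.
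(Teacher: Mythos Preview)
Your proposal is correct and follows essentially the same approach as the paper. Both arguments hinge on the fact that $(u,v)\in L(I)_0$ forces $(u,v)$ to be a leaf-matched pair of the \emph{full} tanglegram $(T,S,\phi)$; the paper states this as ``both or neither of $u>_T t_i$ and $v>_S s_{\phi(i)}$ are true for any $i\notin I$'' and then invokes the two cases of Lemma~\ref{automorphisms}, while you make the leaf-matched-pair conclusion explicit and run a three-way case split on pairs of edges (internal/internal, external/external, mixed). The organization differs, but the content is the same.
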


\begin{proof}
Since $(u,v)\in L(I)_0$, notice that both or neither of $u>_T t_i$ and $v>_S s_{\phi(i)}$ are true for any $i\notin I$. If 
$u>_Tt_i$ and $v>_Ss_{\phi(i)}$, a paired flip at $(u,v)$ flips the induced subtanglegram on the subtrees rooted at $u$ and $v$, preserving all crossings in $(T,S,\phi)$ that involve $(t_i,s_{\phi(i)})$. 
Otherwise, if $u\not>_Tt_i$ and $v\not>_Ss_{\phi(i)}$, then $(t_i,s_{\phi(i)})$ cross all or none of the edges between the subtrees rooted at $u$ and $v$, and a paired at $(u,v)$ does not affect these crossings.
\end{proof}

\begin{lemma}\label{MEIsolution}
A solution to the Tanglegram Multiple Edge Insertion Problem can be obtained by starting at $(X,Y)$ and performing a sequence of subtree switches at elements in $ M(I)$ and paired flips at elements in $L(I)_T\cup L(I)_S\cup L(I)_1$.
\end{lemma}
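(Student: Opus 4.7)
The plan is to first establish a multi-edge analogue of Lemma \ref{planarsubtanglegram} --- namely, that every layout of $(T,S,\phi)$ that restricts to a planar layout of $(T_I,S_{\phi(I)},\phi|_I)$ is obtainable from $(X,Y)$ by paired flips at elements of $L(I)$ and subtree switches at vertices of $M(I)$ --- and then to use commutativity of flips together with Lemma \ref{samecrossings} to prune away the unnecessary paired flips at $L(I)_0$. Granting this analogue, a solution $(X_{\min},Y_{\min})$ to the Multiple Edge Insertion Problem is obtainable from $(X,Y)$ by such a sequence; since all flips commute, paired flips and subtree switches (each built from flips) commute as well, so I may reorder to place every paired flip at an element of $L(I)_0$ at the end. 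Writing $(X'',Y'')$ for the layout produced just before those trailing flips, Lemma \ref{multioperations} ensures $(X'',Y'')$ still restricts to a planar subtanglegram layout, and Lemma \ref{samecrossings} implies the trailing flips do not change the crossing count. Thus $(X'',Y'')$ has the same number of crossings as $(X_{\min},Y_{\min})$ and is itself a solution --- obtained from $(X,Y)$ using only the operations permitted by the statement.

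To prove the analogue, I would restrict both $(X,Y)$ and the target layout $(X',Y')$ to the subtanglegram, obtaining planar layouts $(A,B)$ and $(A',B')$; by Theorem \ref{planarlayouts} there is a sequence of paired flips at elements of $L(I)$ that maps $(A,B)$ to $(A',B')$ inside the subtanglegram. Each paired flip at $(u,v) \in L(I)$ lifts to the full tanglegram as the composition of flips at $u \in T$ and $v \in S$; applying the lifted sequence to $(X,Y)$ produces a layout $(\tilde{X},\tilde{Y})$ whose subtanglegram restriction is exactly $(A',B')$. It remains to show that $(\tilde{X},\tilde{Y})$ and $(X',Y')$ differ by a sequence of subtree switches at vertices of $M(I)$. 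For this I would use that a layout is determined by the binary choice of child order at each internal vertex, so two layouts differ by subtree switches at exactly the internal vertices where those choices disagree. If two such layouts disagree at an internal vertex $u$ of $T_I$, then by Definition \ref{subtreedefinition} both children of $u$ in $T_I$ have descendant leaves in $I$; swapping the children would reverse the relative order of those $I$-leaves in the restriction, contradicting that both layouts restrict to $(A',B')$. Hence any disagreeing vertex of $T$ must lie in $M(I)$, and the same applies to $S$.

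The main obstacle I anticipate is the child-order argument just described: one must carefully formalize that every internal vertex of $T_I$ genuinely has descendant leaves from $I$ on both sides (which is precisely the reason such vertices survive the suppression step in the formation of $T_I$), and that a subtree switch at a single $T_I$ vertex cannot be invisible to the restriction. Once this structural fact is in hand, the overall argument is clean: the lift from the subtanglegram through Theorem \ref{planarlayouts}, the commutativity-based reordering, and the crossing-preservation supplied by Lemmas \ref{multioperations} and \ref{samecrossings} combine to yield the claimed MEI solution.
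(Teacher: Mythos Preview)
Your proposal is correct, and the overall architecture---establish a multi-edge analogue of Lemma~\ref{planarsubtanglegram}, then commute the $L(I)_0$ paired flips to the end and drop them via Lemma~\ref{samecrossings}---matches the paper's. The difference lies in how the analogue is proved.

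The paper works procedurally: starting from an arbitrary sequence of distinct flips taking $(X,Y)$ to a solution, it repeatedly selects a maximal $M(I)$-vertex $u$ appearing among the flips and replaces the flip at $u$ by the subtree switch $h_1\circ g_1\circ f_1$ (flips at $u$ and its two children), absorbing the extra $g_1,h_1$ into the remaining sequence. Iterating peels off all $M(I)$-flips as subtree switches, leaving only flips at subtanglegram vertices, which are then identified with paired flips via Theorem~\ref{planarlayouts}.

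Your route is more structural: lift the paired-flip sequence from the subtanglegram first, then argue that the residual discrepancy between $(\tilde X,\tilde Y)$ and the target is confined to $M(I)$ by the child-order argument. This is clean and arguably more conceptual---it exploits directly that a layout is the data of a left/right choice at each internal vertex, and that a subtree switch toggles exactly one such choice. The only care point, which you correctly flag, is that an internal vertex $u$ of $T_I$ has $I$-leaves on both sides in $T$ (else $u$ would be suppressed or absent from the minimal subtree), so a disagreement at $u$ would be visible in the restriction. Once that is pinned down, your argument goes through. The paper's iterative rewriting buys a closer parallel to the single-edge proof of Lemma~\ref{planarsubtanglegram}; your approach buys a shorter, more transparent reduction.
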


\begin{proof}
Let $(X_{min},Y_{min})$ be a solution to the Multiple Edge Insertion Problem. Some composition of flips $f_m\circ \ldots \circ f_2\circ f_1$ maps $(X,Y)$ to $(X_{min},Y_{min})$, as flips generate all trees isomorphic to $T$ and $S$. All of these flips commute and have order 2, so we can also assume that all $f_i$ are distinct, i.e., no flips occur at any vertex more than once. 

If none of the flips in $\{f_1,f_2,\ldots,f_m\}$ involve vertices in $M(I)$, then restricting $(X,Y)$ and $(X_{min},Y_{min})$ to the subtanglegram $(T_I,S_{\phi(I)},\phi|_I)$, these flips are equivalent to a sequence of flips mapping one planar layout of $(T_I,S_{\phi(I)},\phi|_I)$ to another. By Theorem \ref{planarlayouts}, these flips must be equivalent to a sequence of paired flips at elements in $L(I)$. Since all flips commute, we can commute paired flips involving $(u,v)\in L(I)_0$ to be the last ones performed. By Lemma \ref{samecrossings}, paired flips at $(u,v)\in L(I)_0$ do not affect any crossings, so we can obtain another solution to the Multiple Edge Insertion Problem by excluding them, leaving only the paired flips at elements in $L(I)\setminus L(I)_0=L(I)_T\cup L(I)_S\cup L(I)_1$.

Now suppose some flip in $\{f_1,f_2,\ldots,f_m\}$ involves an element in $M(I)$. Without loss of generality, assume $f_1$ is a flip at $u\in M(I)$, $u\in T$, and no flips occur at any $u'>_T u$. 
Now define $g_1$ and $h_1$ to be flips at the children of $u$, or treat these as the identity if the corresponding child is a leaf. Notice that the composition
$$f_m\circ \ldots \circ f_2\circ h_1\circ g_1\circ h_1\circ g_1\circ f_1$$
also maps $(X,Y)$ to $(X_{min},Y_{min})$. The composition $h_1\circ g_1\circ f_1$ is a subtree switch at $u$, and the remaining operations $g_1,h_1,f_2,\ldots, f_m$ do not involve flips at any $u'\in M(I)$ with $u'\geq_T u$. If $g_1$ or $h_1$ are equivalent to any of the $f_i$, then we commute flips and replace $f_i\circ g_1$ or $f_i \circ h_1$ with the identity. We will find that $f_m\circ \ldots \circ f_2\circ h_1\circ g_1$ is equivalent to
$\widetilde{f}_k\circ \ldots \circ \widetilde{f}_2\circ \widetilde{f}_{1},$
and in this second composition, there are no repeated flips at any vertex, and no flips occur at $u'\in M(I)$ with $u'\geq_T u$. If some $\widetilde{f}_i$ involves a flip at a vertex in $M(I)$, then we iterate this argument. Since we choose a maximal vertex $u\in M(I)$ at each iteration, this process will eventually terminate. We will then have a sequence of subtree switches mapping $(X,Y)$ to some layout $(X',Y')$, and the remaining flips mapping $(X',Y')$ to $(X_{min},Y_{min})$ do not involve vertices in $M(I)$. The conclusion then follows from the preceding paragraph.
\end{proof}

We now focus on $M(I)$ and define subsets $M(I)_0$, $M(I)_T$, and $M(I)_S$ that allow us to generalize our results from \algname{Insertion}. For any leaves $t_i$ and $s_{\phi(i)}$ with $i\notin I$, we let $P(t_i)$ and $P(s_{\phi(i)})$ respective denote their parents. We define $M(I)_0$ to contain all $P(t_i)$ and $P(s_{\phi(i)})$ for $i\notin I$, such that
\begin{itemize}
    \item $\leaf(P(t_i))\cap I^c=\{t_i\}$,
    \item $\leaf(P(s_{\phi(i)}))\cap {\phi(I)^c}=\{s_{\phi(i)}\}$, and
    \item there exists $(u,v)\in L(I)$ with either $P(t_i)>_Tu$ \text{ and } $v>_SP(s_{\phi(i)})$, \text{ or } $P(s_{\phi(i)})>_Sv$ \text{ and } $u>_TP(t_i)$. 
\end{itemize}
Note that these combined properties imply $(u,v)\notin L(I)_0$ by definition. The set $M(I)_0$ is intended to generalize the results in Algorithms \ref{case1algorithm} and \ref{case2algorithm}, which are the cases $u_0>_T u_{Smax}$ and $v_0>_S v_{Tmax}$.

Now we define $M(I)_T$ and $M(I)_S$, which generalize additional situations from our \algname{Insertion} algorithm. For any $t_i\in T$ with $i\notin I$, define $A(t_i)$ to be the minimal distance ancestor $u$ of $t_i$ such that $\leaf(u)\cap I\neq \emptyset$.
 Now define $M(I)_S$ to contain all $P(s_{\phi(i)})\in M(I)\setminus M(I)_0$ for $i\notin I$ such that $\leaf(P(s_{\phi(i)}))\cap {\phi(I)^c}=\{s_{\phi(i)}\}$, and either
 \begin{itemize}
     \item $\leaf(A(t_i))\cap I$ and $\leaf(P(s_{\phi(i)}))\cap \phi(I)$ do not contain matched leaves, or
     \item there exists $(u,v)\in L(I)$ such that $A(t_i)>_Tu$ and $v>_SP(s_{\phi(i)})$. In this case, note that  these properties imply $(u,v)\notin L(I)_0$.
 \end{itemize}
Similarly, define $A(s_{\phi(i)})$ to be the minimal distance ancestor $v$ of $s_{\phi(i)}$ such that $\leaf(v)\cap \phi(I)\neq \emptyset$. Define $M(I)_T$ to contain all $P(t_i)\in M(I)\setminus M(I)_0$ for $i\notin I$ such that $\leaf(P(t_i))\cap {I^c}=\{t_i\}$, and either
\begin{itemize}
    \item $\leaf(P(t_i))\cap I$ and $\leaf(A(s_{\phi(i)}))\cap \phi(I)$ do not contain matched leaves, or
    \item there exists $(u,v)\in L(I)$ such that $u>_TP(t_i)$ and $A(s_{\phi(i)}) >_S v$. Again, $(u,v)\notin L(I)_0$. 
\end{itemize}
Finally, we define $M(I)_1=M(I)\setminus (M(I)_0\cup M(I)_T\cup M(I)_S)$ to be the remaining inserted vertices so that $M(I)_0, M(I)_T, M(I)_S$, and $M(I)_1$ partition $M(I)$. An example of these sets is shown in Figure \ref{MIexample}.

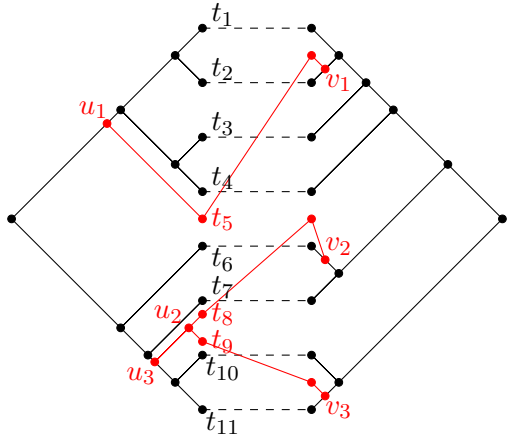
\begin{figure}[h]
    \centering
\begin{tikzpicture}[scale=0.725]
\filldraw[fill=black,draw=black] (1,4) circle (2pt);
\filldraw[fill=black,draw=black] (1,3) circle (2pt);
\filldraw[fill=black,draw=black] (1,2) circle (2pt);
\filldraw[fill=black,draw=black] (1,1) circle (2pt);
\filldraw[fill=black,draw=black] (1,0) circle (2pt);
\filldraw[fill=black,draw=black] (1,-1) circle (2pt);
\filldraw[fill=black,draw=black] (1,-2) circle (2pt);
\filldraw[fill=black,draw=black] (1,-3) circle (2pt);
\filldraw[fill=black,draw=black] (-1,4) circle (2pt);
\filldraw[fill=black,draw=black] (-1,3) circle (2pt);
\filldraw[fill=black,draw=black] (-1,2) circle (2pt);
\filldraw[fill=black,draw=black] (-1,1) circle (2pt);
\filldraw[fill=black,draw=black] (-1,0) circle (2pt);
\filldraw[fill=black,draw=black] (-1,-1) circle (2pt);
\filldraw[fill=black,draw=black] (-1,-2) circle (2pt);
\filldraw[fill=black,draw=black] (-1,-3) circle (2pt);
\draw[dashed] (-1,4) -- (1,4);
\draw[dashed] (-1,3) -- (1,3);
\draw[dashed] (-1,2) -- (1,2);
\draw[dashed] (-1,1) -- (1,1);
\draw[dashed] (-1,0) -- (1,0);
\draw[dashed] (-1,-1) -- (1,-1);
\draw[dashed] (-1,-2) -- (1,-2);
\draw[dashed] (-1,-3) -- (1,-3);
\draw (-1,4) -- (-1.5,3.5) -- (-1,3) -- (-1.5,3.5) -- (-2.5,2.5) -- (-1.5,1.5) -- (-1,2) -- (-1.5,1.5) -- (-1,1) -- (-2.5,2.5) -- (-4.5,0.5);
\draw (-1,-3) -- (-1.5,-2.5) -- (-1,-2) -- (-1.5,-2.5) -- (-2,-2) -- (-1,-1) -- (-2,-2) -- (-2.5,-1.5) -- (-1,0) -- (-2.5,-1.5) -- (-4.5,0.5);
\draw (1,4) -- (1.5,3.5) -- (1,3) -- (1.5,3.5) -- (2,3) -- (1,2) -- (2,3) -- (2.5,2.5) -- (1,1) -- (2.5,2.5) -- (4.5,0.5) ; 
\draw (1,-3) -- (1.5,-2.5) -- (1,-2) -- (1.5,-2.5) -- (4.5,0.5);
\draw (1,0) -- (1.5,-0.5) -- (1,-1) -- (1.5,-0.5) --(3.5,1.5);
\filldraw[fill=black,draw=black] (-1.5,1.5) circle (2pt);
\filldraw[fill=black,draw=black] (-2.5,-1.5) circle (2pt);
\filldraw[fill=black,draw=black] (-2,-2) circle (2pt);
\filldraw[fill=black,draw=black] (2,3) circle (2pt);
\filldraw[fill=black,draw=black] (1.5,-0.5) circle (2pt);
\filldraw[fill=black,draw=black] (3.5,1.5) circle (2pt);
\filldraw[fill=black,draw=black] (-1.5,3.5) circle (2pt);
\filldraw[fill=black,draw=black] (1.5,3.5) circle (2pt);
\filldraw[fill=black,draw=black] (2.5,2.5) circle (2pt);
\filldraw[fill=black,draw=black] (-2.5,2.5) circle (2pt);
\filldraw[fill=black,draw=black] (4.5,0.5) circle (2pt);
\filldraw[fill=black,draw=black] (-4.5,0.5) circle (2pt);
\filldraw[fill=black,draw=black] (1.5,-2.5) circle (2pt);
\filldraw[fill=black,draw=black] (-1.5,-2.5) circle (2pt);
\draw[color=red] (1.25,3.25) -- (1,3.5) -- (-1,0.5) -- (-2.75,2.25);
\draw[color=red] (1.25,-2.75) -- (1,-2.5) -- (-1,-1.75) -- (-1.25,-1.5) -- (-1.875,-2.125) -- (-1,-1.25) -- (1,0.5)-- (1.25,-0.25);
\filldraw[color=red] (-1,0.5) circle (2pt);
\node[color=red] at (1.5,3) {$v_1$};
\filldraw[color=red] (1.25,3.25) circle (2pt);
\filldraw[color=red] (1,3.5) circle (2pt);
\node[color=red] at (-3,2.5) {$u_1$};
\filldraw[color=red] (-2.75,2.25) circle (2pt);
\node[color=red] at (1.5,-3) {$v_3$};
\filldraw[color=red] (1.25,-2.75) circle (2pt);
\filldraw[color=red] (1,-2.5) circle (2pt);
\filldraw[color=red] (-1,-1.75) circle (2pt);
\node[color=red] at (-1.625,-1.25) {$u_2$};
\filldraw[color=red] (-1.25,-1.5) circle (2pt);
\node[color=red] at (-2.125,-2.375) {$u_3$};
\filldraw[color=red] (-1.875,-2.125) circle (2pt);
\filldraw[color=red] (-1,-1.25) circle (2pt);
\filldraw[color=red] (1,0.5) circle (2pt);
\node[color=red] at (1.5,0) {$v_2$};
\filldraw[color=red] (1.25,-0.25) circle (2pt);
\node at (-0.625,4.25) {$t_1$};
\node at (-0.625,3.25) {$t_{2}$};
\node[color=red] at (-0.625,0.5) {$t_{5}$};
\node at (-0.625,2.25) {$t_3$};
\node at (-0.625,1.25) {$t_{4}$};
\node[color=red] at (-0.625,-1.25) {$t_{8}$};
\node[color=red] at (-0.625,-1.75) {$t_9$};
\node at (-0.625,-0.25) {$t_{6}$};
\node at (-0.625,-0.75) {$t_7$};
\node at (-0.625,-2.25) {$t_{10}$};
\node at (-0.625,-3.25) {$t_{11}$};
\end{tikzpicture}
\caption{For the subtanglegram induced by $I=\{1,2,3,4,6,7,10,11\}\subseteq [11]$ shown in black, $M(I)$ is partitioned into $M(I)_0=\{u_1,v_1\}$, $M(I)_T=\emptyset$, $M(I)_S=\{v_2,v_3\}$, and $M(I)_1=\{u_2,u_3\}$.}
\label{MIexample}
\end{figure}

We now define a natural partial order $\preceq$ on $\mathcal{P}=L(I)_T \cup L(I)_S \cup L(I)_1\cup M(I)$ using the partial orders on $T$ and $S$. This will be useful for both determining what crossings to focus on at each element, as well as the order in which we should consider the elements when performing subtree switches and paired flips.
\begin{itemize}
    \item For $(u_1,v_1),(u_2,v_2)\in \mathcal{P}\cap L(I)$, $(u_1,v_1)\preceq (u_2,v_2)$ if $u_1\leq_T u_2$ and $v_1\leq_S v_2$.
    \item For $(u_1,v_1)\in \mathcal{P}\cap L(I),u\in \mathcal{P}\cap T$, $(u_1,v_1)\preceq u$ if $u_1\leq_T u$, and $u\preceq (u_1,v_1)$ if $u\leq_T u_1$.
    \item For $(u_1,v_1)\in \mathcal{P}\cap L(I),v\in \mathcal{P}\cap S$, $(u_1,v_1)\preceq v$ if $v_1\leq_S v$, and $v\preceq (u_1,v_1)$ if $v\leq_S v_1$.
    \item For $u_1,u_2\in  \mathcal{P}\cap T$, $u_1\preceq u_2$ if $u_1\leq_T u_2$. 
    \item For $v_1,v_2\in  \mathcal{P}\cap S$, $v_1\preceq v_2$ if $v_1\leq_S v_2$.
    \item Finally, take the transitive closure of the above relations.
\end{itemize}
Recall that for partially ordered sets, we say $z\in \mathcal{P}$ \textit{covers} $x\in \mathcal{P}$ if $x\prec z$ and there do not exist any $y\in \mathcal{P}$ with $x\prec y\prec z$, i.e., $z$ is larger than $x$ and there are no elements between them. We use the notation $x\succdot y$ or $y\precdot  x$ to denote the covering relation.

For the elements in $L(I)_T,L(I)_S,M(I)_0,M(I)_T,$ and $M(I)_S$, we now define generalizations of the $E(u_j)$ and $E(v_j)$ sets from \algname{Insertion}. These sets, which we denote $\cross(u,v)$, $\cross(u)$, or $\cross(v)$, will determine what crossings we should consider at the corresponding elements. Using the notation $e_j$ for the between-tree edge $(t_j,s_{\phi(j)})$, we organize these definitions in Table \ref{crossingsets}. 
        
\begin{table}[h!]
    \centering
    \begin{tabular}{ |c | c | c |}
        \hline
        Element & Relevant Properties & Crossing Set \\
        \hline
        $(u,v)\in L(I)_T$ & \makecell{$\leaf(u)\cap I^c=\{t_i\}$} & \makecell{$\{(e_i,e_j):t_j\in \leaf(u)\setminus \leaf(u')$ \\
        $\text{ where $(u,v)\succdot u'$ or $(u,v)\succdot (u',v')$}\}$} \\
         \hline 
        $(u,v)\in L(I)_S$ & \makecell{$\leaf(v)\cap \phi(I)^c=\{s_{\phi(i)}\}$ } & \makecell{$\{(e_i,e_j):s_{\phi(j)}\in \leaf(v)\setminus \leaf(v')$ \\ $\text{ where $(u,v)\succdot v'$ or $(u,v)\succdot (u',v')$}\}$} \\
        \hline 
        $P(t_i)\in M(I)_0$ & \makecell{there exists $(u,v)\in L(I)$  with\\ $P(t_i)>_T u$ and $v>_S P(s_{\phi(i)})$} & \makecell{$\{(e_i,e_j):t_j\in \leaf(P(t_i))\setminus \leaf(u)$ \\ $\text{ where $P(t_i)\succdot (u,v)$}\}$}\\
        \hline 
        $P(s_{\phi(i)})\in M(I)_0$ & \makecell{there exists $(u,v)\in L(I)$  with\\ $P(t_i)>_T u$ and $v>_S P(s_{\phi(i)})$} & $\{(e_i,e_j):s_{\phi(j)}\in \leaf(P(s_{\phi(i)}))\setminus \{s_{\phi(i)}\}\}$ \\
        \hline 
        $P(t_i)\in M(I)_0$ & \makecell{there exists $(u,v)\in L(I)$ with \\  $P(s_{\phi(i)})>_S v$ and $u>_T P(t_i)$} & $\{(e_i,e_j):t_j\in \leaf(P(t_i))\setminus \{t_i\}\}$ \\
        \hline 
        $P(s_{\phi(i)})\in M(I)_0$ & \makecell{there exists $(u,v)\in L(I)$  with \\ $P(s_{\phi(i)})>_S v$ and $u>_T P(t_i)$} &  \makecell{$\{(e_i,e_j):s_{\phi(j)}\in \leaf(P(s_{\phi(i)}))\setminus \leaf(v)$\\ $\text{ where $P(s_{\phi(i)})\succdot (u,v)$}$\}}\\
        \hline 
        $P(t_i) \in M(I)_T$ & $\leaf(P(t_i))\cap{I^c}=\{t_i\}$ & $\{(e_i,e_j):t_j\in \leaf(P(t_i))\setminus \{t_i\}\}$ \\
        \hline 
        $P(s_{\phi(i)})\in M(I)_S$ & $\leaf(P(s_{\phi(i)}))\cap{\phi(I)^c}=\{s_{\phi(i)}\}$ & $\{(e_i,e_j):t_j\in \leaf(P(s_{\phi(i)}))\setminus \{s_{\phi(i)}\}\}$ \\
        \hline
    \end{tabular}
    \caption{Crossing sets for elements in $L(I)_T$, $L(I)_S$, $M(I)_0$, $M(I)_T$, and $M(I)_S$.}
    \label{crossingsets}
\end{table}

Finally, recall that for partially ordered sets $\mathcal{P}$, a \textit{linear extension} $F:\mathcal{P}\to \{1,2,\ldots,|\mathcal{P}|\}$ is a bijective map such that $F(w)\geq F(w')$ in the usual order on $\bbN$ whenever $w\succeq w'$, or equivalently, a total ordering of $\mathcal{P}$ that respects the partial ordering. We can use a linear extension on $\mathcal{P}$ with respect to $\preceq$ defined above to determine the order that we perform operations, as by construction of $\preceq$, the order $F^{-1}(|\mathcal{P}|),\ldots ,F^{-1}(2),F^{-1}(1)$ guarantees that we consider all ancestors of a vertex in $T$ or $S$ before we consider the vertex itself. We now give an example of the sets in Table \ref{crossingsets}, followed by our \algname{MultiInsertion} algorithm.

\begin{example}\label{crossingexample}
Consider the tanglegram in Figure \ref{multiexample}, and let $I=\{1,2,4,5,8,9,10,11\}$. We index elements in $\mathcal{P}$ by their images under a linear extension, so
\begin{equation}
    \mathcal{P}=\{v_1,(u_2,v_2),v_3,v_4,(u_5,v_5),u_6,(u_7,v_7),u_8,u_9\}.
\end{equation}
Observe that $L(I)_T=\emptyset$, $L(I)_S=\{(u_2,v_2),(u_5,v_5)\}$, $L(I)_1=\{(u_7,v_7)\}$, $M(I)_0=\emptyset$, $M(I)_T=\{u_6\}$, $M(I)_S=\{v_1,v_3,v_4\}$, and $M(I)_1=\{u_8,u_9\}$. Table \ref{crossingsets} states 
\begin{itemize}
    \item $\cross(v_1)=\{(e_7,e_{11})\}$,
    \item $\cross(u_2,v_2)=\{(e_{7},e_{10})\}$,
    \item $\cross(v_3)=\{(e_3,e_8),(e_3,e_9)\}$,
    \item $\cross(v_4)=\{(e_6,e_2)\}$,
    \item $\cross(u_5,v_5)=\{(e_6,e_1)\}$, and
    \item $\cross(u_6)=\{(e_3,e_1),(e_3,e_2)\}$.
\end{itemize}

\begin{figure}[h]
    \centering
\begin{tikzpicture}[scale=0.7]
\filldraw[fill=black,draw=black] (1,4) circle (2pt);
\filldraw[fill=black,draw=black] (1,3) circle (2pt);
\filldraw[fill=black,draw=black] (1,2) circle (2pt);
\filldraw[fill=black,draw=black] (1,1) circle (2pt);
\filldraw[fill=black,draw=black] (1,0) circle (2pt);
\filldraw[fill=black,draw=black] (1,-1) circle (2pt);
\filldraw[fill=black,draw=black] (1,-2) circle (2pt);
\filldraw[fill=black,draw=black] (1,-3) circle (2pt);
\filldraw[fill=black,draw=black] (-1,4) circle (2pt);
\filldraw[fill=black,draw=black] (-1,3) circle (2pt);
\filldraw[fill=black,draw=black] (-1,2) circle (2pt);
\filldraw[fill=black,draw=black] (-1,1) circle (2pt);
\filldraw[fill=black,draw=black] (-1,0) circle (2pt);
\filldraw[fill=black,draw=black] (-1,-1) circle (2pt);
\filldraw[fill=black,draw=black] (-1,-2) circle (2pt);
\filldraw[fill=black,draw=black] (-1,-3) circle (2pt);
\draw[dashed] (-1,4) -- (1,4);
\draw[dashed] (-1,3) -- (1,3);
\draw[dashed] (-1,2) -- (1,2);
\draw[dashed] (-1,1) -- (1,1);
\draw[dashed] (-1,0) -- (1,0);
\draw[dashed] (-1,-1) -- (1,-1);
\draw[dashed] (-1,-2) -- (1,-2);
\draw[dashed] (-1,-3) -- (1,-3);
\draw (-1,4) -- (-1.5,3.5) -- (-1,3) -- (-1.5,3.5) -- (-2.5,2.5) -- (-1.5,1.5) -- (-1,2) -- (-1.5,1.5) -- (-1,1) -- (-2.5,2.5) -- (-4.5,0.5);
\draw (-1,-3) -- (-1.5,-2.5) -- (-1,-2) -- (-1.5,-2.5) -- (-2,-2) -- (-1,-1) -- (-2,-2) -- (-2.5,-1.5) -- (-1,0) -- (-2.5,-1.5) -- (-4.5,0.5);
\draw (1,4) -- (1.5,3.5) -- (1,3) -- (1.5,3.5) -- (2,3) -- (1,2) -- (2,3) -- (2.5,2.5) -- (1,1) -- (2.5,2.5) -- (4.5,0.5) ; 
\draw (1,-3) -- (1.5,-2.5) -- (1,-2) -- (1.5,-2.5) -- (4.5,0.5);
\draw (1,0) -- (1.5,-0.5) -- (1,-1) -- (1.5,-0.5) --(3.5,1.5);
\filldraw[fill=black,draw=black] (-1.5,1.5) circle (2pt);
\filldraw[fill=black,draw=black] (-2.5,-1.5) circle (2pt);
\filldraw[fill=black,draw=black] (-2,-2) circle (2pt);
\filldraw[fill=black,draw=black] (2,3) circle (2pt);
\filldraw[fill=black,draw=black] (1.5,-0.5) circle (2pt);
\filldraw[fill=black,draw=black] (3.5,1.5) circle (2pt);
\filldraw[fill=black,draw=black] (-1.5,3.5) circle (2pt);
\filldraw[fill=black,draw=black] (1.5,3.5) circle (2pt);
\filldraw[fill=black,draw=black] (2.5,2.5) circle (2pt);
\filldraw[fill=black,draw=black] (-2.5,2.5) circle (2pt);
\filldraw[fill=black,draw=black] (4.5,0.5) circle (2pt);
\filldraw[fill=black,draw=black] (-4.5,0.5) circle (2pt);
\filldraw[fill=black,draw=black] (1.5,-2.5) circle (2pt);
\filldraw[fill=black,draw=black] (-1.5,-2.5) circle (2pt);
\draw[color=red] (-2.875,2.125) -- (-1,0.25) -- (-1.25,0.5) -- (-1,0.75) -- (1,3.5) -- (1.25,3.25);
\draw[color=red] (-1,0.25) -- (1,-2.5) -- (1.25,-2.75);
\draw[color=red] (-1.75,3.25) -- (-1,2.5) -- (1,0.5) -- (1.75,-0.25);
\filldraw[color=red] (-1,0.25) circle (2pt);
\filldraw[color=red] (-1,0.75) circle (2pt);
\filldraw[color=red] (-2.875,2.125) circle (2pt);
\filldraw[color=red] (-1.25,0.5) circle (2pt);
\filldraw[color=red] (1,3.5) circle (2pt);
\filldraw[color=red] (1.25,3.25) circle (2pt);
\filldraw[color=red] (1,-2.5) circle (2pt);
\filldraw[color=red] (1.25,-2.75) circle (2pt);
\filldraw[color=red] (-1.75,3.25) circle (2pt);
\filldraw[color=red] (-1,2.5) circle (2pt);
\filldraw[color=red] (1,0.5) circle (2pt);
\filldraw[color=red] (1.75,-0.25) circle (2pt);
\node[color=red] at (-3.125,2.375) {$u_9$};
\node[color=red] at (-1.625,0.5) {$u_8$};
\node at (-2.75,2.75) {$u_7$};
\node at (2.75,2.75) {$v_7$};
\node[color=red] at (-2,3.5) {$u_6$};
\node at (-1.75,3.75) {$u_5$};
\node at (1.75,3.75) {$v_5$};
\node[color=red] at (1.5,3) {$v_4$};
\node[color=red] at (2,-0.5) {$v_3$};
\node at (-1.75,-2.75) {$u_2$};
\node at (1.75,-2.75) {$v_2$};
\node[color=red] at (1.5,-3.125) {$v_{1}$};
\node at (-0.625,3.75) {$t_1$};
\node at (-0.625,3.25) {$t_{2}$};
\node[color=red] at (-0.625,2.5) {$t_{3}$};
\node at (-0.625,1.75) {$t_4$};
\node at (-0.625,1.25) {$t_{5}$};
\node[color=red] at (-0.625,0.75) {$t_{6}$};
\node[color=red] at (-0.625,0.25) {$t_7$};
\node at (-0.625,-0.25) {$t_{8}$};
\node at (-0.625,-1.25) {$t_9$};
\node at (-0.625,-1.75) {$t_{10}$};
\node at (-0.625,-2.75) {$t_{11}$};
\end{tikzpicture}
\quad 
\begin{tikzpicture}[scale=0.7]
\filldraw[fill=black,draw=black] (1,4) circle (2pt);
\filldraw[fill=black,draw=black] (1,3) circle (2pt);
\filldraw[fill=black,draw=black] (1,2) circle (2pt);
\filldraw[fill=black,draw=black] (1,1) circle (2pt);
\filldraw[fill=black,draw=black] (1,0) circle (2pt);
\filldraw[fill=black,draw=black] (1,-1) circle (2pt);
\filldraw[fill=black,draw=black] (1,-2) circle (2pt);
\filldraw[fill=black,draw=black] (1,-3) circle (2pt);
\filldraw[fill=black,draw=black] (-1,4) circle (2pt);
\filldraw[fill=black,draw=black] (-1,3) circle (2pt);
\filldraw[fill=black,draw=black] (-1,2) circle (2pt);
\filldraw[fill=black,draw=black] (-1,1) circle (2pt);
\filldraw[fill=black,draw=black] (-1,0) circle (2pt);
\filldraw[fill=black,draw=black] (-1,-1) circle (2pt);
\filldraw[fill=black,draw=black] (-1,-2) circle (2pt);
\filldraw[fill=black,draw=black] (-1,-3) circle (2pt);
\draw[dashed] (-1,4) -- (1,4);
\draw[dashed] (-1,3) -- (1,3);
\draw[dashed] (-1,2) -- (1,2);
\draw[dashed] (-1,1) -- (1,1);
\draw[dashed] (-1,0) -- (1,0);
\draw[dashed] (-1,-1) -- (1,-1);
\draw[dashed] (-1,-2) -- (1,-2);
\draw[dashed] (-1,-3) -- (1,-3);
\draw (-1,4) -- (-1.5,3.5) -- (-1,3) -- (-1.5,3.5) -- (-2.5,2.5) -- (-1.5,1.5) -- (-1,2) -- (-1.5,1.5) -- (-1,1) -- (-2.5,2.5) -- (-4.5,0.5);
\draw (-1,-3) -- (-1.5,-2.5) -- (-1,-2) -- (-1.5,-2.5) -- (-2,-2) -- (-1,-1) -- (-2,-2) -- (-2.5,-1.5) -- (-1,0) -- (-2.5,-1.5) -- (-4.5,0.5);
\draw (1,4) -- (1.5,3.5) -- (1,3) -- (1.5,3.5) -- (2,3) -- (1,2) -- (2,3) -- (2.5,2.5) -- (1,1) -- (2.5,2.5) -- (4.5,0.5) ; 
\draw (1,-3) -- (1.5,-2.5) -- (1,-2) -- (1.5,-2.5) -- (4.5,0.5);
\draw (1,0) -- (1.5,-0.5) -- (1,-1) -- (1.5,-0.5) --(3.5,1.5);
\filldraw[fill=black,draw=black] (-1.5,1.5) circle (2pt);
\filldraw[fill=black,draw=black] (-2.5,-1.5) circle (2pt);
\filldraw[fill=black,draw=black] (-2,-2) circle (2pt);
\filldraw[fill=black,draw=black] (2,3) circle (2pt);
\filldraw[fill=black,draw=black] (1.5,-0.5) circle (2pt);
\filldraw[fill=black,draw=black] (3.5,1.5) circle (2pt);
\filldraw[fill=black,draw=black] (-1.5,3.5) circle (2pt);
\filldraw[fill=black,draw=black] (1.5,3.5) circle (2pt);
\filldraw[fill=black,draw=black] (2.5,2.5) circle (2pt);
\filldraw[fill=black,draw=black] (-2.5,2.5) circle (2pt);
\filldraw[fill=black,draw=black] (4.5,0.5) circle (2pt);
\filldraw[fill=black,draw=black] (-4.5,0.5) circle (2pt);
\filldraw[fill=black,draw=black] (1.5,-2.5) circle (2pt);
\filldraw[fill=black,draw=black] (-1.5,-2.5) circle (2pt);
\draw[color=red] (-2.875,2.125) -- (-1,0.25) -- (-1.25,0.5) -- (-1,0.75) -- (1,2.5) -- (1.25,3.25);
\draw[color=red] (-1,0.25) -- (1,-1.5) -- (1.25,-2.25);
\draw[color=red] (-1.75,3.25) -- (-1,2.5) -- (1,0.5) -- (1.75,-0.25);
\filldraw[color=red] (-1,0.25) circle (2pt);
\filldraw[color=red] (-1,0.75) circle (2pt);
\filldraw[color=red] (-2.875,2.125) circle (2pt);
\filldraw[color=red] (-1.25,0.5) circle (2pt);
\filldraw[color=red] (1,2.5) circle (2pt);
\filldraw[color=red] (1.25,3.25) circle (2pt);
\filldraw[color=red] (1,-1.5) circle (2pt);
\filldraw[color=red] (1.25,-2.25) circle (2pt);
\filldraw[color=red] (-1.75,3.25) circle (2pt);
\filldraw[color=red] (-1,2.5) circle (2pt);
\filldraw[color=red] (1,0.5) circle (2pt);
\filldraw[color=red] (1.75,-0.25) circle (2pt);
\node[color=red] at (-3.125,2.375) {$u_9$};
\node[color=red] at (-1.625,0.5) {$u_8$};
\node at (-2.75,2.75) {$u_7$};
\node at (2.75,2.75) {$v_7$};
\node[color=red] at (-2,3.5) {$u_6$};
\node at (-1.75,3.75) {$u_5$};
\node at (1.75,3.75) {$v_5$};
\node[color=red] at (1.5,3) {$v_4$};
\node[color=red] at (2,-0.5) {$v_3$};
\node at (-1.75,-2.75) {$u_2$};
\node at (1.75,-2.75) {$v_2$};
\node[color=red] at (1.5,-2) {$v_{1}$};
\node[color=white] at (1.5,-3.125) {$v_{1}$};
\node at (-0.625,3.75) {$t_1$};
\node at (-0.625,3.25) {$t_{2}$};
\node[color=red] at (-0.625,2.5) {$t_{3}$};
\node at (-0.625,1.75) {$t_4$};
\node at (-0.625,1.25) {$t_{5}$};
\node[color=red] at (-0.625,0.75) {$t_{6}$};
\node[color=red] at (-0.625,0.25) {$t_7$};
\node at (-0.625,-0.25) {$t_{8}$};
\node at (-0.625,-1.25) {$t_9$};
\node at (-0.625,-1.75) {$t_{11}$};
\node at (-0.625,-2.75) {$t_{10}$};
\end{tikzpicture}
\caption{Two layouts for the same tanglegram.}
\label{multiexample}
\end{figure}
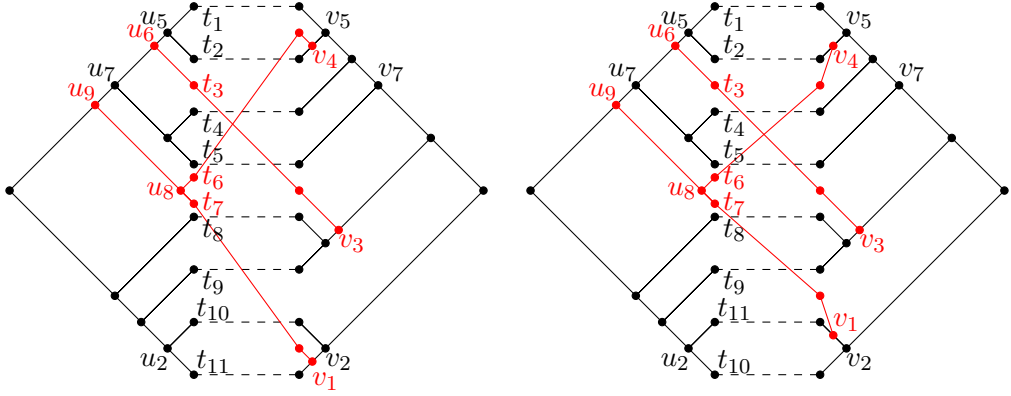
\end{example}

\begin{algorithm}[H]\DontPrintSemicolon
\caption{\algname{MultiInsertion}}
\KwIn{tanglegram $(T,S,\phi)$, index set $I$ such that $(T_I,S_{\phi(I)},\phi|_I)$ is planar}
\KwOut{layout of $(T,S,\phi)$ that restricts to a planar layout of $(T_I,S_{\phi(I)},\phi|_I)$}
$(X,Y),L\coloneqq\algname{ModifiedUntangle}(T,S,\phi|_I)$ \label{multistart1}\\
construct $L(I)$ from $L$ using Definition \ref{Lsubtanglegram}\\
construct $M(I)$, $M(I)_0$, $M(I)_T$, $M(I)_S$, $M(I)_1$, $L(I)_0$, $L(I)_T$, $L(I)_S$, and $L(I)_1$ \\
calculate $\cross(w)$ for all $w\in L(I)_T\cup L(I)_S\cup M(I)_0 \cup M(I)_T\cup M(I)_S$ using Table \ref{crossingsets}\\
$F\coloneqq$ linear extension of $\mathcal{P}=L(I)_T\cup L(I)_S\cup L(I)_1\cup M(I)$ with respect to partial order $\preceq$\\
$(X_{min},Y_{min})\coloneqq(X,Y)$ \label{multistart2}\\
\For{$C \subseteq  L(I)_1\cup M(I)_1$,\label{multiforloop}}
    {
    $(X',Y')\coloneqq(X,Y)$ \\
    \For{$j=|\mathcal{P}|,\ldots ,2,1$,\label{multiforloop2}}
        {
        \If{[$F^{-1}(j)\in C \cap L(I)_1$] or [$F^{-1}(j)\in L(I)_T\cup L(I)_S$ and more than half of the crossings in $\cross(F^{-1}(j))$ occur in $(X',Y')$],}
            {update $(X',Y')\coloneqq \algname{PairedFlip}((X',Y'),F^{-1}(j))$}
        \ElseIf{[$F^{-1}(j)\in C \cap M(I)_1 \cap T$] or [$F^{-1}(j)\in M(I)_T \cup (M(I)_0\cap T)$ and more than half of the crossings in $\cross(F^{-1}(j))$ occur in $(X',Y')$],}
            {update $X'\coloneqq \algname{SubtreeSwitch}(X',F^{-1}(j))$}
        \ElseIf{[$F^{-1}(j)\in C \cap M(I)_1 \cap S$] or [$F^{-1}(j)\in M(I)_S \cup (M(I)_0\cap S)$ and more than half of the crossings in $\cross(F^{-1}(j))$ occur in $(X',Y')$],}
            {update $Y'\coloneqq \algname{SubtreeSwitch}(Y',F^{-1}(j))$}
        }
    \If{$(X',Y')$ has fewer crossings than $(X_{min},Y_{min})$,\label{remainingsteps}}
        {update $(X_{min},Y_{min})\coloneqq(X',Y')$\label{trackmin}}
    }
\Return $(X_{min},Y_{min})$
\end{algorithm}

\begin{example}
Consider the tanglegram $(T,S,\phi)$ and linear extension of $\mathcal{P}$ from Example \ref{crossingexample}. Suppose that the the output of $\algname{ModifiedUntangle}(T,S,\phi|_I)$ is the layout on the left in Figure \ref{multiexample}. \algname{MultiInsertion} will consider $2^{|L(I)_1\cup M(I)_1|}=8$ iterations in the \textbf{for} loop in line \ref{multiforloop}. In each iteration, it produces another layout, and ultimately the algorithm returns the layout encountered that has the fewest number of crossings. In the iteration $C=\emptyset$, the algorithm will perform a subtree switch at $v_4$, a paired flip at $(u_2,v_2)$, and a subtree switch at $v_1$, resulting in a layout with seven crossings shown on the right in Figure \ref{multiexample}. Note that this is not a solution to the Multiple Edge Insertion Problem since the iteration with $C=\{(u_7,v_7)\}$ will produce a layout with fewer crossings.
\end{example}

\begin{lemma}\label{forloop}
In some iteration of the \textbf{for} loop in line \ref{multiforloop} of \algname{MultiInsertion}, the resulting layout $(X',Y')$ is a solution to the Tanglegram Multiple Edge Insertion Problem.
\end{lemma}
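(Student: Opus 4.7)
The plan is to fix an arbitrary solution $(X_{min}, Y_{min})$ to the Tanglegram Multiple Edge Insertion Problem and exhibit a specific subset $C^* \subset L(I)_1 \cup M(I)_1$ such that the iteration $C = C^*$ of the outer for loop produces a layout with exactly $\crt(X_{min}, Y_{min})$ crossings. By Lemma \ref{MEIsolution} combined with Lemma \ref{samecrossings} (which allows us to discard any paired flips at $L(I)_0$), the layout $(X_{min}, Y_{min})$ is obtained from $(X, Y)$ via a commuting, involutive sequence of operations specified by a subset $D^* \subset L(I)_T \cup L(I)_S \cup L(I)_1 \cup M(I) = \mathcal{P}$. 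Set $C^* = D^* \cap (L(I)_1 \cup M(I)_1)$; this is one of the subsets enumerated by the outer for loop.

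The core of the argument is a downward induction on $j$ along the reverse linear extension $F^{-1}(|\mathcal{P}|), \ldots, F^{-1}(1)$ used by the inner for loop. The inductive hypothesis will assert that after the algorithm processes $F^{-1}(j)$ in the iteration $C = C^*$, the current layout $(X', Y')$ can be completed to an optimum by performing some set of operations at the as-yet-unprocessed elements $F^{-1}(j-1), \ldots, F^{-1}(1)$. The base case is immediate from the choice of $D^*$. In the inductive step, write $w = F^{-1}(j)$: when $w \in L(I)_1 \cup M(I)_1$, the algorithm performs the operation precisely when $w \in C^*$, which by construction matches the choice in $D^*$, so the inductive hypothesis is preserved via the remaining operations of $D^*$. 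When $w \in L(I)_T \cup L(I)_S \cup M(I)_0 \cup M(I)_T \cup M(I)_S$, the algorithm instead makes a greedy choice driven by $\cross(w)$, and one must show this greedy choice is consistent with some optimal completion.

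The technical heart of the argument is a lemma, generalizing Lemma \ref{case3order} and the local-flip analyses in Lemmas \ref{insertioncase1} and \ref{insertioncase3}, that no operation at a strict $\preceq$-descendant of $w$ can alter which pairs in $\cross(w)$ actually cross. Granting this, the number of crossings in $\cross(w)$ in the final layout depends only on choices already committed to at $w$ and its $\preceq$-ancestors. A local flip argument, analogous to Figures \ref{case1drawing1}--\ref{case1drawing2} and \ref{case3graph}, then shows that flipping or skipping at $w$ toggles precisely the crossings in $\cross(w)$ while leaving all other crossings in the layout unchanged (once descendant operations are composed appropriately). Consequently, the ``more than half'' rule picks whichever choice at $w$ matches the majority configuration in $(X_{min}, Y_{min})$ up to composition with descendant operations, and hence is consistent with some optimum.

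The principal obstacle will be verifying the invariance claim for $\cross(w)$ under $\preceq$-descendant operations in full generality: each of the seven rows of Table \ref{crossingsets} specifies $\cross(w)$ via the covering relations in $\preceq$, and one must check, case by case, that paired flips at $\preceq$-descendant leaf-matched pairs and subtree switches at $\preceq$-descendant inserted vertices merely permute leaves within subtrees rooted at $\preceq$-children of $w$ without moving leaves in or out of those subtrees. The $M(I)_0$ cases require the most care, since an element of $M(I)_0$ interacts simultaneously with a vertex on the $T$-side and the $S$-side through a witness pair in $L(I)$; the definitions of $M(I)_0, M(I)_T, M(I)_S$ were arranged precisely so that this witness pair plays the role of $u_{Smax}$ or $v_{Tmax}$ from Lemma \ref{case1order}, and verifying this correspondence is the technical step that turns the plan into a complete proof.
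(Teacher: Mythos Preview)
Your proposal is essentially the same approach as the paper's proof: both pick the iteration $C^*$ determined by a fixed optimal solution (via Lemma~\ref{MEIsolution}), then argue by induction down the linear extension that the greedy choice at each element of $L(I)_T\cup L(I)_S\cup M(I)_0\cup M(I)_T\cup M(I)_S$ extends to a solution, using the local flip-and-compensate analyses from Lemmas~\ref{insertioncase1} and~\ref{insertioncase3} as the model. Your framing via an explicit invariance lemma for $\cross(w)$ is a clean repackaging of what the paper does case by case.

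One point to tighten: your invariance lemma is stated only for operations at strict $\preceq$-descendants of $w$, and you then conclude that ``the number of crossings in $\cross(w)$ in the final layout depends only on choices already committed to at $w$ and its $\preceq$-ancestors.'' That inference requires also handling operations at elements $\preceq$-incomparable to $w$ but with smaller $F$-value, which can still be processed after $w$. The paper's proof for $P(t_i)\in M(I)_T$ does exactly this: its bulleted list treats subtree switches at arbitrary $u\in M(I)\cap T\setminus\{P(t_i)\}$ and $v\in M(I)\cap S\setminus\{A(s_{\phi(i)})\}$, many of which are $\preceq$-incomparable to $P(t_i)$, and argues they leave the restriction to $(T_{I\cup\{i\}},S_{\phi(I\cup\{i\})},\phi|_{I\cup\{i\}})$ unchanged; the special vertex $A(s_{\phi(i)})$ is then handled via the two defining alternatives of $M(I)_T$. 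You correctly flag that the $M(I)_0$ and $M(I)_T,M(I)_S$ cases are where the real work lies, but make sure your case analysis covers all later operations in the linear extension, not only $\preceq$-descendants.
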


\begin{proof}
By Lemma \ref{MEIsolution},  starting with $(X,Y)$ in line \ref{multistart1} and performing some sequence of paired flips and subtree switches at elements in $\mathcal{P}$ produces a solution. In some iteration of the \textbf{for} loop in line \ref{multiforloop}, the choices at elements in $L(I)_1\cup M(I)_1$ all extend to a solution. We will assume that we are in this iteration, and we will show that the choice at each element in  $L(I)_T\cup L(I)_S\cup M(I)_0\cup M(I)_T\cup M(I)_S$ extends to a solution, provided that all prior choices extend to a solution.

Consider $(u,v)\in L(I)_T$. By definition of $L(I)_T$, $u$ is an ancestor of a single inserted leaf $t_i$ and $v$ is not an ancestor of any inserted leaves. Thus, the choice at $(u,v)$ can only affect the crossings 
$C=\{(e_i,e_j): t_j\in \leaf(u)\}.$
Applying the same arguments as in Lemma \ref{insertioncase1} and \ref{insertioncase3}, if we perform a paired flip at $(u,v)$, then we can perform either a paired flip or a subtree flip at the element covered by $(u,v)$ to obtain the same crossings in $C\setminus \cross(u,v)$. The case of a paired flip is illustrated in Figure \ref{multifig1}. From this, we see that the choice at $(u,v)$ that extends to a solution must be one that minimizes crossings in $\cross(u,v)$, which is what \algname{MultiInsertion} does.  The case $(u,v)\in L(I)_S$ follows by a similar argument.

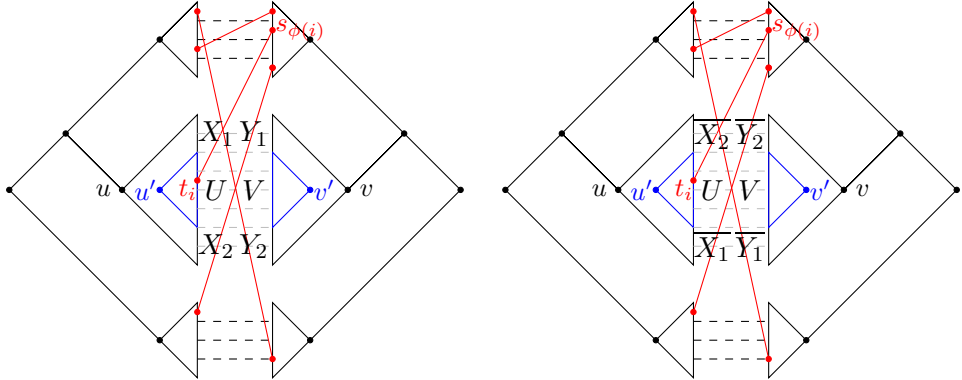
\begin{figure}[h]
    \centering
    \begin{tikzpicture}[scale=.5]
    \draw (-1,7) -- (-2,6) -- (-1,5) -- (-1,7) -- (-4.5,3.5) -- (-3,2) -- (-1,4) -- (-1,0) -- (-4.5,3.5) -- (-6,2) -- (-1,-3) -- (-1,-1) -- (-2,-2);
    \draw[color=blue] (-2,2) -- (-1,3) -- (-1,1) -- (-2,2);
    \draw (1,7) -- (2,6) -- (1,5) -- (1,7) -- (4.5,3.5) -- (3,2) -- (1,4) -- (1,0) -- (4.5,3.5) -- (6,2) -- (1,-3) -- (1,-1) -- (2,-2);
    \draw[color=blue] (2,2) -- (1,3) -- (1,1) -- (2,2);
    \draw[dashed] (-1,6.5) -- (1,6.5);
    \draw[dashed] (-1,5.5) -- (1,5.5);
    \draw[dashed] (-1,6) -- (1,6);
    \draw[dashed] (-1,-2) -- (1,-2);
    \draw[dashed] (-1,-1.5) -- (1,-1.5);
    \draw[dashed] (-1,-2.5) -- (1,-2.5);
    \draw[color=red] (-1,2.25) -- (1,6.25);
    \draw[color=black!25!white,dashed] (-1,.5) -- (1,.5);
    \draw[color=black!25!white,dashed] (-1,2.5) -- (1,2.5);
    \draw[color=black!25!white,dashed] (-1,1.5) -- (1,1.5);
    \draw[color=black!25!white,dashed] (-1,3.5) -- (1,3.5);
    \draw[color=black!25!white,dashed] (-1,3) -- (1,3);
    \draw[color=black!25!white,dashed] (-1,2) -- (1,2);
    \draw[color=black!25!white,dashed] (-1,1) -- (1,1);
    \filldraw[fill=black,draw=black] (-6,2) circle (2pt);
    \filldraw[fill=black,draw=black] (-4.5,3.5) circle (2pt);
    \filldraw[fill=black,draw=black] (-2,6) circle (2pt);
    \filldraw[fill=black,draw=black] (-3,2) circle (2pt);
    \filldraw[fill=blue,draw=blue] (-2,2) circle (2pt);
    \filldraw[fill=black,draw=black] (-2,-2) circle (2pt);
    \filldraw[fill=black,draw=black] (6,2) circle (2pt);
    \filldraw[fill=black,draw=black] (4.5,3.5) circle (2pt);
    \filldraw[fill=black,draw=black] (2,6) circle (2pt);
    \filldraw[fill=black,draw=black] (3,2) circle (2pt);
    \filldraw[fill=blue,draw=blue] (2,2) circle (2pt);
    \filldraw[fill=black,draw=black] (2,-2) circle (2pt);
    \filldraw[fill=red,draw=red] (-1,2.25) circle (2pt);
    \filldraw[fill=red,draw=red] (1,6.25) circle (2pt);
    \draw[color=red] (-1,5.75) -- (1,6.75);
    \draw[color=red] (-1,6.75) -- (1,-2.5);
    \draw[color=red] (-1,-1.25) -- (1,5.25);
        \filldraw[fill=red,draw=red] (-1,5.75) circle (2pt);
    \filldraw[fill=red,draw=red] (1,6.75) circle (2pt);
    \filldraw[fill=red,draw=red] (-1,6.75) circle (2pt);
    \filldraw[fill=red,draw=red] (1,5.25) circle (2pt);
    \filldraw[fill=red,draw=red] (-1,-1.25) circle (2pt);
    \filldraw[fill=red,draw=red] (1,-2.5) circle (2pt);
    \filldraw[fill=red,draw=red] (1,5.25) circle (2pt);
    \node at (-3.5,2) {\small{$u$}};
    \node[color=blue] at (-2.375,2) {\small{$u'$}};
    \node at (3.5,2) {\small{$v$}};
    \node[color=blue] at (2.375,2) {\small{$v'$}};
    \node[color=red] at (-1.25,2) {\small{$t_i$}};
    \node[color=red] at (1.75,6.25) {\small{$s_{\phi(i)}$}};
    \node at (-.5,3.5) {$X_1$};
    \node at (-.5,2) {$U$};
    \node at (-.5,0.5) {$X_2$};
    \node at (0.5,3.5) {$Y_1$};
    \node at (0.5,2) {$V$};
    \node at (0.5,0.5) {$Y_2$};
    \end{tikzpicture}
    \quad 
    \begin{tikzpicture}[scale=.5]
    \draw (-1,7) -- (-2,6) -- (-1,5) -- (-1,7) -- (-4.5,3.5) -- (-3,2) -- (-1,4) -- (-1,0) -- (-4.5,3.5) -- (-6,2) -- (-1,-3) -- (-1,-1) -- (-2,-2);
    \draw[color=blue] (-2,2) -- (-1,3) -- (-1,1) -- (-2,2);
    \draw (1,7) -- (2,6) -- (1,5) -- (1,7) -- (4.5,3.5) -- (3,2) -- (1,4) -- (1,0) -- (4.5,3.5) -- (6,2) -- (1,-3) -- (1,-1) -- (2,-2);
    \draw[color=blue] (2,2) -- (1,3) -- (1,1) -- (2,2);
    \draw[dashed] (-1,6.5) -- (1,6.5);
    \draw[dashed] (-1,5.5) -- (1,5.5);
    \draw[dashed] (-1,6) -- (1,6);
    \draw[dashed] (-1,-2) -- (1,-2);
    \draw[dashed] (-1,-1.5) -- (1,-1.5);
    \draw[dashed] (-1,-2.5) -- (1,-2.5);
    \draw[color=red] (-1,2.25) -- (1,6.25);
    \draw[color=black!25!white,dashed] (-1,.5) -- (1,.5);
    \draw[color=black!25!white,dashed] (-1,2.5) -- (1,2.5);
    \draw[color=black!25!white,dashed] (-1,1.5) -- (1,1.5);
    \draw[color=black!25!white,dashed] (-1,3.5) -- (1,3.5);
    \draw[color=black!25!white,dashed] (-1,3) -- (1,3);
    \draw[color=black!25!white,dashed] (-1,2) -- (1,2);
    \draw[color=black!25!white,dashed] (-1,1) -- (1,1);
    \filldraw[fill=black,draw=black] (-6,2) circle (2pt);
    \filldraw[fill=black,draw=black] (-4.5,3.5) circle (2pt);
    \filldraw[fill=black,draw=black] (-2,6) circle (2pt);
    \filldraw[fill=black,draw=black] (-3,2) circle (2pt);
    \filldraw[fill=blue,draw=blue] (-2,2) circle (2pt);
    \filldraw[fill=black,draw=black] (-2,-2) circle (2pt);
    \filldraw[fill=black,draw=black] (6,2) circle (2pt);
    \filldraw[fill=black,draw=black] (4.5,3.5) circle (2pt);
    \filldraw[fill=black,draw=black] (2,6) circle (2pt);
    \filldraw[fill=black,draw=black] (3,2) circle (2pt);
    \filldraw[fill=blue,draw=blue] (2,2) circle (2pt);
    \filldraw[fill=black,draw=black] (2,-2) circle (2pt);
    \filldraw[fill=red,draw=red] (-1,2.25) circle (2pt);
    \filldraw[fill=red,draw=red] (1,6.25) circle (2pt);
    \draw[color=red] (-1,5.75) -- (1,6.75);
    \draw[color=red] (-1,6.75) -- (1,-2.5);
    \draw[color=red] (-1,-1.25) -- (1,5.25);
    \filldraw[fill=red,draw=red] (-1,5.75) circle (2pt);
    \filldraw[fill=red,draw=red] (1,6.75) circle (2pt);
    \filldraw[fill=red,draw=red] (-1,6.75) circle (2pt);
    \filldraw[fill=red,draw=red] (1,5.25) circle (2pt);
    \filldraw[fill=red,draw=red] (-1,-1.25) circle (2pt);
    \filldraw[fill=red,draw=red] (1,5.25) circle (2pt);
    \filldraw[fill=red,draw=red] (1,-2.5) circle (2pt);
    \node at (-3.5,2) {\small{$u$}};
    \node[color=blue] at (-2.375,2) {\small{$u'$}};
    \node at (3.5,2) {\small{$v$}};
    \node[color=blue] at (2.375,2) {\small{$v'$}};
    \node[color=red] at (-1.25,2) {\small{$t_i$}};
    \node[color=red] at (1.75,6.25) {\small{$s_{\phi(i)}$}};
    \node at (-.5,3.5) {$\overline{X_2}$};
    \node at (-.5,2) {$U$};
    \node at (-.5,0.5) {$\overline{X_1}$};
    \node at (0.5,3.5) {$\overline{Y_2}$};
    \node at (0.5,2) {$V$};
    \node at (0.5,0.5) {$\overline{Y_1}$};
    \end{tikzpicture}
    \caption{The effect of a paired flip at $(u,v)\in L(I)_T$ and $(u',v')$ with $(u,v)\succdot (u',v')$}
    \label{multifig1}
\end{figure}

Now consider $P(t_i),P(s_{\phi(i)})\in M(I)_0$. Our choices at $P(t_i),P(s_{\phi(i)})$ can only affect crossings between $(t_i,s_{\phi(i)})$ and edges in $(T_I,S_{\phi(I)},\phi|_I)$ with an endpoint in $\leaf(P(t_i))$ or $\leaf(P(s_{\phi(i)}))$. In the case that there is a leaf-matched pair $(u,v)\in L(I)$ with $P(t_i)>_T u$ and $v>_S P(s_{\phi(i)})$, we can use the same arguments from Lemma \ref{insertioncase1} with $P(t_i)$ in place of $u_0$ and $P(s_{\phi(i)})$ in place of $v_0$ to conclude that \algname{MultiInsertion} makes choices that extend to a solution. The case when there is a leaf-matched pair $(u,v)\in L(I)$ such that $P(s_{\phi(i)})>_S v$ and $u>_T P(t_i)$ is done similarly.

Finally, consider $P(t_i)\in M(I)_T$. Notice that the choice at $P(t_i)$ can only affect the crossings $\cross(P(t_i))=\{(e_i,e_j):t_j\in \leaf(P(t_i))\setminus \{t_i\}\}$, where all such $t_j$ must be leaves in the subtanglegram $(T_I,S_{\phi(I)},\phi|_I)$ since $\leaf(P(t_i))\cap I^c=\{t_i\}$. We claim that our future choices at elements in $\mathcal{P}$ do not affect these crossings. 
\begin{itemize}
    \item Any leaf-matched pairs $(u,v)\in L(I)$ with $(u,v)\succ P(t_i)$ have already been considered based on the linear extension, and there are no leaf-matched pairs $(u,v)\in L(I)$ with $P(t_i)\succ (u,v)$.
    \item If we perform subtree switches at $u\in M(I)\cap T\setminus \{P(t_i)\}$, then the resulting layouts restrict to the same layout of $(T_{I\cup \{i\}},S_{\phi(I\cup \{i\})},\phi|_{I\cup \{i\}})$. Hence, the same crossings occur in $\cross(P(t_i))$.
    \item If we perform subtree switches at $v\in M(I)\cap S$ with $v\neq A(s_{\phi(i)})$, then the resulting layouts restrict to the same layout of $(T_{I\cup \{i\}},S_{\phi(I\cup \{i\})},\phi|_{I\cup \{i\}})$. Again, the same crossings occur in $\cross(P(t_i))$.
    \item Now for $A(s_{\phi(i)})$, we consider two cases. In the case that $\leaf(A(s_{\phi(i)}))\cap \phi(I)$ and $\leaf(P(t_i))\cap I$ do not contain matched leaves, an operation at $A(s_{\phi(i)})$ cannot affect the crossings in $\cross(P(t_i))$. In the case that there exists $(u,v)\in L(I)\setminus L(I)_0$ with $A(s_{\phi(i)})\succ (u,v) \succ P(t_i)$, an operation at $A(s_{\phi(i)})$ has already been considered.
\end{itemize}
Thus, a choice at $P(t_i)$ that extends to a solution must be one that minimizes crossings in $\cross(P(t_i))$, which is precisely what the algorithm does. The case $P(s_{\phi(i)})\in M(I)_S$ follows by a similar argument.
\end{proof}

\begin{theorem}
\algname{MultiInsertion} solves the Tanglegram Multiple Edge Insertion Problem in $O(2^{|L(I)_1\cup M(I)_1|} n^2)$ time and $O(n^2)$ space.
\end{theorem}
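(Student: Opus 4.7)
The plan is to combine the correctness guarantee already established by Lemma \ref{forloop} with a fairly direct runtime and space accounting. For correctness, observe that the outer \textbf{for} loop in line \ref{multiforloop} enumerates all subsets $C\subset L(I)_1\cup M(I)_1$, and for each such $C$ it constructs a candidate layout $(X',Y')$ obtained from $(X,Y)$ by applying exactly the paired flips/subtree switches at the elements of $C$ together with the greedy flip/switch choices at the elements of $L(I)_T\cup L(I)_S\cup M(I)_0\cup M(I)_T\cup M(I)_S$ dictated by the $\cross$ sets. By Lemma \ref{multioperations}, each such $(X',Y')$ still restricts to a planar layout of $(T_I,S_{\phi(I)},\phi|_I)$, so it is a feasible solution. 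Lemma \ref{forloop} guarantees that for at least one choice of $C$ (namely the one for which the greedy choices at the other elements also extend to a solution), the resulting $(X',Y')$ attains the optimum. The bookkeeping in lines \ref{remainingsteps}--\ref{trackmin} returns a layout with crossing count no larger than that of this optimal candidate, hence a solution to the Tanglegram Multiple Edge Insertion Problem.

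For the runtime analysis, I would first handle the one-time set-up in lines \ref{multistart1}--\ref{multistart2}. By Remark \ref{modifieduntangletime}, \algname{ModifiedUntangle} runs in $O(n^2)$ time and space. Constructing $L(I)$ from $L$ via Definition \ref{Lsubtanglegram} costs $O(n^2)$, since $|L|=O(n)$ and each reduced vertex is found in $O(n)$ by walking down from $u$ or $v$. The partitions $L(I)_0,L(I)_T,L(I)_S,L(I)_1$ and $M(I)_0,M(I)_T,M(I)_S,M(I)_1$ are determined by ancestry and leaf tests, each executable in $O(n)$, so computing all partitions uses $O(n^2)$. The crossing sets from Table \ref{crossingsets} each have size at most $n$ and there are $O(n)$ of them, giving $O(n^2)$ space and time to build them. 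A linear extension $F$ of $(\mathcal{P},\preceq)$ is computable in $O(n)$ (e.g.\ by a topological sort, since $|\mathcal{P}|=O(n)$ and $\preceq$ is derived from the tree orders).

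Next I would handle the outer \textbf{for} loop, which is iterated $2^{|L(I)_1\cup M(I)_1|}$ times. Within each iteration, copying $(X,Y)$ into $(X',Y')$ costs $O(n)$. The inner loop in line \ref{multiforloop2} touches each of the $|\mathcal{P}|=O(n)$ elements exactly once, and at each element it must (i) count how many of the pairs in the corresponding precomputed $\cross(\cdot)$ set are currently crossings of $(X',Y')$, and (ii) possibly perform a paired flip or subtree switch. Crossing status of a single pair of between-tree edges can be tested in $O(1)$ once the vertical positions of the four endpoints in $(X',Y')$ are known (maintained in $O(1)$ per flip using an auxiliary position array), and each $|\cross(\cdot)|=O(n)$, so (i) is $O(n)$; a paired flip or subtree switch rewrites an interval of $X'$ or $Y'$ in $O(n)$, so (ii) is also $O(n)$. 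The inner loop thus runs in $O(n^2)$. Finally, counting crossings of the resulting $(X',Y')$ to compare with $(X_{min},Y_{min})$ in lines \ref{remainingsteps}--\ref{trackmin} is $O(n^2)$ by checking all $\binom{n}{2}$ pairs. Summing, one iteration of the outer loop is $O(n^2)$, and the total runtime is $O(2^{|L(I)_1\cup M(I)_1|}\,n^2)$.

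For space, I note that every data structure maintained simultaneously has size $O(n^2)$ (the two layouts $(X,Y)$ and $(X_{min},Y_{min})$, the Boolean table inherited from \algname{ModifiedUntangle}, the union of all $\cross$ sets, and the partitions of $L(I)$ and $M(I)$); the working layout $(X',Y')$ and sets $M_T,M_S$ are reused across iterations of the outer \textbf{for} loop, so no space is accumulated by the exponential enumeration. Hence the total space is $O(n^2)$. The main subtlety I expect is keeping the per-element cost in the inner loop to $O(n)$ rather than $O(n^2)$: this requires maintaining the vertical positions of leaves under flips and switches incrementally (each flip/switch on an interval of length $\ell$ updates $\ell$ positions in $O(\ell)$ and the total length of all intervals touched in one iteration is $O(n)$, since each vertex of $\mathcal{P}$ is visited once and flips at disjoint subtrees contribute disjointly), so that each crossing test is genuinely $O(1)$. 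Once this bookkeeping is in place, the stated bounds follow.
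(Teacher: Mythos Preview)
Your proposal is correct and follows essentially the same approach as the paper: correctness via Lemma~\ref{multioperations} and Lemma~\ref{forloop} together with the min-tracking in lines~\ref{remainingsteps}--\ref{trackmin}, and the time/space bounds via a direct accounting of the setup plus the $2^{|L(I)_1\cup M(I)_1|}$ outer iterations each costing $O(n^2)$.

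One small inaccuracy worth fixing: in your final paragraph you claim that ``the total length of all intervals touched in one iteration is $O(n)$, since \ldots flips at disjoint subtrees contribute disjointly.'' The subtrees are not disjoint in general---elements of $L(I)_T$, for instance, are a nested chain of ancestors of $t_i$, so their intervals are nested and the total length can be $\Theta(n^2)$. This does not affect your bound: $|\mathcal{P}|=O(n)$ operations, each rewriting an interval of length $O(n)$ and updating the position array in $O(n)$, still gives $O(n^2)$ per outer iteration, which is exactly what the paper asserts (``at most $2n$ steps, and each step takes $O(n)$ time''). Just drop the disjointness claim and the argument goes through.
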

\begin{proof}
By Lemma \ref{multioperations}, \algname{MultiInsertion} only encounters layouts that restrict to planar layouts of $(T_I,S_{\phi(I)},\phi|_I)$. By Lemma \ref{forloop}, \algname{MultiInsertion} encounters a solution $(X_{min},Y_{min})$ to the Multiple Edge Insertion Problem in some iteration of the \textbf{for} loop in line \ref{multiforloop}. From lines \ref{remainingsteps}-\ref{trackmin}, we see that the algorithm stores the layout with the fewest number of crossings considered over all iterations of the \textbf{for} loop, so it will return either $(X_{min},Y_{min})$ or another layout with the same number of crossings. Thus, the output of \algname{MultiInsertion} is a solution to the Multiple Edge Insertion Problem.

For the run-time and space claims, first note that lines \ref{multistart1}-\ref{multistart2} can all be completed in $O(n^2)$ time. 
 The \textbf{for} loop in line \ref{multiforloop} then runs for $2^{|L(I)_1\cup M(I)_1|}$ iterations. The \textbf{for} loop in line \ref{multiforloop2} has at most $2n$ steps, and each step takes $O(n)$ time, as all of the \textbf{if} and \textbf{else if} conditions can be checked in $O(n)$ time, and paired flips and subtree switches take $O(n)$ time. The remaining steps after line \ref{remainingsteps} take $O(n^2)$ time for a total of  $O(2^{|L(I)_1\cup M(I)_1|} n^2)$ time. For the space claim, notice that storing all of the sets and layouts takes $O(n^2)$ space.
\end{proof}

\begin{remark}
In the special case that $|I|=n-1$, the set $L(I)_1$ is empty and $|M(I)_1|\leq 2$. Here, \algname{MultiInsertion} reduces to a less efficient version of \algname{Insertion} that still runs in $O(n^2)$ time.
\end{remark}

\section{Future work}\label{future}

In Section \ref{planar}, we defined the flip graph of a planar tanglegram. While paired flips will generate all vertices in this graph, it is possible that some flips do not produce a new layout, as tanglegrams are considered up to isomorphism on $T$ and $S$. One simple example of this is the unique tanglegram of size $2$, where a paired flip at the roots of both trees does not produce a new layout. As such, we pose the following problem.

\begin{problem*}
For any planar tanglegram $(T,S,\phi)$, characterize the flip graph $\Gamma(T,S,\phi)$. In particular, determine the number of vertices, the number of edges, and the degree of any vertex.
\end{problem*}

Billey, Konvalinka, and Matsen previous gave an algorithm for generating tanglegrams uniformly at random \cite{count}. We propose a corresponding problem for the case of planar tanglegrams. Since there is a bijection between irreducible planar tanglegrams and pairs of triangulations with no common diagonal from \cite{countingplanar}, solving this problem may lead to solutions to other open problems.

\begin{problem*}
Construct an efficient algorithm {generating} planar tanglegrams uniformly at random.
\end{problem*}

Finally, we pose a problem about using \algname{MultiInsertion} to approximate the tanglegram crossing number, where the bound is modeled after one in \cite{graphcrossing}. For any tanglegram $(T,S,\phi)$, this also requires finding a planar subtanglegram $(T_I,S_{\phi(I)},\phi|_I)$, and from Corollary \ref{ex1}, we know that we do not necessarily want a subtanglegram of maximum size. Additionally, we insist on an efficient algorithm, so we must modify \algname{MultiInsertion} at the vertices in $L(I)_1\cup M(I)_1$.

\begin{problem*}
Use \algname{MultiInsertion} to construct an efficient algorithm that finds a tanglegram layout with at most $O(\crt(T,S,\phi)\cdot \text{poly}(\log n))$ crossings, where $n$ is the size of $(T,S,\phi)$ and $\text{poly}(x)$ is some polynomial in $x$.
\end{problem*}

\section*{Acknowledgements}
\noindent
We would like to thank Sara Billey for proposing tanglegrams as an area of research, suggesting improvements to several proofs, and giving valuable feedback in earlier versions of this paper. We would also like to thank Herman Chau,  Paige Helms, Matja\v{z} Konvalinka, Stark Ryan, and Stephan Wagner for valuable feedback in earlier versions of this paper. The author was partially supported by the National Science Foundation grant DMS-1764012.

\printbibliography


\section{Appendix}

\begin{algorithm}[H]\DontPrintSemicolon
\caption{\algname{Insertion Case $v_0>_S v_{Tmax}$}}\label{case2algorithm}
\KwIn{tanglegram $(T,S,\phi)$, index $i$ such that $(T_I,S_{\phi(I)},\phi|_I)$ is planar for $I=[n]\setminus \{i\}$}
\KwOut{layout of $(T,S,\phi)$ that restricts to a planar layout of $(T_I,S_{\phi(I)},\phi|_I)$}
\tcp*{Step 1: initialize the algorithm.}
$(X,Y),L\coloneqq\algname{ModifiedUntangle}(T,S,\phi|_I)$\\
construct $L(I)$ from $L$ using Definition \ref{Lsubtanglegram}\\
$u_0\coloneqq$ parent of $t_i$, $v_0\coloneqq$ parent of $s_{\phi(i)}$\\
$L(I)_T\coloneqq\{(u,v)\in L(I):u>_T t_i,v\not>_S s_{\phi(i)}\}$\\
\tcp*{Step 2: construct edge sets.}
linearly order $L(I)_T=\{(u_j,v_j)\}_{j=1}^m$ so that $u_1<_T u_2<_T \ldots <_T u_m$\\
$E(v_0) \coloneqq$ between-tree edges with an endpoint in $\leaf(v_0)\setminus \leaf(v_m)$\\
$E(u_0) \coloneqq$ between-tree edges with an endpoint in $\leaf(u_0)\setminus \{t_i\}$\\
\For{$j=1,2,\ldots ,m$,}
    {$E(u_j) \coloneqq$ between-tree edges with an endpoint in $\leaf(u_j)\setminus \leaf(u_{j-1})$}
\tcp*{Step 3: use paired flips and subtree switches to change crossings.}
\If{$(t_i,s_{\phi(i)})$ crosses more than half of the edges in $E(v_0)$ in the layout $(X,Y)$,} 
    {update $Y\coloneqq \algname{SubtreeSwitch}(Y,v_0)$} 
\For{$j=m,\ldots ,2,1$,} 
    {
    \If{$(t_i,s_{\phi(i)})$ crosses more than half of the edges in $E(u_j)$ in the layout $(X,Y)$,}  
        {update $(X,Y)\coloneqq \algname{PairedFlip}((X,Y),(u_j,v_j))$}
    }
\If{$(t_i,s_{\phi(i)})$ crosses more than half of the edges in $E(u_0)$ in the layout $(X,Y)$,} 
    {update $X\coloneqq \algname{SubtreeSwitch}(X,u_0)$} 
\Return $(X,Y)$
\end{algorithm}

\noindent
   \textsc{Kevin Liu, Department of Mathematics, University of Washington, Seattle, WA 98125, USA}\\
   \textit{E-mail address}: \texttt{kliu15@uw.edu}

\end{document}